\documentclass[12pt,reqno]{amsart}

\usepackage[utf8]{inputenc}
\usepackage[T1]{fontenc}
\usepackage{amsmath,amssymb,amsthm,mathtools,mathrsfs}
\usepackage{geometry}
\usepackage[colorlinks=true,citecolor=blue,linkcolor=blue,urlcolor=blue]{hyperref}
\usepackage{enumitem}
\usepackage[expansion=false]{microtype}
\usepackage{booktabs}
\usepackage{tabularx}
\usepackage{aliascnt}
\usepackage{cleveref}
\crefname{assumption}{Assumption}{Assumptions}
\Crefname{assumption}{Assumption}{Assumptions}

\theoremstyle{plain}
\newtheorem{theorem}{Theorem}[section]
\newaliascnt{proposition}{theorem}
\newtheorem{proposition}[proposition]{Proposition}
\aliascntresetthe{proposition}
\crefname{proposition}{proposition}{propositions}
\Crefname{proposition}{Proposition}{Propositions}
\newaliascnt{lemma}{theorem}
\newtheorem{lemma}[lemma]{Lemma}
\aliascntresetthe{lemma}
\crefname{lemma}{lemma}{lemmas}
\Crefname{lemma}{Lemma}{Lemmas}
\newaliascnt{corollary}{theorem}
\newtheorem{corollary}[corollary]{Corollary}
\aliascntresetthe{corollary}
\crefname{corollary}{corollary}{corollaries}
\Crefname{corollary}{Corollary}{Corollaries}

\theoremstyle{definition}
\newaliascnt{definition}{theorem}
\newtheorem{definition}[definition]{Definition}
\aliascntresetthe{definition}
\crefname{definition}{definition}{definitions}
\Crefname{definition}{Definition}{Definitions}
\newaliascnt{example}{theorem}
\newtheorem{example}[example]{Example}
\aliascntresetthe{example}
\crefname{example}{example}{examples}
\Crefname{example}{Example}{Examples}
\newaliascnt{openquestion}{theorem}
\newtheorem{openquestion}[openquestion]{Open Question}
\aliascntresetthe{openquestion}
\crefname{openquestion}{open question}{open questions}
\Crefname{openquestion}{Open Question}{Open Questions}
\newtheorem{assumption}{Assumption}
\renewcommand{\theassumption}{H\arabic{assumption}}

\theoremstyle{remark}
\newaliascnt{remark}{theorem}
\newtheorem{remark}[remark]{Remark}
\aliascntresetthe{remark}
\crefname{remark}{remark}{remarks}
\Crefname{remark}{Remark}{Remarks}

\newcommand{\R}{\mathbb{R}}

\newcommand{\Lf}{L_f}
\newcommand{\grad}{\nabla}
\newcommand{\Lap}{\Delta}
\newcommand{\vol}{\operatorname{Vol}}
\newcommand{\dV}{\,dV}

\newcommand{\ef}{e^{-f}}

\newcommand{\bdy}{\partial_\nu}

\newcommand{\abs}[1]{\left|#1\right|}
\newcommand{\ip}[2]{\langle #1,\, #2\rangle}
\newcommand{\fdom}{L^2(M,e^{-f}dV)}
\newcommand{\fint}{\mathop{\,\rlap{-}\!\!\int}}

\begin{document}

\title[Compatibility of Spectral and Growth Filtrations for Drift Laplacians]{%
Two Growth Filtrations for Drift Laplacians:\\
Compatibility, Rigidity, and an Inverse Hermite/Laguerre Theorem}

\author{Elham Matinpour}
\address{Department of Mathematics and Statistics, Auburn University,
Auburn, AL 36849, USA}
\email{elm0135@auburn.edu}
\subjclass[2020]{Primary 58J50, 35P15, 35J10; Secondary 53C21, 60J60}
\keywords{Drift Laplacian, weighted Laplacian, spectral rigidity, frequency function,
Agmon estimates, Schr\"odinger operators, polynomial growth eigenfunctions,
Bakry--\'Emery geometry, growth filtrations, Bochner's classification,
orthogonal polynomials, Ornstein--Uhlenbeck operator}

\begin{abstract}
Let $(M^n,g)$ be a complete, connected, noncompact Riemannian manifold,
and let $f\ge0$ be a proper $C^2$ weight. We assume polynomial bounds on $f$, $|\nabla f|$, and $\Delta f$
(\crefrange{ass:H1}{ass:H3}), with growth exponent $\alpha>0$.
We study the drift Laplacian
$L_f=\Delta-\langle\nabla f,\nabla\cdot\rangle$ on
$L^2(M,e^{-f}dV)$. We assume that its eigenfunctions have finite polynomial growth orders
$\gamma_k$, measured on the level sets of $f^{1/\alpha}$.
We also assume that $\gamma_k\to\infty$ in spectral order (\Cref{ass:H4}).
We compare this growth filtration with the filtration by eigenvalue
$\lambda_k$. For $\alpha>1$, we study the relation
$\lambda_k\asymp\gamma_k^{(2\alpha-2)/\alpha}$, called \emph{compatibility}.
An explicit rotationally symmetric example shows that this relation
can fail for the full spectrum, even when $\alpha=2$.

We prove discreteness and weighted Agmon estimates for $\alpha>1$.
A moment estimate bounds the concentration radius in terms of the
growth order and a finite-scale prefactor. A lower localization
condition gives one direction of the spectral comparison.
For exact warped products with a one-dimensional base, we impose the
radial growth condition and a regularity assumption on the radial drift.
These give $\alpha=2$ and $\lambda_k\asymp\gamma_k$ in the radial sector.
For general $(M,g)$, we assume a transitive isometric symmetry of the
level sets instead of a warped-product structure. The invariant-sector
growth condition and radial-drift regularity then give $\alpha=2$ and
$\lambda_k\asymp\gamma_k\asymp k$.
Finally, exact polynomiality of the invariant eigenfunctions implies
$\alpha=2$ without the drift regularity assumptions. The reduced
equation is then Hermite or generalized Laguerre after normalization.
The arguments use neither curvature bounds nor soliton equations.
\end{abstract}

\maketitle
\tableofcontents

\section{Introduction}\label{sec:intro}

\subsection{Background and motivation}

Let $(M^n,g)$ be a complete, connected, noncompact Riemannian manifold and
$f:M\to[0,\infty)$ a $C^2$ proper function. The drift Laplacian, also called the weighted or Bakry--\'Emery
Laplacian, is the operator
\[
  L_fu \;=\; \Delta u - \langle\nabla f,\nabla u\rangle .
\]
It is formally symmetric on $L^2(M,e^{-f}dV)$, with inner product
$\langle u,v\rangle_f=\int_Muv\,e^{-f}dV$. It arises in diffusion
theory and Bakry--\'Emery geometry \cite{BakryEmery1985}, and in the
study of self-shrinkers for mean curvature flow.

The basic example is the Ornstein--Uhlenbeck operator with
$f(x)=|x|^2/4$. On $\R$, its degree-$k$ Hermite eigenfunction grows
like $|x|^k$ and has eigenvalue $\lambda_k=k/2$. On $\R^n$, tensor products of the one-dimensional Hermite
eigenfunctions form a complete eigenbasis. For $n\ge2$, the $SO(n)$-invariant sector is described
by Laguerre polynomials in $|x|^2$. In either one-dimensional indexing, the eigenvalue and polynomial-growth
order are linear in the mode number. The degree in $|x|$ is twice the
Laguerre degree.

Colding and Minicozzi \cite{ColdingMinicozzi1997} proved that, on a
complete manifold with nonnegative Ricci curvature, harmonic functions
of each fixed polynomial growth degree form a finite-dimensional
space. Wu and Wu \cite{WuWu2023} proved the corresponding finiteness
for solutions of $(\Delta-aR)u=0$, $a>0$, on gradient Ricci shrinkers.
Here $R$ is the scalar curvature. They also proved that every
$f$-harmonic function of polynomial growth on such a shrinker is constant.

Colding and Minicozzi \cite{ColdingMinicozzi2021} obtained sharp
growth bounds for drift eigenfunctions under the identities
$\Delta f+S=n/2$ and $|\nabla f|^2+S=f$, with $f,S\ge0$ and $f$ proper.
For an eigenfunction of eigenvalue $\lambda$, their normalized
level-set $L^2$ quantity is $O(r^{4\lambda})$. The exhaustion
$b=2\sqrt f$ is $1$-Lipschitz, so
$f(x)\le\tfrac14(d(x,p)+2\sqrt{f(p)})^2$.
In this setting properness is an additional assumption rather than
a consequence of the two identities. For gradient Ricci shrinkers, two-sided quadratic
growth of $f$ follows from Cao and Zhou \cite{CaoZhou2010}.

This paper studies the converse direction. We assume no soliton or
shrinker structure on $(M,g)$ or $f$, and no curvature bound. The only restrictions on the volume growth of $(M,g)$ come from
Assumptions~\ref{ass:H1}--\ref{ass:H3} on the weight and its derivatives.
They give $\vol(\Sigma_r)\le e^{\eta r^\alpha+O(\log r)}$.
Exponential growth is allowed once $\alpha\ge1$. The question is the
following:

\begin{center}
If the eigenfunctions of $L_f$ grow polynomially in a suitable
intrinsic sense and their growth orders increase without bound, what
does this imply about the asymptotic shape of $f$?
\end{center}

\subsection{Two filtrations and their compatibility}

Let $\lambda_k$ be the eigenvalues of $-L_f$, counted with multiplicity.
Let $\gamma_k$ be the polynomial growth orders in Assumption~\ref{ass:H4}.
We call the two filtrations compatible if
\[
  \lambda_k\asymp\gamma_k^{(2\alpha-2)/\alpha}
\]
for all sufficiently large $k$. See \Cref{def:compatibility}.
This does not follow from \ref{ass:H1}--\ref{ass:H4}. In \Cref{prop:incompatibility},
both $f$ and the negative logarithm of the weighted radial density
are quadratic at infinity,
but angular energy makes $\lambda_k/\gamma_k$ unbounded.
The radial subsequence is compatible. We ask whether every \ref{ass:H1}--\ref{ass:H4} manifold admits a
compatible subsequence in \Cref{q:compatiblesubsequence}.

We compare growth and spectrum through two radii. The Agmon safe radius
$r_{\mathrm{Ag}}(k)$ is a scale beyond which the conjugated potential
exceeds $\lambda_k$. The concentration radius $r_C(k)$ is the median
of the weighted mass of $u_k$. We prove
$r_C(k)\lesssim r_{\mathrm{Ag}}(k)$.
If the Lower Localization Property (LLP) also holds, then
$r_C(k)\gtrsim\gamma_k^{1/\alpha}$, leading to
$\lambda_k\gtrsim\gamma_k^{(2\alpha-2)/\alpha}$.
The moment estimate provides another upper bound on $r_C(k)$.
This bound also depends on a finite-scale prefactor not specified by $\gamma_k$.

Compatibility and linear growth of $\lambda_k$ and $\gamma_k$ in the
index of a common subsequence force $\alpha=2$
(\Cref{cor:alpha2conditional}). Full linear indexing holds for the
one-dimensional Ornstein--Uhlenbeck model and for its radial sectors.
The full Gaussian spectrum, counted with multiplicity, instead has
$\lambda_k\asymp\gamma_k\asymp k^{1/n}$ when $n\ge2$.
Choosing one eigenfunction from each degree produces a subsequence
satisfying \Cref{ass:linearcorr}.

\subsection{Main results}

We state the main theorems here. The hypotheses are made precise in
\S\ref{sec:ass}, \S\ref{sec:compatibility}, and \S\S\ref{sec:warped}--\ref{sec:sturm}.

\begin{theorem}\label{thm:informal1}
Under \Cref{ass:H1,ass:H2,ass:H3,ass:H4} with $\alpha>1$, the following hold:
\begin{enumerate}[label=\rm(\alph*)]
\item (\S\ref{sec:conjugate}) $L_f$ has purely discrete
  spectrum, and its Schr\"odinger-conjugated eigenfunctions satisfy Agmon estimates.
  The decay rates depend on the constants in \Cref{ass:H1,ass:H2,ass:H3}.
  The prefactors may also depend on the fixed metric and weight.
\item (\S\ref{sec:twofiltrations}) the concentration radius satisfies
  the prefactor-dependent moment bound \eqref{eq:concUBmoment}, and
  $r_C(k)\lesssim r_{\mathrm{Ag}}(k)$ for all sufficiently large $k$.
\item (\S\ref{sec:localization}) the Lower Localization Property
  (LLP, \Cref{def:LLP}), together with
  $r_C(k)\lesssim r_{\mathrm{Ag}}(k)$, gives
  $\lambda_k\gtrsim\gamma_k^{(2\alpha-2)/\alpha}$
  (\Cref{thm:reduction}).
\end{enumerate}
\end{theorem}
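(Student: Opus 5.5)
The theorem collects three statements, and I will prove them in the order listed. Each one feeds the next.

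\emph{Step (a): discreteness and Agmon estimates.} Conjugate by the unitary map $u\mapsto v=e^{-f/2}u$ from $L^2(M,e^{-f}dV)$ to $L^2(M,dV)$. This turns $-L_f$ into the Schr\"odinger operator $H=-\Delta+V$ with
\[
V=\tfrac14|\nabla f|^2-\tfrac12\Delta f .
\]
\Cref{ass:H1,ass:H2,ass:H3} should give a two-sided bound $|\nabla f|\asymp f^{(\alpha-1)/\alpha}$ at infinity, with $\Delta f$ of strictly lower order. Since $\alpha>1$, this yields
\[
V\ge c\,f^{(2\alpha-2)/\alpha}-C\to\infty .
\]
Properness of $f$ and completeness of $g$ then make $\{V\le\Lambda\}$ relatively compact for every $\Lambda$. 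The standard Rellich argument (form domain compactly embedded in $L^2$) gives purely discrete spectrum.

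For the Agmon estimates, I use the Agmon metric $(V-\lambda_k)_+\,g$. Take a cutoff weight $e^{\phi}$ with $|\nabla\phi|^2\le(1-\epsilon)(V-\lambda_k)$ outside the Agmon safe radius $r_{\mathrm{Ag}}(k)$. This radius is defined by $V>\lambda_k$ on $\{f^{1/\alpha}>r_{\mathrm{Ag}}\}$, so $r_{\mathrm{Ag}}(k)\asymp\lambda_k^{1/(2\alpha-2)}$. The IMS/Agmon identity then bounds $\int e^{2\phi}v_k^2$ by the mass on the safe region. The decay rate depends only on the constants in \Cref{ass:H1,ass:H2,ass:H3}, while the compact part gives metric-dependent prefactors.

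\emph{Step (b): the concentration radius.} Define $r_C(k)$ as the median level of $f^{1/\alpha}$ for the mass $u_k^2e^{-f}dV=v_k^2dV$. The Agmon bound from (a) shows that the mass beyond $2r_{\mathrm{Ag}}(k)$ is at most $Ce^{-c\,r_{\mathrm{Ag}}^\alpha}$, which is below $1/2$ once $k$ is large. Hence $r_C(k)\le 2r_{\mathrm{Ag}}(k)$.

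For the moment bound \eqref{eq:concUBmoment}, apply Chebyshev's inequality to a moment $\int f^{p/\alpha}u_k^2e^{-f}$. That moment is controlled by integrating the growth bound from \Cref{ass:H4},
\[
\int_{\Sigma_r}u_k^2\le A_k r^{2\gamma_k},
\]
against the weight $e^{-f}$. The coarea formula over level sets of $f^{1/\alpha}$ introduces $1/|\nabla f^{1/\alpha}|$ factors, which \Cref{ass:H2} controls. The result is
\[
r_C\lesssim (A_k\,\Gamma(\cdot))^{1/p}\sim(\gamma_k+\log A_k)^{1/\alpha},
\]
where $A_k$ is the finite-scale prefactor.

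\emph{Step (c): the spectral comparison.} Assuming LLP, which gives $r_C(k)\gtrsim\gamma_k^{1/\alpha}$ (\Cref{def:LLP}), I chain the inequalities:
\[
\gamma_k^{1/\alpha}\lesssim r_C(k)\lesssim r_{\mathrm{Ag}}(k)\lesssim\lambda_k^{1/(2\alpha-2)} .
\]
Raising to the power $2\alpha-2>0$ gives $\lambda_k\gtrsim\gamma_k^{(2\alpha-2)/\alpha}$. This is the content of \Cref{thm:reduction}.

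\emph{Main obstacle.} The delicate point is step (a): obtaining the upper relation $r_{\mathrm{Ag}}(k)\lesssim\lambda_k^{1/(2\alpha-2)}$. It requires a lower bound on $|\nabla f|$ comparable to $f^{(\alpha-1)/\alpha}$ at infinity, with $\Delta f$ genuinely subordinate. If \Cref{ass:H2} gives only an upper bound on $|\nabla f|$, I would instead derive the lower bound from properness and \Cref{ass:H1} via an averaged argument on level sets. Failing that, I would define $r_{\mathrm{Ag}}$ intrinsically through $V$ and state the comparison in those terms.
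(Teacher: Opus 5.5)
Your treatment of (a), of the comparison $r_C(k)\lesssim r_{\mathrm{Ag}}(k)$, and of (c) follows the paper's route. The moment bound in (b), however, has a genuine gap.

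\Cref{ass:H4} controls the \emph{normalized} average $\vol(\Sigma_r)^{-1}\int_{\Sigma_r}u_k^2\,d\sigma$, not $\int_{\Sigma_r}u_k^2\,d\sigma$. It also controls it only at exponents strictly above $\gamma_k$, since $\gamma_k$ is an infimum; the paper uses $I_k\le B_kr^{2(\gamma_k+1)}$ with $B_k$ from \eqref{eq:Bk}. After the coarea formula, your moment is therefore bounded by $B_k\int_{R_0}^\infty r^{q}e^{-r^\alpha}\vol(\Sigma_r)\,dr$ with $q=p+2(\gamma_k+1)$, and nothing in your sketch controls $\vol(\Sigma_r)$. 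That factor can be exponentially large (\Cref{ex:volgrowthsharp}). Finiteness of $\int_Me^{-f}\,dV$ does not help: a profile $\vol(\Sigma_r)\approx r^{-2}e^{r^\alpha}$ has finite weighted volume but makes every moment with $q\ge1$ diverge.

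The missing ingredient is \Cref{lem:volmoment}. Green's formula on $\{b\le r\}$ gives $\alpha r^{\alpha-1}\int_{\Sigma_r}|\nabla b|\,d\sigma=\int_{\{b\le r\}}\Delta f\,dV$. The upper half of \Cref{ass:H3} bounds the right side by $C+\eta\alpha^2\int_{R_0}^r s^{2\alpha-2}\int_{\Sigma_s}|\nabla b|\,d\sigma\,ds$, and Gr\"onwall then gives $\vol(\Sigma_r)\lesssim r^{1-\alpha}e^{\eta r^\alpha}$. Because $\eta<1$, this yields \eqref{eq:volmoment}, namely $\int_{R_0}^\infty r^qe^{-f}\vol(\Sigma_r)\,dr\le C_M^{q+1}\Gamma\bigl((q+1)/\alpha\bigr)$, with the factor $(1-\eta)^{-q/\alpha}$ absorbed into $C_M^{q+1}$. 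This is where the upper half of \Cref{ass:H3} enters (b), and it is the only place the geometry of $(M,g)$ does.

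There are three smaller points.

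First, the final step of your moment argument is too optimistic. With $G_k=\gamma_k+1$, the factor $\Gamma\bigl((p+2G_k+1)/\alpha\bigr)^{1/p}$ is comparable to $p^{1/\alpha}$ only once $p\gtrsim G_k\log(e+G_k)$. The choice $p=2S_k$ of \Cref{prop:moment} therefore gives exactly \eqref{eq:concUBmoment}, namely $r_C(k)\lesssim R_0+\bigl(G_k\log(e+G_k)+\log B_k\bigr)^{1/\alpha}$. No choice of $p$ brings this bound down to $(\gamma_k+\log A_k)^{1/\alpha}$.

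Second, in (a) you misplace the difficulty. \Cref{ass:H2} is two-sided, so no averaging argument is needed. \Cref{ass:H3} does not make $\Delta f$ lower order: it allows $\Delta f$ as large as $\eta|\nabla f|^2$. Coercivity comes from $\eta<\tfrac12$, through $V\ge(\tfrac14-\tfrac\eta2)|\nabla f|^2\ge(\tfrac14-\tfrac\eta2)c_5\,b^{2\alpha-2}$ on the tail (\Cref{lem:Vbounds2}). Once this is in hand, $r_{\mathrm{Ag}}(k)\asymp\lambda_k^{1/(2\alpha-2)}$ is immediate.

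Third, to conclude $r_C(k)\le2r_{\mathrm{Ag}}(k)$ from the Agmon bound, you need the prefactor to be uniform in $k$, or at most polynomial in $\lambda_k$. It is uniform. The allowed-region term in the energy identity is at most $\lambda_k-\inf_MV$, while $V-\lambda_k\ge(2^{2\alpha-2}-1)\lambda_k$ on $\{b>2r_{\mathrm{Ag}}(k)\}$. Dividing removes the $\lambda_k$, as in \Cref{cor:taildecay}.
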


\begin{remark}
LLP gives the lower spectral inequality. \ref{ass:H1}--\ref{ass:H4}
alone do not imply the upper inequality for the full spectrum, as
\Cref{prop:incompatibility} shows.
Whether \ref{ass:H1}--\ref{ass:H4} imply LLP is \Cref{prob:LLPunconditional}.
\end{remark}

\begin{theorem}\label{thm:introwarped}
Let $(M,g)$ be a complete exact warped product with one-dimensional
base and closed connected fiber, as in \S\ref{sec:warped}.
Let $f$ be radial and satisfy \Cref{ass:H1,ass:H2,ass:H3}.
On each noncompact end, assume $\Theta'(r)=O(r^{\alpha-2})$, where
$r$ is outward arclength and $\Theta=f'-(n-1)\varphi'/\varphi$.
If the radial eigenfunction sector satisfies \Cref{ass:H4}, then
$\alpha=2$ and $\lambda_k\asymp\gamma_k$ for every $k$ with
$\lambda_k>0$. In the one-ended case, $\gamma_k=c_*^{-1}\lambda_k$
for every $k$,
with $c_*>0$ independent of $k$ (\Cref{thm:wprigidity}).
\end{theorem}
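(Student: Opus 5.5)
The plan is to reduce to a one-dimensional Sturm--Liouville problem on each end and then read off $\alpha$ from the large-$r$ behavior of the radial eigenfunctions. On an exact warped product $dr^2+\varphi(r)^2g_F$ with radial $f$, a radial function $u(r)$ satisfies
\[
  L_fu = u'' + \Big(\frac{(n-1)\varphi'}{\varphi}-f'\Big)u' = u''-\Theta u'.
\]
So a radial eigenfunction solves $u''-\Theta u'+\lambda u=0$, which is self-adjoint with respect to the measure $\varphi^{n-1}e^{-f}\,dr$. By \ref{ass:H1}, $f\asymp r^\alpha$, and $f^{1/\alpha}$ level sets are comparable to $\{r=\text{const}\}$. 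The growth order $\gamma_k$ is therefore the exponent $\gamma$ in $|u_k(r)|\approx r^{\gamma}$ as $r\to\infty$ on each end.

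First, I would do a Liouville/WKB analysis at infinity. The equation has two independent solutions. One is the recessive, polynomially growing solution. The other is dominant and behaves like $\int^r e^{\int\Theta}$, which is not in the weighted $L^2$ space; this is where discreteness from \Cref{thm:informal1}(a) enters. Substituting $u=r^{\gamma}(1+o(1))$ gives the leading balance
\[
  -\Theta(r)\gamma r^{\gamma-1} + \lambda r^\gamma \approx 0,
\]
that is, $\gamma\approx\lambda r/\Theta(r)$. The hypothesis $\Theta'=O(r^{\alpha-2})$ integrates to $\Theta(r)=c_*r^{\alpha-1}+o(r^{\alpha-1})$. Here I must argue that the limit $c_*=\lim\Theta(r)/r^{\alpha-1}$ exists and is positive. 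Positivity should follow from \ref{ass:H2}--\ref{ass:H3} together with finiteness of the weighted volume, and existence from a Cesàro argument combined with the derivative bound. This identification of $c_*$ is the step I expect to be the main obstacle. If it fails, I would instead compare with the model $\Theta_0=c r^{\alpha-1}$ via a Riccati/Prüfer comparison for $w=ru'/u$.

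Next, I would rigorously show that the recessive solution has finite polynomial order only if $\alpha=2$. Set $w=ru'/u$. The Riccati equation is
\[
  w' = \frac{w}{r}-\frac{w^2}{r}+\Theta\frac{w}{r}-\lambda r.
\]
A bounded limit $w\to\gamma$ forces $\Theta\gamma/r\approx\lambda r$, i.e.\ $\gamma\sim\lambda r^{2-\alpha}/c_*$. The two cases then separate:
\begin{itemize}
  \item If $\alpha<2$, the right-hand side diverges, so $\gamma_k=\infty$, contradicting \ref{ass:H4}.
  \item If $\alpha>2$, then $w\to0$ for every $\lambda$. In that case $\gamma_k=0$ for all $k$, contradicting $\gamma_k\to\infty$.
\end{itemize}
Hence $\alpha=2$, and the limit gives $\gamma_k=\lambda_k/c_*$ on each end. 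The $O(r^{\alpha-2})=O(1)$ control on $\Theta'$ makes the error terms integrable enough, via a Levinson-type asymptotic theorem, to pass to the limit.

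Finally, for several ends, each end $j$ has its own constant $c_*^{(j)}$. The growth order, being the maximal growth over the ends, is $\gamma_k=\lambda_k/\min_jc_*^{(j)}$ up to those ends where $u_k$ happens to be recessive with lower order. In every case, each end contributes $\lambda_k/c_*^{(j)}$, so $\lambda_k\asymp\gamma_k$ whenever $\lambda_k>0$. For $\lambda=0$ the eigenfunction is constant, which is why that case is excluded. In the one-ended case this yields the exact identity $\gamma_k=c_*^{-1}\lambda_k$, matching \Cref{thm:wprigidity}.
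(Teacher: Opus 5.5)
\textbf{Gap: the pointwise limit does not exist.} Your skeleton matches the paper's: the radial ODE $u''-\Theta u'+\lambda u=0$, a Riccati equation for the logarithmic derivative, and the dichotomy $\alpha<2$ versus $\alpha>2$. The step you flagged, however, is false, and the one-ended identity cannot be reached through it. The bound $\Theta'=O(r^{\alpha-2})$ only gives $\frac{d}{dr}\bigl(\Theta/r^{\alpha-1}\bigr)=O(1/r)$, which is not integrable. So $\lim\Theta/r^{\alpha-1}$ need not exist, and neither does $\lim r u_k'/u_k$.

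Here is a concrete case. On flat $\R^n$, $n\ge2$, take a radial $f$ with $f'(r)=r(2+\sin\log r)$ for large $r$. Then H1--H3 hold with $\alpha=2$ and $\Theta'=O(1)$. But $\Theta/r$ oscillates in $[1,3]$, and $r u_k'/u_k\approx\lambda_k/(2+\sin\log r)$ has no limit. Nevertheless $\gamma_k=\lambda_k/\sqrt3$ exactly. The logarithmic Ces\`aro mean of $\Theta/r$ is $2$, whereas the correct constant is $\sqrt3$, the reciprocal of the mean of $r/\Theta$. With $\Theta=r/(2+\sin\log\log r)$ even the logarithmic average oscillates (\Cref{rem:wpscope}).

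Your fallback, comparison with $cr^{\alpha-1}$, gives only $a\lambda_k\le\gamma_k\le A\lambda_k$. That is enough for $\alpha=2$ and $\lambda_k\asymp\gamma_k$, but not for $\gamma_k=c_*^{-1}\lambda_k$ with $c_*$ independent of $k$.

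The missing idea is to integrate instead of taking a pointwise limit:
\begin{enumerate}[label=\rm(\roman*)]
\item Show that $u_k$ has no zeros on the tail. The function $\psi=\sqrt W u_k$ lies in $L^2(dr)$ and solves $\psi''=(Q-\lambda_k)\psi$ with $Q-\lambda_k>0$ there, so convexity forces a fixed sign.
\item Trap $w=u_k'/u_k$ within $O(\lambda_k r^{1-2\alpha})$ of the small root $w_-=\lambda_k/\Theta+O(\lambda_k^2/\Theta^3)$ of $w^2-\Theta w+\lambda_k=0$. Deviations either push $w$ past $\Theta/2$ or produce a zero of $u_k$.
\item At $\alpha=2$ the errors are integrable, so $\log|u_k|=\lambda_k\mathcal J(r)+C_k+o(1)$ with $\mathcal J(r)=\int_{R_0}^r ds/\Theta(s)$.
\item Hence $\gamma_k=\lambda_k\limsup_{r\to\infty}\mathcal J(r)/\log r$. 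This limsup is independent of $k$, H4 forces it into $(0,\infty)$, and $c_*$ is its reciprocal.
\end{enumerate}

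\textbf{Further points that need real arguments.}

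First, the two-sided bound $\Theta\asymp r^{\alpha-1}$ is what ties the drift to the exponent of $f$. It does not follow from $\Theta'=O(r^{\alpha-2})$, which integrates only to an upper bound, nor from finiteness of the weighted volume. It comes from H3's anti-cancellation. With $z=f'$ one has $\Delta f=z'+(z-\Theta)z$, so H3 gives $(1-\eta)z+z'/z\le\Theta\le z+z'/z+C$. Averaging over $[r,r+1]$ with the derivative bound yields the two-sided estimate (\Cref{lem:wpanticancel}). Without this step your dichotomy only shows that the drift has exponent $2$. In \Cref{thm:H4notalpha2}, for instance, $\Theta=cr$ while $f\asymp r^{n-1}$.

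Second, the range $0<\alpha\le1$ is not covered. There $\Theta=O(1)$, the roots $w_\pm$ are not real once $\lambda>\Theta^2/4$, and there is no recessive/dominant split. Citing discreteness from \Cref{thm:informal1} already presupposes $\alpha>1$. You must show separately that radial H4 is impossible in this range. For example, $Q=\frac14\Theta^2-\frac12\Theta'$ is bounded, so translated bumps form an orthonormal sequence of bounded energy, and the radial operator cannot have compact resolvent.

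Minor points:
\begin{itemize}
\item For $w=ru'/u$ the drift term in the Riccati equation is $\Theta w$, not $\Theta w/r$. As displayed, the balance would give $\gamma\sim\lambda r^{3-\alpha}$.
\item In the two-ended case the level-set average is a volume-weighted average of the slice averages. So $\gamma_k$ is sandwiched between the two end orders rather than equal to their maximum. The conclusion $\lambda_k\asymp\gamma_k$ still holds.
\end{itemize}
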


\ref{ass:H3} cannot be dropped here. For every $n\ge4$,
\Cref{thm:H4notalpha2} provides a complete smooth rotationally symmetric
metric on $\mathbb R^n$ with a radial weight $f\asymp r^{n-1}$
satisfying \ref{ass:H1}, \ref{ass:H2} and \ref{ass:H4} in full. Its eigenvalues are proportional to
the growth orders of the eigenfunctions. There \ref{ass:H3} fails and
$\alpha=n-1\ne2$.

A sharper conditional statement applies to this example. On an exact
warped product, suppose the radial drift $\Theta=f'-V_r$ has a regular
effective power-law exponent in the sense of \Cref{def:beta}.
Then radial \ref{ass:H4} implies that the exponent is $2$ (\Cref{thm:betarigidity}). Equivalently, the radial weight
$W(r)=\varphi(r)^{n-1}e^{-f(r)}$ then satisfies $-\log W\asymp r^2$.
Under \ref{ass:H3} and the drift regularity above,
\Cref{lem:wpanticancel} shows that the effective exponent agrees with
$\alpha$, so $\alpha=2$.

The uniqueness statement uses the same effective-exponent hypothesis.
Consider an exact warped product with $\varphi$ bounded below on every
end. Suppose two radial weights satisfy \ref{ass:H1}, \ref{ass:H2} and
radial \ref{ass:H4}, and their drifts have regular effective exponents.
Each weight differs from the logarithm of the warped-slice volume by
a quadratic term. These terms are comparable on each end, so the weights
are comparable. They are asymptotic to each other if the slice volume grows faster than
$e^{cr^2}$ for every $c>0$ on every end (\Cref{cor:H4uniqueness}).

\begin{theorem}\label{thm:informal2}
Let $(M^n,g)$ be a complete, connected, noncompact Riemannian manifold,
and let $f$ satisfy \Cref{ass:H1,ass:H2,ass:H3,ass:H4}.
No warped-product structure is assumed. Then:
\begin{enumerate}[label=\rm(\alph*)]
\item \Cref{ass:H5} implies LLP, hence
  $r_C(k)\gtrsim\gamma_k^{1/\alpha}$ and, for $\alpha>1$,
  $\lambda_k\gtrsim\gamma_k^{(2\alpha-2)/\alpha}$
  (\Cref{prop:H5consequence} and \Cref{thm:reduction}).
\item for $\alpha>1$, under a transitive isometric symmetry
  (\Cref{ass:transitive}) and the regularity \eqref{eq:Dregularity}
  of the radial drift, $\alpha=2$, and the $G$-invariant sector
  satisfies $\lambda_k\asymp\gamma_k\asymp k$
  (\Cref{thm:osccount}, \Cref{cor:SL-compat}).
\end{enumerate}
\end{theorem}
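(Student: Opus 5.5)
The statement has two parts, and both are meant to be assembled from results stated later in the paper: \Cref{prop:H5consequence}, \Cref{thm:reduction}, \Cref{thm:osccount}, \Cref{cor:SL-compat}. The plan is to invoke these in order and to check that \Cref{ass:H1,ass:H2,ass:H3,ass:H4} suffice to apply each one without any warped-product structure.

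For (a), I use \Cref{ass:H5} to control the weighted mass of $u_k$ inside the sublevel set $\{f^{1/\alpha}\le c\gamma_k^{1/\alpha}\}$.

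First, \Cref{prop:H5consequence} turns \Cref{ass:H5} into LLP (\Cref{def:LLP}). The mechanism is that polynomial growth of order $\gamma_k$, measured on the level sets $\Sigma_r$, combined with the lower growth control in \Cref{ass:H5}, forces at least half of the weighted mass to lie at radius $\gtrsim\gamma_k^{1/\alpha}$. This gives $r_C(k)\gtrsim\gamma_k^{1/\alpha}$.

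Second, for $\alpha>1$ I combine this with $r_C(k)\lesssim r_{\mathrm{Ag}}(k)$ from \Cref{thm:informal1}(b). By \Cref{ass:H2,ass:H3}, the conjugated potential $\tfrac14|\nabla f|^2-\tfrac12\Delta f$ is bounded by a constant times $r^{2\alpha-2}$, so $r_{\mathrm{Ag}}(k)\lesssim\lambda_k^{1/(2\alpha-2)}$. Chaining the inequalities,
\[
\gamma_k^{1/\alpha}\lesssim r_C(k)\lesssim \lambda_k^{1/(2\alpha-2)},
\]
and raising to the power $2\alpha-2$ gives the claim. This is exactly \Cref{thm:reduction}.

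For (b), I use the transitive isometric action of $G$ from \Cref{ass:transitive} to reduce the $G$-invariant sector to a one-dimensional Sturm--Liouville problem in the variable $r=f^{1/\alpha}$ (or in the distance along a transversal). The level sets are homogeneous, so the slice volume $A(r)$ and the quantity $|\nabla r|$ depend only on $r$. The reduced operator is then
\[
u''+D(r)u'
\]
with a radial drift $D$ satisfying \eqref{eq:Dregularity}.

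\Cref{thm:osccount} counts zeros: the $k$-th invariant eigenfunction has $k$ nodes. A WKB/Prüfer-angle comparison places these nodes in the classically allowed region, whose extent is set by $\lambda_k$ and $D$. The polynomial growth order $\gamma_k$ is read off from the tail behaviour $u\sim r^{\gamma}$, which balances $\lambda u$ against the drift term $D(r)\,r\,u'/r$.

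The regularity \eqref{eq:Dregularity} is what allows this step. It makes $D(r)\sim c\,r^{\beta-1}$ with an effective exponent, and then the asymptotics give $\gamma_k\asymp\lambda_k\,r^{2-\beta}$ uniformly. For this to be consistent with $\gamma_k$ being finite and independent of $r$, we need $\beta=2$. \Cref{ass:H3}, through the anti-cancellation argument analogous to \Cref{lem:wpanticancel}, identifies $\beta$ with $\alpha$, so $\alpha=2$. With $\alpha=2$, the node count makes $\lambda_k\asymp k$, and the tail balance makes $\gamma_k\asymp\lambda_k$. This is \Cref{cor:SL-compat}.

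I expect the main obstacle to be proving that the effective drift exponent equals $\alpha$ without a warped-product structure. Here the volume term $\log A(r)'$ could cancel $f'$. I would first try the bounds from \Cref{ass:H2,ass:H3} on $\Delta f$, expressed through the coarea formula, to prevent such cancellation.
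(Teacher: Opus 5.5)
Your architecture matches the paper's. Part (a) is \Cref{prop:H5LLP} chained with \Cref{thm:rCleqrAg} and the explicit formula for $r_{\mathrm{Ag}}(k)$. Part (b) is the cohomogeneity-one reduction, the zero count of \Cref{lem:sturmosc} and \Cref{thm:osccount}, and the Riccati growth-order rigidity of \Cref{thm:betarigidity}, transferred by \Cref{cor:SL-compat}. However, the mechanism you give for H5 $\Rightarrow$ LLP is wrong and would not work.

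H5 is not a growth condition. It says $\overline f_k-s_k\ge c_H\gamma_k$, where $\overline f_k$ and $s_k$ are the mean and standard deviation of $f$ under the probability measure $d\pi_k=u_k^2e^{-f}\,dV$. The paper's proof is Cantelli's one-sided Chebyshev inequality: for $t<\overline f_k-s_k$, one gets $\pi_k\{f\le t\}\le s_k^2/\bigl(s_k^2+(\overline f_k-t)^2\bigr)<\tfrac12$. So the lower median of $f$ under $\pi_k$ is at least $\overline f_k-s_k\ge c_H\gamma_k$, and this median is exactly $r_C(k)^\alpha$ because $f=b^\alpha$. The growth order enters only through the right side of H5. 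The polynomial bounds in H4 are upper bounds and cannot push mass outward, which is the point of \Cref{rem:concLBmissing}.

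Similarly, in the Agmon step the relevant input is the \emph{lower} bound $V\ge c_B^2b^{2\alpha-2}$, which comes from the upper half of H3 and the lower half of H2 and drives \Cref{cor:taildecay}. The formula $r_{\mathrm{Ag}}(k)=(\lambda_k/c_B^2)^{1/(2\alpha-2)}$ holds by definition. An upper bound $V\lesssim r^{2\alpha-2}$ controls only the inner radius $r^-_{\mathrm{Ag}}$.

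In (b), the step you leave open as the ``main obstacle'' is not an obstacle. The hypothesis \eqref{eq:Dregularity} is stated with the exponent $\alpha$ of H1: in the arclength variable, $\Theta(s)\asymp s^{\alpha-1}$, $\Theta'(s)=O(s^{\alpha-2})$, and $\Theta''(s)=O(s^{\alpha-3})$. So the effective exponent of the drift is $\alpha$ by assumption, \Cref{thm:betarigidity} applies with $\beta=\alpha$, and $\alpha=2$ follows with no anti-cancellation argument. The gap in your write-up comes from reading \eqref{eq:Dregularity} as ``$\Theta\asymp s^{\beta-1}$ for some $\beta$.''

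Even if you kept only the derivative bound, no coarea estimate would be needed. Under \Cref{ass:transitive}, with $s$ the arclength, $\Delta s$ is constant on orbits and equals $(\log\vol\Sigma_s)'$. Hence $\Delta f=f_{ss}+(\log\vol\Sigma_s)'f_s$ holds exactly, and the proof of \Cref{lem:wpanticancel} transfers verbatim.

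Two smaller points. First, the case $\beta>2$ is excluded because it gives $\gamma_k=0$ for every $k$, contradicting $\gamma_k\to\infty$; your criterion that $\gamma_k$ be finite does not rule it out. Second, the zero count you need is on $(R_0,\infty)$. There the paper gets $Z_k=k+O(1+\sqrt{\lambda_k})$ by Dirichlet--Neumann bracketing at $\Sigma_{R_0}$, and this error is negligible against $\lambda_k^{\alpha/(2\alpha-2)}$ because $\alpha>1$.
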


\begin{remark}
Part (b) does not assume \ref{ass:H5} or LLP.
None of the arguments above establishes full compatibility in general. Part
(a), together with the unconditional
$r_C(k)\lesssim r_{\mathrm{Ag}}(k)$ of \Cref{thm:informal1}(b),
gives the lower spectral bound
$\lambda_k\gtrsim\gamma_k^{(2\alpha-2)/\alpha}$, but not the reverse
inequality. Part (b) establishes full compatibility, but for the invariant sector
only, and under a regularity hypothesis on the radial drift.
If compatibility holds and both sequences grow linearly in the
index of a common subsequence (\Cref{ass:linearcorr}), then $\alpha=2$
(\Cref{cor:alpha2conditional}).
\end{remark}

\begin{theorem}\label{thm:informal3}
Suppose $M^n$ satisfies \Cref{ass:H1,ass:H2,ass:H3,ass:H4} and
\Cref{ass:transitive}, for any $\alpha>0$. Assume that the $G$-invariant
sector $\{u_k\}$ of the eigenbasis satisfies exact radial polynomiality.
Each $u_k$ is a polynomial of degree $k$ in $b=f^{1/\alpha}$, or in $b^2$.
Then $\alpha=2$, so the radial weight is quadratic. After the normalization
in \Cref{thm:inversehermite}, the invariant-sector equation is Hermite
or generalized Laguerre, according to the polynomiality type.
\end{theorem}

Theorem~\ref{thm:informal3} is logically independent of
Theorem~\ref{thm:informal2}. Its proof uses a Bochner-type analysis of the polynomial coefficients
of the radial equation, rather than an analytic non-degeneracy estimate.
The transitive isometric symmetry in \Cref{ass:transitive} is needed
only to make exact polynomiality of the $G$-invariant sector well-posed.
The result concerns the invariant sector alone, uses no curvature
hypothesis on $(M,g)$, and holds for every $\alpha>0$, including
$0<\alpha\le1$. See \S\ref{sec:small}.
Theorem~\ref{thm:informal2}(a), by contrast, relies on confinement
through Schr\"odinger conjugation, which needs $\alpha>1$.
Theorem~\ref{thm:informal2}(b) uses a different radial argument.
The transitive symmetry reduces the invariant sector to a
one-dimensional Sturm--Liouville equation. The result also assumes
\eqref{eq:Dregularity}. Whether that regularity can be removed is
\Cref{prob:oscgrowth}.

\subsection{Scope of the hypotheses}

\ref{ass:H1}--\ref{ass:H3} impose bounds on the weight rather than a soliton equation.
There is no separate curvature, isoperimetric, or Sobolev assumption.
The general estimates apply to complete manifolds without symmetry.
The rigidity results require an invariant radial sector and either
drift regularity or exact polynomiality.
Two examples distinguish these assumptions. Persistent transverse
pinching produces zero-growth separated modes
(\Cref{thm:neckviolatesH4}). A different rotationally symmetric
example satisfies \ref{ass:H1}--\ref{ass:H4} but fails full-spectrum compatibility
(\Cref{prop:incompatibility}).

\subsection{Organization}

\S\S\ref{sec:setup}--\ref{sec:freq} give the hypotheses, preliminary
estimates, and frequency function. \S\ref{sec:conjugate} treats
Schr\"odinger conjugation and Agmon decay.
\S\ref{sec:twofiltrations} defines compatibility and contains the moment
bound and counterexample. \S\ref{sec:localization} studies lower
localization. \S\S\ref{sec:warped}--\ref{sec:sturm} prove the radial
rigidity results. \S\ref{sec:alphatwo} states the consequence of
compatibility with linear growth along a subsequence.
\S\ref{sec:bochner} proves the
inverse Hermite/Laguerre theorem. The last two sections discuss
$0<\alpha\le1$, examples, and further questions.

\section{Geometric Setup}\label{sec:setup}

Throughout, $(M^n,g)$ denotes a complete, connected, noncompact Riemannian
manifold of dimension $n\ge1$, without boundary, with any volume
growth. We fix a basepoint $x_0\in M$ and write $r(x)=d_g(x,x_0)$ for
the Riemannian distance. We write $dV$ for the Riemannian volume
element, $\nabla$ for the Riemannian gradient, and $\Delta=\Delta_g$
for the Laplace--Beltrami operator, with the sign convention
$\Delta u=\mathrm{div}(\nabla u)$.

Let $f\in C^2(M)$ be nonnegative and proper. Define
\[
  b \;:=\; f^{1/\alpha} \;:\; M\to[0,\infty),
\]
where $\alpha>0$ is the growth exponent of Assumption~\ref{ass:H1}
below. For $r>0$, the value $r$ is regular for $b$ exactly when
$r^\alpha$ is regular for $f$. At such a value the level set
\[
  \Sigma_r \;:=\; \{x\in M: b(x)=r\} \;=\; \{f=r^\alpha\}
\]
is a compact $C^2$ hypersurface. We write $d\sigma$ for the induced
measure on $\Sigma_r$, $\nu=\nabla b/|\nabla b|$ for the outward unit
normal, and $\partial_\nu u=\langle\nabla u,\nu\rangle$.

\begin{definition}\label{def:volgrowth}
$(M,g)$ has volume growth controlled by $\omega$ if there exist
$C_{\mathrm{vol}}>0$ and a nondecreasing $\omega:[1,\infty)\to[0,\infty)$ with
$\vol(B_r(x_0))\le C_{\mathrm{vol}} e^{\omega(r)}$ for all $r\ge1$.
\end{definition}

Every complete Riemannian manifold satisfies this for some $\omega$.
Polynomial growth corresponds to $\omega(r)=n\log r$. Sub-exponential
growth corresponds to $\omega(r)=\gamma r^p$ with $0<p<1$.
Exponential growth corresponds to $\omega(r)=\beta r$, as for
$\mathbb H^n$. The only constraint on $\omega$ comes from the weight.
H1--H3 give $\omega(r)=O(r^\alpha)$, \Cref{lem:volmoment}. Thus polynomial growth
is compatible with every $\alpha>0$, and exponential growth with
every $\alpha\ge1$.

\begin{definition}\label{def:weightedlap}
The drift Laplacian with weight $f$ is
$L_fu:=\Delta u-\langle
\nabla f,\nabla u\rangle$.
It is formally symmetric on $L^2(M,e^{-f}dV)$ with respect to
$\langle u,v\rangle_f:=\int_Muve^{-f}dV$.
Its quadratic form is $\mathcal E(u,u)=\int_M|\nabla u|^2e^{-f}dV$.
\end{definition}

\begin{remark}\label{rem:asymp}
We write $A\lesssim B$ if $A\le CB$ with $C$ independent of the
varying eigenfunction index and radii. Unless a more specific dependence
is stated, $C$ may depend on the fixed manifold, weight, and constants in the hypotheses. We write $A\asymp B$ when both inequalities hold.
\end{remark}

\section{Standing Hypotheses}\label{sec:ass}

We formulate four standing hypotheses. They constrain only the
large-scale behavior of $f$, and they leave $f$ unconstrained on the
compact set $\{r\le R_0\}$. H1 permits any $\alpha>0$. Some results
below need $\alpha>1$. These results use Schr\"odinger conjugation, \S\ref{sec:conjugate}--\S\ref{sec:compatibility},
where confinement is needed. Others hold for every $\alpha>0$, among
them the concentration radius of \S\ref{sec:twofiltrations} and the
Inverse Hermite/Laguerre Theorem of \S\ref{sec:bochner}. See
\S\ref{sec:small}.

\begin{assumption}[Asymptotic polynomial bound]\label{ass:H1}
There exist constants $c_1,c_2>0$, $\alpha>0$, and $R_0>0$ such that
\[
  c_1\,r(x)^\alpha \;\le\; f(x) \;\le\; c_2\,r(x)^\alpha \qquad\forall\,r(x)\ge R_0.
\]
\end{assumption}

\begingroup
\renewcommand{\theassumption}{H2}
\renewcommand{\theHassumption}{H2}
\begin{assumption}[Two-sided gradient control]\label{ass:H2}
There exist constants $c_3\ge c_5>0$ such that
\[
  c_5\,f(x)^{2-2/\alpha} \;\le\; |\nabla f(x)|^2 \;\le\; c_3\,f(x)^{2-2/\alpha}
  \qquad\forall\,r(x)\ge R_0.
\]
\end{assumption}
\endgroup
\addtocounter{assumption}{-1}

\begin{remark}\label{rem:gradb}
On $\Sigma_r$ we have $f=r^\alpha$, so for $r\ge R_0$ H2 reads
$c_5r^{2\alpha-2}\le|\nabla f|^2\le c_3r^{2\alpha-2}$. Since $b=f^{1/\alpha}$, the chain
rule gives $|\nabla b|^2=\alpha^{-2}f^{2/\alpha-2}|\nabla f|^2$, so H2
implies
\[
  c_0 \;\le\; |\nabla b| \;\le\; C_0 \qquad\text{on } \{r\ge R_0\},
  \qquad c_0=\sqrt{c_5}/\alpha,\ \ C_0=\sqrt{c_3}/\alpha.
\]
These two-sided bounds on $|\nabla b|$ are used throughout without
further reference.
\end{remark}

\begingroup
\renewcommand{\theassumption}{H3}
\renewcommand{\theHassumption}{H3}
\begin{assumption}[Two-sided Laplacian control]\label{ass:H3}
There exist $c_4>0$ and $0\le\eta<\tfrac12$ such that
\[
  \Delta f(x) \;\ge\; -c_4\,r(x)^{\alpha-1}
  \quad\text{and}\quad
  \Delta f(x) \;\le\; \eta\,|\nabla f(x)|^2, \qquad\forall\,r(x)\ge R_0.
\]
\end{assumption}
\endgroup
\addtocounter{assumption}{-1}

\begin{remark}\label{rem:H3}
The lower bound in H3 provides a lower bound for $\Delta b$
(\Cref{lem:deltab}). The upper bound provides
$V=\tfrac14|\nabla f|^2-\tfrac12\Delta f
\ge(\tfrac14-\tfrac\eta2)|\nabla f|^2$.
Hence $V\to\infty$ when $\alpha>1$.
For a gradient Ricci shrinker, $\Delta f=n/2-R$ motivates the upper
condition. The shrinker identities alone do not give H2 and the lower
condition in H3 in the forms used here.
\end{remark}

\medskip
\noindent\textbf{Convention on $r$ and $R_0$.}
H1--H3 are stated on $\{r(x)\ge R_0\}$, where $r(x)=d_g(x,x_0)$ is
the Riemannian distance of \S\ref{sec:setup}. The level sets
$\Sigma_r=\{b=r\}$, and every radius attached to them, are indexed by
values of $b$. We relate the two parameters by enlarging $R_0$ once. The
closed ball $\{r(x)\le R_0\}$ is compact and $b$ is continuous, so
$B_0:=\max_{\{r\le R_0\}}b$ is finite. Put $R_1:=\max(R_0,B_0,1)+1$.
Every $x$ with $b(x)\ge R_1$ has $r(x)>R_0$, so H1--H3 hold at every
such $x$. We assume this enlargement has been made and write $R_0$
for $R_1$ from here on. Hence $R_0\ge1$, and H1--H3 are available on
$\{b\ge R_0\}$. By H1, $b\asymp r(x)$ there. After changing
$c_4$ by a fixed factor, the lower bound in H3 may be written
$\Delta f\ge-c_4 b^{\alpha-1}$ on the tail. This is valid also when
$0<\alpha<1$. From here on a bare $r$, as in $\Sigma_r$, $r\ge R_0$,
or $V(r)$, always denotes a value of $b$. The Riemannian distance is
written $r(x)$, with its argument, and enters only where H1 compares
$f$ to $r(x)$. This convention also applies in Remark~\ref{rem:gradb}.

\begingroup
\renewcommand{\theassumption}{H4}
\renewcommand{\theHassumption}{H4}
\begin{assumption}[Intrinsic spectral growth]\label{ass:H4}
$-L_f$ has eigenvalues $0=\lambda_0<\lambda_1\le\lambda_2\le\cdots\nearrow
+\infty$ in $L^2(M,e^{-f}dV)$, with $L^2(M,e^{-f}dV)$-normalized
eigenfunctions $u_k$ satisfying $L_fu_k=-\lambda_ku_k$. The family $\{u_k\}_{k\ge0}$ is a complete
orthonormal system in $L^2(M,e^{-f}dV)$. The eigenvalue $\lambda_0=0$, with constant
eigenfunction $u_0$, is always present, since
$\int_Me^{-f}dV<\infty$, \Cref{lem:volmoment}. H4 requires the
remaining spectrum to be positive and discrete. For each $k$, define
\begin{equation}\label{eq:gammak}
  \gamma_k:=\inf\Bigl\{N\ge0:
  \frac1{\vol(\Sigma_r)}\int_{\Sigma_r}u_k^2\,d\sigma
  \le C_{k,N}r^{2N}\text{ for some }C_{k,N}>0\text{ and all }r\ge R_0\Bigr\}.
\end{equation}
We require $\gamma_k<+\infty$ for every $k$ and
$\gamma_k\to+\infty$ as $k\to\infty$.
\end{assumption}
\endgroup
\addtocounter{assumption}{-1}

\begin{remark}\label{rem:H4delicate}
The constants in \eqref{eq:gammak} may depend on the eigenfunction
and on the exponent $N$. Here $\gamma_k$ measures the asymptotic polynomial degree, not the
size at a fixed radius. This distinction is
needed even in elementary radial examples. In the
Ornstein--Uhlenbeck model, $H_k(r)^2\sim r^{2k}$, but
$\int H_k^2e^{-r^2/4}<\infty$. If
$\psi_k:=u_ke^{-f/2}$, then the growth of $u_k$ reflects the balance
between the decay of $\psi_k$ and the factor $e^{f/2}$.
Theorem~\ref{thm:neckviolatesH4} shows that, even when H1--H3 hold,
the natural separated eigenbasis can contain zero-growth modes at
unbounded energies. H4 also restricts the geometry seen by the
chosen eigenbasis.
\end{remark}

\begin{remark}\label{rem:completeness}
For $\alpha>1$, completeness of $\{u_k\}$ is not an independent
requirement. By \Cref{lem:discrete}, H1--H3 and Schr\"odinger conjugation give a
purely discrete spectrum for $L_f$. Completeness then follows from
the spectral theorem. For $0<\alpha\le1$
the argument of \Cref{lem:discrete} does not apply,
\Cref{rem:whyalphagt1}. In that range completeness is part of the
content of H4, and does not follow from H1--H3.
\end{remark}

\begin{remark}
H1--H4 impose no separate bound on the tangential spectrum of the
level sets. This distinction matters in \Cref{prop:incompatibility}.
\end{remark}

\section{Preliminary Lemmas}\label{sec:prelim}

\begin{lemma}\label{lem:properness}
Suppose $(M,g)$ is complete and $f$ satisfies H1. Then $b=f^{1/\alpha}$
is proper on $M$. Every sublevel set $\{b\le R\}$ is compact,
and every level set $\Sigma_r=b^{-1}(r)$ is compact.
\end{lemma}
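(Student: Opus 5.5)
The plan is to compare sublevel sets of $b$ with metric balls, using the lower bound in \Cref{ass:H1} together with the Hopf--Rinow theorem. Since $b=f^{1/\alpha}$ with $f\ge0$ continuous and $t\mapsto t^{1/\alpha}$ continuous on $[0,\infty)$, $b$ is continuous. Hence $\{b\le R\}$ and $\Sigma_r=b^{-1}(r)$ are closed subsets of $M$. It therefore suffices to show that each sublevel set $\{b\le R\}$ lies in a bounded set. By completeness and Hopf--Rinow, closed bounded subsets of $M$ are compact. Properness then follows, because for a compact $K\subset[0,\infty)$ the preimage $b^{-1}(K)$ is closed and contained in $\{b\le\max K\}$.

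For the boundedness step, I would take $x\in\{b\le R\}$, so $f(x)\le R^\alpha$. There are two cases.
\begin{enumerate}[label=\rm(\roman*)]
\item If $r(x)<R_0$, then $x$ lies in the ball of radius $R_0$ about $x_0$.
\item If $r(x)\ge R_0$, then the lower bound in \Cref{ass:H1} gives $c_1 r(x)^\alpha\le f(x)\le R^\alpha$. Thus $r(x)\le c_1^{-1/\alpha}R$.
\end{enumerate}
In either case $\{b\le R\}\subset \overline{B}_{\rho}(x_0)$ with $\rho=\max(R_0,c_1^{-1/\alpha}R)$. This closed ball is compact by Hopf--Rinow. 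So $\{b\le R\}$ is a closed subset of a compact set, hence compact. Finally, $\Sigma_r\subset\{b\le r\}$ is closed, so it is compact as well.

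There is no serious obstacle. The only points to watch are applying the original \Cref{ass:H1}, stated on $\{r(x)\ge R_0\}$ in terms of Riemannian distance, before the convention enlarging $R_0$, and using only the lower bound $c_1r(x)^\alpha\le f$. Note also that at this stage $\Sigma_r$ need not be a hypersurface, since $r$ may not be a regular value. The claim is only that it is compact, which needs no regularity.
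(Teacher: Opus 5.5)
Your proposal is correct and follows the paper's proof: the lower bound in \Cref{ass:H1} places $\{b\le R\}$ inside the closed ball of radius $\max(R_0,c_1^{-1/\alpha}R)$, Hopf--Rinow makes that ball compact, and continuity of $b$ makes the sublevel and level sets closed. Your explicit check that $b^{-1}(K)$ is compact for every compact $K$ is a small addition that the paper leaves implicit.
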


\begin{proof}
By H1, $f(x)\ge c_1r(x)^\alpha$ for $r(x)\ge R_0$, where $r(x)$ is
the Riemannian distance from the base point $x_0$ used in H1. We have
$b(x)=f(x)^{1/\alpha}\ge c_1^{1/\alpha}r(x)$ for $r(x)\ge R_0$. Hence
\[
  \{b\le R\} \;\subset\; \{r(x)\le R_0\} \;\cup\; \{r(x)\le R/c_1^{1/\alpha}\}
  \;\subset\; \{r(x)\le \max(R_0,\,R/c_1^{1/\alpha})\},
\]
a closed metric ball of finite radius in $(M,g)$. Since $(M,g)$ is
complete, closed and bounded subsets of $M$ are compact, by
Hopf--Rinow. Hence $\{r(x)\le\max(R_0,R/c_1^{1/\alpha})\}$ is compact.
As $b$ is continuous, $\{b\le R\}$ is a closed subset of this compact
set, so it is compact.

For every $r$, $\Sigma_r=b^{-1}(r)$ is closed in the compact set
$\{b\le r\}$, so it is compact.
\end{proof}

\begin{corollary}\label{cor:weightfinite}
Under H1--H3, for every $r\ge R_0$ at which $\Sigma_r$ is a regular
level set, $0<\vol(\Sigma_r)<\infty$. On this interval
$(R_0,\infty)$, the induced measure
\[
  d\nu(r) \;=\; e^{-f(r)}\left(\int_{\Sigma_r}\frac{d\mathcal H^{n-1}}{|\nabla b|}\right)dr
\]
is a positive measure on $(R_0,\infty)$, and it is locally bounded by
the two-sided bound $c_0\le|\nabla b|\le C_0$ of \Cref{rem:gradb}.
\end{corollary}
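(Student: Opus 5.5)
The plan is to treat the three claims in turn: finiteness of $\vol(\Sigma_r)$, positivity of $\vol(\Sigma_r)$, and the structure of the measure $d\nu$. The finiteness claims come directly from compactness. Positivity and the density of $d\nu$ come from the regular-value theorem and the coarea formula, with \Cref{rem:gradb} supplying the two-sided gradient bounds.

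\emph{Finiteness.} Fix a regular value $r\ge R_0$ of $b$. By \Cref{lem:properness}, $\Sigma_r$ is compact. Since $r$ is regular and $b\in C^2$ near $\Sigma_r$ (recall $f\ge c_1 r(x)^\alpha>0$ there, so $f^{1/\alpha}$ is $C^2$), the set $\Sigma_r$ is a compact embedded $C^2$ hypersurface. Its induced $(n-1)$-dimensional measure is therefore finite: cover it by finitely many coordinate charts, in each of which the area density is bounded.

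\emph{Positivity.} I would show $\Sigma_r\neq\emptyset$ using connectedness and properness. The function $b$ is continuous on the connected space $M$, and $b\to\infty$ by H1. Also $b\le B_0<R_0\le r$ somewhere, namely on the ball $\{r(x)\le R_0\}$, by the enlargement convention. Hence $b$ takes the value $r$. A nonempty $C^2$ hypersurface contains a chart neighborhood, so it has positive $\mathcal H^{n-1}$ measure.

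\emph{The measure $d\nu$.} By the coarea formula applied to the Lipschitz function $b$ on $\{b>R_0\}$, for nonnegative measurable $h$ we have
\[
\int_{\{b>R_0\}} h\,e^{-f}\dV=\int_{R_0}^\infty e^{-r^\alpha}\int_{\Sigma_r}\frac{h}{|\nabla b|}\,d\mathcal H^{n-1}\,dr .
\]
Here $|\nabla b|\ge c_0>0$ on the tail, so every $r>R_0$ is in fact a regular value and the integrand is well defined. Taking $h=1$ identifies $d\nu$ as the pushforward of $e^{-f}dV$ under $b$, which is a positive measure. By the previous steps its density is strictly positive. For local boundedness, \Cref{rem:gradb} gives
\[
C_0^{-1}\vol(\Sigma_r)\le\int_{\Sigma_r}\frac{d\mathcal H^{n-1}}{|\nabla b|}\le c_0^{-1}\vol(\Sigma_r).
\]
So it remains to bound $\vol(\Sigma_r)$ uniformly for $r$ in a compact interval $[a,a']\subset(R_0,\infty)$.

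\emph{Main obstacle: uniform local bounds.} For this I would apply the coarea formula again, now with $h=|\nabla b|$ and without the weight:
\[
\int_a^{a'}\vol(\Sigma_r)\,dr=\int_{\{a\le b\le a'\}}|\nabla b|\dV\le C_0\vol(\{b\le a'\})<\infty,
\]
using compactness from \Cref{lem:properness}. This gives local integrability of $\vol(\Sigma_r)$, which suffices for local finiteness of $d\nu$. For a genuine pointwise bound, I would use that the gradient flow of $\nabla b/|\nabla b|^2$ is a $C^1$ diffeomorphism $\Sigma_a\to\Sigma_r$ on the compact region. Its Jacobian is bounded, so $\vol(\Sigma_r)$ is continuous in $r$ and hence bounded on compact intervals.
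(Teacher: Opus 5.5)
Your proposal is correct and follows essentially the paper's route. Compactness from \Cref{lem:properness} gives finiteness, the bounds $c_0\le|\nabla b|\le C_0$ give a finite positive density, and the $C^1$ flow of $\nabla b/|\nabla b|^2$ gives boundedness on compact subintervals. Your explicit nonemptiness argument (connectedness of $M$ together with $b(x_0)\le B_0<R_0$) and your identification of $d\nu$ as the pushforward of $e^{-f}dV$ under $b$ fill in points the paper leaves implicit.
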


\begin{proof}
By Lemma~\ref{lem:properness}, $\Sigma_r$ is a compact hypersurface
for regular $r$. A compact smooth submanifold of dimension $n-1$ has
finite and positive $(n-1)$-dimensional Hausdorff measure, so
$0<\vol(\Sigma_r)<\infty$. By H1, $f$ is continuous and finite. By
\Cref{rem:gradb}, $|\nabla b|\in[c_0,C_0]$ on $\{r\ge R_0\}$. Hence the
density $e^{-f(r)}\int_{\Sigma_r}d\mathcal H^{n-1}/|\nabla b|$ is
finite and positive at every regular $r\ge R_0$. This makes $d\nu$ a
positive measure. It is bounded on compact subintervals of
$(R_0,\infty)$. Indeed, the $C^1$ flow of
$\nabla b/|\nabla b|^2$ identifies nearby level sets, and the corresponding
Jacobian and $|\nabla b|^{-1}$ vary continuously on each compact tail
interval.
\end{proof}

\begin{remark}
Under a transitive isometric symmetry (\Cref{ass:transitive},
\S\ref{sec:sturm}), $|\nabla b|$ is constant on each $\Sigma_r$
(\Cref{ass:radial}). In this case
$d\nu(r)=e^{-f(r)}\vol(\Sigma_r)/|\nabla b|\,dr$,
the form used in that setting.
Without such symmetry, $|\nabla b|$ satisfies only the two-sided
bound of H2, and the integral form above is the general statement.
\end{remark}

\begin{remark}
Properness makes the level sets compact. H2 makes all sufficiently
large levels regular.
\end{remark}

\begin{lemma}\label{lem:coarea}
On the tail $\{b\ge R_0\}$, the function $b=f^{1/\alpha}$ is $C^2$ and
has no critical points. Every $r\ge R_0$ is a regular value of $b$ on the tail,
and $\Sigma_r$ is a compact $C^2$ hypersurface. For every measurable
$\phi\ge0$ and every $R_0\le a<c<\infty$,
\[
  \int_{\{a<b<c\}} \phi\,dV
  = \int_a^c\int_{\Sigma_s}\frac{\phi}{|\nabla b|}\,d\sigma\,ds.
\]
In particular, for $r\ge R_0$,
\[
  \int_{\{R_0<b<r\}} \phi\,dV
  = \int_{R_0}^r\int_{\Sigma_s}\frac{\phi}{|\nabla b|}\,d\sigma\,ds.
\]
\end{lemma}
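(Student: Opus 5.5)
The plan is to reduce the statement to the smooth coarea formula for a $C^2$ submersion. The one point needing care is that the tail $\{b\ge R_0\}$ really stays away from the critical set of $f$ (in particular from $\{f=0\}$, where $f^{1/\alpha}$ may fail to be differentiable).

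\emph{Regularity.} By the convention on $R_0$ fixed after \Cref{rem:H3}, every $x$ with $b(x)\ge R_0$ satisfies $r(x)>R_0$. Hence H1 gives $f(x)\ge R_0^\alpha\ge1>0$ on a neighbourhood of the tail. Since $t\mapsto t^{1/\alpha}$ is smooth on $(0,\infty)$ and $f\in C^2$, the composite $b=f^{1/\alpha}$ is $C^2$ on an open set containing $\{b\ge R_0\}$. By \Cref{rem:gradb}, $|\nabla b|\ge c_0>0$ there, which comes from the lower bound in H2. So $b$ has no critical points on the tail, and every $r\ge R_0$ is a regular value of $b$ restricted to the tail. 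By the implicit function theorem, $\Sigma_r$ is then an embedded $C^2$ hypersurface. \Cref{lem:properness} shows it is compact.

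\emph{Coarea identity.} On the open set $U=\{a<b<c\}$, which has compact closure by \Cref{lem:properness}, $b$ is a $C^2$ submersion. The cleanest route is to flow along $X=\nabla b/|\nabla b|^2$. The field $X$ is $C^1$ and bounded on $\overline U$ (by $c_0^{-1}$), and it satisfies $Xb=1$. Its flow $\Phi_s$ therefore carries $\Sigma_a$ diffeomorphically onto $\Sigma_{a+s}$ for $0\le s\le c-a$; the flow exists for these times because $\overline U$ is compact.

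The map $F:\Sigma_a\times(0,c-a)\to U$, $F(y,s)=\Phi_s(y)$, is then a $C^1$ diffeomorphism. To compute $F^*dV$, split $T_xM$ into $T_x\Sigma_{b(x)}$ and the normal direction. The $s$-direction contributes the normal component of $X$, namely $|X|=|\nabla b|^{-1}$. This gives
\[
F^*dV=|\nabla b|^{-1}\,(\Phi_s^*d\sigma_{a+s})\,ds.
\]
Applying Fubini and changing variables $y\mapsto\Phi_s(y)$ on each slice yields the stated formula. This holds first for continuous compactly supported $\phi$, then for nonnegative measurable $\phi$ by monotone convergence. Alternatively, one can simply cite Federer's coarea formula for the locally Lipschitz map $b$ with Jacobian $|\nabla b|$, applied to $\phi/|\nabla b|$.

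\emph{Second formula.} Take $a=R_0$, $c=r$.

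The only genuine obstacle is the first step, namely ensuring that $f>0$ and $\nabla f\neq0$ on the whole tail indexed by $b$ rather than by $r(x)$. This is exactly what the enlargement $R_1$ in the convention provides.
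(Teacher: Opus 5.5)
Your proposal is correct and follows the paper's proof. Positivity of $f$ and the bound $|\nabla b|\ge c_0$ give $C^2$ regularity with no critical points, \Cref{lem:properness} gives compactness, and the identity is the coarea formula, which you derive explicitly from the flow of $X=\nabla b/|\nabla b|^2$ where the paper only cites it. One small slip: $f\ge R_0^\alpha$ on the tail follows directly from $f=b^\alpha$, whereas H1 would only give $f\ge c_1r(x)^\alpha$.
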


\begin{proof}
On $\{b\ge R_0\}$ one has $f=b^\alpha\ge R_0^\alpha>0$, so
$b=f^{1/\alpha}$ is $C^2$ there. By \Cref{rem:gradb},
$|\nabla b|\ge c_0>0$ on the tail, so there are no critical points.
The level sets are smooth and compact by
\Cref{lem:properness}. The stated identities are the coarea formula
applied to the tail region. See~\cite{Chavel2006}.
\end{proof}

\begin{lemma}\label{lem:deltab}
Under H1--H3, there exists $C_b=C_b(c_3,c_4,\alpha)>0$ such that
\[
  \Delta b \ge -C_b
  \quad\text{on }\Sigma_r,\text{ for all }r\ge R_0.
\]
\end{lemma}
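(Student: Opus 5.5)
Compute $\Delta b$ directly with the chain rule and then bound the two resulting terms by H2 and H3. Since $b=f^{1/\alpha}$ with $f\ge R_0^\alpha>0$ on the tail, we have
\[
  \Delta b=\tfrac1\alpha f^{1/\alpha-1}\Delta f+\tfrac1\alpha\bigl(\tfrac1\alpha-1\bigr)f^{1/\alpha-2}|\nabla f|^2 .
\]
On $\Sigma_r$, $f=r^\alpha$, so the first term is $\tfrac1\alpha r^{1-\alpha}\Delta f$. The second is $\tfrac1\alpha(\tfrac1\alpha-1)r^{1-2\alpha}|\nabla f|^2$.

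For the first term, use the tail form of the lower bound in H3 from the Convention, $\Delta f\ge -c_4 b^{\alpha-1}=-c_4r^{\alpha-1}$. This gives $\tfrac1\alpha r^{1-\alpha}\Delta f\ge -c_4/\alpha$, uniformly in $r$.

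For the second term, split on the sign of $\tfrac1\alpha-1$. If $\alpha\le1$ the coefficient is nonnegative, so the term is $\ge0$ and can be dropped. If $\alpha>1$ the coefficient is negative, and the upper bound in H2, $|\nabla f|^2\le c_3 r^{2\alpha-2}$ on $\Sigma_r$ (\Cref{rem:gradb}), gives
\[
  \tfrac1\alpha\bigl(\tfrac1\alpha-1\bigr)r^{1-2\alpha}|\nabla f|^2\ge -\tfrac{(\alpha-1)c_3}{\alpha^2}\,r^{-1}\ge -\tfrac{(\alpha-1)c_3}{\alpha^2},
\]
using $r\ge R_0\ge1$. Combining the two terms, $C_b=c_4/\alpha+c_3\max(\alpha-1,0)/\alpha^2$ works.

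The only delicate point is bookkeeping rather than analysis. The lower bound in H3 is stated in terms of the Riemannian distance $r(x)$, and it must be converted to $b$ via H1. The Convention already does this by enlarging $c_4$, but then $C_b$ also depends on $c_1,c_2$ through that rescaled $c_4$. I would note this explicitly, or simply absorb it into the meaning of $c_4$ after the Convention, as the statement implicitly does. One should also check that $b$ is $C^2$ on the tail (\Cref{lem:coarea}), so the chain rule is legitimate.
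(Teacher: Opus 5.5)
Your proposal is correct and follows the paper's proof essentially line by line: the same chain-rule decomposition, the H3 lower bound for the $\Delta f$ term, the H2 upper bound for the gradient term when $\alpha>1$ (dropping that term when $\alpha\le1$), and the same constant $C_b=(\alpha-1)_+c_3/\alpha^2+c_4/\alpha$. Your bookkeeping remark is also accurate: the paper likewise uses the Convention's rescaled $c_4$, so the stated dependence $C_b(c_3,c_4,\alpha)$ implicitly hides a dependence on $c_1$ or $c_2$ through that rescaling.
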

\begin{proof}
Since $b=f^{1/\alpha}$, the chain rule gives
\[
  \Delta b
  = \frac{1-\alpha}{\alpha^2}\,f^{1/\alpha-2}|\nabla f|^2
  + \frac{1}{\alpha}\,f^{1/\alpha-1}\Delta f.
\]
On $\Sigma_r$ we have $f=r^\alpha$, $f^{1/\alpha-2}=r^{1-2\alpha}$,
and $f^{1/\alpha-1}=r^{1-\alpha}$.

For the first term, H2 on $\Sigma_r$ implies
$|\nabla f|^2\le c_3 r^{2\alpha-2}$, so
\[
  \frac{1-\alpha}{\alpha^2}\,r^{1-2\alpha}\cdot c_3\,r^{2\alpha-2}
  = -\frac{(\alpha-1)c_3}{\alpha^2 r}.
\]
For the second term, the lower bound in H3 gives
$\Delta f\ge -c_4 r^{\alpha-1}$, so
\[
  \frac{1}{\alpha}\,r^{1-\alpha}\cdot(-c_4 r^{\alpha-1})
  = -\frac{c_4}{\alpha}.
\]
If $0<\alpha\le1$, then $1-\alpha\ge0$ and the first term is
non-negative, so the two terms together give
$\Lap b\ge -\dfrac{c_4}{\alpha}$ on $\Sigma_r$ for all $r\ge R_0$.

If $\alpha>1$, the two terms give, for all $r\ge R_0$,
\[
  \Lap b \;\ge\; -\frac{(\alpha-1)c_3}{\alpha^2\,r} - \frac{c_4}{\alpha}
  \;\ge\; -\frac{(\alpha-1)c_3}{\alpha^2} - \frac{c_4}{\alpha},
\]
since $r\ge R_0\ge1$.

In both cases $\Lap b\ge -C_b$ on $\Sigma_r$ for all $r\ge R_0$, with
\[
  C_b := \frac{(\alpha-1)_+\,c_3}{\alpha^2} + \frac{c_4}{\alpha},
  \qquad (\alpha-1)_+ := \max(\alpha-1,0).
\]
\end{proof}

\begin{lemma}\label{lem:deltabUpper}
Under H1--H3, for all $r\ge R_0$:
\[
  \Lap b \;\le\;
  \begin{cases}
  \dfrac{(1-\alpha)c_3}{\alpha^2 R_0} + \dfrac{\eta c_3}{\alpha}\,R_0^{\alpha-1},
    & 0<\alpha\le1,\\[3mm]
  \dfrac{\eta c_3}{\alpha}\, r^{\alpha-1},
    & \alpha>1.
  \end{cases}
\]
In particular, $\Lap b$ is bounded above by an $r$-independent constant
$C_b^+=C_b^+(c_3,\eta,\alpha,R_0)$ for $0<\alpha\le1$.
For $\alpha>1$, no such constant bound holds in general under H1--H3 alone. The sharp
upper bound grows like $r^{\alpha-1}$.
\end{lemma}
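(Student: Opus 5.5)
The upper bound follows directly from the chain-rule identity already used in the proof of \Cref{lem:deltab}:
\[
  \Delta b
  = \frac{1-\alpha}{\alpha^2}\,f^{1/\alpha-2}|\nabla f|^2
  + \frac{1}{\alpha}\,f^{1/\alpha-1}\Delta f ,
\]
evaluated on $\Sigma_r$, where $f=r^\alpha$. In place of the lower half of H3, I will use its upper half, $\Delta f\le\eta|\nabla f|^2$, together with the upper half of H2, $|\nabla f|^2\le c_3r^{2\alpha-2}$. The prefactor $\frac1\alpha f^{1/\alpha-1}$ is positive, so these give
\[
  \frac1\alpha r^{1-\alpha}\Delta f\le \frac{\eta c_3}{\alpha}r^{\alpha-1}.
\]
This holds even where $\Delta f$ is negative, since only an upper bound is being used.

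\textbf{Case split on the first term.}
\begin{itemize}
\item If $\alpha>1$, the coefficient $(1-\alpha)/\alpha^2$ is negative and $|\nabla f|^2\ge0$, so the first term is nonpositive and can be dropped. This gives $\Delta b\le \frac{\eta c_3}{\alpha}r^{\alpha-1}$.
\item If $0<\alpha\le1$, the first term is at most $\frac{(1-\alpha)c_3}{\alpha^2}r^{-1}\le \frac{(1-\alpha)c_3}{\alpha^2R_0}$. Since $\alpha-1\le0$ and $r\ge R_0$, we also have $r^{\alpha-1}\le R_0^{\alpha-1}$. Adding the two bounds gives the stated $r$-independent constant $C_b^+$.
\end{itemize}
This part is entirely routine.

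\textbf{Sharpness for $\alpha>1$.} The only real content is the claim that no constant bound holds in general. I will show this with a rotationally symmetric example, $M=\R^n$ with $n\ge2$, carrying a metric equal to $dr^2+e^{2\psi(r)}g_{S^{n-1}}$ outside a compact set, with $\psi'(r)=\kappa r^{\alpha-1}$ for large $r$, and the weight $f=r^\alpha$ near infinity. Then $b=r$ and $\Delta b=\Delta r=(n-1)\psi'=(n-1)\kappa r^{\alpha-1}$, which is unbounded. It remains to check H1--H3 for this example:
\begin{itemize}
\item H1 and H2 are immediate, since $|\nabla f|^2=\alpha^2r^{2\alpha-2}$.
\item For H3, compute $\Delta f=\alpha(n-1)\kappa r^{2\alpha-2}+\alpha(\alpha-1)r^{\alpha-2}$. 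This is positive, so the lower bound holds trivially. It is at most $\eta\alpha^2r^{2\alpha-2}$ for large $r$ once $(n-1)\kappa<\eta\alpha$ with $0<\eta<\frac12$, so the upper bound holds as well.
\end{itemize}

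The main (mild) obstacle is exactly this example. One has to make sure the warping can be smoothly completed at the origin and that the Laplacian term really saturates the allowed $\eta|\nabla f|^2$ scale. The rest is the one-line estimate above.
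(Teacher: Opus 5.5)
Your proof of the inequality follows the paper's: the chain-rule formula on $\Sigma_r$, the upper halves of H2 and H3 for the $\Delta f$ term, and a sign case split at $\alpha=1$ for the first term. The one small difference is that for $\alpha>1$ the paper uses the lower H2 bound to get first term $\le -(\alpha-1)c_5/(\alpha^2 r)$, whereas your observation $|\nabla f|^2\ge0$ already suffices. The paper's proof does not justify the sharpness sentence, and your rotationally symmetric example is the one it verifies in Example~\ref{ex:volgrowthsharp}, with $\kappa=\eta'\alpha/(n-1)$ and $\eta'<\eta$; since $c_3=\alpha^2$ there, it even shows that the coefficient $\eta c_3/\alpha$ cannot be decreased uniformly.
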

\begin{proof}
On $\Sigma_r$, $f=r^\alpha$, so as in Lemma~\ref{lem:deltab},
\[
  \Lap b = \frac{1-\alpha}{\alpha^2}r^{1-2\alpha}|\nabla f|^2
  + \frac1\alpha r^{1-\alpha}\Lap f.
\]
For the second term, H3 gives $\Lap f\le\eta|\nabla f|^2$ and H2
gives $|\nabla f|^2\le c_3 r^{2\alpha-2}$. We obtain
$\frac1\alpha r^{1-\alpha}\Lap f \le \frac{\eta c_3}{\alpha}r^{\alpha-1}$.

As for the first term, if $0<\alpha\le1$ then the coefficient
$(1-\alpha)/\alpha^2$ is non-negative. Maximizing over $|\nabla f|^2$
with the upper bound in H2 implies $\frac{1-\alpha}{\alpha^2}r^{1-2\alpha}|\nabla f|^2\le
\frac{(1-\alpha)c_3}{\alpha^2 r}$, which is non-negative and
non-increasing in $r$, so it is bounded by its value at $r=R_0$. Likewise $r^{\alpha-1}$ is non-increasing for $\alpha\le1$, so
the second term is bounded by its value at $r=R_0$. Combining the two estimates gives the
stated bound for $0<\alpha\le1$.

If $\alpha>1$ the coefficient $(1-\alpha)/\alpha^2$ is negative.
Maximizing over $|\nabla f|^2$ with the lower bound in H2 implies
$\frac{1-\alpha}{\alpha^2}r^{1-2\alpha}|\nabla f|^2 \le
-\frac{(\alpha-1)c_5}{\alpha^2 r}\le0$. Dropping this non-positive term
and keeping only the second term's bound $\frac{\eta c_3}{\alpha}r^{\alpha-1}$
proves the stated bound for $\alpha>1$.
\end{proof}

The upper bound in H3 leads to the following estimate for the volume of
the level sets. The resulting bound is the only place the geometry of
$(M,g)$ enters the moment estimates of \S\ref{sec:twofiltrations}.

\begin{lemma}\label{lem:volmoment}
Under \crefrange{ass:H1}{ass:H3}, for any $\alpha>0$, there is a
constant $C_\Sigma$ such that, for $r\ge R_0$,
\begin{equation}\label{eq:sigmagrowth}
  \log\vol(\Sigma_r) \;\le\; \eta\,r^\alpha+(1-\alpha)\log r+C_\Sigma .
\end{equation}
There is a constant $C_M>0$, independent of $q$, such that
\begin{equation}\label{eq:volmoment}
  \int_{R_0}^\infty r^q\,e^{-f(r)}\vol(\Sigma_r)\,dr \;\le\; C_M^{\,q+1}\,\Gamma\Bigl(\frac{q+1}{\alpha}\Bigr)
  \qquad\text{for every }q\ge0,
\end{equation}
where $f(r)=r^\alpha$ is the constant value of $f$ on $\Sigma_r$. In
particular $\int_Me^{-f}\,dV<\infty$. Through \Cref{ass:H1}, one may
take
\[
  \omega(R)=\eta c_2R^\alpha+O(\log R)
\]
in \Cref{def:volgrowth}.
\end{lemma}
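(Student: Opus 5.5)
We prove \eqref{eq:sigmagrowth} by a differential inequality for $A(r):=\vol(\Sigma_r)$, or more conveniently for the weighted flux quantity $F(r):=\int_{\Sigma_r}|\nabla b|\,d\sigma$. By \Cref{lem:coarea} there are no critical points on the tail, so the divergence theorem on $\{R_0<b<r\}$ gives
\[
F(r)-F(R_0)=\int_{\{R_0<b<r\}}\Delta b\,dV=\int_{R_0}^r\int_{\Sigma_s}\frac{\Delta b}{|\nabla b|}\,d\sigma\,ds .
\]
Hence $F$ is absolutely continuous with $F'(r)=\int_{\Sigma_r}\Delta b/|\nabla b|\,d\sigma$. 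The two-sided bound $c_0\le|\nabla b|\le C_0$ of \Cref{rem:gradb} gives $A\asymp F$. It therefore suffices to bound $F$.

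For the derivative we use the upper bound of \Cref{lem:deltabUpper}, but in its sharper pointwise form before the constants are absorbed. On $\Sigma_r$,
\[
\Delta b=\tfrac{1-\alpha}{\alpha^2}r^{1-2\alpha}|\nabla f|^2+\tfrac1\alpha r^{1-\alpha}\Delta f
\le\tfrac{1-\alpha}{\alpha^2}r^{1-2\alpha}|\nabla f|^2+\tfrac{\eta}{\alpha}r^{1-\alpha}|\nabla f|^2 .
\]
Since $|\nabla f|=\alpha r^{\alpha-1}|\nabla b|$, this reads
\[
\Delta b\le\Bigl(\tfrac{1-\alpha}{r}+\eta\alpha r^{\alpha-1}\Bigr)|\nabla b|^2 .
\]
Dividing by $|\nabla b|$ and integrating over $\Sigma_r$ yields the clean inequality
\[
F'(r)\le\Bigl(\tfrac{1-\alpha}{r}+\eta\alpha r^{\alpha-1}\Bigr)F(r).
\]
Integrating $(\log F)'$ from $R_0$ gives $\log F(r)\le \eta r^\alpha+(1-\alpha)\log r+C$. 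Comparing $A$ with $F$ via $c_0$ then gives \eqref{eq:sigmagrowth}.

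This choice of $F$, rather than $A$ itself, is the main point. Differentiating $\vol(\Sigma_r)$ directly produces a mean-curvature term together with derivatives of $|\nabla b|$, which H1--H3 do not control. The flux $\int|\nabla b|$ is exactly what the divergence theorem differentiates, and its growth rate is governed by $\Delta b/|\nabla b|^2$, which H2--H3 control without loss. I expect this to be the only delicate step. A further technical point is that $F$ is only absolutely continuous, so the Gronwall step should be done in integrated form.

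For \eqref{eq:volmoment}, insert the bound on $A$:
\[
\int_{R_0}^\infty r^q e^{-r^\alpha}A(r)\,dr\le e^{C_\Sigma}\int_0^\infty r^{q+1-\alpha}e^{-(1-\eta)r^\alpha}\,dr .
\]
The power $r^{1-\alpha}$ is harmless only if $q+1-\alpha>-1$; for small $q$ we instead use $r\ge R_0\ge1$ to bound $r^{1-\alpha}\le r^{|1-\alpha|}$ adjusted so that the exponent is at least $q$. Substituting $t=(1-\eta)r^\alpha$ turns the integral into
\[
\alpha^{-1}(1-\eta)^{-(q+2-\alpha)/\alpha}\,\Gamma\Bigl(\tfrac{q+2-\alpha}{\alpha}\Bigr).
\]
This is of the form $C^{q+1}\Gamma((q+1)/\alpha)$ up to a shift in the Gamma argument by $(1-\alpha)/\alpha$. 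By the ratio asymptotics $\Gamma(x+a)/\Gamma(x)\le C^{x+1}$ for $x\ge 1/\alpha$, that shift is absorbed into $C_M^{q+1}$, using $\eta<\tfrac12$ to keep $1-\eta$ bounded below.

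Taking $q=0$ and adding the compact part $\{b\le R_0\}$ gives $\int_Me^{-f}dV<\infty$. Finally, by the coarea formula,
\[
\vol(B_R(x_0))\le\vol\{b\le c_2^{1/\alpha}R\}\le C+C_0^{-1}\cdots\le C+\int_{R_0}^{c_2^{1/\alpha}R}c_0^{-1}A(s)\,ds ,
\]
and \eqref{eq:sigmagrowth} bounds the right-hand side by $e^{\eta c_2R^\alpha+O(\log R)}$, which gives the stated $\omega$.
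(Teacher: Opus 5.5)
Your proof is correct, and for the key estimate \eqref{eq:sigmagrowth} it takes a different route from the paper. The paper applies Green's formula to $\nabla f$ on the whole sublevel set $\{b\le r\}$, obtaining $\alpha r^{\alpha-1}\int_{\Sigma_r}|\nabla b|\,d\sigma=\int_{\{b\le r\}}\Delta f\,dV$. It then bounds the core contribution by a constant $C_+$ and uses H3 on the tail to get an integral inequality for $G_+(r)=\int_{R_0}^r s^{2\alpha-2}\int_{\Sigma_s}|\nabla b|\,d\sigma\,ds$. It closes with an inhomogeneous Gr\"onwall argument, treating $\eta=0$ separately. You instead take the flux of $\nabla b$ across the tail slab and apply H3 pointwise on each level set. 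This gives the homogeneous inequality $F'\le\bigl((1-\alpha)/r+\eta\alpha r^{\alpha-1}\bigr)F$, which integrates in one line. The two are closely related: $\frac{d}{dr}\bigl(\alpha r^{\alpha-1}F(r)\bigr)=\int_{\Sigma_r}\Delta f/|\nabla b|\,d\sigma$, so your inequality is the differentiated form of the paper's identity with H3 applied on $\Sigma_r$. Your version avoids the core term and the auxiliary $G_+$, and it needs no case split in $\eta$. It produces the factor $r^{1-\alpha}$ directly from the chain-rule term rather than by dividing through by $\alpha r^{\alpha-1}$. It also gives an explicit $C_\Sigma$ depending only on $F(R_0)$, $R_0$, $c_0$ and $\eta$. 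For the logarithm you need $F>0$, which holds because $\Sigma_r\neq\emptyset$ for $r\ge R_0$; alternatively, run the comparison as $\bigl(Fe^{-\int c}\bigr)'\le0$. The moment bound, the finiteness of $\int_Me^{-f}dV$, and the volume-growth statement then proceed essentially as in the paper. One tidy-up in the moment step: for $\alpha\ge1$, simply use $r^{1-\alpha}\le1$ on $[R_0,\infty)$. This gives $\alpha^{-1}(1-\eta)^{-(q+1)/\alpha}\Gamma((q+1)/\alpha)$ with no shift, so the shifted-Gamma absorption is needed only for $\alpha<1$, where the shift $(1-\alpha)/\alpha$ is positive. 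As you phrased it, the bound $\Gamma(x+a)/\Gamma(x)\le C^{x+1}$ for $x\ge1/\alpha$ would fail for the negative shift when $\alpha\ge2$, since $x+a$ can then be nonpositive.
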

\begin{proof}
Write
\[
 A(r):=\int_{\Sigma_r}|\nabla b|\,d\sigma,
 \qquad
 G_+(r):=\int_{R_0}^r s^{2\alpha-2}A(s)\,ds.
\]
Since $f=b^\alpha$, on $\Sigma_r$ we have
$\partial_\nu f=\alpha r^{\alpha-1}|\nabla b|$. Green's formula on
$B_r:=\{b\le r\}$ gives
\[
 \alpha r^{\alpha-1}A(r)=\int_{B_r}\Delta f\,dV.
\]
Separate the compact core $B_{R_0}$ from the tail. By H3 and the tail
coarea formula,
\begin{align*}
 \alpha r^{\alpha-1}A(r)
 &\le \int_{B_{R_0}}\Delta f\,dV
      +\eta\int_{\{R_0<b<r\}}|\nabla f|^2\,dV\\
 &= \int_{B_{R_0}}\Delta f\,dV+\eta\alpha^2G_+(r).
\end{align*}
Let $C_+:=\max\{0,\int_{B_{R_0}}\Delta f\,dV\}$. If $\eta=0$, the
boundary estimate directly gives
\[
  \alpha r^{\alpha-1}A(r)\le C_+,
\]
so $A(r)\le C r^{1-\alpha}$. Suppose now that $\eta>0$. Since
$G_+'(r)=r^{2\alpha-2}A(r)$, for a.e. $r\ge R_0$,
\[
 G_+'(r)\le \eta\alpha r^{\alpha-1}G_+(r)
       +\frac{C_+}{\alpha}r^{\alpha-1}.
\]
Gr\"onwall's inequality gives $G_+(r)\le C_2e^{\eta r^\alpha}$.
Substitution in the boundary estimate then gives
\[
 A(r)\le C_3 r^{1-\alpha}e^{\eta r^\alpha}.
\]
The same bound holds for every $0\le\eta<\tfrac12$. Since
$A(r)\ge c_0\vol(\Sigma_r)$ by \Cref{rem:gradb}, this proves
\eqref{eq:sigmagrowth}.

For \eqref{eq:volmoment}, $f=r^\alpha$ on $\Sigma_r$, so
\[
 e^{-f(r)}\vol(\Sigma_r)
 \le C_4 r^{1-\alpha}e^{-(1-\eta)r^\alpha}.
\]
Because the integral starts at the fixed positive number $R_0$, this
implies, after enlarging the constant if necessary,
\[
 \int_{R_0}^\infty r^q e^{-f(r)}\vol(\Sigma_r)\,dr
 \le C_M^{q+1}\Gamma\!\left(\frac{q+1}{\alpha}\right),
 \qquad q\ge0.
\]
For all sufficiently large $q$, comparison with the full Gamma
integral gives the same estimate with a fixed shift in the Gamma
argument. The ratio of the shifted and unshifted Gamma functions grows
only polynomially in $q$, and the factor $(1-\eta)^{-q/\alpha}$ is
absorbed by $C_M^{q+1}$. The remaining bounded interval of $q$ is
covered by enlarging $C_M$.

The tail coarea formula and $|\nabla b|\ge c_0$ now give
\[
 \int_{\{b>R_0\}}e^{-f}\,dV
 \le \frac1{c_0}\int_{R_0}^\infty e^{-f(r)}\vol(\Sigma_r)\,dr<\infty.
\]
The integral over $B_{R_0}$ is finite by compactness, so
$\int_Me^{-f}\,dV<\infty$.

Finally, H1 implies $b(x)\le c_2^{1/\alpha}r(x)$ outside a compact set.
For large $R$, the Riemannian ball $B_R(x_0)$ is contained in
$B_{R_0}\cup\{R_0<b\le c_2^{1/\alpha}R\}$. Using the tail coarea
formula, $|\nabla b|\ge c_0$, and \eqref{eq:sigmagrowth},
\[
 \vol B_R(x_0)
 \le C+\frac1{c_0}\int_{R_0}^{c_2^{1/\alpha}R}\vol(\Sigma_s)\,ds
 \le C'R^2e^{\eta c_2R^\alpha}.
\]
This proves $\log\vol B_R(x_0)\le\eta c_2R^\alpha+O(\log R)$.
\end{proof}

The coefficient $\eta$ in \eqref{eq:sigmagrowth} cannot be decreased uniformly.

\begin{example}\label{ex:volgrowthsharp}
Fix $n\ge2$, $\alpha>1$, $\eta\in(0,\tfrac12)$, and
$\eta'\in(0,\eta)$. On $M=\mathbb R^n$ take a smooth rotationally
symmetric metric $g=dr^2+\varphi(r)^2g_{\mathbb S^{n-1}}$ with
$\varphi(r)=r$ near the origin and
\[
  \varphi(r):=\exp\Bigl(\frac{\eta'}{n-1}\,r^\alpha\Bigr)\qquad(r\ge R_0),
\]
with a smooth positive interpolation in between. Choose a smooth
nonnegative radial weight $f$ with $f(r)=r^\alpha$ for all sufficiently
large $r$. Then $b=r$ on the tail, so H1 holds with $c_1=c_2=1$.
Also $|\nabla f|^2=\alpha^2r^{2\alpha-2}$ on the tail,
so H2 holds with $c_3=c_5=\alpha^2$. And H3 holds with the given
$\eta$ and $c_4=1$ once $R_0$ is large. Yet $\log\vol(\Sigma_r)=\eta'r^\alpha+\log\vol(\mathbb S^{n-1})$.
\end{example}
\begin{proof}
On the tail, $\Delta f=f''+(n-1)\frac{\varphi'}{\varphi}f'
=\alpha(\alpha-1)r^{\alpha-2}+\eta'\alpha^2r^{2\alpha-2}
=\eta'|\nabla f|^2+\alpha(\alpha-1)r^{\alpha-2}$. The last term is nonnegative
and of lower order, so $-c_4r^{\alpha-1}\le\Delta f\le\eta|\nabla f|^2$ as
soon as $r^\alpha\ge\frac{\alpha-1}{\alpha(\eta-\eta')}$. Since
$\vol(\Sigma_r)=\varphi(r)^{n-1}\vol(\mathbb S^{n-1})$, the growth claim follows.
Since $\eta'$ can be chosen arbitrarily close to $\eta$, the coefficient
$\eta$ in \eqref{eq:sigmagrowth} cannot be decreased uniformly.
\end{proof}

\section{The Frequency Function}\label{sec:freq}

Fix $k$ throughout, and write $u=u_k$, $\lambda=\lambda_k$, $\gamma=\gamma_k$.

\begin{definition}\label{def:freq}
For $r>R_0$ set
\begin{align}
  I(r) &:= \frac{1}{\vol(\Sigma_r)}\int_{\Sigma_r}u^2|\nabla b|\,d\sigma,
  \label{eq:I}\\
  D(r) &:= \frac{1}{\vol(\Sigma_r)}\int_{\Sigma_r}u\,\partial_\nu u\,d\sigma.
  \label{eq:D}
\end{align}
Whenever $I(r)>0$, define
\begin{equation}\label{eq:U}
  U(r):=\frac{rD(r)}{I(r)}.
\end{equation}
\end{definition}

By H2, the vector field $X:=\nabla b/|\nabla b|^2$ is $C^1$ and
nonvanishing on the tail. Its local flow identifies neighboring level
sets of $b$. The first-variation formula for integrals over these level
sets gives local absolute continuity of the quantities in $I$ and $D$.
At regular points, it also gives the derivative formulas used below. Both $I$ and $D$ are
differentiable for a.e. $r>R_0$, and so is $U$ on every interval on
which $I>0$. Possible zero levels of $I$ are treated separately in
\Cref{lem:Ivanish}.

\begin{definition}\label{def:FEPhi}
For $u=u_k$ an $L_f$-eigenfunction of eigenvalue $\lambda_k$, and
$r>R_0$:
\begin{align}
  F(r) &:= \int_{\{b<r\}}u^2\,e^{-f}\,dV, \label{eq:Fdef}\\
  E(r) &:= \int_{\{b<r\}}|\nabla u|^2\,e^{-f}\,dV, \label{eq:Edef}\\
  \Phi(r) &:= \int_{\Sigma_r}u\,\partial_\nu u\,e^{-f}\,d\sigma. \label{eq:Phidef}
\end{align}
By the weighted divergence theorem, $\Phi(r)=e^{-f(r)}\vol(\Sigma_r)D(r)$.
Testing $L_fu_k=-\lambda_ku_k$ against $u_k$ over all of $M$ gives
$F(\infty)=1$ and $E(\infty)=\lambda_k$. The relation of $\Phi$ to the mass density $m(r)$ introduced in
\S\ref{sec:twofiltrations} is given in Lemma~\ref{lem:PhiUm}.
\end{definition}

\begin{remark}\label{rem:CM}\label{rem:CMcomparison}
Colding and Minicozzi \cite[(0.5)--(0.6), Theorem~0.7]{ColdingMinicozzi2021}
use $b_{\rm CM}=2\sqrt f$ and normalize the boundary mass by $r^{1-n}$.
Under their hypotheses, including properness of $f$ and the identities
$\Delta f+S=n/2$ and $|\nabla f|^2+S=f$ with $f,S\ge0$,
the frequency of a nonzero $L^2(e^{-f}dV)$ eigenfunction with
eigenvalue $\lambda>0$ satisfies
\[
  U_{\rm CM}(r)\le2\lambda
  \left(1+\frac{4\lambda+2n-4+\epsilon}{r^2}\right)
\]
for every $\epsilon>0$ and all sufficiently large $r$.
Our quantities \eqref{eq:I} and \eqref{eq:D} use the actual
$\vol(\Sigma_r)$ instead. For a fixed exhaustion, this common volume
normalization cancels in $U=rD/I$. The identity for
$J=\vol(\Sigma_r)I$ in \Cref{cor:Jidentity} avoids differentiating
the level-set volume. The argument uses no shrinker identity.
\end{remark}

\begin{lemma}\label{lem:gammaequiv}
Under H1--H4,
\begin{equation}\label{eq:gammaequiv}
  \gamma_k
  = \inf\Bigl\{N\ge0 : I_k(r)\le C_{k,N}r^{2N}
    \text{ for some }C_{k,N}>0\text{ and all }r\ge R_0\Bigr\}.
\end{equation}
Moreover, with $\log0:=-\infty$,
\begin{equation}\label{eq:gammalimsup}
  \gamma_k=\max\left\{0,\limsup_{r\to\infty}
  \frac{\log I_k(r)}{2\log r}\right\}.
\end{equation}
\end{lemma}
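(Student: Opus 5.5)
The plan is to prove both statements by comparing $I_k(r)$ with the quantity $\frac1{\vol(\Sigma_r)}\int_{\Sigma_r}u_k^2\,d\sigma$ appearing in \eqref{eq:gammak}, and then to unwind the definition of the infimum.

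\emph{Step 1: comparability.} On the tail $\{b\ge R_0\}$, \Cref{rem:gradb} gives $c_0\le|\nabla b|\le C_0$. Integrating against $u_k^2\ge0$ over $\Sigma_r$ and dividing by $\vol(\Sigma_r)$, which is positive and finite by \Cref{cor:weightfinite} since every $r\ge R_0$ is regular by \Cref{lem:coarea}, yields
\[
  c_0\,\frac{1}{\vol(\Sigma_r)}\int_{\Sigma_r}u_k^2\,d\sigma
  \;\le\; I_k(r)\;\le\;
  C_0\,\frac{1}{\vol(\Sigma_r)}\int_{\Sigma_r}u_k^2\,d\sigma
  \qquad(r\ge R_0).
\]
(For $r=R_0$ one may take $I_k$ by the same formula; nothing changes.) Consequently, for fixed $N$, a bound of the form $\text{(mean of }u_k^2)\le C r^{2N}$ holds for all $r\ge R_0$ if and only if $I_k\le C' r^{2N}$ holds, with $C'=C_0C$ and conversely $C=C'/c_0$. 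The two admissible sets of $N$ therefore coincide, and \eqref{eq:gammaequiv} follows.

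\emph{Step 2: the limsup formula.} Write $\ell:=\limsup_{r\to\infty}\log I_k(r)/(2\log r)$ and let $A$ be the admissible set in \eqref{eq:gammaequiv}. The set $A$ is upward closed because $r\ge R_0\ge1$.
\begin{itemize}[label=--]
\item If $N\in A$, then $\log I_k\le\log C+2N\log r$, so $\ell\le N$. Hence $\max(0,\ell)\le\gamma_k$.
\item Conversely, let $N>\max(0,\ell)$. Then there is $R\ge R_0$ with $I_k(r)\le r^{2N}$ for $r\ge R$; this also covers $I_k=0$ under the convention $\log 0=-\infty$. On the compact interval $[R_0,R]$, $I_k$ is bounded: $u_k$ is continuous, hence bounded on the compact set $\{R_0\le b\le R\}$, and $\vol(\Sigma_r)$ is bounded below there. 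The bound on the volume is the one delicate point. I would get it from the continuity of $r\mapsto\vol(\Sigma_r)$ via the flow of $X=\nabla b/|\nabla b|^2$ noted in \Cref{cor:weightfinite}, together with its positivity at each $r$.
\end{itemize}
Enlarging the constant to cover $[R_0,R]$ then gives $N\in A$. Therefore $\gamma_k\le N$ for every such $N$, and so $\gamma_k\le\max(0,\ell)$.

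The only real obstacle is justifying the uniform finite-scale bound on $[R_0,R]$, that is, the continuity and positivity of $\vol(\Sigma_r)$ and the boundedness of $u_k$. Everything else is a direct unwinding of definitions. Finiteness of $\gamma_k$ from H4 is not needed for the equalities; it simply makes $\ell<\infty$.
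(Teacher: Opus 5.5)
Your proposal is correct and follows the paper's proof essentially step for step: the two-sided bound $c_0\le|\nabla b|\le C_0$ identifies the two admissible sets of exponents, and the limsup formula comes from the same two inequalities, with the constant enlarged on a compact interval. The point you flag as delicate is immediate, because $I_k(r)$ is a normalized average of $u_k^2|\nabla b|$ over $\Sigma_r$; hence $I_k(r)\le C_0\max_{\{R_0\le b\le R\}}u_k^2$ for $r\in[R_0,R]$, with no need for continuity or a lower bound of $\vol(\Sigma_r)$.
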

\begin{proof}
By \Cref{rem:gradb},
\[
  \frac{c_0}{\vol(\Sigma_r)}\int_{\Sigma_r}u_k^2\,d\sigma
  \le I_k(r)\le
  \frac{C_0}{\vol(\Sigma_r)}\int_{\Sigma_r}u_k^2\,d\sigma .
\]
Multiplying a growth bound by the fixed constants $c_0$ or $C_0$
does not change its infimal exponent. This proves
\eqref{eq:gammaequiv}.

Let $L_k$ denote the limsup in \eqref{eq:gammalimsup}. If $N>L_k$,
then $I_k(r)\le r^{2N}$ for all sufficiently large $r$. On the
remaining compact interval $[R_0,R]$, continuity gives
$I_k(r)\le C_{k,N}r^{2N}$ after increasing $C_{k,N}$. Hence
$\gamma_k\le\max\{0,L_k\}$. Conversely, if
$I_k(r)\le C_{k,N}r^{2N}$ on the tail, then
$L_k\le N$. Taking the infimum over admissible $N\ge0$ gives the
reverse inequality.
\end{proof}

\begin{lemma}\label{lem:Ivanish}
Under H1--H2, if $I(r_0)=0$ for some regular $r_0>R_0$, then
$u\equiv0$ on $\Sigma_{r_0}$.
\end{lemma}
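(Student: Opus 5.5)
The claim follows from a pointwise lower bound on the integrand. By definition,
\[
  I(r_0)=\frac{1}{\vol(\Sigma_{r_0})}\int_{\Sigma_{r_0}}u^2|\nabla b|\,d\sigma .
\]
The plan is to show that the integrand is a nonnegative continuous function bounded below by a positive multiple of $u^2$. Vanishing of the integral then forces $u^2$ to vanish pointwise.

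First, we check that the setting is legitimate. Since $r_0>R_0$, \Cref{lem:coarea} shows that $\Sigma_{r_0}$ is a compact $C^2$ hypersurface. \Cref{cor:weightfinite} gives $0<\vol(\Sigma_{r_0})<\infty$, so the prefactor $1/\vol(\Sigma_{r_0})$ is finite and positive. Hence $I(r_0)=0$ is equivalent to
\[
  \int_{\Sigma_{r_0}}u^2|\nabla b|\,d\sigma=0 .
\]

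Second, we use the lower gradient bound. On the tail, \Cref{rem:gradb} (which uses H2) gives $|\nabla b|\ge c_0>0$. Therefore
\[
  0\le c_0\int_{\Sigma_{r_0}}u^2\,d\sigma\le\int_{\Sigma_{r_0}}u^2|\nabla b|\,d\sigma=0,
\]
and so $u^2=0$ holds $\sigma$-almost everywhere on $\Sigma_{r_0}$.

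Third, we upgrade almost-everywhere vanishing to everywhere vanishing. This is the only step requiring care. The eigenfunction $u$ is continuous; indeed it is $C^2$ by elliptic regularity, because the coefficients of $L_f$ are continuous with $f\in C^2$ (at minimum $u\in C^{1,\beta}_{\rm loc}$, which suffices). Thus $u^2|_{\Sigma_{r_0}}$ is continuous and nonnegative. The induced measure $\sigma$ on a $C^2$ hypersurface gives positive mass to every nonempty relatively open subset. If $u(x)\neq0$ at some $x\in\Sigma_{r_0}$, then $u^2>0$ on a relatively open neighborhood of $x$ in $\Sigma_{r_0}$. That neighborhood has positive $\sigma$-measure, which contradicts the vanishing of the integral. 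Hence $u\equiv0$ on $\Sigma_{r_0}$.
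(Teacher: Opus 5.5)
Your proof is correct and follows the paper's argument exactly: $\vol(\Sigma_{r_0})>0$ and $|\nabla b|\ge c_0$ force $\int_{\Sigma_{r_0}}u^2\,d\sigma=0$, and continuity of $u$ upgrades the almost-everywhere vanishing to $u\equiv0$ on $\Sigma_{r_0}$. Your extra justification of the continuity step, via regularity of $u$ and the positive $\sigma$-mass of relatively open subsets, only makes explicit what the paper leaves implicit.
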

\begin{proof}
Since $\vol(\Sigma_{r_0})>0$ and $|\nabla b|\ge c_0>0$ on the tail,
$I(r_0)=0$ implies
\[
  0=\int_{\Sigma_{r_0}}u^2|\nabla b|\,d\sigma
  \ge c_0\int_{\Sigma_{r_0}}u^2\,d\sigma.
\]
Hence $u=0$ almost everywhere on $\Sigma_{r_0}$, and continuity gives $u\equiv0$ on $\Sigma_{r_0}$.
\end{proof}

\section{Schrödinger Conjugation and Agmon Decay}\label{sec:conjugate}

\subsection{Conjugation and discrete spectrum}

\begin{lemma}\label{lem:conjugation}
Define the unitary map $\Phi:L^2(M,e^{-f}\dV)\to L^2(M,\dV)$ by
$\Phi(u)=e^{-f/2}u$. Setting $\psi=\Phi(u)=e^{-f/2}u$, the equation
$L_fu=-\lambda u$ is equivalent to $H\psi=\lambda\psi$, where
\begin{equation}\label{eq:Hdef}
  H \;=\; -\Delta+V, \qquad V \;=\; \tfrac14|\nabla f|^2-\tfrac12\Delta f.
\end{equation}
$-L_f$ and $H$ are unitarily equivalent and have
identical spectra.
\end{lemma}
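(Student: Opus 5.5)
We verify unitarity first, then conjugate the operator by direct computation, and then pass from the formal identity to an identity of self-adjoint operators.

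\textbf{Unitarity.} For $u\in L^2(M,e^{-f}dV)$,
\[
  \int_M|e^{-f/2}u|^2\,dV=\int_Mu^2e^{-f}\,dV ,
\]
so $\Phi$ is an isometry. Its inverse is $\psi\mapsto e^{f/2}\psi$, so it is surjective, hence unitary.

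\textbf{Conjugation.} Write $u=e^{f/2}\psi$, with $f\in C^2$. Then
\[
  \nabla u=e^{f/2}\bigl(\nabla\psi+\tfrac12\psi\nabla f\bigr),
\]
and
\[
  \Delta u=e^{f/2}\Bigl(\Delta\psi+\ip{\nabla f}{\nabla\psi}
  +\tfrac12\psi\Delta f+\tfrac14|\nabla f|^2\psi\Bigr).
\]
Subtracting
\[
  \ip{\nabla f}{\nabla u}
  =e^{f/2}\bigl(\ip{\nabla f}{\nabla\psi}+\tfrac12|\nabla f|^2\psi\bigr)
\]
gives
\[
  e^{-f/2}L_fu=\Delta\psi-\bigl(\tfrac14|\nabla f|^2-\tfrac12\Delta f\bigr)\psi
  =-H\psi .
\]
So $\Phi(-L_f)\Phi^{-1}=H$ on smooth (or $C^2$) functions, and $L_fu=-\lambda u$ holds if and only if $H\psi=\lambda\psi$.

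\textbf{Operator level.} The formal identity is not enough by itself; we need it for the self-adjoint realizations, and this is the main step. Both operators should be understood as Friedrichs (form) realizations. For $\psi=e^{-f/2}u$ with $u\in C_c^\infty$, expand $|\nabla\psi|^2$ and integrate the cross term $-\tfrac12\ip{\nabla f}{\nabla(u^2)}e^{-f}$ by parts. This gives the form identity
\[
  \int_M|\nabla u|^2e^{-f}\,dV=\int_M\bigl(|\nabla\psi|^2+V\psi^2\bigr)\,dV ,
\]
where the boundary terms vanish because $u$ has compact support. Since $\Phi$ maps $C_c^\infty$ bijectively onto $C_c^2$-type cores, the closures of the two forms correspond under $\Phi$. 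Here $\mathcal E$ is closable and bounded below by $0$. The form of $H$ is likewise nonnegative on this core, even though $V$ may be negative on a compact set. The associated self-adjoint operators therefore satisfy $H=\Phi(-L_f)\Phi^{-1}$.

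On a complete manifold, $-L_f$ is essentially self-adjoint on $C_c^\infty$ by the standard Gaffney/Strichartz argument for weighted Laplacians. Hence the Friedrichs extension is the only self-adjoint extension, and the same uniqueness then holds for $H$ through $\Phi$. Unitary equivalence gives equality of spectra, including multiplicities and spectral type.
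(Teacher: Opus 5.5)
Your proof is correct and its core is the paper's own argument: the same direct computation of $\nabla u$ and $\Delta u$ for $u=e^{f/2}\psi$, giving $L_fu=e^{f/2}(\Delta\psi-V\psi)$, which is all the paper writes down. Your unitarity check and the form-identity/essential-self-adjointness step make explicit the operator-level identification that the paper leaves implicit. One small point should be added. $\Phi(C_c^\infty)=e^{-f/2}C_c^\infty$ is a proper subset of $C_c^2$ when $f\notin C^\infty$, so you have only shown essential self-adjointness of $H$ on that core. To transfer it to $H$ on $C_c^\infty$ itself, note that $H|_{C_c^2}$ is a symmetric extension of $H|_{e^{-f/2}C_c^\infty}$ and therefore has the same closure. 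Then $C_c^\infty$ is a core for that closure by mollification.
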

\begin{proof}
Using $\nabla u=e^{f/2}(\nabla\psi+\tfrac12\psi\nabla f)$,
\[
  \Delta u = e^{f/2}\Bigl[\Delta\psi+\langle\nabla f,\nabla\psi\rangle
  +\tfrac12(\Delta f)\psi+\tfrac14|\nabla f|^2\psi\Bigr],
\]
so
\[
  L_fu = \Delta u-\langle\nabla f,\nabla u\rangle
  = e^{f/2}\Bigl[\Delta\psi+\tfrac12(\Delta f)\psi-\tfrac14|\nabla f|^2\psi\Bigr]
  = e^{f/2}\bigl(\Delta\psi-V\psi\bigr).
\]
This is equivalent to $L_fu=-\lambda u\iff\Delta\psi-V\psi=-\lambda\psi\iff H\psi=\lambda\psi$.
\end{proof}

\begin{lemma}\label{lem:Vbounds2}
Under \crefrange{ass:H1}{ass:H3} with $\alpha>1$, set
$\kappa^2:=\tfrac14-\tfrac\eta2>0$ (\Cref{ass:H3}).
\begin{enumerate}[label=\rm(\alph*)]
\item In terms of $b$, with
  $c_B^2:=\kappa^2c_5$ and
  $C_B^2:=\tfrac14c_3+\tfrac12c_4c_1^{-(\alpha-1)/\alpha}$,
  \begin{equation}\label{eq:Vboundsb}
    c_B^2\,b(x)^{2\alpha-2} \;\le\; V(x) \;\le\; C_B^2\,b(x)^{2\alpha-2}
    \qquad\text{on }\{b\ge R_0\}.
  \end{equation}
\item In terms of Riemannian distance, with
  $c_V^2:=\kappa^2c_5c_1^{2-2/\alpha}$ and
  $C_V^2:=\tfrac14c_3c_2^{2-2/\alpha}+\tfrac12c_4$,
  \begin{equation}\label{eq:Vbounds2}
    c_V^2\,r(x)^{2\alpha-2} \;\le\; V(x) \;\le\; C_V^2\,r(x)^{2\alpha-2}
    \qquad\text{on }\{r(x)\ge R_0\}.
  \end{equation}
\end{enumerate}
All four constants depend only on $c_1,c_2,c_3,c_5,c_4,\eta,\alpha$. In
particular $V(x)\to+\infty$ as $b(x)\to\infty$.
\end{lemma}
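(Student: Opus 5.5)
The plan is to bound the two terms of $V=\tfrac14|\nabla f|^2-\tfrac12\Delta f$ separately, using \ref{ass:H2} for the gradient term and the two sides of \ref{ass:H3} for the Laplacian term. We then translate between $f$, $b$ and $r(x)$ via $f=b^\alpha$ and \ref{ass:H1}. Throughout we work on $\{b\ge R_0\}$, where by the convention after \Cref{rem:H3} all of \ref{ass:H1}--\ref{ass:H3} hold and $r(x)\ge R_0\ge1$.

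\emph{Lower bound.} The upper half of \ref{ass:H3}, $\Delta f\le\eta|\nabla f|^2$, gives $V\ge(\tfrac14-\tfrac\eta2)|\nabla f|^2=\kappa^2|\nabla f|^2$, as already observed in \Cref{rem:H3}. Here $\kappa^2>0$ because $\eta<\tfrac12$. The lower half of \ref{ass:H2} gives $|\nabla f|^2\ge c_5f^{2-2/\alpha}=c_5b^{2\alpha-2}$, which yields $V\ge c_B^2b^{2\alpha-2}$. For the distance version, \ref{ass:H1} gives $f\ge c_1r(x)^\alpha$. Since $\alpha>1$, the exponent $2-2/\alpha$ is positive, so $f^{2-2/\alpha}\ge c_1^{2-2/\alpha}r(x)^{2\alpha-2}$, which is $c_V^2$.

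\emph{Upper bound.} We have $\tfrac14|\nabla f|^2\le\tfrac14c_3f^{2-2/\alpha}=\tfrac14c_3b^{2\alpha-2}$ by \ref{ass:H2}. The lower half of \ref{ass:H3}, in its original form $\Delta f\ge-c_4r(x)^{\alpha-1}$, gives $-\tfrac12\Delta f\le\tfrac12c_4r(x)^{\alpha-1}$. For (b), use $r(x)^{\alpha-1}\le r(x)^{2\alpha-2}$, valid since $r(x)\ge1$ and $\alpha>1$, together with $f\le c_2r(x)^\alpha$ for the gradient term. This produces $C_V^2$. For (a), convert $r(x)$ to $b$ using $b^\alpha=f\ge c_1r(x)^\alpha$, i.e.\ $r(x)\le c_1^{-1/\alpha}b$. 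Then $r(x)^{\alpha-1}\le c_1^{-(\alpha-1)/\alpha}b^{\alpha-1}\le c_1^{-(\alpha-1)/\alpha}b^{2\alpha-2}$, using $b\ge R_0\ge1$. This gives exactly $C_B^2$.

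Finally, $V\to\infty$ as $b\to\infty$ follows from the lower bound, since $2\alpha-2>0$. There is no real obstacle here. The only care needed is bookkeeping the constants and keeping the two radial parameters $b$ and $r(x)$ distinct, and checking that the monotonicity steps $t^{\alpha-1}\le t^{2\alpha-2}$ use $t\ge1$ and $\alpha>1$. The sign of the exponent $2-2/\alpha$ in the \ref{ass:H1} conversions relies on the same hypothesis.
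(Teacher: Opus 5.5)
Your proposal is correct and follows the paper's proof essentially line by line. The lower bound comes from $V\ge\kappa^2|\nabla f|^2$ and the lower half of H2, and the upper bound from the upper half of H2 plus the lower half of H3 in its $r(x)$ form, converted to $b$ via $r(x)\le c_1^{-1/\alpha}b$ and $b^{\alpha-1}\le b^{2\alpha-2}$, giving exactly the stated constants. One minor bookkeeping point: the $R_0$ convention only guarantees that $r(x)$ exceeds the original, unenlarged $R_0$ on $\{b\ge R_0\}$, not $r(x)\ge R_0$. This is harmless, since you never use $r(x)\ge1$ in part (a), and in part (b) the domain $\{r(x)\ge R_0\}$ supplies it directly.
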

The two versions differ through H1's constants $c_1,c_2$.
These constants compare $f$ with $r(x)$. The two versions coincide
only when $c_1=c_2=1$. The level sets and the radii in this paper are
indexed by $b$. Part (a) is the form used below, except where a geodesic
path is parametrized by Riemannian arc length.
\begin{proof}
By H3, $\Delta f\le\eta|\nabla f|^2$, so
$V=\tfrac14|\nabla f|^2-\tfrac12\Delta f\ge\kappa^2|\nabla f|^2$.

For (a), by H2, $|\nabla f|^2\ge c_5f^{2-2/\alpha}$, and $f=b^\alpha$
identically, so $f^{2-2/\alpha}=b^{2\alpha-2}$. This gives the
lower bound with no use of H1. For the upper bound, H2 implies
$|\nabla f|^2\le c_3f^{2-2/\alpha}=c_3b^{2\alpha-2}$, and H3 gives
$-\Delta f\le c_4r(x)^{\alpha-1}$. By H1's lower bound,
$c_1r(x)^\alpha\le f=b^\alpha$, so $r(x)\le c_1^{-1/\alpha}b$ and
$r(x)^{\alpha-1}\le c_1^{-(\alpha-1)/\alpha}b^{\alpha-1}$, since
$\alpha-1>0$. Since $b\ge R_0\ge1$, $b^{\alpha-1}\le b^{2\alpha-2}$, so
$V\le\tfrac14c_3b^{2\alpha-2}+\tfrac12c_4c_1^{-(\alpha-1)/\alpha}b^{2\alpha-2}
=C_B^2b^{2\alpha-2}$.

For (b), by H1, $f\ge c_1r(x)^\alpha$, so (exponent $2-2/\alpha>0$ for
$\alpha>1$) $f^{2-2/\alpha}\ge c_1^{2-2/\alpha}r(x)^{2\alpha-2}$, and
combining with $V\ge\kappa^2c_5f^{2-2/\alpha}$ gives the lower bound.
In the other direction, $f\le c_2r(x)^\alpha$ implies
$|\nabla f|^2\le c_3c_2^{2-2/\alpha}r(x)^{2\alpha-2}$, and
$-\Delta f\le c_4r(x)^{\alpha-1}\le c_4r(x)^{2\alpha-2}$ for
$r(x)\ge R_0\ge1$, so $V\le C_V^2r(x)^{2\alpha-2}$.
\end{proof}

\begin{lemma}\label{lem:discrete}
Under H1--H3 with $\alpha>1$, $L_f$ has purely discrete spectrum in
$L^2(M,e^{-f}\dV)$.
\end{lemma}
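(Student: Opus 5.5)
By \Cref{lem:conjugation}, $-L_f$ on $L^2(M,e^{-f}\dV)$ is unitarily equivalent to the Schr\"odinger operator $H=-\Delta+V$ on $L^2(M,\dV)$. It therefore suffices to show that $H$, defined as the Friedrichs extension of its quadratic form, has compact resolvent. The plan is the standard confinement criterion: a Schr\"odinger operator on a complete manifold whose potential is bounded below and tends to $+\infty$ at infinity has purely discrete spectrum. The point is that no curvature or Sobolev input is needed, because \Cref{lem:Vbounds2} already supplies the confinement.

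\emph{Step 1: lower semiboundedness.} By \Cref{lem:Vbounds2}(a), $V\ge c_B^2b^{2\alpha-2}\ge0$ on the tail $\{b\ge R_0\}$. On the compact core $\{b\le R_0\}$, which is compact by \Cref{lem:properness}, $V$ is continuous because $f\in C^2$, and hence bounded below by some $-C_*$. Thus $V\ge -C_*$ on $M$. The form
\[
q(\psi)=\int_M|\nabla\psi|^2+V\psi^2\,\dV
\]
is then closable and semibounded on $C_c^\infty(M)$. Let $H$ be its Friedrichs extension; this is the operator corresponding to $-L_f$ with form domain given by the closure of $C^\infty_c$. If the paper's $L_f$ is understood as the closure from $C_c^\infty$, completeness of $M$ makes it essentially self-adjoint, so there is no ambiguity.

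\emph{Step 2: compactness of the form embedding.} It suffices to show that the unit ball $\mathcal B=\{\psi:\ q(\psi)+(C_*+1)\|\psi\|^2\le1\}$ is precompact in $L^2(M,\dV)$. Given $\epsilon>0$, I would use $V\to\infty$ from \Cref{lem:Vbounds2} to choose $R\ge R_0$ with $V\ge 1/\epsilon$ on $\{b>R\}$. Then every $\psi\in\mathcal B$ satisfies $\int_{\{b>R\}}\psi^2\le \epsilon\int V\psi^2\lesssim\epsilon$; to bound $\int V\psi^2$, add back the bounded negative part of $V$ on the core. On the compact set $K=\{b\le R+1\}$, take a cutoff $\chi$ equal to $1$ on $\{b\le R\}$ and supported in $K$. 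The functions $\chi\psi$ are bounded in $H^1$ of a relatively compact domain of a smooth manifold, and Rellich's theorem on compact manifolds with boundary (or a finite chart cover of $K$) shows that they form a precompact set in $L^2$. Hence $\mathcal B$ is totally bounded, since each element lies within $O(\sqrt\epsilon)$ of a precompact set.

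\emph{Step 3: conclusion.} A compact form-domain embedding means the resolvent of $H$ is compact, so the spectrum of $H$ is purely discrete, consisting of eigenvalues of finite multiplicity accumulating only at $+\infty$. Transporting back through the unitary map of \Cref{lem:conjugation} gives the same for $L_f$. The main care point is Step 1's identification of the self-adjoint realization. I expect it to be unproblematic, since $V\ge -C_*$ and $M$ is complete, so essential self-adjointness of $H$ on $C_c^\infty$ follows from the standard result for semibounded Schr\"odinger operators on complete manifolds. The $C^2$ regularity of $f$ only makes $V$ continuous, which is enough for this argument.
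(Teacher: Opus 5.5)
Your proposal is correct and follows the paper's own proof. Both conjugate to $H=-\Delta+V$, use that $V$ is continuous, bounded below, and tends to $+\infty$ by \Cref{lem:Vbounds2}, and get a compact form-domain embedding from local Rellich compactness together with uniformly small $L^2$ mass outside a large sublevel set of $b$; you simply spell out the tail estimate, the cutoff, and the choice of self-adjoint realization, which the paper's sketch leaves implicit.
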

\begin{proof}
By Lemma~\ref{lem:conjugation}, it suffices to show $H=-\Delta+V$ has
purely discrete spectrum in $L^2(M,\dV)$. By
Lemma~\ref{lem:Vbounds2}, $V\to+\infty$. Since $f\in C^2(M)$, $V$ is
continuous, and is locally bounded below. Compactness of the quadratic-form embedding then proves the claim.
After adding a constant to $V$, the form norm controls $H^1$ on each
compact domain. Local Rellich compactness gives compactness there.
Since $V\to\infty$, a form-bounded family has uniformly small
$L^2$ mass outside a sufficiently large compact set. The global
embedding is compact. Unitary equivalence gives the same
conclusion for $L_f$.
\end{proof}

\subsection{\texorpdfstring{Agmon decay on $(M,g)$}{Agmon decay on (M,g)}}

\begin{definition}\label{def:agmon}
For eigenvalue $\lambda_k$, the classically allowed region is
\[
  \mathcal{A}_k := \{x\in M : V(x)\le\lambda_k\},
\]
which is compact by
Lemma~\ref{lem:Vbounds2}.

The Agmon metric associated with $\lambda_k$ is the (possibly degenerate)
Riemannian metric
\[
  ds^2_{\mathcal{A}} = (V(x)-\lambda_k)_+\,ds^2_g,
\]
where $(V-\lambda_k)_+=\max(V-\lambda_k,0)$.

The Agmon distance from $\mathcal{A}_k$ to $x$ is
\begin{equation}\label{eq:rho}
  \rho_{\lambda_k}(x)
  := \inf_\gamma \int_0^1 \sqrt{(V(\gamma(t))-\lambda_k)_+}\;|\dot\gamma(t)|_g\,dt,
\end{equation}
where the infimum is over piecewise $C^1$ curves $\gamma:[0,1]\to M$ with
$\gamma(0)\in\mathcal{A}_k$ and $\gamma(1)=x$, and $|\dot\gamma|_g$ denotes
the length of $\dot\gamma$ in the metric $g$.

It is standard that $\rho_{\lambda_k}$ is a $1$-Lipschitz function with respect to
$ds^2_{\mathcal{A}}$ and satisfies the eikonal inequality
$|\nabla_g\rho_{\lambda_k}|\le\sqrt{(V-\lambda_k)_+}$ a.e.~on $M$.
\end{definition}

\begin{proposition}\label{prop:agmon}
Under H1--H3 with $\alpha>1$, let $\psi_k=e^{-f/2}u_k\in L^2(M,\dV)$
satisfy $H\psi_k=\lambda_k\psi_k$. Then for every $\varepsilon\in(0,1)$,
\begin{equation}\label{eq:agmonL2}
  \int_M e^{2(1-\varepsilon)\rho_{\lambda_k}(x)}\bigl(\psi_k(x)^2+|\nabla\psi_k(x)|^2\bigr)\,\dV \;<\;\infty.
\end{equation}
\end{proposition}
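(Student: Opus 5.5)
The plan is the standard Agmon argument: a weighted energy identity with a bounded, truncated Agmon weight, followed by a limit removing the truncation. Fix $k$, write $\psi=\psi_k$, $\lambda=\lambda_k$, $\rho=\rho_{\lambda_k}$, and fix $\varepsilon\in(0,1)$. Two preliminary facts are needed. First, $\psi\in H^1(M)$ and $\int_M V_+\psi^2\,dV<\infty$; this holds because $\psi$ lies in the form domain of $H$ (\Cref{lem:conjugation}, \Cref{lem:discrete}) and $V$ is bounded below by \Cref{lem:Vbounds2}. Second, $\psi\in H^2_{\mathrm{loc}}$ by elliptic regularity, since $V$ is continuous.

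For $N>0$ set $\phi_N:=(1-\varepsilon)\min(\rho,N)$. Since $\mathcal A_k$ is compact and $\rho$ is locally Lipschitz, $\phi_N$ is bounded and Lipschitz, with $|\nabla\phi_N|\le(1-\varepsilon)\sqrt{(V-\lambda)_+}$ a.e. To avoid boundary terms from completeness, I also multiply by a cutoff $\chi_R$ with $\chi_R=1$ on $\{r(x)\le R\}$, support in $\{r(x)\le 2R\}$, and $|\nabla\chi_R|\le 2/R$. The basic identity is the usual IMS/Agmon identity: for real bounded Lipschitz $\phi$ and compactly supported Lipschitz $\chi$,
\[
\int_M\bigl|\nabla(e^{\phi}\chi\psi)\bigr|^2+(V-\lambda)e^{2\phi}\chi^2\psi^2\,dV
=\int_M|\nabla(e^{\phi}\chi)|^2\psi^2\,dV .
\]
It follows by testing $H\psi=\lambda\psi$ against $e^{2\phi}\chi^2\psi$ and integrating by parts. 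Expanding the right side, the term involving $e^{2\phi}\chi|\nabla\chi||\nabla\phi|\psi^2$ is absorbed by Cauchy--Schwarz, and every remaining $\nabla\chi_R$ contribution is $O(R^{-2})e^{2N}\|\psi\|_2^2\to0$ as $R\to\infty$. This uses the boundedness of $\phi_N$ and the fact that $\psi\in L^2$. Letting $R\to\infty$ by monotone convergence gives the identity with $\chi\equiv1$ and $\phi=\phi_N$.

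Next I split $M$ into $\Omega_\delta:=\{V-\lambda\ge\delta\}$ and its complement, where $\delta>0$ will be chosen below and $M\setminus\Omega_\delta$ is compact by \Cref{lem:Vbounds2}. On $\Omega_\delta$,
\[
(V-\lambda)-|\nabla\phi_N|^2\ge\bigl(1-(1-\varepsilon)^2\bigr)(V-\lambda)\ge\varepsilon\delta .
\]
On the compact set $K=M\setminus\Omega_\delta$, the function $\rho$ is bounded by some $\rho_K$ independent of $N$. Rearranging the identity therefore gives
\[
\varepsilon\delta\int_{\Omega_\delta}e^{2\phi_N}\psi^2\,dV
\le C\,e^{2(1-\varepsilon)\rho_K}\sup_K\bigl(|V-\lambda|+V_+\bigr)\int_K\psi^2\,dV ,
\]
and the right side is uniform in $N$. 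The gradient part is handled the same way. The identity bounds $\int|\nabla(e^{\phi_N}\psi)|^2$ uniformly in $N$, and
\[
e^{\phi_N}\nabla\psi=\nabla(e^{\phi_N}\psi)-\psi e^{\phi_N}\nabla\phi_N .
\]
The last term is controlled by $(V-\lambda)_+e^{2\phi_N}\psi^2$. That quantity is bounded by the left side of the identity after a first pass carried out with $\varepsilon/2$ in place of $\varepsilon$: one uses the strict gap $(1-\varepsilon/2)^2<1$, which leaves a positive multiple of $(V-\lambda)$ on $\Omega_\delta$. Monotone convergence as $N\to\infty$ then yields \eqref{eq:agmonL2}.

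I expect the main obstacle to be the justification of the integration by parts on a possibly exponentially growing, noncompact $M$ with the merely Lipschitz weight $\rho$. This is exactly why the weight is truncated at $N$ first. The compact cutoff $\chi_R$ must then be removed while $e^{2\phi_N}$ is still bounded, and only afterwards is $N\to\infty$ taken. The remaining points are routine: compactness of $\mathcal A_k$ and of $\{V\le\lambda+\delta\}$, which follow from \Cref{lem:Vbounds2} because $\alpha>1$, and the a.e. eikonal bound from \Cref{def:agmon}.
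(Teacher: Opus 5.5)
Your proposal is correct and follows essentially the paper's argument: the truncated weight $(1-\varepsilon)\min(\rho_{\lambda_k},N)$, the identity from testing against $e^{2\phi_N}\psi$, a bound uniform in $N$ from compactness of $\{V<\lambda+\delta\}$ and the gap $1-(1-\varepsilon)^2>0$, the product rule for $e^{\phi_N}\nabla\psi$, and monotone convergence. The differences are minor: the paper justifies the test function by noting that $e^{\phi_N}\psi$ and $e^{2\phi_N}\psi$ lie in the form domain, so your cutoff $\chi_R$ is not needed, and your extra pass with $\varepsilon/2$ is redundant, since the same gap already bounds $\int(V-\lambda)_+e^{2\phi_N}\psi^2$ uniformly in $N$.
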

\begin{proof}
By \Cref{lem:Vbounds2}, $V\to+\infty$. Since $V$ is continuous, it
is bounded below on $M$. Completeness implies essential self-adjointness
of $H$ \cite{BravermanMilatovicShubin2002}. We use the weighted energy
argument of \cite{Agmon1982,CyconFroeseKirschSimon1987,HislopSigal1996}.
Write $\psi=\psi_k$, $\lambda=\lambda_k$, and
$d=\rho_{\lambda_k}$. An $L^2$ eigenfunction belongs to the quadratic
form domain, so
\[
 \int_M\bigl(|\nabla\psi|^2+V_+\psi^2\bigr)\,dV<\infty,
 \qquad V_+:=\max(V,0).
\]
The allowed set $\mathcal A_k$ is nonempty. Otherwise
$\int_M(|\nabla\psi|^2+(V-\lambda)\psi^2)\,dV>0$, contrary to the
eigenfunction equation.

Set $\theta=1-\varepsilon$ and
$\eta_T=\theta\min(d,T)$, $\varphi_T=e^{\eta_T}$. The distance $d$
is locally Lipschitz and satisfies the eikonal inequality a.e.
The function $\varphi_T$ is bounded and
\[
 |\nabla\varphi_T|^2\le\theta^2(V-\lambda)_+\varphi_T^2
 \quad\text{a.e.}
\]
These bounds show that $\varphi_T\psi$ and $\varphi_T^2\psi$ belong
to the form domain. Testing the weak eigenfunction equation with
$\varphi_T^2\psi$ and expanding gives
\[
 \int_M|\nabla(\varphi_T\psi)|^2\,dV
 +\int_M(V-\lambda-|\nabla\eta_T|^2)\varphi_T^2\psi^2\,dV=0.
\]
On $\mathcal A_k$, $d=0$ and $\varphi_T=1$, so
\[
 \int_M|\nabla(\varphi_T\psi)|^2\,dV
 +(1-\theta^2)\int_M(V-\lambda)_+\varphi_T^2\psi^2\,dV
 \le C_k,
 \qquad C_k:=\int_{\mathcal A_k}(\lambda-V)\psi^2\,dV.
\]
Here $C_k<\infty$ is independent of $T$.
The set $K_k:=\{V\le\lambda+1\}$ is compact, and $d$ is bounded
there. On its complement $V-\lambda>1$. The preceding estimate bounds $\int_M\varphi_T^2\psi^2\,dV$
uniformly in $T$.
Moreover,
\[
 \varphi_T^2|\nabla\psi|^2
 \le2|\nabla(\varphi_T\psi)|^2
     +2|\nabla\eta_T|^2\varphi_T^2\psi^2.
\]
The same estimate bounds its integral uniformly in $T$.
Letting $T\to\infty$ by monotone convergence proves \eqref{eq:agmonL2}.
\end{proof}

\begin{lemma}\label{lem:agmonlb}
Under H1--H3 with $\alpha>1$, let $c_B$ be the constant of
\Cref{lem:Vbounds2}(a), so that $V(x)\ge c_B^2b(x)^{2\alpha-2}$ on
$\{b\ge R_0\}$ by \eqref{eq:Vboundsb}, and set
$r_{\mathrm{Ag}}=r_{\mathrm{Ag}}(\lambda_k):=(\lambda_k/c_B^2)^{1/(2\alpha-2)}$.
For every $x\in M$ with $b(x)=r\ge4\max(r_{\mathrm{Ag}},R_0)$,
\begin{equation}\label{eq:agmonlb}
  \rho_{\lambda_k}(x) \;\ge\; \beta_\alpha\,r^\alpha,
  \qquad
  \beta_\alpha := \frac{c_B\sqrt{1-2^{-(2\alpha-2)}}}{2C_0\alpha}.
\end{equation}
By \Cref{lem:discrete}, $\lambda_k\to\infty$. Hence
$r_{\mathrm{Ag}}\ge R_0$ for all but finitely many $k$, and the
threshold is then $4r_{\mathrm{Ag}}$.
\end{lemma}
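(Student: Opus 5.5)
Fix $x$ with $b(x)=r\ge4\max(r_{\mathrm{Ag}},R_0)$ and any piecewise $C^1$ curve $\gamma$ from a point of $\mathcal A_k$ to $x$. The plan is to bound the Agmon length of $\gamma$ from below using only the portion of $\gamma$ on which $b$ runs from $r/2$ up to $r$. First I would locate $\mathcal A_k$. On $\{b\ge R_0\}$, \eqref{eq:Vboundsb} gives $V\ge c_B^2b^{2\alpha-2}$, so any point with $b\ge\max(r_{\mathrm{Ag}},R_0)$ and strict inequality $b>r_{\mathrm{Ag}}$ has $V>\lambda_k$. Hence $\mathcal A_k\subset\{b\le\max(r_{\mathrm{Ag}},R_0)\}\subset\{b\le r/4\}$. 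By continuity of $b\circ\gamma$, there is a last time $t_0$ with $b(\gamma(t_0))=r/2$, and on $[t_0,1]$ the curve stays in $\{b\ge r/2\}$ and ends at $b=r$.

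On this final segment, $b\ge r/2\ge 2\max(r_{\mathrm{Ag}},R_0)$, so \eqref{eq:Vboundsb} applies. I would use it together with $\lambda_k=c_B^2r_{\mathrm{Ag}}^{2\alpha-2}$ to get
\[
 V-\lambda_k\ \ge\ c_B^2\bigl(b^{2\alpha-2}-r_{\mathrm{Ag}}^{2\alpha-2}\bigr)\ \ge\ c_B^2 b^{2\alpha-2}\bigl(1-2^{-(2\alpha-2)}\bigr),
\]
because $r_{\mathrm{Ag}}\le b/2$ there and $\alpha>1$. Hence $\sqrt{(V-\lambda_k)_+}\ge c_B\sqrt{1-2^{-(2\alpha-2)}}\,b^{\alpha-1}$ along the segment.

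Next I would convert $g$-length into change of $b$. Since $|\nabla b|\le C_0$ on the tail (\Cref{rem:gradb}), $\bigl|\tfrac{d}{dt}b(\gamma(t))\bigr|\le C_0|\dot\gamma|_g$. Therefore
\[
 \int_{t_0}^1\sqrt{(V-\lambda_k)_+}\,|\dot\gamma|_g\,dt\ \ge\ \frac{c_B\sqrt{1-2^{-(2\alpha-2)}}}{C_0}\int_{t_0}^1 b^{\alpha-1}\,\frac{d}{dt}b(\gamma)\,dt
 =\frac{c_B\sqrt{1-2^{-(2\alpha-2)}}}{C_0\alpha}\Bigl(r^\alpha-(r/2)^\alpha\Bigr).
\]
In this step the inequality $\int b^{\alpha-1}|\tfrac{d}{dt}b|\,dt\ge\int b^{\alpha-1}\tfrac{d}{dt}b\,dt$ handles non-monotone curves, and the chain rule for the piecewise $C^1$ composition is used.

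Finally, $1-2^{-\alpha}\ge\tfrac12$ for $\alpha\ge1$, so the right-hand side is at least $\beta_\alpha r^\alpha$. Taking the infimum over $\gamma$ gives \eqref{eq:agmonlb}. The closing remark follows because $\lambda_k\to\infty$ forces $r_{\mathrm{Ag}}\to\infty$. The only delicate points are the reduction to the last exit time from $\{b\le r/2\}$ and the bookkeeping of the factor $2$, which together produce the stated constant. Nothing beyond \Cref{lem:Vbounds2} and \Cref{rem:gradb} is needed.
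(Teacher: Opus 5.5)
Your proof is correct and follows the paper's argument. You locate $\mathcal A_k$ inside $\{b\le\max(r_{\mathrm{Ag}},R_0)\}$, take a last-exit time, bound $\sqrt{(V-\lambda_k)_+}$ below by $c_B\sqrt{1-2^{-(2\alpha-2)}}\,b^{\alpha-1}$ and $|\tfrac{d}{dt}b(\gamma)|$ above by $C_0|\dot\gamma|_g$, and integrate; the differences are cosmetic: you exit at level $r/2$ directly, whereas the paper exits at $\max(r_{\mathrm{Ag}},R_0)$ and then restricts to $[2\max(r_{\mathrm{Ag}},R_0),r]$, and you use the fundamental theorem of calculus with $|\tfrac{d}{dt}b|\ge\tfrac{d}{dt}b$ instead of the one-dimensional area formula, which even gives the slightly better factor $1-2^{-\alpha}\ge\tfrac12$.
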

\begin{proof}
Abbreviate $\tilde r:=\max(r_{\mathrm{Ag}},R_0)$. Let $\gamma$ be any admissible
path, $\gamma(0)\in\mathcal A_k$, $\gamma(1)=x$, and write $L(\gamma)$
for its Agmon length.

\emph{Step 1.} Every point of $\mathcal A_k$ has $b\le\tilde r$.
Indeed, if $b>\tilde r$, then \eqref{eq:Vboundsb} gives
$V>c_B^2r_{\mathrm{Ag}}^{2\alpha-2}=\lambda_k$.
Let $t_0$ be the last time at which $b(\gamma(t_0))=\tilde r$.
Such a time exists by continuity. On $[t_0,1]$ the path stays in
$\{b\ge\tilde r\}\subset\{b\ge R_0\}$, where $b$ is $C^2$ and H2 applies.
Write $s=b\circ\gamma$ on this interval. Then $s$ is absolutely
continuous and
$|s'|\le|\nabla b|\,|\dot\gamma|_g\le C_0|\dot\gamma|_g$ a.e.
Dropping the earlier part of the path gives
\[
  L(\gamma)\ge\frac1{C_0}\int_{t_0}^1
  \sqrt{(V(\gamma(t))-\lambda_k)_+}\,|s'(t)|\,dt.
\]

\emph{Step 2.} Since $s(t_0)=\tilde r$ and $s(1)=r$, the
one-dimensional area formula gives, for every nonnegative measurable
function $g$,
\[
  \int_{t_0}^1 g(s(t))|s'(t)|\,dt
  \ge\int_{\tilde r}^r g(\sigma)\,d\sigma.
\]
For $s\ge\tilde r$, \eqref{eq:Vboundsb} gives
$V\ge c_B^2s^{2\alpha-2}\ge\lambda_k$.
Taking $g(s)=c_B\sqrt{s^{2\alpha-2}-r_{\mathrm{Ag}}^{2\alpha-2}}$
in the preceding estimates gives
\[
  L(\gamma)\ge\frac{c_B}{C_0}\int_{\tilde r}^r
  \sqrt{s^{2\alpha-2}-r_{\mathrm{Ag}}^{2\alpha-2}}\,ds,
\]
using $\lambda_k=c_B^2r_{\mathrm{Ag}}^{2\alpha-2}$.

\emph{Step 3.} Restrict to the subinterval $[2\tilde r,r]$. The integrand is
nonnegative, so this only decreases the bound. For $s\ge2\tilde r\ge2r_{\mathrm{Ag}}$,
\[
  r_{\mathrm{Ag}}^{2\alpha-2}\le(s/2)^{2\alpha-2}
  =2^{-(2\alpha-2)}s^{2\alpha-2}.
\]
Thus
\[
  \sqrt{s^{2\alpha-2}-r_{\mathrm{Ag}}^{2\alpha-2}}
  \ge\sqrt{1-2^{-(2\alpha-2)}}\,s^{\alpha-1}.
\]
Hence
\[
  L(\gamma) \;\ge\; \frac{c_B\sqrt{1-2^{-(2\alpha-2)}}}{C_0}\int_{2\tilde r}^rs^{\alpha-1}\,ds
  \;=\; \frac{c_B\sqrt{1-2^{-(2\alpha-2)}}}{C_0\alpha}\bigl(r^\alpha-(2\tilde r)^\alpha\bigr).
\]
Since $r\ge4\tilde r$, $2\tilde r\le r/2$, so
$(2\tilde r)^\alpha\le(r/2)^\alpha\le r^\alpha/2$, giving
$r^\alpha-(2\tilde r)^\alpha\ge r^\alpha/2$. It follows that
$L(\gamma)\ge\beta_\alpha r^\alpha$. Taking the infimum over $\gamma$
gives \eqref{eq:agmonlb}.
\end{proof}

\begin{corollary}\label{cor:taildecay}
Under H1--H3 with $\alpha>1$, let the eigenfunctions $u_k$ be
normalized in $L^2(M,e^{-f}dV)$. For every $\varepsilon\in(0,1)$ there is
$C_{\varepsilon}>0$. It may depend on the fixed metric, weight, and
constants in H1--H3, but not on $k$. For every $k$ with $\lambda_k>0$
and all $r\ge4\max(r_{\mathrm{Ag}}(k),R_0)$,
\begin{equation}\label{eq:taildecayL2}
  \int_{\{b>r\}}u_k^2e^{-f}\,\dV \;\le\; C_{\varepsilon}\,e^{-2(1-\varepsilon)\beta_\alpha r^\alpha}.
\end{equation}
\end{corollary}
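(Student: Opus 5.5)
The plan is to rerun the weighted energy identity from the proof of \Cref{prop:agmon}, this time tracking the $k$-dependence of every constant, and then to combine it with the pointwise lower bound on $\rho_{\lambda_k}$ from \Cref{lem:agmonlb}.

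\emph{Setup and the energy inequality.} Set $\psi=\psi_k=e^{-f/2}u_k$, so that $\int_M\psi^2\,dV=1$ by unitarity (\Cref{lem:conjugation}). Put $\theta=1-\varepsilon$ and $\varphi_T=e^{\theta\min(\rho_{\lambda_k},T)}$. The proof of \Cref{prop:agmon} gives
\[
 (1-\theta^2)\int_M (V-\lambda_k)_+\varphi_T^2\psi^2\,dV\le C_k:=\int_{\mathcal A_k}(\lambda_k-V)\psi^2\,dV .
\]

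\emph{Bounding $C_k$.} Since $V$ is continuous and tends to $+\infty$, it is bounded below, say $V\ge -C_-$ on $M$. Hence $\lambda_k-V\le\lambda_k+C_-$, and the normalization gives $C_k\le\lambda_k+C_-$. Here $C_-$ depends only on $V$, not on $k$. This bound grows with $k$, so it must be offset by a matching lower bound on $V-\lambda_k$ in the tail.

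\emph{Lower bound on $V-\lambda_k$ in the tail.} Let $x$ satisfy $b(x)>r\ge4\max(r_{\mathrm{Ag}},R_0)$. By \eqref{eq:Vboundsb},
\[
V(x)\ge c_B^2b^{2\alpha-2}\ge 4^{2\alpha-2}c_B^2r_{\mathrm{Ag}}^{2\alpha-2}=4^{2\alpha-2}\lambda_k .
\]
Therefore
\[
V-\lambda_k\ge\bigl(1-4^{-(2\alpha-2)}\bigr)V\ge\bigl(1-4^{-(2\alpha-2)}\bigr)\max\bigl(4^{2\alpha-2}\lambda_k,\;c_B^2(4R_0)^{2\alpha-2}\bigr).
\]
Write $a_k$ for the right-hand side. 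Then
\[
\frac{\lambda_k+C_-}{a_k}\le C',
\]
with $C'$ independent of $k$. This holds because $a_k\gtrsim\lambda_k$ absorbs the $\lambda_k$ term, and the floor $a_k\ge c_B^2(4R_0)^{2\alpha-2}(1-4^{-(2\alpha-2)})>0$ absorbs $C_-$; equivalently, one may use $\lambda_k\ge\lambda_1>0$.

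\emph{Using the Agmon distance.} On the same region, \Cref{lem:agmonlb} gives $\rho_{\lambda_k}(x)\ge\beta_\alpha b(x)^\alpha\ge\beta_\alpha r^\alpha$. Hence, as soon as $T\ge\beta_\alpha r^\alpha$, we have $\varphi_T^2\ge e^{2\theta\beta_\alpha r^\alpha}$ on $\{b>r\}$.

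\emph{Conclusion.} Combining these bounds,
\[
 e^{2\theta\beta_\alpha r^\alpha}\int_{\{b>r\}}\psi^2\,dV\le\frac{1}{a_k}\int_{\{b>r\}}(V-\lambda_k)\varphi_T^2\psi^2\,dV\le\frac{\lambda_k+C_-}{(1-\theta^2)\,a_k}\le\frac{C'}{1-\theta^2}.
\]
Since $\psi^2=u_k^2e^{-f}$, the integral on the left is exactly $\int_{\{b>r\}}u_k^2e^{-f}\,dV$. This gives \eqref{eq:taildecayL2} with $C_\varepsilon=C'/(1-(1-\varepsilon)^2)$, and letting $T\to\infty$ is unnecessary.

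\emph{Main obstacle.} The only delicate point is the uniformity in $k$. The boundary constant $C_k$ grows like $\lambda_k$. This growth has to be cancelled by the fact that $V-\lambda_k\gtrsim\lambda_k$ beyond $4r_{\mathrm{Ag}}$, which is the reason the threshold is $4\max(r_{\mathrm{Ag}},R_0)$ rather than $r_{\mathrm{Ag}}$. I would also verify that the energy identity in \Cref{prop:agmon} holds with exactly the constant $C_k$ as written, with no $k$-dependent loss from the truncation.
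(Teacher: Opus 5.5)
Your proposal is correct and follows the paper's proof essentially step for step. It uses the same energy inequality with boundary term at most $\lambda_k-V_{\min}$, the same tail bound $V-\lambda_k\ge(4^{2\alpha-2}-1)\max(\lambda_k,c_B^2R_0^{2\alpha-2})$ to cancel the growth in $\lambda_k$, and the same appeal to \Cref{lem:agmonlb}. The only difference is cosmetic: you work directly on $\{b>r\}$ with a fixed $T\ge\beta_\alpha r^\alpha$, whereas the paper bounds the weighted integral over $\{b>4\max(r_{\mathrm{Ag}}(k),R_0)\}$ and passes $T\to\infty$ by monotone convergence.
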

\begin{proof}
We keep track of the constants in the weighted energy identity of
\Cref{prop:agmon}. Let $\psi=\psi_k=e^{-f/2}u_k$, so that
$\|\psi\|_{L^2(M,dV)}=1$, and set
$\varphi=e^{\theta\min(\rho_{\lambda_k},T)}$ with $\theta\in(0,1)$.
The form-domain argument in that proof gives
\begin{equation}\label{eq:groundstate}
  \int|\nabla(\varphi\psi)|^2\,dV + \int(V-\lambda_k)\varphi^2\psi^2\,dV
  \;=\; \int|\nabla\varphi|^2\psi^2\,dV.
\end{equation}
Let $V_{\min}:=\inf_MV> -\infty$. On $\mathcal A_k$ we have
$\rho_{\lambda_k}=0$ and $\varphi=1$, so
$\int_{\mathcal A_k}(\lambda_k-V)\varphi^2\psi^2\,dV
\le\lambda_k-V_{\min}$ by normalization.
The eikonal inequality gives
$|\nabla\varphi|^2\le\theta^2\varphi^2(V-\lambda_k)_+$ a.e.
Splitting the potential term at $\mathcal A_k$ and dropping the
nonnegative gradient term in \eqref{eq:groundstate} gives
\begin{equation}\label{eq:preunif}
  (1-\theta^2)\int_{\mathcal A_k^c}(V-\lambda_k)_+\varphi^2\psi^2\,dV
  \;\le\;\lambda_k-V_{\min}.
\end{equation}
Set $\Lambda_0:=c_B^2R_0^{2\alpha-2}>0$,
$\tilde r_k:=\max(r_{\mathrm{Ag}}(k),R_0)$, and
$\kappa_\alpha:=4^{2\alpha-2}-1>0$. On $\{b>4\tilde r_k\}$,
\Cref{lem:Vbounds2}(a) gives
\[
  V-\lambda_k
  \ge4^{2\alpha-2}\max(\lambda_k,\Lambda_0)-\lambda_k
  \ge\kappa_\alpha\max(\lambda_k,\Lambda_0).
\]
Using \eqref{eq:preunif} and then monotone convergence as $T\to\infty$,
\[
 \int_{\{b>4\tilde r_k\}}e^{2\theta\rho_{\lambda_k}}\psi_k^2\,dV
 \le\frac{\lambda_k-V_{\min}}
           {(1-\theta^2)\kappa_\alpha\max(\lambda_k,\Lambda_0)}
 \le\frac{1+(-V_{\min})_+/\Lambda_0}
           {(1-\theta^2)\kappa_\alpha}.
\]
One may take
\[
 C_\varepsilon:=
 \frac{1+(-V_{\min})_+/\Lambda_0}
      {\varepsilon(2-\varepsilon)\kappa_\alpha},
 \qquad \theta=1-\varepsilon.
\]
This constant is independent of $k$. By \Cref{lem:agmonlb},
$\rho_{\lambda_k}(x)\ge\beta_\alpha b(x)^\alpha$ whenever
$b(x)\ge4\tilde r_k$. For $r\ge4\tilde r_k$,
\[
  e^{2(1-\varepsilon)\beta_\alpha r^\alpha}
  \int_{\{b>r\}}\psi_k^2\,dV
  \le\int_{\{b>4\tilde r_k\}}
      e^{2(1-\varepsilon)\rho_{\lambda_k}}\psi_k^2\,dV
  \le C_\varepsilon.
\]
Since $\psi_k^2\,dV=u_k^2e^{-f}\,dV$, this proves \eqref{eq:taildecayL2}.
\end{proof}

\subsection{Transverse pinching produces zero-growth modes}\label{ssec:neckviolatesH4}

The following example shows that transverse geometry can create
high-energy eigenfunctions of growth order zero even when H1--H3
hold. In the natural separated eigenbasis this violates the
requirement $\gamma_k\to\infty$ in H4. Because H4 is stated for a
fixed eigenbasis, mixing vectors inside a repeated full eigenspace can
change the individual growth orders. The conclusion below is
stated for the separated eigenbasis. The global warped-product structure is
used to obtain reducing angular sectors and hence global separated
eigenfunctions. The estimates after that reduction use only the tail.

\begin{theorem}\label{thm:neckviolatesH4}
Let $(M^n,g)$, $n\ge2$, be a complete smooth exact warped product
as in \S\ref{sec:warped}, with closed connected fiber $N^{n-1}$. Let $f$ be radial and satisfy H1--H3 with $\alpha>1$.
Suppose the warping function tends to zero on every noncompact end.
Then there are separated eigenfunctions $u_{j,0}$ with
$\gamma_{j,0}=0$ and $\lambda_{j,0}\to\infty$.
Every eigenbasis chosen sector by sector fails H4.
\end{theorem}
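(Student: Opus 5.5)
Separation of variables on the warped product $g=dr^2+\varphi(r)^2g_N$ comes first. Fix an eigenfunction $\phi_j$ of $-\Delta_N$ with eigenvalue $\mu_j\to\infty$, $j\ge1$. Because $f$ is radial, the subspace $\{h(r)\phi_j(\theta)\}$ reduces $-L_f$. On it the operator becomes the one-dimensional Sturm--Liouville operator
\[
 -h''-\Theta h'+\frac{\mu_j}{\varphi^2}h,\qquad \Theta=f'-(n-1)\varphi'/\varphi,
\]
acting in $L^2(W\,dr)$ with $W=\varphi^{n-1}e^{-f}$. By \Cref{lem:discrete} each sector has discrete spectrum. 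Let $h_{j,0}$ be the ground state of sector $j$, with eigenvalue $\lambda_{j,0}$, and set $u_{j,0}=h_{j,0}\phi_j$.

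\emph{Energy lower bound.} The quadratic form on sector $j$ is $\int(h'^2+\mu_j\varphi^{-2}h^2)W\,dr$. Since $\varphi\to0$ on every end and $\varphi>0$ on compacts, we have $m:=\sup\varphi<\infty$. Hence $\lambda_{j,0}\ge\mu_j/m^2\to\infty$.

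\emph{Zero growth.} The main step is $\gamma_{j,0}=0$, i.e.\ $\int_{\Sigma_r}u_{j,0}^2\,d\sigma/\vol(\Sigma_r)=h_{j,0}(r)^2\|\phi_j\|^2_{L^2(N)}$ is bounded (here $b$ is a function of $r$, so level sets are fibers). I would show that $h_{j,0}$ is eventually monotone decreasing on each end.
\begin{itemize}
\item Write $Q_j:=\mu_j\varphi^{-2}-\lambda_{j,0}$. Since $\varphi\to0$, we have $Q_j>0$ for $r\ge r_j$.
\item Take the ground state positive. The equation $(Wh')'=WQ_jh$ shows that $Wh'$ is increasing wherever $r\ge r_j$.
\item Finite weighted energy and $L^2$ norm force $\liminf W|h'|h\to0$ along a sequence. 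Combined with the monotonicity of $Wh'$, this gives $Wh'\le0$ for $r\ge r_j$. Indeed, if $Wh'>0$ at some point, it would stay $\ge c>0$ beyond it, so $h$ would increase to a positive limit. That contradicts $h\in L^2(W)$ once one checks $\int^\infty W=\infty$ fails. So instead I would argue directly: $h$ increasing together with $Wh'\ge c$ gives $W|h'|h\ge c\,h(r_j)$, contradicting the vanishing liminf.
\item Hence $h$ is nonincreasing on the tail and $\sup_{r\ge R_0}h^2<\infty$, so $\gamma_{j,0}=0$ by \eqref{eq:gammak}.
\end{itemize}
The vanishing-liminf step is the one I expect to need care. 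It should follow from $\int(h'^2+h^2)W<\infty$ via Cauchy--Schwarz, $\int W|h'|h<\infty$, which forces $\liminf_{r\to\infty} r\,W|h'|h=0$ on each end. This part uses only the tail, as the statement promises.

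\emph{Conclusion.} Any eigenbasis chosen sector by sector contains an eigenbasis of sector $j$ for each $j$. The sector ground eigenspace is one-dimensional (Sturm--Liouville, or positivity of the ground state), so the basis contains $u_{j,0}$ up to sign. These give infinitely many basis elements with $\gamma=0$, indexed by unbounded $\lambda$. Hence $\gamma_k\not\to\infty$ in spectral order, and H4 fails.
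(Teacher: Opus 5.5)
Your proof is correct. It follows the paper's overall structure:
\begin{enumerate}
\item sector decomposition;
\item a positive sector ground state;
\item monotonicity of $Wh'$ on the tail where $\mu_j\varphi^{-2}>\lambda_{j,0}$, giving $h$ nonincreasing and bounded;
\item a min--max lower bound.
\end{enumerate}
The key step that rules out $Wh'(r_2)>0$ is done differently.

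\emph{The paper's route.} It uses only $h\in L^2(W\,dr)$. From $Wh'\ge c$ it derives $h\ge cA$ with $A(r)=\int_{r_2}^r ds/W(s)$. It then proves $\int A^2W\,dr=\infty$ by the substitution $t=A(r)$ and Cauchy--Schwarz, which uses $A\to\infty$.

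\emph{Your route.} You use that the eigenfunction lies in the form domain, so $\int Wh'^2\,dr\le\lambda_{j,0}<\infty$. Cauchy--Schwarz then makes $Whh'$ integrable. But $Wh'\ge c$ and $h\ge h(r_2)>0$ bound it below by a positive constant, which contradicts integrability. Equivalently, $(Whh')'=Wh'^2+Q_jWh^2\ge0$ on the tail, so this integrable increasing function must be $\le0$.

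\emph{Comparison.} Your version is shorter and avoids the auxiliary function $A$ and the change of variables. The paper's version needs no energy information, so it applies to any $L^2$ tail solution of the ODE. Your bound $\lambda_{j,0}\ge\mu_j/(\sup\varphi)^2$, read off the nonnegative drift form, is also slightly cleaner than the paper's Schr\"odinger bound carrying $V_{\min}$.

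\emph{Minor slips, none affecting the argument.}
\begin{itemize}
\item On the sector, $-L_f$ acts as $-h''+\Theta h'+\mu_j\varphi^{-2}h$; your displayed sign of $\Theta h'$ is wrong, although your divergence form $(Wh')'=WQ_jh$ is right.
\item The lower bound should read $h\ge h(r_2)$, not $h(r_j)$.
\item The slice average is $h^2\|\phi_j\|^2/\vol(N)$.
\item In the two-ended case a large level of $b$ is a union of two slices. Its normalized average is therefore a volume-weighted average of two bounded slice averages, not a single one.
\end{itemize}
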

\begin{proof}
\emph{Step 1.} For $u=p(r)\phi_j(\omega)$
with $-\Delta_N\phi_j=\mu_j\phi_j$, where
$0=\mu_0<\mu_1\le\mu_2\le\cdots\to\infty$ by Weyl's law on the
closed fiber $N$, the warped product Laplacian formula
$\Delta_Mu=u_{rr}+(n-1)\frac{\varphi'}\varphi u_r+\frac1{\varphi^2}\Delta_Nu$
and $L_fu=\Delta u-f'u_r$, for radial $f=f(r)$, give
\[
  L_fu = \left[p''+\Bigl((n-1)\frac{\varphi'}\varphi-f'\Bigr)p'
    -\frac{\mu_j}{\varphi^2}p\right]\phi_j.
\]
For each fixed $j$, $L_fu=-\lambda u$ therefore becomes a radial ODE for
$p$. There is no mode mixing, since the coefficients depend on $j$
only through the constant $\mu_j$.

\emph{Step 2.} Each angular sector inherits discreteness from $H$. By
Lemma~\ref{lem:conjugation}, $-L_f$ is unitarily equivalent, through
$\psi=e^{-f/2}u$, to $H=-\Delta+V$ on $L^2(M,\dV)$, where
$V=\frac14|\nabla f|^2-\frac12\Delta f$. By Lemma~\ref{lem:Vbounds2},
$V\to+\infty$ under H1--H3 alone, on any manifold satisfying them.
That lemma's proof uses only H3's bound on $\Delta f$ and the
resulting bounds on $|\nabla f|$, never $\varphi$. Lemma~\ref{lem:discrete}
then gives that $H$ has purely discrete spectrum on all of $M$.

Writing $\mathcal I$ for the base interval, separation gives
\[
L^2(M,\dV)=\bigoplus_{j\ge0}
L^2(\mathcal I,\varphi^{n-1}dr)\otimes\operatorname{span}\{\phi_j\}.
\]
The operator $H$ preserves this decomposition, since it commutes with
$-\Delta_N$. Write $H=\bigoplus_{j\ge0}H_j$. Since each angular
sector is a reducing subspace for $H$, the
restriction $H_j$ of the compact-resolvent operator $H$ to that
sector also has compact resolvent. Each $H_j$, $j\ge0$, has
purely discrete spectrum. In particular each sector,
including $j=0$, the radial one, has a well-defined lowest
eigenvalue $\lambda_{j,0}<\infty$, with ground state
$u_{j,0}=p_{j,0}(r)\phi_j(\omega)$.

\emph{Step 3.} Fix $j\ge1$ and use outward arclength $r$ on a fixed
end. Since $\varphi(r)\to0$ and $\mu_j>0$, there is $r_1$ such that
\[
  q_j(r):=\frac{\mu_j}{\varphi(r)^2}-\lambda_{j,0}>0
  \qquad\text{for all }r\ge r_1.
\]
Let
\[
  W(r):=\varphi(r)^{n-1}e^{-f(r)}.
\]
The radial sector equation from Step~1 can then be written in the
self-adjoint form
\begin{equation}\label{eq:neckWp}
  (W p')'=q_j Wp,
  \qquad p:=p_{j,0}.
\end{equation}
The lowest eigenfunction of the $j$-th one-dimensional sector has no
zeros. After changing sign, we may assume $p>0$.

\emph{Step 4.} The sector ground state is bounded on this end. By
\eqref{eq:neckWp},
\[
  (Wp')'=q_jWp>0
  \qquad (r\ge r_1),
\]
so $Wp'$ is strictly increasing there. We claim that $Wp'\le0$ on
$[r_1,\infty)$. Suppose instead that $Wp'(r_2)>0$ at some $r_2\ge r_1$.
Monotonicity then gives $Wp'(r)\ge c>0$ for every $r\ge r_2$, so
\[
  p'(r)\ge \frac{c}{W(r)},\qquad
  p(r)\ge c A(r),\qquad
  A(r):=\int_{r_2}^r\frac{ds}{W(s)},
\]
after decreasing $c$ if necessary. Since $f(r)\to\infty$ and
$\varphi(r)\to0$, one has $W(r)\to0$, so $A(r)\to\infty$.

We show that
\begin{equation}\label{eq:A2Wdiverges}
  \int_{r_2}^\infty A(r)^2W(r)\,dr=\infty.
\end{equation}
Indeed, put $t=A(r)$. Then $dt=dr/W(r)$, so
\[
  \int_{r_2}^\infty A(r)^2W(r)\,dr
  =\int_0^\infty t^2W(r(t))^2\,dt.
\]
If the latter integral were finite, Cauchy--Schwarz would give, for
any $T>0$,
\[
  \int_T^\infty W(r(t))\,dt
  \le
  \left(\int_T^\infty t^2W(r(t))^2\,dt\right)^{1/2}
  \left(\int_T^\infty t^{-2}\,dt\right)^{1/2}<\infty.
\]
On the other hand, by the change of variables $t=A(r)$,
\[
  \int_0^{A(R)}W(r(t))\,dt=R-r_2\longrightarrow\infty,
\]
a contradiction. This proves \eqref{eq:A2Wdiverges}.

Since $p\in L^2((r_2,\infty),W\,dr)$, the lower bound
$p(r)\ge cA(r)$ contradicts \eqref{eq:A2Wdiverges}. Hence $Wp'\le0$
on the tail. As $W>0$, $p'\le0$ there, so $p$ is bounded.

Apply this on every end. Normalize $\phi_j$ in $L^2(N)$. On a warped slice
$\{r\}\times N$,
\[
  \frac1{\vol(\{r\}\times N)}
  \int_{\{r\}\times N}u_{j,0}^2\,d\sigma
  =\frac{p(r)^2}{\vol(N)}.
\]
By H1 and H2, $f'(r)>0$ sufficiently far out on each end. A large
level of $b=f^{1/\alpha}$ is a union of one or two slices. The normalized average over the whole level set is a weighted average
of the bounded slice averages, with weights given by their volumes.
On the remaining compact region $u_{j,0}$ is bounded, so $N=0$
is admissible in H4, and
\[
  \gamma_{j,0}=0
\]
for every $j\ge1$.

\emph{Step 5.} The corresponding eigenvalues tend to infinity with
$j$. The warping function is bounded on the compact region and tends
to zero on every end. The supremum $\Phi:=\sup_{\mathcal I}\varphi$ is finite.
In the
Schr\"odinger realization of the $j$-th sector the additional angular
potential satisfies
\[
  \frac{\mu_j}{\varphi(r)^2}\ge \frac{\mu_j}{\Phi^2}.
\]
If $V_{\min}:=\inf_MV> -\infty$, the min--max principle yields
\[
  \lambda_{j,0}\ge \frac{\mu_j}{\Phi^2}+V_{\min}\longrightarrow\infty.
\]
The separated eigenfunctions $\{u_{j,0}\}_{j\ge1}$, each with
$\gamma_{j,0}=0$, occur at unbounded eigenvalues. Any spectral basis
chosen sector by sector has zero-growth vectors at
unbounded indices and fails H4.
\end{proof}

\begin{remark}
Theorem~\ref{thm:neckviolatesH4} requires $\varphi$ to tend to zero
on every noncompact end. It gives no obstruction
to a neck contained in a compact region. A compact change of
$\varphi$ leaves the tail equation of each fixed angular channel
unchanged. Whether the resulting separated eigenbasis satisfies H4
depends on that tail equation and is not addressed here. The
theorem concerns persistent transverse pinching, not the existence of
a finite neck.
\end{remark}

\section{Two Growth Filtrations}\label{sec:twofiltrations}

We compare the spectral filtration with the filtration by asymptotic
polynomial growth. The estimates below distinguish this growth order
from the finite-scale mass distribution.

\subsection{Motivation}

The two orderings can differ. In \Cref{prop:incompatibility},
angular energy produces eigenvalues much larger than the corresponding
growth degrees. The radial sectors studied later behave differently.

\subsection{The two filtrations}

\begin{definition}\label{def:filtrations}
Under H4, let $\lambda_k$ and $\gamma_k$ be the eigenvalues and
intrinsic growth orders defined there. The spectral filtration is
\[
  E_\lambda := \mathrm{span}\{u_k:\lambda_k\le\lambda\},\qquad\lambda\ge0.
\]
The growth filtration is
\[
  \mathcal P_N := \mathrm{span}\{u_k:\gamma_k\le N\},\qquad N\ge0.
\]
Both are nested. $E_{\lambda_1}\subset E_{\lambda_2}$ for
$\lambda_1\le\lambda_2$, and $\mathcal P_{N_1}\subset\mathcal P_{N_2}$
for $N_1\le N_2$.
\end{definition}

\begin{example}\label{ex:OU}
Let $M=\mathbb R^n$ and $f=|x|^2/4$. Then H1--H3 hold with
$\alpha=2$, $c_1=c_2=1/4$, $c_3=c_5=1$, any $c_4>0$, and any
$\eta\in(0,\tfrac12)$ once $R_0$ is large. The $SO(n)$-invariant sector is the setting for a radial eigenbasis
in \Cref{ass:transitive}. Its $j$-th eigenfunction is a degree-$j$
polynomial in $r^2$ with $\mu_j=j$, so $\gamma_j=2j$. On
$\mathbb R^1$ this is the full Hermite eigenbasis, with
$\lambda_k=k/2$ and $\gamma_k=k$. Either way, both sequences are
linear in their index. This is a special algebraic feature of the
Gaussian weight, which serves as the model case throughout.
\end{example}

\begin{proposition}
\label{prop:PN_structure}
Under H1--H4, the following hold. {\rm(i)} $d(N):=\dim\mathcal P_N<\infty$
for each $N$. {\rm(ii)} $\mathcal P_N\subset\mathcal P_{N+1}$.
{\rm(iii)} $\bigcup_N\mathcal P_N$ is dense in $L^2(M,e^{-f}\dV)$.
{\rm(iv)} $L_f(\mathcal P_N)=\mathcal P_N\ominus\mathbb Ru_0$, a
subspace of codimension $1$ in $\mathcal P_N$. The constant
eigenfunction $u_0$, with $\lambda_0=0$, is never in the image.
\end{proposition}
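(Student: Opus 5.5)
The plan is to read all four items directly off the structure of the eigenbasis fixed in \Cref{ass:H4}. By \Cref{def:filtrations}, $\mathcal P_N$ is the span of a subfamily of the orthonormal system $\{u_k\}$, so everything reduces to counting which indices $k$ satisfy $\gamma_k\le N$.

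For (i), I will use the requirement $\gamma_k\to+\infty$ in \Cref{ass:H4}. For a fixed $N$ there is then $k_N$ with $\gamma_k>N$ for all $k>k_N$. Hence $\{k:\gamma_k\le N\}\subset\{0,\dots,k_N\}$ is finite, and $d(N)\le k_N+1<\infty$. Since the $u_k$ are orthonormal, $d(N)$ equals the cardinality of this index set.

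Item (ii) is immediate: $\gamma_k\le N$ implies $\gamma_k\le N+1$, which is the nesting already noted after \Cref{def:filtrations}.

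For (iii), I will use that \Cref{ass:H4} requires $\gamma_k<\infty$ for every $k$. Each $u_k$ therefore lies in $\mathcal P_N$ for any integer $N\ge\gamma_k$. Consequently $\bigcup_N\mathcal P_N\supset\operatorname{span}\{u_k\}_{k\ge0}$. This span is dense because $\{u_k\}$ is a complete orthonormal system in $L^2(M,e^{-f}dV)$, again by \Cref{ass:H4}; for $\alpha>1$ this completeness also follows from \Cref{lem:discrete}, as in \Cref{rem:completeness}.

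For (iv), I will first check that $u_0\in\mathcal P_N$ for every $N\ge0$. Since $u_0$ is constant, the normalized level-set average in \eqref{eq:gammak} is constant in $r$, so $N=0$ is admissible and $\gamma_0=0$. Next, for $v=\sum_{\gamma_k\le N}a_ku_k$, the identity $L_fu_k=-\lambda_ku_k$ gives
\[
  L_fv=-\sum_{\gamma_k\le N}\lambda_k a_ku_k .
\]
\Cref{ass:H4} stipulates $0=\lambda_0<\lambda_1\le\cdots$, so $\lambda_k\neq0$ exactly when $k\ge1$; that is, $0$ is a simple eigenvalue in this basis. Hence the image $L_f(\mathcal P_N)$ is $\operatorname{span}\{u_k:k\ge1,\ \gamma_k\le N\}$, because each such $u_k=L_f(-\lambda_k^{-1}u_k)$. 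By orthonormality this space is exactly the orthogonal complement of $\mathbb Ru_0$ inside $\mathcal P_N$. It therefore has codimension $1$, since $d(N)\ge1$, and it does not contain $u_0$.

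The only step needing care is the simplicity of $\lambda_0$ together with $\gamma_0=0$. Both are built into \Cref{ass:H4}, and neither requires any analytic estimate, so I expect no genuine obstacle in this proof.
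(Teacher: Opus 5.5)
Your proof is correct and follows the paper's argument item by item: finiteness of $\{k:\gamma_k\le N\}$ from $\gamma_k\to\infty$, nesting from the definition, density from completeness of $\{u_k\}$, and the image computation using $\lambda_k>0$ for $k\ge1$. You are slightly more careful than the paper in two places. You check $\gamma_0=0$, so $u_0\in\mathcal P_N$ for every $N$, which is what makes the codimension exactly $1$. You also correctly attribute (iii) to the finiteness of each $\gamma_k$, whereas the paper cites $\gamma_k\to\infty$ there.
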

\begin{proof}
(i) By H4, $\gamma_k\to\infty$, so $\{k:\gamma_k\le N\}$ is finite.
(ii) This follows from the definition. (iii) By the spectral theorem, $\{u_k\}$ is a
complete orthonormal basis, and $\gamma_k\to\infty$ places every
$u_k$ in some $\mathcal P_N$. (iv) For $u=\sum_{\gamma_k\le N}c_ku_k$,
$L_fu=-\sum_{\gamma_k\le N}c_k\lambda_ku_k$. The $k=0$ term
contributes $0$, so $L_fu$ lies in
$\mathrm{span}\{u_k:\gamma_k\le N,\lambda_k>0\}=\mathcal P_N\ominus\mathbb Ru_0$.
Conversely, let $v=\sum_{\gamma_k\le N,\,\lambda_k>0}c_ku_k$, with no
$u_0$ component. Then $v=L_fu$ for
$u=-\sum_{\gamma_k\le N,\,\lambda_k>0}(c_k/\lambda_k)u_k\in\mathcal P_N$,
where the sum excludes $k=0$ and so avoids division by
$\lambda_0=0$. This proves equality with $\mathcal P_N\ominus\mathbb Ru_0$.
\end{proof}

\subsection{From filtrations to radii}\label{ssec:filtrationstoradii}

The spectral filtration provides a comparison scale through the
conjugated potential. The concentration radius depends on the full weighted mass of each
eigenfunction; its growth degree alone does not determine it.

\subsubsection{The turning radius}

Setting $\psi_k:=u_ke^{-f/2}$, the equation
$L_fu_k=-\lambda_ku_k$ becomes $-\Delta\psi_k+V\psi_k=\lambda_k\psi_k$,
where $V:=\tfrac14|\nabla f|^2-\tfrac12\Delta f$.
By \Cref{lem:Vbounds2}(a), valid for $\alpha>1$, $c_B^2r^{2\alpha-2}\le
V\le C_B^2r^{2\alpha-2}$ on $\Sigma_r$ for $r\ge R_0$, with
$c_B,C_B$ as given there.

\begin{definition}\label{def:Agmon}
For $\lambda>0$ define the \emph{Agmon safe radius}
\begin{equation}\label{eq:AgmonSafe}
  r_{\mathrm{Ag}}(\lambda)
  := \left(\frac{\lambda}{c_B^2}\right)^{1/(2\alpha-2)},
  \qquad r_{\mathrm{Ag}}(k):=r_{\mathrm{Ag}}(\lambda_k).
\end{equation}
For comparison, set
\begin{equation}\label{eq:AgmonInner}
  r_{\mathrm{Ag}}^-(\lambda)
  := \left(\frac{\lambda}{C_B^2}\right)^{1/(2\alpha-2)}.
\end{equation}
Then $r_{\mathrm{Ag}}^-(\lambda)\le r_{\mathrm{Ag}}(\lambda)$ and both are
comparable to $\lambda^{1/(2\alpha-2)}$.  On the tail $\{b\ge R_0\}$,
\[
 b<r_{\mathrm{Ag}}^-(\lambda) \Longrightarrow V<\lambda,
 \qquad
 b>r_{\mathrm{Ag}}(\lambda) \Longrightarrow V>\lambda.
\]
The radius $r_{\mathrm{Ag}}$ is the outer, or safe, radius beyond which the
potential is certainly in the forbidden regime.  When $V$ is radial,
any actual solution of $V(r)=\lambda$ lies between
$r_{\mathrm{Ag}}^-(\lambda)$ and $r_{\mathrm{Ag}}(\lambda)$.
\end{definition}

The radii above depend only on the potential bounds and $\lambda$, and are
independent of the eigenfunction. In the nonradial setting they are comparison
radii; the level sets of $V$ need not coincide with those of $b$.  The terminology comes from the Agmon distance and the exponential decay
estimates of \Cref{def:agmon} and \Cref{prop:agmon}. Outside
$r_{\mathrm{Ag}}(\lambda)$ the potential is uniformly above the energy.
Inside $r_{\mathrm{Ag}}^-(\lambda)$ it is uniformly below the energy on the tail.

\subsubsection{The concentration radius}

\begin{definition}\label{def:massdensity}
The concentration radius is
\begin{equation}\label{eq:medianDef}
  r_C(k) := \inf\Bigl\{r>0:\int_{\{b\le r\}}u_k^2e^{-f}\dV\ge\tfrac12\Bigr\}.
\end{equation}
For later use, set $m_k(r):=J_k(r)e^{-f(r)}$,
$J_k(r):=\vol(\Sigma_r)I_k(r)=\int_{\Sigma_r}u_k^2|\nabla b|\,d\sigma$.
The definition of $r_C(k)$ uses the cumulative mass $F$
(\Cref{def:FEPhi}), not the auxiliary quantity $m_k$.
The latter equals the mass density $F_k'$ only when $|\nabla b|\equiv1$.
In general it is comparable to $F_k'$ from both sides. See
\Cref{lem:IFprime} below. This comparison turns a bound on $m_k$
into a bound on the mass. H4 is useful here because it controls $I_k$.
We then obtain bounds on $F_k$ and $r_C(k)$, as in the proof of
\Cref{prop:moment}.
\end{definition}

\begin{lemma}\label{lem:PhiUm}
$\Phi(r)=U(r)m(r)/r$, where $m(r)$ is the mass density just defined,
and $\Phi$, $U$ are as in Definitions~\ref{def:FEPhi} and
\ref{def:freq} respectively.
\end{lemma}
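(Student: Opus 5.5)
This is a direct unwinding of the definitions, and I would carry it out at a regular level $r>R_0$ with $I(r)>0$. Start from \eqref{eq:Phidef}. Since $f\equiv r^\alpha$ is constant on $\Sigma_r$, the factor $e^{-f}$ comes out of the integral:
\[
  \Phi(r)=e^{-f(r)}\int_{\Sigma_r}u\,\partial_\nu u\,d\sigma
  =e^{-f(r)}\vol(\Sigma_r)D(r),
\]
using \eqref{eq:D}. This is the identity already recorded after \Cref{def:FEPhi}.

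Next, rewrite $D$ through the frequency. By \eqref{eq:U}, $D(r)=U(r)I(r)/r$ wherever $I(r)>0$. Substituting gives
\[
  \Phi(r)=\frac{U(r)}{r}\,e^{-f(r)}\vol(\Sigma_r)I(r).
\]
By \Cref{def:massdensity}, $\vol(\Sigma_r)I(r)=J(r)=\int_{\Sigma_r}u^2|\nabla b|\,d\sigma$, because the $\vol(\Sigma_r)$ normalization in \eqref{eq:I} cancels. Also $J(r)e^{-f(r)}=m(r)$. Together these give $\Phi(r)=U(r)m(r)/r$.

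The only point needing care is where $U$ is defined. On levels with $I(r)=0$, \Cref{lem:Ivanish} gives $u\equiv0$ on $\Sigma_r$. There $\Phi(r)=0$ and $m(r)=0$, so the identity holds in the form $\Phi=Um/r$ with either side read as $0$, or it can simply be stated only on $\{I>0\}$. By \Cref{lem:coarea}, every $r\ge R_0$ is a regular value, so no further exceptional set arises. I expect no real obstacle. The main thing to check is that the normalizations of $I$, $D$ and $J$ by $\vol(\Sigma_r)$ and by the weight $|\nabla b|$ match as stated.
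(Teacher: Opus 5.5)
Your proof is correct and follows the paper's argument: you pull out $e^{-f(r)}$ to get $\Phi=e^{-f(r)}\vol(\Sigma_r)D$, substitute $D=UI/r$, and recognize $e^{-f(r)}\vol(\Sigma_r)I=m$. Your justification for pulling out $e^{-f(r)}$ (that $f$ is constant on $\Sigma_r$) and your restriction to levels with $I>0$, where $U$ is defined, are reasonable clarifications that the paper leaves implicit.
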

\begin{proof}
By Definition~\ref{def:FEPhi},
\[
  \Phi(r)=e^{-f(r)}\vol(\Sigma_r)D(r).
\]
Definition~\ref{def:massdensity} gives
$m(r)=I(r)\vol(\Sigma_r)e^{-f(r)}$, while $U=rD/I$ implies $D=UI/r$.
Therefore
\[
  \Phi(r)=\frac{U(r)m(r)}r.
\]
\end{proof}

The following two results describe the limitations of using critical
points of $m_k$ to locate its mass.

\begin{lemma}\label{lem:IFprime}
Under H2, for all regular $r\ge R_0$,
\[
  c_0^2\,e^{f(r)}F'(r) \;\le\; I(r)\,\vol(\Sigma_r) \;\le\; C_0^2\,e^{f(r)}F'(r).
\]
\end{lemma}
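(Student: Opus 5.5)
The plan is to write $F'(r)$ as a level-set integral using the tail coarea formula of \Cref{lem:coarea}, and then compare the two weights $|\nabla b|$ and $|\nabla b|^{-1}$ pointwise on $\Sigma_r$ using \Cref{rem:gradb}.

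\emph{Step 1: a formula for $F$.} For $r>R_0$ I split
\[
F(r)=\int_{\{b\le R_0\}}u^2e^{-f}\,dV+\int_{\{R_0<b<r\}}u^2e^{-f}\,dV .
\]
The first term is a constant independent of $r$; the level set $\{b=R_0\}$ has measure zero because $|\nabla b|\ge c_0>0$ there. I apply \Cref{lem:coarea} with $\phi=u^2e^{-f}\ge0$ to the second term. Since $f\equiv r^\alpha$ on $\Sigma_s$, this gives
\[
F(r)=\mathrm{const}+\int_{R_0}^r e^{-f(s)}\,h(s)\,ds,\qquad h(s):=\int_{\Sigma_s}\frac{u^2}{|\nabla b|}\,d\sigma .
\]

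\emph{Step 2: differentiate.} I need $F'(r)=e^{-f(r)}h(r)$ at every regular $r\ge R_0$, not only almost everywhere. For this I show that $h$ is continuous on the tail. I use the $C^1$ flow of $X=\nabla b/|\nabla b|^2$, exactly as in \Cref{cor:weightfinite} and the discussion after \Cref{def:freq}. This flow maps $\Sigma_r$ diffeomorphically onto $\Sigma_s$ for $s$ near $r$. Pulled back by the flow, the integrand $u^2|\nabla b|^{-1}$ times the Jacobian varies continuously in $s$, because $u$ is continuous and $|\nabla b|\in[c_0,C_0]$. Since $\Sigma_r$ is compact, the integrals converge. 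The fundamental theorem of calculus then gives the derivative formula. This regularity point is the only step needing care: it is where an a.e. statement becomes a pointwise one. At $r=R_0$ I interpret $F'$ as the right derivative.

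\emph{Step 3: pointwise comparison.} On $\Sigma_r$ I write $u^2|\nabla b|=|\nabla b|^2\cdot u^2|\nabla b|^{-1}$. By \Cref{rem:gradb}, $c_0^2\le|\nabla b|^2\le C_0^2$. Integrating over $\Sigma_r$ gives
\[
c_0^2\,h(r)\le\int_{\Sigma_r}u^2|\nabla b|\,d\sigma=I(r)\vol(\Sigma_r)\le C_0^2\,h(r).
\]
Substituting $h(r)=e^{f(r)}F'(r)$ from Step 2 yields the claimed two-sided inequality.
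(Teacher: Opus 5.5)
Your proposal is correct and is essentially the paper's proof: both write $e^{f(r)}F'(r)=\int_{\Sigma_r}u^2|\nabla b|^{-1}\,d\sigma$ via the tail coarea formula, then integrate the pointwise bound $c_0^2\le|\nabla b|^2\le C_0^2$ against $u^2|\nabla b|^{-1}\,d\sigma$. Your Step 2 uses the continuity of the level-set integral along the flow of $\nabla b/|\nabla b|^2$ to get the derivative at every regular $r$ rather than only almost everywhere, which makes explicit a point the paper's two-line proof takes for granted.
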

\begin{proof}
Since $f=f(b)$ is radial, $f\equiv f(r)$ on $\Sigma_r$, so
\[
  I(r)\vol(\Sigma_r) = \int_{\Sigma_r}u^2|\nabla b|\,d\sigma, \qquad
  e^{f(r)}F'(r) = \int_{\Sigma_r}u^2\,\frac{1}{|\nabla b|}\,d\sigma.
\]
By Remark~\ref{rem:gradb}, $c_0\le|\nabla b|\le C_0$ pointwise, so
$c_0^2/|\nabla b|\le|\nabla b|\le C_0^2/|\nabla b|$. Integrating
against $u^2\,d\sigma$ proves the claim.
\end{proof}

\begin{definition}\label{def:growthprefactor}
For each eigenfunction set
\begin{equation}\label{eq:Bk}
  B_k:=\max\left\{1,\sup_{r\ge R_0}
  r^{-2(\gamma_k+1)}I_k(r)\right\}.
\end{equation}
Then $B_k<\infty$ by H4 and \Cref{lem:gammaequiv}, since
$\gamma_k+1>\gamma_k$. The number $B_k$ measures finite-scale size. H4 makes it finite for each
$k$, but does not compare these constants as $k$ varies. The number
$\gamma_k$ measures only the asymptotic polynomial degree.
\end{definition}

\begin{proposition}\label{prop:moment}
Under H1--H4, let
\[
  G_k:=\gamma_k+1,\qquad
  S_k:=G_k\log(e+G_k)+\log B_k.
\]
For every $p>0$, with $q:=p+2G_k$,
\begin{equation}\label{eq:momentbound}
  M_p(k):=\int_{R_0}^\infty r^p m_k(r)\,dr
  \le B_k C_M^{\,q+1}\Gamma\!\left(\frac{q+1}{\alpha}\right),
\end{equation}
where $C_M$ is the constant of \eqref{eq:volmoment}. Consequently
there is a constant $C$, independent of $k$, such that
\begin{equation}\label{eq:concUBmoment}
  r_C(k)\le C\left(R_0+S_k^{1/\alpha}\right).
\end{equation}
\end{proposition}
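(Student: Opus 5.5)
The plan has two parts: a moment bound, then a Markov/Chebyshev inequality for the median.

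\emph{Moment bound \eqref{eq:momentbound}.} By \Cref{def:growthprefactor}, $I_k(r)\le B_k r^{2G_k}$ for all $r\ge R_0$. Hence
\[
  m_k(r)=e^{-f(r)}\vol(\Sigma_r)I_k(r)\le B_k\,r^{2G_k}e^{-f(r)}\vol(\Sigma_r).
\]
Multiplying by $r^p$ and integrating over $(R_0,\infty)$ gives
$M_p(k)\le B_k\int_{R_0}^\infty r^{q}e^{-f(r)}\vol(\Sigma_r)\,dr$ with $q=p+2G_k$. The integral is bounded by \eqref{eq:volmoment} of \Cref{lem:volmoment}, applied with this $q\ge0$, and this proves \eqref{eq:momentbound} directly.

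\emph{Concentration radius.} First I convert $m_k$ into the true mass density. By \Cref{lem:IFprime}, $F_k'(r)\le c_0^{-2}m_k(r)$ for a.e. $r\ge R_0$. The tail coarea formula (\Cref{lem:coarea}) then gives, for $R\ge R_0$,
\[
  \int_{\{b>R\}}u_k^2e^{-f}\,dV=\int_R^\infty F_k'\,dr\le c_0^{-2}R^{-p}M_p(k).
\]
If $R\ge R_0$ makes the right side at most $\tfrac12$, then $F_k(R)\ge\tfrac12$ because $F_k(\infty)=1$, and so $r_C(k)\le R$. It therefore suffices to take
\[
  R=\max\Bigl\{R_0,\ \bigl(2c_0^{-2}B_kC_M^{q+1}\Gamma(\tfrac{q+1}{\alpha})\bigr)^{1/p}\Bigr\}.
\]

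\emph{Choice of $p$ (main step).} The point is to choose $p$ so that this $p$-th root is $\lesssim S_k^{1/\alpha}$ with a constant independent of $k$. I take $p:=G_k\log(e+G_k)+\log B_k=S_k$, which is positive since $B_k\ge1$ and $G_k\ge1$. Then $q=S_k+2G_k\le 3S_k$. Stirling's bound $\Gamma(x)\le x^{x}$ for $x\ge1$ (with $\Gamma$ bounded for $x\le1$) gives
\[
  \bigl(\Gamma(\tfrac{q+1}{\alpha})\bigr)^{1/p}\le \Bigl(\tfrac{q+1}{\alpha}\Bigr)^{(q+1)/(\alpha p)}\lesssim (S_k+1)^{(3S_k+1)/(\alpha S_k)}.
\]
This is not $\lesssim S_k^{1/\alpha}$: the exponent is about $3/\alpha$. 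So the powers must be balanced better. Instead I choose $p=\lambda S_k$ with a large fixed $\lambda$. Since $G_k\le S_k$, this gives $q\le(\lambda+2)S_k$ and $(q+1)/p\le 1+3/\lambda$. The $\Gamma$ factor then contributes $\lesssim S_k^{(1+3/\lambda)/\alpha}$.

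This is still off by the small power $S_k^{3/(\alpha\lambda)}$, which is the genuine obstacle. To remove it I would optimize $p$ more carefully, taking $p\asymp S_k\cdot\log$-factor, or else use that the $\log(e+G_k)$ in $S_k$ absorbs it. The key observation is that the loss comes from $r^{2G_k}$, so I write
\[
  \Gamma\Bigl(\tfrac{q+1}{\alpha}\Bigr)^{1/p}\le C\,q^{q/(\alpha p)}=C\,q^{1/\alpha}\,q^{2G_k/(\alpha p)}.
\]
With $p=S_k\ge G_k\log(e+G_k)$ and $q\lesssim S_k$, the last factor is
\[
  \exp\Bigl(\tfrac{2G_k\log q}{\alpha S_k}\Bigr)\le \exp\Bigl(C\tfrac{G_k(\log S_k)}{G_k\log(e+G_k)+\log B_k}\Bigr).
\]
This is bounded uniformly, because $\log S_k\lesssim\log(e+G_k)+\log\log B_k$, and the $\log\log B_k$ term is dominated by $\log B_k/G_k$ up to constants. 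This is exactly why $S_k$ is defined with the $\log(e+G_k)$ factor.

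The remaining factors are harmless: $(B_k)^{1/p}\le e$, $C_M^{(q+1)/p}\le C_M^{4}$, and $(2c_0^{-2})^{1/p}$ is bounded because $p\ge1$. Combining these gives $R\le C(R_0+S_k^{1/\alpha})$ with $C$ depending only on $C_M$, $c_0$ and $\alpha$, which is \eqref{eq:concUBmoment}. The careful bookkeeping in this last case analysis (large $B_k$ versus large $G_k$) is the step I expect to require the most care.
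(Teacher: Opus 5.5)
Your argument is correct and essentially the paper's. It has the same moment bound from $I_k\le B_kr^{2G_k}$ and \eqref{eq:volmoment}, the same Markov inequality for the tail mass via $c_0^2F_k'\le m_k$, and a choice $p\asymp S_k$ (the paper takes $p=2S_k$), which keeps $B_k^{1/p}$ and $C_M^{(q+1)/p}$ bounded while Stirling leaves $S_k^{1/\alpha}\exp\bigl(CG_k\log S_k/S_k\bigr)$. The one loose point is your justification of that last factor: $\log\log B_k$ is not always $\lesssim\log B_k/G_k$ (take $\log B_k=G_k$), but the needed bound $G_k\log(e+S_k)\lesssim S_k$ does hold. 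The cleanest proof is the paper's: $\log(e+s)/s$ is decreasing and $S_k\ge G_k\log(e+G_k)$, so $G_k\log(e+S_k)/S_k\le\log\bigl(e+G_k\log(e+G_k)\bigr)/\log(e+G_k)\le C$.
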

\begin{proof}
By \eqref{eq:Bk},
$I_k(r)\le B_kr^{2G_k}$ for $r\ge R_0$. Thus
\[
  m_k(r)\le B_kr^{2G_k}e^{-f(r)}\vol(\Sigma_r).
\]
With $q=p+2G_k$, \eqref{eq:volmoment} gives
\[
  M_p(k)\le B_k\int_{R_0}^\infty
  r^q e^{-f(r)}\vol(\Sigma_r)\,dr
  \le B_k C_M^{q+1}\Gamma\!\left(\frac{q+1}{\alpha}\right),
\]
which proves \eqref{eq:momentbound}.

Set $b_k:=\log B_k\ge0$ and choose
\[
  p:=2S_k=2\bigl(G_k\log(e+G_k)+b_k\bigr).
\]
Then $p\ge2G_k$ and $B_k^{1/p}\le e^{1/2}$. Put
$x:=(q+1)/\alpha$. Since $S_k\ge G_k\log(e+G_k)$,
\[
  \frac{q+1}{p}=1+\frac{2G_k+1}{2S_k}\le2,
  \qquad x\asymp_\alpha S_k.
\]
Stirling's bound gives, after enlarging a constant depending only on
$\alpha$ and $C_M$,
\[
  M_p(k)^{1/p}
  \le C S_k^{1/\alpha}
  \exp\left(C\frac{G_k\log(e+S_k)}{S_k}\right).
\]
The exponential factor is uniformly bounded. Indeed,
$S_k\ge G_k\log(e+G_k)$ and the function
$\log(e+s)/s$ is decreasing for $s>0$, so
\[
  \frac{G_k\log(e+S_k)}{S_k}
  \le \frac{\log(e+G_k\log(e+G_k))}{\log(e+G_k)}\le C.
\]
Thus
\begin{equation}\label{eq:MpSk}
  M_p(k)^{1/p}\le C S_k^{1/\alpha}.
\end{equation}

It remains to pass from the moment to the median radius. By
\Cref{lem:IFprime},
$c_0^2F'(r)\le m_k(r)$ for $r\ge R_0$. Hence
\[
  \int_0^\infty r^pF'(r)\,dr
  \le R_0^p+c_0^{-2}M_p(k).
\]
For $t\ge R_0$,
\[
  1-F(t)\le t^{-p}\left(R_0^p+c_0^{-2}M_p(k)\right).
\]
Choose
$t^p=2R_0^p+2c_0^{-2}M_p(k)$. Then $F(t)\ge1/2$, so
$r_C(k)\le t$. Since $p\ge1$,
\[
  r_C(k)\le4\max(1,c_0^{-2})
  \left(R_0+M_p(k)^{1/p}\right).
\]
Now use \eqref{eq:MpSk}. This proves \eqref{eq:concUBmoment}.
\end{proof}

\medskip
\noindent\textbf{Conditional consequence.}
If
\begin{equation}\label{eq:Bksubexp}
  \log B_k\lesssim G_k\log(e+G_k),
\end{equation}
with an implicit constant independent of $k$, then
$S_k\lesssim G_k\log(e+G_k)$. Hence \Cref{prop:moment} gives
\[
  r_C(k)\lesssim\bigl(\gamma_k\log(e+\gamma_k)\bigr)^{1/\alpha}
\]
for all sufficiently large $k$.

\begin{remark}\label{rem:logsharp}
H4 shows that $B_k<\infty$ for every $k$, but it does not give an
estimate for $B_k$ that is uniform in $k$. The constants in the growth
bound defining $\gamma_k$ are allowed to depend on the eigenfunction.
In \Cref{prop:moment}, the finite-scale prefactor enters only through
$\log B_k$.
\end{remark}

\begin{remark}\label{rem:normalizedH4}
A stronger normalized growth bound produces the model scale without the
logarithmic loss. Suppose there are constants $C,\kappa>0$, independent
of $k$, such that, for all sufficiently large $k$,
\begin{equation}\label{eq:normalizedH4}
  I_k(r)\le C\left(\frac{\kappa r}{\gamma_k^{1/\alpha}}\right)^{2\gamma_k}
  \qquad(r\ge R_0).
\end{equation}
Then
\[
  m_k(r)\le C\kappa^{2\gamma_k}\gamma_k^{-2\gamma_k/\alpha}
  r^{2\gamma_k}e^{-f(r)}\vol(\Sigma_r).
\]
An argument similar to the proof of \Cref{prop:moment} applies with
$p=2\gamma_k$. Then
$q=4\gamma_k$ and $x=(4\gamma_k+1)/\alpha$. Stirling then gives
\[
  M_p(k)^{1/p}\lesssim
  \gamma_k^{-1/\alpha}(x/e)^{x/p}
  \lesssim\gamma_k^{1/\alpha}.
\]
This proves $r_C(k)\lesssim\gamma_k^{1/\alpha}$. This additional growth bound
is sufficient. It does not follow from H4.
\end{remark}

\begin{remark}\label{rem:concLBmissing}
H4 is an asymptotic growth condition. It does not by itself prevent
an eigenfunction from placing a large fraction of its weighted mass
well inside the scale $\gamma_k^{1/\alpha}$. Such finite-scale
behavior does not change $\gamma_k$. A lower bound
$r_C(k)\gtrsim\gamma_k^{1/\alpha}$ needs information beyond
the definition of H4. The moment condition H5 below provides such information.
\end{remark}

\begin{openquestion}\label{prob:LLPunconditional}
Do H1--H4 imply the Lower Localization Property?
A related question is whether they imply
$\log B_k\lesssim G_k\log(e+G_k)$ for the prefactors in \eqref{eq:Bk}.
The latter would give an upper concentration bound in terms of the
growth order alone.
\end{openquestion}

\subsection{Compatibility}\label{ssec:compatibility}

\begin{definition}\label{def:compatibility}
Assume $\alpha>1$. The spectral and growth filtrations are compatible if there are constants
$0<c\le C<\infty$ and an index $K$ such that the following estimate holds.
The constants are independent of $k$, but may depend on the fixed manifold,
weight, and constants in the standing hypotheses.
\begin{equation}\label{eq:compatibilitygoal}
  c\,\gamma_k^{(2\alpha-2)/\alpha} \;\le\; \lambda_k \;\le\; C\,\gamma_k^{(2\alpha-2)/\alpha}
  \quad\text{for all } k\ge K.
\end{equation}
This asymptotic formulation is essential: H4 allows finitely many positive
eigenfunctions with $\gamma_k=0$, and the compact core is unconstrained by
H1--H3.
\end{definition}

Compatibility is stated in terms of $(\lambda_k,\gamma_k)$ rather
than the radii $r_C(k),r_{\mathrm{Ag}}(k)$. By \Cref{def:Agmon}, the safe radius satisfies
$r_{\mathrm{Ag}}(k)\asymp\lambda_k^{1/(2\alpha-2)}$. A model growth
profile $u_k^2\sim r^{2\gamma_k}$ produces the model density
$r^{2\gamma_k}e^{-r^\alpha}$, whose peak lies at
$r\asymp\gamma_k^{1/\alpha}$. Matching these two model scales leads to
\eqref{eq:compatibilitygoal}. This calculation does not locate the
true mass of $u_k$. The actual concentration radius depends also on
finite-scale information, as \Cref{prop:moment} makes explicit.
LLP is the lower localization estimate needed for the spectral comparison.

\begin{proposition}\label{prop:incompatibility}
There is a complete smooth rotationally symmetric weighted surface
satisfying H1--H4 with $\alpha=2$ for which $\lambda_k/\gamma_k$ is
unbounded as $k\to\infty$. Its radial eigenfunctions form a compatible subsequence.
\end{proposition}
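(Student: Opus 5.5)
We construct the surface explicitly as a rotationally symmetric metric on $\R^2$ whose circles of latitude converge to a fixed length, with the Gaussian weight. Take $M=\R^2$, $g=dr^2+\varphi(r)^2d\theta^2$ and $f=r^2/4$ outside a compact set. Choose $\varphi$ smooth with $\varphi(r)=r$ near $0$ and
\[
\varphi(r)^2=a^2\Bigl(1-\frac{1}{1+r}\Bigr)\qquad(r\ge R_0),
\]
so that $\varphi\nearrow a$ on the tail.

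\emph{Standing hypotheses.} Then $b=2\sqrt f=r$ on the tail, and H1, H2 hold as in \Cref{ex:OU}. For H3, the formula
\[
\Delta f=f''+\frac{\varphi'}{\varphi}f'=\tfrac12+O(r^{-1})
\]
gives both bounds of H3 for large $R_0$. The radial density $\varphi e^{-f}$ has $-\log$ quadratic. Discreteness follows from \Cref{lem:discrete}.

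\emph{Separation of variables.} As in Step 1 of \Cref{thm:neckviolatesH4}, write $u=p(r)e^{ij\theta}$. With $\Theta=\varphi'/\varphi-f'=-\tfrac r2+O(r^{-2})$, the sector equation is
\[
p''+\Theta p'+\Bigl(\lambda-\frac{j^2}{\varphi^2}\Bigr)p=0 .
\]
On the tail,
\[
\frac{j^2}{\varphi^2}=\frac{j^2}{a^2}+\frac{j^2}{a^2}\,\varepsilon(r),\qquad \varepsilon(r)=O(r^{-1}).
\]

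\emph{Exact growth orders.} We compute $\gamma_{j,m}$ exactly in each sector. The equation is an Ornstein--Uhlenbeck equation with spectral parameter $\mu:=\lambda-j^2/a^2$, perturbed by a term $O(j^2r^{-1})p$. This perturbation is integrable against $ds/s$, as is the $O(r^{-2})$ correction to $\Theta$. An asymptotic (Levinson/WKB) analysis at infinity shows there are two solutions, behaving like $r^{2\mu}(1+o(1))$ and like $e^{r^2/4}r^{-2\mu-1}(1+o(1))$. The $L^2(e^{-f}\varphi\,dr)$ eigenfunction must be the first. Since level-set averages of $|e^{ij\theta}|^2$ equal $1$, \Cref{lem:gammaequiv} gives
\[
\gamma_{j,m}=2\Bigl(\lambda_{j,m}-\frac{j^2}{a^2}\Bigr)=:2E_{j,m}
\]
for every $j$ and $m$. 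For $j=0$ this reads $\gamma=2\lambda$, so the radial sector is a compatible subsequence.

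\emph{Size of the excess $E_{j,0}$.} We show $E_{j,0}\asymp j^{4/3}$. For a lower bound, conjugate as in \Cref{lem:conjugation}. The sector potential minus $j^2/a^2$ is bounded below by a multiple of
\[
\frac{r^2}{16}+\frac{j^2}{a^2}\varepsilon(r)\gtrsim \frac{r^2}{16}+\frac{j^2}{r},
\]
including near the origin, where $j^2/\varphi^2\ge j^2/r^2$. The minimum of this is $\asymp j^{4/3}$, so min--max gives $E_{j,0}\gtrsim j^{4/3}$. For an upper bound, use a trial function supported near $r\asymp j^{2/3}$, which gives $E_{j,0}\lesssim j^{4/3}$.

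\emph{Conclusion.} Along the ground states of the sectors,
\[
\frac{\lambda_{j,0}}{\gamma_{j,0}}\asymp\frac{j^2}{j^{4/3}}\to\infty .
\]

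\emph{Verification of H4.} This is the main obstacle. Consider the modes with $\lambda_{j,m}\le\Lambda$, and fix a threshold $T$. Every such mode has either $j\le j_T$, where $j_T$ is defined by $E_{j_T,0}\approx T$, or else $\gamma_{j,m}\ge 2E_{j,0}>2T$, using $E_{j,m}\ge E_{j,0}$. In each of the finitely many sectors $j\le j_T$, the one-dimensional spectrum is discrete and $\gamma_{j,m}=2E_{j,m}\to\infty$ as $m\to\infty$. Hence only finitely many modes have $\gamma\le T$, and $\gamma_k\to\infty$ in spectral order.

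The delicate points are the following.
\begin{enumerate}[label=\rm(\roman*)]
\item The asymptotic analysis must be carried out uniformly enough that the exact identity $\gamma=2E$ holds in every sector. Here the integrability of $\varepsilon(r)/r$ is essential.
\item The ground-state excess must be shown to be sublinear in $j^2$ but unbounded. If $j^{4/3}$ proves awkward near the origin, I would instead choose $\varepsilon$ with a different power $r^{-\delta}$, $0<\delta\le1$. This still gives an excess $\asymp j^{4/(2+\delta)}$, which is unbounded and $o(j^2)$.
\end{enumerate}
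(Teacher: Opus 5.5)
Your argument is correct, but it takes a different route from the paper. The paper also uses a cigar-type surface, $g=dr^2+\frac{r^2}{1+ar^2}d\theta^2$, whose circles approach length $2\pi a^{-1/2}$.

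\textbf{The paper's route.} It tunes the weight, $f=\frac{r^2}{2}-\frac12\log(1+ar^2)$, so that $e^{-f}dV_g=re^{-r^2/2}\,dr\,d\theta$ and $L_f=L_{\rm OU}+a\partial_\theta^2$ exactly. The eigenbasis is then explicit: $r^\ell L_m^{(\ell)}(r^2/2)Y_\ell$, with $\lambda_{m,\ell}=2m+\ell+a\ell^2$ and $\gamma_{m,\ell}=2m+\ell$. H4 follows from $\lambda_{m,\ell}\le d+ad^2$ with $d=2m+\ell$, and the ratio along $m=0$ is $1+a\ell$.

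\textbf{Your route.} You keep the simple weight $f=r^2/4$ and let $\varphi\to a$ at rate $1/r$. Since the spectrum is then not explicit, you need two analytic inputs:
\begin{enumerate}
\item a Levinson/Riccati asymptotic giving $\gamma_{j,m}=2(\lambda_{j,m}-j^2/a^2)$;
\item a variational two-sided bound $E_{j,0}\asymp j^{4/3}$.
\end{enumerate}
You obtain the slower divergence $\lambda/\gamma\asymp j^{2/3}$.

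\textbf{What each buys.} Your version shows the mechanism is robust and not an artifact of exact solvability. The bounded circle length produces angular energy of size $j^2/a^2$ that the growth order does not see. Your formula $j^{4/(2+\delta)}$ at $\delta=2$ reproduces the paper's linear excess, matching its centrifugal profile $\frac1{\varphi^2}-a=\frac1{r^2}$. The paper's version buys exactness:
\begin{itemize}
\item it needs no ODE asymptotics;
\item irrationality of $a$ excludes accidental degeneracies, so the unbounded ratio holds for every eigenbasis;
\item the explicit Laguerre structure is reused to verify H5 in Remark~\ref{rem:H5moment}.
\end{itemize}
You prove unboundedness only for the separated basis. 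That suffices for the statement, since H4 is attached to a chosen eigenbasis.

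\textbf{Two small points.}
\begin{itemize}
\item Your delicate point (i) is not actually an issue. $\gamma$ is defined eigenfunction by eigenfunction, so the asymptotics are needed only for each fixed $(j,m)$. The only input that must be uniform in $j$ is the bound on $E_{j,0}$.
\item You should choose $\varphi$ increasing, so that $\varphi<a$ everywhere. You use this both for $E_{j,m}>0$ and for the lower bound $\frac{j^2}{\varphi^2}-\frac{j^2}{a^2}\gtrsim j^2/r$ on the compact core. Near the pole, the $-\frac1{4r^2}$ term of the Liouville potential must be absorbed by $j^2/r^2$, which works for $j\ge1$.
\end{itemize}
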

\begin{proof}
Fix an irrational $a\in(0,1)$. On $\R^2$, let $r$ denote distance
from the origin, which is the pole of the rotationally symmetric metric, and set
\[
  g=dr^2+\frac{r^2}{1+ar^2}\,d\theta^2,
  \qquad f=\frac{r^2}{2}-\frac12\log(1+ar^2).
\]
Write
\[
  \varphi(r):=\frac{r}{\sqrt{1+ar^2}},
  \qquad g=dr^2+\varphi(r)^2d\theta^2.
\]
Since $\varphi(r)=r-\frac a2r^3+O(r^5)$ at the origin, the metric
extends smoothly across the pole. It is complete because the radial
coordinate has infinite length. Also
\[
  f'(r)=r-\frac{ar}{1+ar^2}
       =r\left(1-\frac{a}{1+ar^2}\right)\ge(1-a)r,
\]
so $f\ge0$. On the tail,
\[
  f(r)=\frac12r^2+O(\log r),
  \qquad f'(r)=r+O(r^{-1}),
\]
and hence $f\asymp r^2$ and $|\nabla f|^2\asymp r^2$. Moreover,
\[
  \frac{\varphi'}{\varphi}=\frac{1}{r(1+ar^2)},
  \qquad
  \Delta_g f=f''+\frac{\varphi'}{\varphi}f'=1+O(r^{-2}).
\]
After increasing $R_0$ if necessary, these estimates give H1--H3
with $\alpha=2$.

The weighted measure and the drift operator can be written explicitly.
Since
\[
  dV_g=\frac{r}{\sqrt{1+ar^2}}\,dr\,d\theta,
  \qquad
  e^{-f}=e^{-r^2/2}\sqrt{1+ar^2},
\]
we have
\[
  e^{-f}dV_g=r e^{-r^2/2}\,dr\,d\theta.
\]
Also
\[
  \Delta_g
  =\partial_r^2+\frac{1}{r(1+ar^2)}\partial_r
   +\left(\frac1{r^2}+a\right)\partial_\theta^2.
\]
Using the formula for $f'$ above,
\[
  \frac{1}{r(1+ar^2)}-f'(r)=\frac1r-r.
\]
Therefore
\[
  L_f
  =\partial_r^2+\left(\frac1r-r\right)\partial_r
   +\frac1{r^2}\partial_\theta^2+a\partial_\theta^2
  =L_{\rm OU}+a\partial_\theta^2,
\]
where $L_{\rm OU}=\Delta_{\R^2}-r\partial_r$.

Let $-Y_\ell''=\ell^2Y_\ell$. Put $z=r^2/2$ and
\[
  R_{m,\ell}(r):=r^\ell L_m^{(\ell)}(z).
\]
The Laguerre equation
\[
  z(L_m^{(\ell)})''+(\ell+1-z)(L_m^{(\ell)})'
  +mL_m^{(\ell)}=0
\]
gives, after substitution in the polar form of $L_{\rm OU}$,
\[
  L_{\rm OU}\bigl(R_{m,\ell}Y_\ell\bigr)
  =-(2m+\ell)R_{m,\ell}Y_\ell.
\]
Together with this decomposition, this yields
\[
  u_{m,\ell}=c_{m,\ell}r^\ell L_m^{(\ell)}(r^2/2)Y_\ell(\theta),
  \qquad
  \lambda_{m,\ell}=2m+\ell+a\ell^2,
  \quad m,\ell\ge0.
\]
For $\ell\ge1$, include both sine and cosine angular modes.
Using the weighted measure above, Fourier orthogonality in
$\theta$ and Laguerre orthogonality in $z$ show that these functions
form a complete orthonormal basis; see
\cite[\S\S18.2--18.3, 18.8]{DLMF}.

Set $d:=2m+\ell$. Since $L_m^{(\ell)}$ has degree $m$ with nonzero
leading coefficient, the radial factor in $u_{m,\ell}$ has degree
$d$ in $r$. Let $r(\rho)$ be the radial coordinate of the level
$\Sigma_\rho=\{b=\rho\}$. Since $b=\sqrt f\asymp r$, we have
$r(\rho)\asymp\rho$. The normalized circle average is therefore
\[
  \frac1{\vol(\Sigma_\rho)}\int_{\Sigma_\rho}u_{m,\ell}^2\,d\sigma
  \asymp r(\rho)^{2d}\asymp\rho^{2d}
  \qquad(\rho\to\infty),
\]
where the angular average of $Y_\ell^2$ is a positive constant.
Thus
\[
  \gamma_{m,\ell}=d=2m+\ell.
\]
For every $\Lambda<\infty$, the inequalities
$2m\le\lambda_{m,\ell}\le\Lambda$ and
$a\ell^2\le\lambda_{m,\ell}\le\Lambda$ leave only finitely many
pairs $(m,\ell)$. Moreover,
\[
  \lambda_{m,\ell}=d+a\ell^2\le d+ad^2.
\]
Hence $\lambda_{m,\ell}\to\infty$ in spectral order forces
$d=\gamma_{m,\ell}\to\infty$. This proves H4 for the displayed
eigenbasis.

The failure of compatibility comes from the angular modes:
\[
  \frac{\lambda_{0,\ell}}{\gamma_{0,\ell}}
  =1+a\ell\longrightarrow\infty.
\]
On the other hand, the radial modes satisfy
\[
  \lambda_{m,0}=\gamma_{m,0}=2m,
\]
so they form a compatible subsequence.

It remains to check that the failure does not depend on the choice of
eigenbasis inside repeated eigenspaces. If
$\lambda_{m,\ell}=\lambda_{m',\ell'}$, then
\[
  2(m-m')+(\ell-\ell')+a(\ell^2-\ell'^2)=0.
\]
Since $a$ is irrational, $\ell\ne\ell'$ is impossible. Thus
$\ell=\ell'$ and then $m=m'$. The only multiplicity is the sine--cosine
multiplicity for a fixed pair $(m,\ell)$ with $\ell\ge1$. Every
nonzero linear combination of these two angular modes has the same
radial factor and the same growth order $2m+\ell$. Hence the
unbounded ratio above is independent of the eigenbasis.
\end{proof}

\begin{openquestion}\label{q:compatiblesubsequence}
Under H1--H4 with $\alpha>1$, must the eigenbasis in H4 contain
indices $k_j\to\infty$ such that
\[
  \lambda_{k_j}\asymp\gamma_{k_j}^{(2\alpha-2)/\alpha},
\]
with comparison constants independent of $j$?
\end{openquestion}

\subsection{Comparing the two radii}\label{ssec:comparingradii}

\begin{theorem}\label{thm:rCleqrAg}
Under H1--H4 with $\alpha>1$, there is $C=C(\alpha,c_1,\ldots,c_5,\eta)$
and $K_0$, which depends on the operator $L_f$ and not only on
$\alpha,c_1,\ldots,c_5,\eta$, such that
\begin{equation}\label{eq:rCleqrAg}
  r_C(k) \;\le\; C\,r_{\mathrm{Ag}}(k) \qquad\text{for all } k\ge K_0.
\end{equation}
\end{theorem}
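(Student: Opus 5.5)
The plan is to read off \eqref{eq:rCleqrAg} from the uniform tail decay of \Cref{cor:taildecay}, using the fact that the constant $C_\varepsilon$ there is independent of $k$. Fix $\varepsilon=\tfrac12$ and write $\tilde r_k:=\max(r_{\mathrm{Ag}}(k),R_0)$. By \Cref{lem:discrete}, $\lambda_k\to\infty$, so there is $K_0$ with $r_{\mathrm{Ag}}(k)\ge R_0$ for $k\ge K_0$. For such $k$ we have $\tilde r_k=r_{\mathrm{Ag}}(k)$. \Cref{cor:taildecay} then gives, for every $r\ge4r_{\mathrm{Ag}}(k)$,
\[
  1-F_k(r)=\int_{\{b>r\}}u_k^2e^{-f}\,dV\le C_{1/2}\,e^{-\beta_\alpha r^\alpha}.
\]
Here we use $F_k(\infty)=1$ from the normalization.

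Next, choose $r=\Lambda\,r_{\mathrm{Ag}}(k)$ with a constant $\Lambda\ge4$ to be fixed. The right-hand side becomes $C_{1/2}\exp(-\beta_\alpha\Lambda^\alpha r_{\mathrm{Ag}}(k)^\alpha)$. Since $r_{\mathrm{Ag}}(k)\ge R_0\ge1$ for $k\ge K_0$, this is at most $C_{1/2}e^{-\beta_\alpha\Lambda^\alpha}$. Now pick $\Lambda\ge4$ large enough that $C_{1/2}e^{-\beta_\alpha\Lambda^\alpha}<\tfrac12$. Then $F_k(\Lambda r_{\mathrm{Ag}}(k))>\tfrac12$, and the definition of the median radius gives $r_C(k)\le\Lambda r_{\mathrm{Ag}}(k)$.

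It remains to track what $\Lambda$ depends on. The exponent $\beta_\alpha$ depends only on $c_B$, $C_0$ and $\alpha$, that is, on $\alpha$, $c_3$, $c_5$ and $\eta$. The constant $C_{1/2}$, on the other hand, involves $V_{\min}$ and $\Lambda_0=c_B^2R_0^{2\alpha-2}$. This is the main obstacle to getting $C=C(\alpha,c_1,\dots,c_5,\eta)$ as stated, because $V_{\min}$ depends on the core of $f$, not only on the hypothesis constants. To remove this dependence, I would take $K_0$ larger so that $\lambda_k\ge\max\{1,2(-V_{\min})_+\}$. I would then redo the estimate \eqref{eq:preunif} with $\lambda_k$ in place of $\max(\lambda_k,\Lambda_0)$. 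The resulting bound is $(\lambda_k-V_{\min})/((1-\theta^2)\kappa_\alpha\lambda_k)\le 3/(2(1-\theta^2)\kappa_\alpha)$, which depends only on $\alpha$. In the exponent I would use $r_{\mathrm{Ag}}(k)^\alpha\ge1$ together with the explicit $\beta_\alpha$. With these changes, $\Lambda$, and hence $C$, depends only on $\alpha,c_1,\dots,c_5,\eta$, while all dependence on the operator $L_f$ is moved into $K_0$, exactly as the statement allows.
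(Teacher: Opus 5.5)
Your argument is correct and uses the same ingredients as the paper's proof: \Cref{cor:taildecay} with $\varepsilon=\tfrac12$ together with the definition of $r_C(k)$. However, the obstacle you identify in your last paragraph is not a real one.

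The paper evaluates the tail bound directly at $r=4r_{\mathrm{Ag}}(k)$. Since $r_{\mathrm{Ag}}(k)\asymp\lambda_k^{1/(2\alpha-2)}\to\infty$, the quantity $C_{1/2}\,e^{-\beta_\alpha(4r_{\mathrm{Ag}}(k))^\alpha}$ falls below $\tfrac12$ for all $k\ge K_0$. So the whole operator dependence of $C_{1/2}$, through $V_{\min}$ and $\Lambda_0$, is absorbed into $K_0$, and one gets $C=4$ outright.

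You lose this when you replace $r_{\mathrm{Ag}}(k)^\alpha$ by $1$ in the exponent. That throws away exactly the growth that does the work. It forces $\Lambda$ to absorb $C_{1/2}$, and so you must re-derive \eqref{eq:preunif} with a core-independent constant.

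That re-derivation is valid:
\begin{itemize}
\item On $\{b>4r_{\mathrm{Ag}}(k)\}$ one has $V-\lambda_k\ge\kappa_\alpha\lambda_k$.
\item The condition $\lambda_k\ge2(-V_{\min})_+$ gives $\lambda_k-V_{\min}\le\tfrac32\lambda_k$.
\item Hence the weighted tail integral is bounded by a constant depending only on $\alpha$.
\end{itemize}
This yields a slightly stronger by-product, namely $1-F_k(r)\le C(\alpha)\,e^{-\beta_\alpha r^\alpha}$ for $r\ge4r_{\mathrm{Ag}}(k)$ and $k\ge K_0$. The theorem does not need it, and the paper's route is both shorter and gives the explicit constant $C=4$.
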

\begin{proof}
Fix $\varepsilon=1/2$ in \Cref{cor:taildecay}. Take $k$ large enough that $r_{\mathrm{Ag}}(k)\ge R_0$.
This requirement is included in the choice of $K_0$ below.
Set $r=4r_{\mathrm{Ag}}(k)$ in \eqref{eq:taildecayL2}. Then
\[
  1-F_k\bigl(4r_{\mathrm{Ag}}(k)\bigr) \;=\; \int_{\{b>4r_{\mathrm{Ag}}(k)\}}u_k^2e^{-f}\,\dV
  \;\le\; C_{1/2}\,e^{-\beta_\alpha(4r_{\mathrm{Ag}}(k))^\alpha}.
\]
By \eqref{eq:AgmonSafe},
$r_{\mathrm{Ag}}(k)=c_B^{-1/(\alpha-1)}\lambda_k^{1/(2\alpha-2)}$,
so $r_{\mathrm{Ag}}(k)\to\infty$ by H4. The
right side tends to $0$, and there is $K_0$ with
$F_k(4r_{\mathrm{Ag}}(k))>1/2$ for all $k\ge K_0$. By definition of
$r_C(k)$, this gives $r_C(k)\le4r_{\mathrm{Ag}}(k)$ for all $k\ge K_0$,
which is \eqref{eq:rCleqrAg} with $C=4$.
\end{proof}

\begin{remark}\label{rem:K0scope}
H1--H3 are asymptotic hypotheses, stated only for $r\ge R_0$.
They do not restrict the geometry of $(M,g)$ or the weight $f$
on the compact core $\{r<R_0\}$. The finitely many low eigenvalues and eigenfunctions depend on that core.
For $k<K_0$, there is no reason to expect
$r_C(k)/r_{\mathrm{Ag}}(k)$ to be bounded by a constant depending only on
$\alpha,c_1,\ldots,c_5,\eta$. Two operators sharing
identical asymptotic constants can have arbitrarily different spectra
in this range, by varying the unconstrained geometry of the compact
core. The threshold $K_0$ in \Cref{thm:rCleqrAg} depends on the
operator. We have not proved that this dependence is unavoidable,
but we expect a universal $K_0$ to fail without control of the
compact core.
\end{remark}

\begin{remark}\label{rem:compatapriori}
We compare $r_C$ and $r_{\mathrm{Ag}}$ as follows. LLP gives
$\gamma_k^{1/\alpha}\lesssim r_C(k)$, and
$r_{\mathrm{Ag}}(k)\asymp\lambda_k^{1/(2\alpha-2)}$. The unconditional
comparison proved here is
$r_C(k)\lesssim r_{\mathrm{Ag}}(k)$ for all sufficiently large $k$,
\Cref{thm:rCleqrAg}. These two bounds give the lower spectral
inequality in Compatibility. The reverse inequality needs a separate argument. The moment estimate \Cref{prop:moment} provides another upper bound
on $r_C(k)$ when an upper estimate for the finite-scale prefactor $B_k$
is available. H4 gives $B_k<\infty$ for each $k$, but not a bound of
this kind that is uniform in $k$.
\end{remark}

\section{The Localization Problem, and a Sufficient Hypothesis for
LLP}\label{sec:localization}\label{sec:compatibility}

\subsection{The Lower Localization Property and the Reduction Theorem}

\begin{definition}\label{def:LLP}
$\{u_k\}$ satisfies the Lower Localization Property if there exist
$c>0$ and an index $K$, independent of $k$, such that
\begin{equation}\label{eq:LLP}
  r_C(k) \;\ge\; c\,\gamma_k^{1/\alpha} \qquad\text{for all }k\ge K.
\end{equation}
\end{definition}

\begin{theorem}\label{thm:reduction}
Under H1--H4 with $\alpha>1$, the following hold.
\begin{enumerate}[label=\rm(\alph*)]
\item Unconditionally,
  $r_C(k)\lesssim r_{\mathrm{Ag}}(k)$ for all sufficiently large $k$.
  The independent moment estimate is \eqref{eq:concUBmoment}.
\item If LLP holds, then
  $\lambda_k\gtrsim\gamma_k^{(2\alpha-2)/\alpha}$.
\item The reverse inequality fails in general
  (\Cref{prop:incompatibility}). Under \Cref{ass:transitive} and
  \eqref{eq:Dregularity}, it holds for the invariant sector
  (\Cref{cor:SL-compat}).
\end{enumerate}
\end{theorem}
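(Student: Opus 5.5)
The proof will assemble results already established; the plan follows the three parts in order.

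For (a), I will quote \Cref{thm:rCleqrAg} directly. It gives $r_C(k)\le 4r_{\mathrm{Ag}}(k)$ for all $k\ge K_0$. Its proof used \Cref{cor:taildecay} with $\varepsilon=1/2$, evaluated at $r=4r_{\mathrm{Ag}}(k)$, together with $\lambda_k\to\infty$. That divergence holds by H4 or, for $\alpha>1$, by \Cref{lem:discrete}. The independent moment bound \eqref{eq:concUBmoment} is exactly \Cref{prop:moment}, so nothing new is needed here.

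For (b), I will chain LLP with part (a). For $k\ge\max(K,K_0)$,
\[
  c\,\gamma_k^{1/\alpha}\le r_C(k)\le 4\,r_{\mathrm{Ag}}(k)
  =4\,c_B^{-1/(\alpha-1)}\lambda_k^{1/(2\alpha-2)}.
\]
Raising to the power $2\alpha-2>0$ gives
$\lambda_k\ge (c/4)^{2\alpha-2}c_B^{2}\,\gamma_k^{(2\alpha-2)/\alpha}$. The only care point is the exponent bookkeeping; I will check that $r_{\mathrm{Ag}}^{2\alpha-2}=\lambda_k/c_B^2$ from \eqref{eq:AgmonSafe}. The finitely many $k$ below the threshold are excluded, as permitted by the asymptotic formulation in \Cref{def:compatibility}. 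The constant depends only on $c$, $c_B$ and $\alpha$, while the threshold index also depends on $K_0$, as noted in \Cref{rem:K0scope}.

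For (c), the failure of the reverse inequality is \Cref{prop:incompatibility}. There $\alpha=2$, so the exponent $(2\alpha-2)/\alpha$ equals $1$. The modes $u_{0,\ell}$ have $\lambda_{0,\ell}/\gamma_{0,\ell}=1+a\ell\to\infty$, and this holds for every eigenbasis. Hence no bound $\lambda_k\le C\gamma_k^{(2\alpha-2)/\alpha}$ holds for all large $k$. The positive statement for the invariant sector is forward-referenced to \Cref{cor:SL-compat}, so I will cite it rather than prove it here. The main subtlety, rather than an obstacle, lies in (b). LLP is phrased in terms of $\gamma_k$ for the same eigenbasis and the same index as part (a), so the two thresholds must be combined with a common index set. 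Taking the maximum of $K$ and $K_0$ handles this.
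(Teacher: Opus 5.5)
Your proposal is correct and follows the paper's proof essentially verbatim. Part (a) cites \Cref{thm:rCleqrAg} together with \Cref{prop:moment}; part (b) chains LLP with (a) and $r_{\mathrm{Ag}}(k)\asymp\lambda_k^{1/(2\alpha-2)}$ from \Cref{def:Agmon}; part (c) cites \Cref{prop:incompatibility} and \Cref{cor:SL-compat}. Your explicit constant $(c/4)^{2\alpha-2}c_B^{2}$ and the common threshold $\max(K,K_0)$ simply make the paper's $\lesssim$ bookkeeping concrete.
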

\begin{proof}
Part (a) is \Cref{thm:rCleqrAg}, together with
\Cref{prop:moment}. If LLP holds, then
$\gamma_k^{1/\alpha}\lesssim r_C(k)\lesssim r_{\mathrm{Ag}}(k)$.
By \Cref{def:Agmon},
$r_{\mathrm{Ag}}(k)\asymp\lambda_k^{1/(2\alpha-2)}$. Combining these estimates gives
$\gamma_k^{1/\alpha}\lesssim\lambda_k^{1/(2\alpha-2)}$, which is
part (b). Part (c) follows from
\Cref{prop:incompatibility} and \Cref{cor:SL-compat}.
\end{proof}

\subsection{A two-moment condition for LLP}

For each normalized eigenfunction, let
\[
  d\pi_k:=u_k^2e^{-f}\,dV,\qquad
  \overline f_k:=\int_M f\,d\pi_k,\qquad
  s_k^2:=\int_M(f-\overline f_k)^2\,d\pi_k.
\]
Here $\pi_k$ is a probability measure and $s_k$ is the standard
deviation of $f$ with respect to $\pi_k$. These moments are finite,
since on the tail H4 and H2 give a polynomial bound for the coarea
integrand, and \Cref{lem:volmoment} integrates every polynomial power
against the weighted level-set volume. The compact core causes no
problem because $f$ is bounded there.

\begingroup
\renewcommand{\theassumption}{H5}
\renewcommand{\theHassumption}{H5}
\begin{assumption}[Two-moment localization]\label{ass:H5}
In addition to H1--H4, there are constants $c_H>0$ and $K_H$ such that
\begin{equation}\label{eq:H5moment}
  \overline f_k-s_k\ge c_H\gamma_k
  \qquad\text{for all }k\ge K_H.
\end{equation}
\end{assumption}
\endgroup
\addtocounter{assumption}{-1}

\begin{proposition}\label{prop:H5LLP}
Under H1--H5,
\[
  r_C(k)^\alpha\ge \overline f_k-s_k\ge c_H\gamma_k
  \qquad(k\ge K_H).
\]
Hence LLP holds with constant $c_H^{1/\alpha}$.
\end{proposition}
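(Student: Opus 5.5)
The plan is to read $r_C(k)^\alpha$ as a median of $f$ under $\pi_k$, and then use the elementary fact that a median cannot lie more than one standard deviation below the mean. Because $b=f^{1/\alpha}$ is monotone in $f$, we have $\{b\le r\}=\{f\le r^\alpha\}$ for $r>0$. Hence
\[
  F_k(r)=\int_{\{b\le r\}}u_k^2e^{-f}\,dV=\pi_k\bigl(\{f\le r^\alpha\}\bigr).
\]
Substituting $t=r^\alpha$ in \eqref{eq:medianDef} then gives
\[
  r_C(k)^\alpha=\inf\bigl\{t>0:\pi_k(\{f\le t\})\ge\tfrac12\bigr\}.
\]
This is the lower median of the random variable $f$ under the probability measure $\pi_k$. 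The moments $\overline f_k$ and $s_k$ are finite, as noted just before \Cref{ass:H5}.

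The main step is a one-sided Chebyshev (Cantelli) bound. Put $a:=\overline f_k-s_k$. If $a\le0$, the inequality $r_C(k)^\alpha\ge a$ is trivial, so assume $a>0$, and fix $t$ with $0<t<a$. Write $\delta:=\overline f_k-t$, so that $\delta>s_k\ge0$. For every $\mu\ge0$, Markov's inequality applied to $(\overline f_k-f+\mu)^2$ on the event $\{f\le t\}$ gives
\[
  \pi_k(\{f\le t\})\le\frac{s_k^2+\mu^2}{(\delta+\mu)^2}.
\]
Choosing $\mu=s_k^2/\delta$ yields
\[
  \pi_k(\{f\le t\})\le\frac{s_k^2}{s_k^2+\delta^2}.
\]
The right side is strictly less than $\tfrac12$, because $\delta>s_k$. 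When $s_k=0$ the bound reads $\pi_k(\{f\le t\})=0$, so that case is covered as well. Thus no $t<a$ belongs to the set defining the infimum, and therefore $r_C(k)^\alpha\ge a=\overline f_k-s_k$.

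Finally, for $k\ge K_H$, \eqref{eq:H5moment} gives $\overline f_k-s_k\ge c_H\gamma_k$. Taking $\alpha$-th roots yields
\[
  r_C(k)\ge c_H^{1/\alpha}\gamma_k^{1/\alpha},
\]
which is \eqref{eq:LLP} with $c=c_H^{1/\alpha}$ and $K=K_H$.

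The argument uses no spectral information beyond the definition of $\pi_k$. The only places needing care are the strict inequality at the median, where Cantelli is applied with $\delta>s_k$, and the identification of the sublevel sets of $b$ with those of $f$. The latter holds on all of $M$ because $f\ge0$, with no tail restriction required.
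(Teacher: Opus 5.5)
Your proof is correct and follows the paper's argument essentially verbatim. You identify $r_C(k)^\alpha$ with the lower median of $f$ under $\pi_k$, and you apply the one-sided Chebyshev (Cantelli) bound with the optimal shift $s_k^2/\delta$, which gives $\pi_k\{f\le t\}\le s_k^2/(s_k^2+\delta^2)<\tfrac12$ for every $t<\overline f_k-s_k$; your explicit handling of $\{b\le r\}=\{f\le r^\alpha\}$ and of the cases $\overline f_k-s_k\le0$ and $s_k=0$ only spells out what the paper states briefly.
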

\begin{proof}
Fix $k$ and write $\mu:=\overline f_k$ and $s:=s_k$. If $s=0$, then
$f=\mu$ $\pi_k$-almost everywhere, and the first inequality is immediate.
Suppose $s>0$, and let $t<\mu-s$. Put $d:=\mu-t>s$ and
$a:=s^2/d$. On $\{f\le t\}$,
$|(f-\mu)-a|\ge d+a$. Applying the preceding estimate gives
\[
  \pi_k\{f\le t\}
  \le \frac{\int_M((f-\mu)-a)^2\,d\pi_k}{(d+a)^2}
  =\frac{s^2+a^2}{(d+a)^2}
  =\frac{s^2}{s^2+d^2}<\frac12.
\]
The lower median of $f$ is at least $\mu-s$. Since $f=b^\alpha$,
this median is $r_C(k)^\alpha$. The second inequality is H5.
\end{proof}

\begin{proposition}\label{prop:H5consequence}
Under H1--H5,
\[
  r_C(k)\gtrsim\gamma_k^{1/\alpha}
\]
for all sufficiently large $k$. If $\alpha>1$, then also
\[
  \lambda_k\gtrsim\gamma_k^{(2\alpha-2)/\alpha}.
\]
If, in addition, \eqref{eq:Bksubexp} holds, then
\[
  \gamma_k^{1/\alpha}\lesssim r_C(k)
  \lesssim\bigl(\gamma_k\log(e+\gamma_k)\bigr)^{1/\alpha}.
\]
\end{proposition}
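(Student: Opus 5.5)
The plan is to chain the results already established, so the argument is essentially bookkeeping.

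\emph{First claim.} \Cref{prop:H5LLP} gives $r_C(k)^\alpha\ge c_H\gamma_k$ for $k\ge K_H$. Taking $\alpha$-th roots gives $r_C(k)\ge c_H^{1/\alpha}\gamma_k^{1/\alpha}$ for all $k\ge K_H$. This is LLP (\Cref{def:LLP}) with $c=c_H^{1/\alpha}$ and $K=K_H$. The one point to check is that the lower median of $f$ under $\pi_k$ really equals $r_C(k)^\alpha$. This holds because $\{b\le r\}=\{f\le r^\alpha\}$ and $t\mapsto t^\alpha$ is increasing, which was already used in that proposition.

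\emph{Second claim.} Assume $\alpha>1$. Invoke \Cref{thm:rCleqrAg}: $r_C(k)\le 4r_{\mathrm{Ag}}(k)$ for $k\ge K_0$. By \eqref{eq:AgmonSafe}, $r_{\mathrm{Ag}}(k)=(\lambda_k/c_B^2)^{1/(2\alpha-2)}$. For $k\ge\max(K_0,K_H)$, combining with the first claim gives
\[
  c_H^{1/\alpha}\gamma_k^{1/\alpha}\le 4c_B^{-1/(\alpha-1)}\lambda_k^{1/(2\alpha-2)}.
\]
Raising both sides to the power $2\alpha-2>0$ yields $\lambda_k\ge c\,\gamma_k^{(2\alpha-2)/\alpha}$ with $c=c_B^2c_H^{(2\alpha-2)/\alpha}4^{-(2\alpha-2)}$. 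This is exactly \Cref{thm:reduction}(b), so one could also simply cite it.

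\emph{Third claim.} Assume \eqref{eq:Bksubexp}. Then $S_k=G_k\log(e+G_k)+\log B_k\lesssim G_k\log(e+G_k)$, and \eqref{eq:concUBmoment} gives
\[
  r_C(k)\le C\bigl(R_0+(G_k\log(e+G_k))^{1/\alpha}\bigr).
\]
Since $\gamma_k\to\infty$ by H4, for large $k$ we have $G_k=\gamma_k+1\le2\gamma_k$ and $\log(e+G_k)\le2\log(e+\gamma_k)$. The additive constant $R_0$ is then absorbed into $(\gamma_k\log(e+\gamma_k))^{1/\alpha}$, which tends to $\infty$. This gives the upper bound, and the first claim supplies the lower bound in the displayed two-sided estimate.

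No step is a genuine obstacle. The only care needed is the index bookkeeping: the thresholds $K_H$, $K_0$ and the threshold for large $\gamma_k$ must all be taken together, with the maximum as the final index. Note that the third claim does not need $\alpha>1$, since \Cref{prop:moment} holds for any $\alpha>0$.
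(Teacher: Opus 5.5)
Your proposal is correct and follows the paper's proof exactly. The lower bound is \Cref{prop:H5LLP}; the spectral bound for $\alpha>1$ comes from combining it with \Cref{thm:rCleqrAg} and the formula for $r_{\mathrm{Ag}}(k)$ in \Cref{def:Agmon}; and the upper bound is the conditional consequence of \Cref{prop:moment} under \eqref{eq:Bksubexp}, with your explicit constants and threshold bookkeeping making that one-line argument quantitative.
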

\begin{proof}
The lower bound is \Cref{prop:H5LLP}. For $\alpha>1$, combine it
with \Cref{thm:rCleqrAg} and \Cref{def:Agmon}. The final bound follows from \eqref{eq:Bksubexp} and the conditional
consequence following \Cref{prop:moment}.
\end{proof}

\begin{remark}\label{rem:H5moment}
The estimate in \Cref{prop:H5LLP} is sharp if only the mean and
variance are known. For $\mu>s>0$, a probability measure with equal mass at
$\mu-s$ and $\mu+s$ has mean $\mu$, standard deviation $s$, and lower
median $\mu-s$. H5 is a sufficient two-moment condition for LLP,
not a necessary one. We do not claim that H1--H4 imply H5.

The condition holds for the standard Gaussian basis. Consider the
tensor-product Hermite basis on $\mathbb R^n$ with $f=|x|^2/4$, and
write $\nu=(\nu_1,\ldots,\nu_n)$ and
$m=|\nu|=\gamma_\nu$. Under the probability measure
$d\pi_\nu=u_\nu^2e^{-f}dx$, the coordinate factors are independent and
\[
  f=\sum_{i=1}^n\frac{x_i^2}{4}.
\]
For the normalized one-dimensional Hermite factor of degree $\nu_i$,
the Hermite recurrence \cite[\S18.9]{DLMF}, applied twice to the
multiplication operator, gives
\[
  \mathbb E_\nu\!\left(\frac{x_i^2}{4}\right)
     =\nu_i+\frac12,
  \qquad
  \operatorname{Var}_\nu\!\left(\frac{x_i^2}{4}\right)
     =\frac12\bigl(\nu_i^2+\nu_i+1\bigr).
\]
Since the coordinate variables are independent, their covariances
vanish. Summing the means and variances gives
\[
  \overline f_\nu=m+\frac n2,
  \qquad
  s_\nu^2
  =\frac12\Bigl(\sum_{i=1}^n\nu_i^2+m+n\Bigr).
\]
Because $\sum_i\nu_i^2\le m^2$,
\[
  s_\nu^2\le\frac12(m^2+m+n),
  \qquad
  \frac{s_\nu}{m}
  \le\frac1{\sqrt2}\sqrt{1+\frac1m+\frac{n}{m^2}}
  \longrightarrow\frac1{\sqrt2}.
\]
In particular, $s_\nu\le3m/4$ for all sufficiently large $m$.
Since $\overline f_\nu\ge m$, we obtain
\[
  \overline f_\nu-s_\nu\ge\frac m4
\]
for all sufficiently large $m$. Thus H5 holds for this full
eigenbasis.

H5 also holds for the separated basis in \Cref{prop:incompatibility}.
Set
\[
  d:=2m+\ell=\gamma_{m,\ell},
  \qquad z:=\frac{r^2}{2}.
\]
By the weighted-measure identity in the proof of \Cref{prop:incompatibility}, after integrating the angular
variable the $z$-marginal of $d\pi_{m,\ell}$ is proportional to
\[
  z^\ell\bigl(L_m^{(\ell)}(z)\bigr)^2e^{-z}\,dz.
\]
The Laguerre three-term recurrence and orthogonality
\cite[\S18.9]{DLMF} therefore give
\[
  \mathbb E z=2m+\ell+1=d+1
\]
and
\begin{align*}
  \operatorname{Var}(z)
  &=(m+1)(m+\ell+1)+m(m+\ell)\\
  &=\frac{(d+1)^2+1-\ell^2}{2}
  \le\frac{(d+1)^2+1}{2}.
\end{align*}
In this example
\[
  f=h(z),
  \qquad
  h(z):=z-\frac12\log(1+2az).
\]
Since
\[
  h'(z)=1-\frac{a}{1+2az}\in(0,1],
  \qquad
  h''(z)=\frac{2a^2}{(1+2az)^2}>0,
\]
the function $h$ is convex and $1$-Lipschitz. Jensen's inequality
then gives
\[
  \overline f_{m,\ell}=\mathbb E h(z)
  \ge h(\mathbb E z)
  =d+1-\frac12\log\bigl(1+2a(d+1)\bigr).
\]
For the standard deviation, the minimizing property of the mean and
the Lipschitz bound give
\[
\begin{aligned}
  s_{m,\ell}^2
  &=\operatorname{Var}(h(z))\\
  &\le \mathbb E\bigl(h(z)-h(\mathbb E z)\bigr)^2\\
  &\le \mathbb E\bigl(z-\mathbb E z\bigr)^2
   =\operatorname{Var}(z).
\end{aligned}
\]
Consequently
\[
  \overline f_{m,\ell}-s_{m,\ell}
  \ge d+1-\frac12\log\bigl(1+2a(d+1)\bigr)
      -\sqrt{\frac{(d+1)^2+1}{2}}.
\]
After division by $d$, the right-hand side converges to
$1-1/\sqrt2>0$. Hence there are $c>0$ and $d_0$, independent of
$m$ and $\ell$, such that
\[
  \overline f_{m,\ell}-s_{m,\ell}\ge c\,d
  \qquad(d\ge d_0).
\]
Thus H5 gives the lower localization estimate in this example even
though the upper spectral inequality fails for the full spectrum.
\end{remark}

H5 gives the lower localization estimate. The upper bound from the
moment estimate also depends on the finite-scale prefactor $B_k$. The upper spectral inequality requires additional hypotheses,
as \Cref{prop:incompatibility} shows.

\section{Compatibility Under Radiality: the Exact Warped Case}\label{sec:warped}\label{ssec:warpedrigidity}

This section and \S\ref{sec:sturm} give two proofs of compatibility
under radial hypotheses on the level sets $\Sigma_r$. Neither uses H5
or the exact radial polynomiality assumed in \S\ref{sec:bochner}.

An exact warped product has a reducing radial sector. Under the
regularity imposed below, H4 on this sector implies $\alpha=2$ and
$\lambda_k\asymp\gamma_k$ for every $k$ with $\lambda_k>0$.

We consider complete smooth exact warped products with
one-dimensional base and closed connected fiber $N^{n-1}$:
\begin{equation}\label{eq:wpmetric}
  g=dt^2+\varphi(t)^2g_N.
\end{equation}
The global models are $\mathbb R\times_\varphi N$, with $\varphi>0$,
and smooth one-ended completions of $(0,\infty)\times_\varphi N$
obtained by collapsing the fiber at $t=0$. A completion is included
only when the metric extends smoothly across the added point. In this
one-ended case, smooth collapse to a point requires $N$ to be a round
sphere, up to scaling of $g_N$. All manifolds are without boundary.
In the two-ended case $N$ is an arbitrary closed connected manifold.

The weight $f=f(t)$ is radial. Averaging over $N$ commutes with $L_f$,
so functions independent of the fiber form a reducing spectral
subspace. A nonzero radial eigenfunction cannot vanish identically
on an end, by uniqueness for its radial ODE.

On each end let $r$ be outward arclength. Then $r=t$ on the positive
end and $r=-t$ on the negative end. Write $S_r$ for its warped slice,
and view the restrictions of $f$ and $\varphi$ as functions of $r$.
Unless stated otherwise, primes and tail estimates refer to this
coordinate on a fixed end. All tail hypotheses are required on
every end. The constants may be chosen uniformly over the ends.
In a global product, $|t|\le d_g((t,y),(0,y_0))\le |t|+C$, since
$N$ is compact. In the one-ended completion, $r$ is distance from
the added point. H1 then gives $b=f^{1/\alpha}\asymp r$ on each end.
H2 and properness imply $f'(r)>0$ and
$f'(r)\asymp r^{\alpha-1}$ there, so $b'(r)\asymp1$.
A large level of $b$ is one slice in the one-ended case and the
union of two slices in the two-ended case. H4 is always measured
on the whole level set rather than separately on its components.

The following quantities are used throughout this subsection.
\begin{equation}\label{eq:wpnotation}
\begin{gathered}
  V_r \;:=\; (\log\vol S_r)' \;=\; (n-1)\varphi'/\varphi,
  \qquad
  W \;:=\; \vol(S_r)e^{-f},
  \qquad
  \Theta \;:=\; f'-V_r, \\[2pt]
  \psi \;:=\; \sqrt W\,u,
  \qquad
  Q \;:=\; (\sqrt W)''/\sqrt W.
\end{gathered}
\end{equation}

The substitution $\psi=\sqrt W\,u$ includes the cross-sectional
volume. Its potential $Q$ differs from the full-manifold potential
$V=\tfrac14|\nabla f|^2-\tfrac12\Delta f$ by
\begin{equation}\label{eq:QversusV}
  Q-V=\tfrac14V_r^2+\tfrac12V_r'.
\end{equation}

The radial functions form the reducing sector corresponding to the
constant angular mode. For $\alpha>1$, this sector inherits compact
resolvent from \Cref{lem:discrete}. For $0<\alpha\le1$, discreteness
is included in the radial H4 assumption. In either case the growth
condition is imposed on the radial eigenbasis in its own index.

Radial functions satisfy $L_fu=u''+(V_r-f')u'=\tfrac1W(Wu')'$.

We use the following tail bounds for the radial drift:
\begin{equation}\label{eq:Dregularity}
  \Theta(r)\asymp r^{\alpha-1} \quad\text{(two-sided)}, \qquad
  \Theta'(r)=O(r^{\alpha-2}), \qquad \Theta''(r)=O(r^{\alpha-3}).
\end{equation}

The rigidity argument in this subsection uses the first two bounds.
The third is used later in \S\ref{ssec:countingzeros}, in the
proof of \Cref{thm:osccount}. For $\alpha>1$, the growth bound follows
from H1--H3 and either $f''(r)=O(r^{\alpha-2})$ or
$\Theta'(r)=O(r^{\alpha-2})$, by \Cref{lem:wpanticancel}.
The bounds on $\Theta'$ and $\Theta''$ are additional assumptions.

\begin{theorem}\label{thm:wprigidity}
Let $(M,g,e^{-f})$ be an exact warped product as above, with radial
$f$ satisfying H1--H3. On every end assume
$\Theta'(r)=O(r^{\alpha-2})$.
Suppose the radial eigenfunctions satisfy H4 in their own index.
Then $\alpha=2$ and $\lambda_k\asymp\gamma_k$ for every $k$ with
$\lambda_k>0$. In the one-ended case there is $c_*\in(0,\infty)$,
independent of $k$, such that
\[
  \gamma_k=c_*^{-1}\lambda_k\qquad\text{for every }k.
\]
In either case $\lambda_k\asymp\gamma_k^{(2\alpha-2)/\alpha}$,
since the exponent equals $1$ when $\alpha=2$.
\end{theorem}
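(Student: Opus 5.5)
The plan is to reduce everything to the radial ODE on each end and read off $\gamma_k$ from a WKB/Liouville--Green asymptotic for the $L^2(W\,dr)$ solution. On a fixed end, $W=\vol(S_r)e^{-f}$ satisfies $W'/W=V_r-f'=-\Theta$. Hence a radial eigenfunction solves
\[
  u''-\Theta u'+\lambda u=0,\qquad \Theta\asymp r^{\alpha-1}.
\]
The two-sided bound on $\Theta$ comes from \Cref{lem:wpanticancel} under H1--H3 and $\Theta'=O(r^{\alpha-2})$.

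\emph{The two model solutions.} The equation has two formal behaviours at infinity.
\begin{itemize}
\item The dominant one has $u'\approx\Theta u$, so $u\approx e^{\int\Theta}$. Then $u^2W\approx e^{\int\Theta}$, which is not integrable.
\item The recessive one balances $\Theta u'\approx\lambda u$, giving $u\approx\exp\bigl(\lambda\int^r ds/\Theta(s)\bigr)$.
\end{itemize}

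\emph{Step 1: asymptotics of the $L^2$ solution.} I would make the recessive behaviour rigorous through the Riccati variable $w=u'/u$, which is defined on the tail. The nonzero radial eigenfunction cannot vanish identically on an end, and the Sturm comparison below shows it has no zeros far out. The Riccati equation is
\[
  w'+w^2-\Theta w+\lambda=0.
\]
I look for the branch with $w=\lambda/\Theta+e$, where $e$ is small. A contraction, or an invariant-region argument for the Riccati flow, should give
\[
  e=O\bigl(\lambda^2/\Theta^3+\lambda|\Theta'|/\Theta^3\bigr),
\]
and this is where $\Theta'=O(r^{\alpha-2})$ enters. Then $\int^\infty|e|\,dr<\infty$ as soon as $\alpha>1$, since $e=O(r^{1-2\alpha})$ with $k$ fixed. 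Every other branch satisfies $w\approx\Theta$, which produces the non-$L^2(W)$ solution. The $L^2(W)$ solution space on the end is one-dimensional, so the eigenfunction lies on the recessive branch. This yields
\[
  u(r)=C\exp\Bigl(\lambda\int_{r_1}^r\frac{ds}{\Theta(s)}\Bigr)\,(1+o(1)),\qquad C\ne0.
\]
I expect this uniform control of the error term to be the main obstacle. In particular the recessive branch must be shown to stay away from zero, and the constant $C$ must be shown nonzero.

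\emph{Step 2: forcing $\alpha=2$.} For $\lambda_k>0$, Step 1 gives the following trichotomy.
\begin{itemize}
\item If $\alpha>2$, then $\int^\infty ds/\Theta<\infty$. Every radial eigenfunction tends to a nonzero constant on each end, so $\gamma_k=0$ for all $k$, contradicting $\gamma_k\to\infty$ in H4.
\item If $1<\alpha<2$, then $\int^r ds/\Theta\asymp r^{2-\alpha}$. Hence $u^2$ grows like $\exp(c\lambda r^{2-\alpha})$, so $\gamma_k=\infty$, again contradicting H4.
\item If $\alpha\le1$, H3 no longer guarantees confinement. I would handle this case by the same Riccati analysis: now $\int ds/\Theta\gtrsim r^{2-\alpha}$ with $2-\alpha\ge1$, which again gives super-polynomial growth. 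Any case where $\Theta$ fails to be positive would be treated separately through \Cref{lem:wpanticancel}.
\end{itemize}
Since $b\asymp r$ on each end, growth in $r$ and growth in $b$ have the same exponent. The only surviving case is therefore $\alpha=2$.

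\emph{Step 3: the comparison $\lambda_k\asymp\gamma_k$.} With $\alpha=2$ we have $\Theta\asymp r$, so $\int^r ds/\Theta=\ell(r)$ satisfies $\ell(r)\asymp\log r$. By \Cref{lem:gammaequiv}, on a single end
\[
  \gamma_k=\lambda_k\limsup_{r\to\infty}\frac{\ell(r)}{\log r}+0,
\]
where the $0$ records that the factor $(1+o(1))$ and the constant $C$ do not affect the exponent. We must also convert from $r$ to $b$. The relation $b'(r)\asymp1$ changes $\log r$ only by $O(1)$; more precisely, $\log b=\log r+O(1)$ because $b\asymp r$.

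In the one-ended case set
\[
  c_*^{-1}:=\limsup_{r\to\infty}\frac{\ell(r)}{\log r}\in(0,\infty).
\]
This number depends only on $\Theta$, not on $k$, so $\gamma_k=c_*^{-1}\lambda_k$. For $\lambda_0=0$ the eigenfunction is constant and $\gamma_0=0$, so the identity holds for every $k$.

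In the two-ended case a large level set is the union of two slices. The normalized average is a volume-weighted average of the two slice averages, so it is bounded between them and below by a fixed fraction of the larger one up to volume ratios. I would bound it above by the maximum of the two slice averages, which gives $\gamma_k\le\lambda_k\max_\pm c_\pm^{-1}$. For the lower bound I would use the end of larger volume, or more simply the end whose slice average dominates. This gives $\gamma_k\ge\lambda_k\min_\pm c_\pm^{-1}$, and hence $\lambda_k\asymp\gamma_k$ for all $k$ with $\lambda_k>0$.
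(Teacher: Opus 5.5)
\textbf{The case $0<\alpha\le1$ is a genuine gap.} It must be excluded before your trichotomy can run, and your proposed argument for it does not work.

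In this range, \Cref{lem:wpanticancel} with $\Theta'=O(r^{\alpha-2})$ gives only $\Theta=O(1)$. There is no positive lower bound, so neither the positivity of $\Theta$ nor $\int^r ds/\Theta\gtrsim r^{2-\alpha}$ is available.

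More fundamentally, the Riccati analysis cannot be ``the same''. The branch $w\approx\lambda/\Theta$ is the smaller real root of $w^2-\Theta w+\lambda=0$. It exists and captures the $L^2$ solution only when $\Theta^2\gg\lambda$. With $\Theta$ bounded and $\lambda_k\to\infty$, you have $\Theta^2<4\lambda_k$ on the whole tail for all large $k$, so there is no recessive branch. The Liouville potential $Q=\tfrac14\Theta^2-\tfrac12\Theta'$ is bounded on the tail, so \Cref{lem:wpnonosc} has no analogue and solutions oscillate.

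For $\alpha\le1$ the contradiction comes not from growth orders but from the discreteness built into radial H4. This is the paper's \Cref{lem:wpalphagt1}. Since $Q=O(1)$ on the tail, the half-line operator $-d^2/dr^2+Q$ obtained from $\psi=\sqrt W\,u$ cannot have compact resolvent. Translates of a fixed bump to disjoint intervals tending to infinity form an orthonormal family with uniformly bounded form energy. This contradicts the complete discrete eigenbasis with $\lambda_k\to\infty$ assumed in radial H4. Once $\alpha>1$ is secured this way, the rest of your outline follows the paper's route: $\alpha>2$ forces every $\gamma_k=0$, $1<\alpha<2$ forces $\gamma_k=\infty$, and $\alpha=2$ gives the linear relation through $\mathcal J(r)/\log r$.

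\textbf{Two smaller points need repair.}

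First, in Step~1 the term $\lambda^2/\Theta^3\asymp r^{3-3\alpha}$ is not $O(r^{1-2\alpha})$ when $\alpha<2$, and it is not integrable for $1<\alpha\le4/3$. It is a genuine term, since $w_-=\lambda/\Theta+\lambda^2/\Theta^3+\cdots$. So the formula $u=Ce^{\lambda\mathcal J}(1+o(1))$ does not hold in that range. You only need the full asymptotic for $\alpha\ge2$, where it is valid. For $1<\alpha<2$ a one-sided bound suffices. Since $D\le\Theta$ gives $w_-\ge\lambda/\Theta$, the invariant-region estimate yields $w\ge\lambda/\Theta-O(\lambda r^{1-2\alpha})$. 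Hence $\log|u_k|\ge\lambda_k\mathcal J-C_k$ on every end, which is super-polynomial. The paper uses the analogous bound $w\ge\tfrac12\lambda_k/\Theta-C'r^{-1}$.

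Second, in the two-ended case, \eqref{eq:wpminmax} gives $\gamma_k\ge a\lambda_k$, with $a$ coming from $\mathcal J_E(r)\ge a\log r-O(1)$. It does not give $\gamma_k\ge\lambda_k\min_\pm\limsup_r\mathcal J_\pm/\log r$, because a limsup of a minimum can be smaller than the minimum of the limsups. Also, choosing ``the end whose slice average dominates'' gives the wrong direction for a lower bound, since that slice may carry an exponentially small share of the level-set volume. The correct bound still yields $\lambda_k\asymp\gamma_k$ with constants independent of $k$.
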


\begin{theorem}\label{thm:wpcompat}
Under the hypotheses of \Cref{thm:wprigidity}, suppose there is
$c_1>0$ such that $\Theta_E(r)\sim c_1r$ on every end $E$, where
$r$ is outward arclength. Then, for every $k$,
\[
 \gamma_k=\lambda_k/c_1,\qquad
 \lim_{r\to\infty}r\,w_E(r)=\gamma_k
 \quad\text{on every end }E.
\]
Here $w_E:=u_{k,E}'/u_{k,E}$ for the radial profile $u_{k,E}$.
The quotients are well defined for large $r$ by \Cref{lem:wpnonosc}.
The limits are taken with $k$ fixed. There is also $C'>0$,
independent of $k$ and $E$, such that for every $k$ there is
$R_k<\infty$ with $|r\,w_E(r)-\gamma_k|\le C'$ for $r\ge R_k$
on every end. In the one-ended case $c_1=c_*$, and we write $w=w_E$.
\end{theorem}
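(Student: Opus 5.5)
The plan is a one-dimensional asymptotic analysis of the radial ODE on each end. On a fixed end, a radial eigenfunction $u=u_k$ satisfies $u''-\Theta u'+\lambda_k u=0$, since $L_fu=u''+(V_r-f')u'$. Set $w=u'/u$ wherever $u\neq0$. Then $w$ solves the Riccati equation
\[
  w'=-\lambda_k+\Theta w-w^2 .
\]
I would use \Cref{lem:wpnonosc} for two facts: $u$ has finitely many zeros on each end, so $w$ is defined for $r\ge R_k$, and $w$ is nonnegative there or at least bounded. With $\Theta\sim c_1 r$, the Riccati equation has a slow branch and a fast branch. The slow branch is $w\approx\lambda_k/\Theta\approx \lambda_k/(c_1r)$. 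The fast branch is $w\approx\Theta$, which corresponds to the non-$L^2$ solution behaving like $\int e^{\int\Theta}$.

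The first step is to exclude the fast branch using $u\in L^2(W\,dr)$ with $W=\vol(S_r)e^{-f}$. This is the same argument as in \Cref{thm:neckviolatesH4}, Step 4: since $W=e^{-\int\Theta}$ up to a constant, $w\ge\Theta/2$ on a tail would give $u^2W\gtrsim e^{\int\Theta}$, which is not integrable. So $w$ stays on the slow branch.

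The second step quantifies the slow branch. I would write $w=\lambda_k/\Theta+v$ and substitute. Using $\Theta'=O(1)$, which is the hypothesis $\Theta'=O(r^{\alpha-2})$ with $\alpha=2$, the equation becomes
\[
  v'=\Theta v-(w^2+(\lambda_k/\Theta)'),
\]
where the forcing term is $O(\lambda_k^2/r^2)$. The stable solution, integrated backward from infinity against $e^{-\int\Theta}$, then satisfies $|v|\lesssim(\lambda_k^2+\lambda_k)/r^3$, so $r\,w(r)\to\lambda_k/c_1$ as $r\to\infty$.

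The uniform statement $|rw-\gamma_k|\le C'$ for $r\ge R_k$ needs more care, because the forcing grows like $\lambda_k^2$. I would allow $R_k\gtrsim\sqrt{\lambda_k}$ and absorb the $k$-dependence into $R_k$. Past that scale, $\lambda_k/\Theta\le\Theta$ with room to spare, and $w^2\lesssim\lambda_k^2/r^2\le\lambda_k$. An invariant-region (comparison) argument for the Riccati equation then gives $rw-\lambda_k/c_1=O(1)$, since $r\Theta(r)^{-1}-c_1^{-1}r/r$ is $o(1)\cdot\lambda_k$. This is the main obstacle: the error term $r\lambda_k(1/\Theta-1/(c_1r))$ is only $o(\lambda_k)$, not $O(1)$. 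To handle it I would either invoke the regularity $\Theta=c_1r+O(1/r)$ from \eqref{eq:Dregularity} via \Cref{lem:wpanticancel}, or enlarge $R_k$ until $\lambda_k|r/\Theta-1/c_1|\le1$. Either way $C'$ comes out independent of $k$.

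The third step identifies $\gamma_k$. From $rw\to\lambda_k/c_1$ we get $\log|u(r)|=(\lambda_k/c_1+o(1))\log r$ on each end, so $u(r)^2=r^{2\lambda_k/c_1+o(1)}$. The normalized average of $u^2$ over a level set of $b$ is a volume-weighted average of the slice values $u(r_E)^2$. Here $b\asymp r$, and more precisely $r_E\sim b/\sqrt{c}$ with $f\sim(c_1/2)r^2$ up to the $V_r$ contribution, which only rescales $r$ by a bounded factor. So by \Cref{lem:gammaequiv} the limsup of $\log I_k/(2\log b)$ equals the largest exponent over the ends, which is $\lambda_k/c_1$, the same on every end. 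Hence $\gamma_k=\lambda_k/c_1$. The case $\lambda_0=0$ with $u$ constant is consistent with this. Finally, the one-ended identification $c_1=c_*$ follows by comparison with \Cref{thm:wprigidity}.
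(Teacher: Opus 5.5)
Your argument is correct in outline and uses the same mechanism as the paper. The $L^2$ condition pins $w_E$ to the slow Riccati branch $w\approx\lambda_k/\Theta$. The growth order is read off on each end and combined through \eqref{eq:wpminmax}, and all $k$-dependence is absorbed into $R_k$.

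The paper's proof is short because it cites earlier results: \Cref{thm:wprigidity} for $\alpha=2$, \Cref{prop:wplinear} (with $\mathcal J_E(r)/\log r\to1/c_1$) for $\gamma_k=\lambda_k/c_1$, and \Cref{lem:wpasympt} for $r\,w_E=\lambda_k r/\Theta_E+O((\lambda_k+\lambda_k^2)/r^2)$. You rederive the last estimate by linearizing about $\lambda_k/\Theta$ and integrating backward from infinity, instead of the paper's barrier comparison with the exact root $w_-$. You then get $\gamma_k$ by integrating the pointwise limit $r\,w_E\to\lambda_k/c_1$. That needs only an $o(1/r)$ error on $w_E$, not the integrability used in \Cref{lem:wpgrowthorder}.

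To make the backward integration rigorous despite the nonlinearity, write $v'=av-F$ with $a:=\Theta-2\lambda_k/\Theta-v$ and $F:=\lambda_k^2/\Theta^2+(\lambda_k/\Theta)'$. The a priori bounds $0\le w<\Theta/2$ give $a>\Theta/2-\lambda_k/\Theta\gtrsim r$ on the far tail. They also force the coefficient of $e^{\int a}$ to vanish, whence $|v|\lesssim(\lambda_k+\lambda_k^2)/r^3$. The two a priori bounds come from different places:
\begin{itemize}
\item The bound $w<\Theta/2$ is exactly \Cref{lem:wpnonosc}. Your Step 1 only excludes $w\ge\Theta/2$ on an entire tail, which is weaker. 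Its computation also gives $u^2W\gtrsim1$, not $u^2W\gtrsim e^{\int\Theta}$.
\item The bound $w\ge0$ is not in \Cref{lem:wpnonosc}. It follows from finite-time blow-up of the Riccati equation once $w<0$, as in the lower-bound part of \Cref{lem:wpasympt}.
\end{itemize}

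Two claims in your uniform-bound step are wrong, though neither is needed. First, the hypotheses give only $\Theta\asymp r$ and $\Theta'=O(1)$ (via \Cref{lem:wpanticancel}), not $\Theta=c_1r+O(1/r)$. For instance, $\Theta=c_1r+\sqrt r$ has $\Theta\sim c_1r$ and $\Theta'=O(1)$. So only your second option, enlarging $R_k$ until $\lambda_k|r/\Theta_E-1/c_1|\le1$, is available, and it is what the paper does. Second, $R_k\gtrsim\sqrt{\lambda_k}$ is too small, since there your own error $r|v|\lesssim(\lambda_k+\lambda_k^2)/r^2$ is of order $\lambda_k$; the paper takes $R_k\ge\max(1,\lambda_k)$.

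In fact the ``main obstacle'' is illusory. Because $R_k$ may depend on $k$ and there are at most two ends, the uniform statement with, say, $C'=1$ follows at once from the limit $r\,w_E(r)\to\gamma_k$.
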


In the one-ended case, $r\,w(r)$ is the frequency function $U$
of \Cref{def:freq}, up to a factor comparable to $1$, for a radial
eigenfunction. Here $|\nabla b|=b'(r)$ and the unit normal is
$\partial_r$. Since $u_k$ is radial, it is constant on each level set.
At the level $\rho=b(r)$, we have $I=u_k(r)^2b'(r)$ and
$D=u_k(r)u_k'(r)$. Thus
$U(\rho)=\rho D/I=\bigl(\rho/b'(r)\bigr)\,w(r)$, and
$\rho/b'(r)\asymp r$. Accordingly, \Cref{thm:wpcompat} is a statement
about the frequency function, up to a factor comparable to
$1$.

The additional limit in \Cref{thm:wpcompat} provides pointwise control
of the logarithmic derivative, rather than only a growth order.
It does not follow from \eqref{eq:Dregularity}: for example,
$\Theta(r)=r/(2+\sin\log r)$ satisfies those bounds at $\alpha=2$,
but $r/\Theta(r)$ has no limit.

\subsection{Nonoscillation}

\begin{lemma}\label{lem:wpanticancel}
Let $M$ be a warped product, and let $f$ satisfy H1--H3.
If $f''(r)=O(r^{\alpha-2})$, then $\limsup_{r\to\infty}V_r/f'\le
\eta<\tfrac12$ (the constant $\eta$ of H3), and $V_r$ is bounded
below by a fixed constant for large $r$. After enlarging
the tail,
$\tfrac12(1-\eta)f'\le\Theta=f'-V_r\le f'+C$ for some fixed
constant $C$. If $\alpha>1$, we obtain the two-sided bound
$\Theta(r)\asymp r^{\alpha-1}$ in \eqref{eq:Dregularity}.

If instead $\Theta'(r)=O(r^{\alpha-2})$, then
$\Theta(r)\asymp r^{\alpha-1}$ for $\alpha>1$, and
$\Theta(r)=O(1)$ for $0<\alpha\le1$, without any additional bound on $f''$.
\end{lemma}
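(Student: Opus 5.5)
The plan is to work entirely with the radial identity $\Delta f=f''+V_rf'$ on each end, where $f'>0$ and $f'\asymp r^{\alpha-1}$ by H1--H2, and to feed in the two halves of H3 in the forms
\[
  f''+V_rf'\le\eta f'^2,\qquad f''+V_rf'\ge-c_4r^{\alpha-1}.
\]

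\emph{First case: $f''=O(r^{\alpha-2})$.} Here I would simply divide the two inequalities by $f'$.

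From the upper half of H3,
\[
  V_r\le\eta f'+Cr^{\alpha-2}/f'=\eta f'+O(r^{-1}).
\]
Dividing once more by $f'\asymp r^{\alpha-1}$ gives $V_r/f'\le\eta+O(r^{-\alpha})$, hence $\limsup V_r/f'\le\eta$.

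From the lower half, $V_rf'\ge-c_4r^{\alpha-1}-Cr^{\alpha-2}$. Dividing by $f'\gtrsim r^{\alpha-1}$ gives $V_r\ge-C$. Consequently $\Theta=f'-V_r\le f'+C$.

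Also $\Theta\ge(1-\eta-o(1))f'\ge\tfrac12(1-\eta)f'$ once the tail is enlarged. For $\alpha>1$ the additive constant is absorbed because $r^{\alpha-1}\to\infty$, so $\Theta\asymp r^{\alpha-1}$.

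\emph{Second case: $\Theta'=O(r^{\alpha-2})$, with no bound on $f''$.} Now $f''$ cannot be discarded, so I would use it through $\log f'$. Writing $V_r=f'-\Theta$, the two H3 inequalities become
\[
  \Theta-f'-\frac{c_4r^{\alpha-1}}{f'}\;\le\;(\log f')'\;\le\;\Theta-(1-\eta)f'.
\]

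The key observation is that H2 makes the total variation of $\log f'$ over any interval $[r,(1+\varepsilon)r]$ bounded by a constant $C_2$ independent of $r$ and $\varepsilon\le1$, since $f'\asymp r^{\alpha-1}$ there. Integrating the right-hand inequality over $[r,(1+\varepsilon)r]$ then gives
\[
  \int_r^{(1+\varepsilon)r}\Theta\;\ge\;c\,\varepsilon r^\alpha-C_2 .
\]
Integrating the left-hand one, using $c_4r^{\alpha-1}/f'=O(1)$, gives
\[
  \int_r^{(1+\varepsilon)r}\Theta\;\le\;C\varepsilon r\,(r^{\alpha-1}+1)+C_2 .
\]

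Next I would pass from averages to pointwise values using $\Theta'=O(r^{\alpha-2})$. On the interval, $|\Theta(s)-\Theta(r)|\le C\varepsilon r^{\alpha-1}$. Hence
\[
  \Theta(r)\ \ge\ c\,r^{\alpha-1}-C\varepsilon r^{\alpha-1}-\frac{C_2}{\varepsilon r},
\]
\[
  \Theta(r)\ \le\ C\,(r^{\alpha-1}+1)+C\varepsilon r^{\alpha-1}+\frac{C_2}{\varepsilon r}.
\]

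\emph{Conclusion.} Fix $\varepsilon$ small, depending only on the constants, so that $C\varepsilon<c/2$.
\begin{itemize}
\item For $\alpha>1$, the lower estimate gives $\Theta\gtrsim r^{\alpha-1}$ on the enlarged tail, and the upper estimate gives $\Theta\lesssim r^{\alpha-1}$.
\item For $0<\alpha\le1$, every term in the upper estimate is $O(1)$, so $\Theta=O(1)$.
\end{itemize}

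The main obstacle is this second case. The Θ'-bound only controls the variation of $\Theta$ at the same order as its size over intervals of length $\asymp r$, so the window length $\varepsilon r$ must be short enough for the pointwise comparison and long enough that the bounded change $C_2$ of $\log f'$ is negligible. I expect the $\varepsilon$-balancing above to handle exactly this tension.
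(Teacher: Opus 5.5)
Your proposal is correct and is essentially the paper's proof: the first case is the same division of the two halves of H3 by $f'\asymp r^{\alpha-1}$. In the second case the paper likewise rewrites H3 as $(1-\eta)f'+(\log f')'\le\Theta\le f'+(\log f')'+C$, integrates over a window on which H2 bounds the net change of $\log f'$, and uses $\Theta'=O(r^{\alpha-2})$ to return to pointwise values. The only difference is that the paper uses the unit window $[r,r+1]$, where the errors $O(1)$ and $O(r^{\alpha-2})$ are already $o(r^{\alpha-1})$ for $\alpha>1$ and bounded for $\alpha\le1$, so your $\varepsilon$-balancing is not actually needed. Your window $[r,(1+\varepsilon)r]$ does buy the slightly stronger $\Theta\gtrsim r^{\alpha-1}$ for every $\alpha>0$, and for $\alpha\le1$ it is your lower estimate, not the upper one, that supplies the lower half of $\Theta=O(1)$.
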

\begin{proof}
First assume $f''(r)=O(r^{\alpha-2})$. By definition, $\Delta f=f''+V_rf'$. By H2, $f'\asymp r^{\alpha-1}$.
Using the stated hypothesis $f''=O(r^{\alpha-2})$,
\[
  \Delta f/|\nabla f|^2 \;=\; V_r/f' + O(r^{-\alpha}).
\]
H3's upper bound $\Delta f\le\eta|\nabla f|^2$ then gives
$\limsup V_r/f'\le\eta$. For all sufficiently large $r$,
$V_r/f'\le(1+\eta)/2$, so
$\Theta=f'-V_r\ge\tfrac12(1-\eta)f'$.

For the other direction, H3's lower bound $\Delta f\ge-c_4r^{\alpha-1}$
gives
\[
  V_rf' \;=\; \Delta f-f'' \;\ge\; -c_4r^{\alpha-1}-f''.
\]
Divide by $f'\asymp r^{\alpha-1}$ and use $f''=O(r^{\alpha-2})=o(f')$.
It follows that $V_r\ge-C+o(1)$, where the fixed constant $C$ depends only
on $c_4$, $\alpha$, and the H2 constants.
We also have $\Theta=f'-V_r\le f'+C$. If $\alpha>1$, then $f'\to\infty$
and $\Theta\lesssim f'\asymp r^{\alpha-1}$.

Now assume $\Theta'(r)=O(r^{\alpha-2})$. Put $z=f'>0$ on the tail.
By H1 and H2, $z\asymp r^{\alpha-1}$. Since
$\Delta f=z'+(z-\Theta)z$, H3 yields
\[
 (1-\eta)z+\frac{z'}z\le\Theta
 \le z+\frac{z'}z+C.
\]
Integrate over $[r,r+1]$. The bounds on $z$ give
$\int_r^{r+1}z'/z=\log(z(r+1)/z(r))=O(1)$, and
$\int_r^{r+1}z\asymp r^{\alpha-1}$.
The derivative bound gives
$\int_r^{r+1}\Theta(s)\,ds=\Theta(r)+O(r^{\alpha-2})$.
For fixed $c,C>0$,
\[
 cr^{\alpha-1}-C(1+r^{\alpha-2})\le\Theta(r)
 \le Cr^{\alpha-1}+C(1+r^{\alpha-2}).
\]
For $\alpha>1$ the error on the left is $o(r^{\alpha-1})$.
For $0<\alpha\le1$ both bounds are finite uniformly in $r$.
This proves the remaining bounds.
\end{proof}

\begin{lemma}\label{lem:wpalphagt1}
Under the hypotheses of \Cref{thm:wprigidity}, the radial H4 assumption
implies $\alpha>1$.
\end{lemma}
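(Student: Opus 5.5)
We argue by contradiction: assume $0<\alpha\le1$ and show that radial H4 fails. The key point is that for $\alpha\le1$ the radial drift is bounded. By \Cref{lem:wpanticancel}, the hypothesis $\Theta'(r)=O(r^{\alpha-2})$ gives $\Theta(r)=O(1)$ on every end. A radial eigenfunction with eigenvalue $\lambda$ satisfies
\[
  u''-\Theta u'+\lambda u=0
\]
on each end. This is a second-order ODE with bounded drift $\Theta$ and with $\Theta'=O(r^{\alpha-2})\to0$ when $\alpha<1$; for $\alpha=1$ we have only $\Theta'=O(r^{-1})$, which still tends to zero.

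The first step is to show that every radial solution grows at most exponentially in $r$ and is not of polynomial type. We conjugate with $\psi=e^{-\frac12\int\Theta}u$, which gives
\[
  \psi''+\bigl(\lambda-\tfrac14\Theta^2+\tfrac12\Theta'\bigr)\psi=0 .
\]
The potential $q:=\lambda-\tfrac14\Theta^2+\tfrac12\Theta'$ is bounded, with $q'=O(r^{-1})$ and $q''$ small.

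The second step splits on the sign of $\liminf q$.

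If $q\ge c>0$ on a tail, Sturm comparison makes $u$ oscillatory with zeros spaced $\asymp1$. By a Liouville--Green/Prüfer argument, using that $\int|q'|/q^{3/2}$ grows at most logarithmically, the amplitude of $\psi$ is $q^{-1/4}\asymp1$. Then $|u|\asymp e^{\frac12\int\Theta}$ along crests. The slice average $I(\rho)$ is $u(r)^2b'(r)$, and $b'(r)\asymp1$ by the setup. So $\log I\sim\int^r\Theta$, which is $O(r)$. This estimate alone is not decisive.

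A cleaner route uses weighted mass instead. For $0<\alpha\le1$, the weight $W=\vol(S_r)e^{-f}$ satisfies $W'/W=V_r-f'=-\Theta$. Hence $W=e^{-\int\Theta}$, and $\psi=\sqrt W\,u$ is exactly the conjugated function above. Since $u\in L^2(W\,dr)$, we need $\psi\in L^2(dr)$ on each end. But for $q\ge c>0$ the solutions $\psi$ have amplitude $\asymp q^{-1/4}$, so they are not in $L^2(dr)$. Thus every eigenvalue $\lambda$ must satisfy $\limsup q\le0$ on every end, that is, $\lambda\le\limsup\tfrac14\Theta^2$.

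This bounds the radial $L^2$ spectrum, except for the discrete part lying below $\liminf\tfrac14\Theta^2$. Here the oscillation step needs care: I would use that $\Theta$ is slowly varying, since $\Theta'\to0$, to show that on long intervals $q$ is nearly constant. This rules out eigenvalues strictly above $\limsup\tfrac14\Theta^2$. In particular $\lambda_k$ is bounded, which contradicts $\lambda_k\nearrow\infty$ in radial H4. That is a contradiction, so $\alpha>1$.

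The main obstacle is the non-$L^2$ claim when $q$ is positive only asymptotically and merely slowly varying. I would handle it with the Prüfer energy $E=\psi'^2+q\psi^2$, for which
\[
  E'=q'\psi^2\ge-\frac{|q'|}{q}\,E .
\]
Since $|q'|=O(r^{\alpha-2})$, integrating gives $E\ge c\,r^{-C}$ for $\alpha=1$ and $E\ge c$ for $\alpha<1$. Averaging over a half-period shows $\int_r^{r+1}\psi^2\gtrsim E$. For $\alpha<1$ this immediately gives $\psi\notin L^2$. For $\alpha=1$ we need $C\le1$, which may require using $|q'|/q\le\epsilon/r$ with small $\epsilon$ on the tail. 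This holds if $\Theta'=o(r^{-1})$; otherwise I would exploit cancellation in $q'\psi^2$ over each oscillation period.
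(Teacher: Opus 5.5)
Your reduction to $\psi''+q\psi=0$ and your use of \Cref{lem:wpanticancel} to get $\Theta=O(1)$ are fine, but the step that is supposed to exclude eigenvalues does not go through. Two parts of your argument need control of $q'=-\tfrac12\Theta\Theta'+\tfrac12\Theta''$: the Liouville--Green amplitude claim, and the energy identity $E'=q'\psi^2$ with the bound $|q'|=O(r^{\alpha-2})$. The hypotheses of \Cref{thm:wprigidity} bound only $\Theta'$, and the third condition in \eqref{eq:Dregularity} is not assumed. Since $f\in C^2$, the drift $\Theta$ is merely $C^1$ and $q$ is merely continuous, and small fast oscillations in $\Theta'$ make $q'$ unbounded. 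At $\alpha=1$ there is a further problem beyond the one you flag. There $\tfrac12\Theta'$ can be an oscillating $O(r^{-1})$ term, which is exactly the Wigner--von Neumann scale. Such potentials can have solutions in $L^2(dr)$ while $q\to c>0$. So for fixed $\lambda$ your threshold $\limsup\tfrac14\Theta^2$ is not correct in general. Neither $\Theta'=o(r^{-1})$ nor a period-cancellation argument is available from the hypotheses.

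Your route can be rescued because only large $\lambda$ need to be excluded. Take $E:=\psi'^2+(\lambda-\tfrac14\Theta^2)\psi^2$ and treat $\tfrac12\Theta'$ as a perturbation. Then $E'=-\Theta'\psi\psi'-\tfrac12\Theta\Theta'\psi^2$, which involves no $\Theta''$. For $\lambda\ge\sup_{\text{tail}}\Theta^2$ this gives $|E'|\le Cr^{\alpha-2}(\lambda^{-1/2}+\lambda^{-1})E$. For $\lambda$ large the resulting power of $r$ is below $1$ even at $\alpha=1$, and your averaging step then shows $\psi\notin L^2(dr)$. The paper avoids ODE asymptotics altogether. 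Radial H4 means that $-d^2/dr^2+Q$ on the tail has compact resolvent. Since $Q=\tfrac14\Theta^2-\tfrac12\Theta'=O(1)$, disjoint translates of one bump function form an orthonormal sequence with uniformly bounded form energy, so the unit ball of the form domain is not relatively compact. That argument needs only an upper bound on $Q$, uses no derivative of it, and is insensitive to embedded eigenvalues. Your repaired argument proves more, namely that the point spectrum is bounded, but at the cost of this extra analysis.
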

\begin{proof}
Suppose $0<\alpha\le1$. The estimate under the second hypothesis
of \Cref{lem:wpanticancel} gives $\Theta=O(1)$ on the tail.
The assumed bound
$\Theta'(r)=O(r^{\alpha-2})$ gives
\[
  Q(r)=\frac14\Theta(r)^2-\frac12\Theta'(r)=O(1).
\]
The Liouville map $u\mapsto\psi=\sqrt W\,u$ identifies the radial
operator, outside a compact set, with the one-dimensional Schr\"odinger
operator $-d^2/dr^2+Q(r)$ on a half-line.  Such an operator cannot have
compact resolvent when $Q$ is bounded above on the tail.  Indeed, choose
a fixed $\zeta\in C_c^\infty((0,1))$ with $\|\zeta\|_2=1$ and translate
it to pairwise disjoint intervals tending to infinity.  The translates
are orthonormal in $L^2$, but their quadratic-form energies
$\int(|\zeta'|^2+Q\zeta^2)$ remain uniformly bounded.  The unit
ball of the form domain is not relatively compact in $L^2$, which is
equivalent to failure of compact resolvent.

This contradicts the radial version of H4, which assumes a complete
discrete eigenbasis with eigenvalues tending to $+\infty$. Hence
$\alpha>1$.
\end{proof}

\begin{lemma}\label{lem:wpQ}
Assume $\alpha>1$ and the first two conditions in \eqref{eq:Dregularity}.
Then there are
$c,C,R_2>0$ so that
\[
  cr^{2\alpha-2} \;\le\; Q(r) \;\le\; Cr^{2\alpha-2}
  \qquad \text{for all } r\ge R_2,
\]
where $Q=\tfrac14\Theta^2-\tfrac12\Theta'$. For $\lambda>0$ set
$\rho(\lambda):=\max\bigl(R_2,\,(2\lambda/c)^{1/(2\alpha-2)}\bigr)$. Then
\[
  Q(r)\ge2\lambda \quad\text{for all } r\ge\rho(\lambda).
\]
We also have $\rho(\lambda)\asymp\lambda^{1/(2\alpha-2)}$ as
$\lambda\to\infty$.
\end{lemma}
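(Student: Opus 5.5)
First I will confirm the identity $Q=\tfrac14\Theta^2-\tfrac12\Theta'$, which the statement uses as the definition of $Q$. With $W=\vol(S_r)e^{-f}$ we have $(\log W)'=V_r-f'=-\Theta$. Hence $\sqrt W=\exp\bigl(-\tfrac12\int\Theta\bigr)$, so
\[
(\sqrt W)'=-\tfrac12\Theta\sqrt W,
\qquad
(\sqrt W)''=\bigl(\tfrac14\Theta^2-\tfrac12\Theta'\bigr)\sqrt W .
\]
This agrees with the formula in the statement. The computation only requires $\Theta\in C^1$ on the tail, which is implicit in \eqref{eq:Dregularity}.

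For the two-sided bound I will use only the first two conditions of \eqref{eq:Dregularity}. The first gives constants $0<a\le A$ with $a r^{\alpha-1}\le\Theta(r)\le Ar^{\alpha-1}$ for large $r$. Therefore $\tfrac14 a^2r^{2\alpha-2}\le\tfrac14\Theta^2\le\tfrac14A^2r^{2\alpha-2}$. The second gives $|\Theta'|\le Br^{\alpha-2}$.

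The key point is that $\alpha>1$ makes the derivative term lower order: $r^{\alpha-2}/r^{2\alpha-2}=r^{-\alpha}\to0$. So I choose $R_2\ge1$ large enough that $\tfrac12 Br^{\alpha-2}\le\tfrac18a^2r^{2\alpha-2}$ for $r\ge R_2$, which amounts to $r^\alpha\ge 4B/a^2$. This yields the bound with $c=a^2/8$ and $C=\tfrac14A^2+\tfrac18a^2$. The only real obstacle is this lower-order absorption, and it is immediate because $\alpha>1$. Without that assumption the $\Theta'$ term could compete with $\Theta^2$, and the lower bound could fail. This is exactly why $0<\alpha\le1$ is excluded, consistent with \Cref{lem:wpalphagt1}.

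For the second claim, let $r\ge\rho(\lambda)$. Then $r\ge R_2$, so $Q(r)\ge cr^{2\alpha-2}$. Also $r^{2\alpha-2}\ge\rho(\lambda)^{2\alpha-2}\ge 2\lambda/c$, using $2\alpha-2>0$ so that $t\mapsto t^{2\alpha-2}$ is increasing. Together these give $Q(r)\ge2\lambda$.

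Finally, as $\lambda\to\infty$ the second entry of the maximum dominates the fixed number $R_2$. So for large $\lambda$ we have $\rho(\lambda)=(2/c)^{1/(2\alpha-2)}\lambda^{1/(2\alpha-2)}$, which gives $\rho(\lambda)\asymp\lambda^{1/(2\alpha-2)}$ with constants depending only on $c$ and $\alpha$.
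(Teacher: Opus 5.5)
Your proposal is correct and follows the paper's proof essentially step for step: the same computation $(\sqrt W)''=(\tfrac14\Theta^2-\tfrac12\Theta')\sqrt W$, the same absorption of $\tfrac12\Theta'$ into $\tfrac14\Theta^2$ for large $r$, and the same monotonicity argument giving $Q\ge2\lambda$ beyond $\rho(\lambda)$. One correction to your commentary: since $r^{\alpha-2}/r^{2\alpha-2}=r^{-\alpha}\to0$ for every $\alpha>0$, the absorption step does not need $\alpha>1$; that hypothesis is actually used in the second part, where $2\alpha-2>0$ makes $t\mapsto t^{2\alpha-2}$ increasing and unbounded, so that $\rho(\lambda)$ is well defined and the level $2\lambda$ is eventually reached.
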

Under H1--H3, this scale is comparable to the Agmon turning radius
$r_{\mathrm{Ag}}(\lambda)$ of Definition~\ref{def:Agmon}.
The two radii can differ because the potentials $Q$ and $V$ differ
by the volume term in \eqref{eq:QversusV}.
\begin{proof}
Write $g:=\sqrt W$, so $(\log g)'=g'/g=-\Theta/2$, that is,
$g'=-\tfrac12\Theta g$. Differentiating again,
\[
  g'' \;=\; -\tfrac12\Theta'g - \tfrac12\Theta g'
     \;=\; -\tfrac12\Theta'g - \tfrac12\Theta\bigl(-\tfrac12\Theta g\bigr)
     \;=\; \Bigl(\tfrac14\Theta^2-\tfrac12\Theta'\Bigr)g,
\]
so $Q=g''/g=\tfrac14\Theta^2-\tfrac12\Theta'$.

The growth bound in \eqref{eq:Dregularity} gives
$\tfrac14\Theta^2\asymp r^{2\alpha-2}$.
The second condition gives $\Theta'=O(r^{\alpha-2})$,
which is $o(r^{2\alpha-2})$ since $\alpha>1$. Hence $\tfrac14\Theta^2$
dominates $\tfrac12|\Theta'|$ for large $r$, giving
$Q(r)\asymp r^{2\alpha-2}$. Explicitly,
\[
  cr^{2\alpha-2}\;\le\; Q(r) \;\le\; Cr^{2\alpha-2}
  \qquad\text{for } r\ge R_2,
\]
for some fixed constants $c,C,R_2$.

For $r\ge\rho(\lambda)$, in particular
$r\ge(2\lambda/c)^{1/(2\alpha-2)}$, we get
$Q(r)\ge cr^{2\alpha-2}\ge c\cdot(2\lambda/c)=2\lambda$. The scaling $\rho(\lambda)\asymp\lambda^{1/(2\alpha-2)}$ follows
from the definition.
\end{proof}

\begin{lemma}\label{lem:wpnonosc}
Fix an end $E$ and a radial function $u$, and let $\psi:=\sqrt W\,u$ there.
Then $u$ solves
$L_fu=-\lambda u$ on $E$ if and only if
$-\psi''+Q\psi=\lambda\psi$, and
$u\in L^2(E,e^{-f}dV)$ if and only if $\psi\in L^2(dr)$ on that end.
For $\lambda>0$, on $\{r\ge\rho(\lambda)\}$, with $\rho(\lambda)$
as in \Cref{lem:wpQ}, $Q-\lambda\ge\tfrac12Q>0$. If $u=u_k\in L^2_f$
and $\lambda=\lambda_k$, then $\psi_k$ has no zero on
$[\rho(\lambda_k),\infty)$, and $\psi_k\psi_k'<0$ there. So $u_k$ has
no zero on $\{r\ge\rho(\lambda_k)\}$, and $w:=u_k'/u_k$ is well
defined there, with $w<\Theta/2$.
\end{lemma}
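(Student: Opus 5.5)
Three things have to be checked, in order: the Liouville equivalence, the sign of $Q-\lambda$ on the tail, and the nonoscillation of $\psi_k$ from $\rho(\lambda_k)$ on.

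\emph{Step 1: equivalence.} On the end, with $g=\sqrt W$, the radial operator is $L_fu=W^{-1}(Wu')'$. Substituting $u=\psi/g$ and using $g'=-\tfrac12\Theta g$ and $g''=Qg$ (computed in \Cref{lem:wpQ}) gives
\[
W^{-1}(Wu')'=g^{-1}(\psi''-Q\psi).
\]
Hence $L_fu=-\lambda u$ if and only if $-\psi''+Q\psi=\lambda\psi$. For the norms, $e^{-f}dV=W\,dr\,d\sigma_N/\vol(N)$ after normalizing the slice. Therefore $\int_E u^2e^{-f}dV=\int\psi^2\,dr$ up to the fixed factor.

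\emph{Step 2: sign of $Q-\lambda$.} For $r\ge\rho(\lambda)$, \Cref{lem:wpQ} gives $Q\ge2\lambda$. Then
\[
Q-\lambda\ge Q-\tfrac12Q=\tfrac12Q>0 .
\]

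\emph{Step 3: nonoscillation of $\psi_k$.} Set $h:=\psi_k^2/2$. Then
\[
h''=\psi_k'^2+\psi_k\psi_k''=\psi_k'^2+(Q-\lambda_k)\psi_k^2\ge0
\]
on $[\rho,\infty)$, so $h$ is convex there. Since $\psi_k\in L^2(dr)$ and $h\ge0$ is convex, $h$ must be nonincreasing; otherwise it grows at least linearly and is not integrable. Also $h\to0$ along a sequence.

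Next, $h$ cannot be constant on any interval. If $h'\equiv0$ on an interval, then $h''=0$ there, which forces $\psi_k\equiv0$ on that interval. By ODE uniqueness $u_k\equiv0$ on the end, which is excluded for a nonzero radial eigenfunction, as noted earlier. Hence $h'<0$ strictly, since a convex nonincreasing function with $h'(r_0)=0$ would have $h'\ge0$ beyond $r_0$, making $h$ constant there.

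Strict monotonicity of $h$ gives $\psi_k\psi_k'=h'<0$, and in particular $\psi_k\ne0$ on $[\rho(\lambda_k),\infty)$. Since $\sqrt W>0$, $u_k$ has no zeros there.

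\emph{Step 4: the bound on $w$.} Write $\log u=\log\psi-\log g$. Then
\[
w=\frac{u_k'}{u_k}=\frac{\psi_k'}{\psi_k}+\frac{\Theta}{2}.
\]
From $\psi_k\psi_k'<0$ we get $\psi_k'/\psi_k<0$, so $w<\Theta/2$.

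The only delicate point is the strict inequality in Step 3, that is, excluding $h'=0$ at some point. This is handled by convexity together with ODE uniqueness. The $L^2$-decay argument for convex $h$ is routine.
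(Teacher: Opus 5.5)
Your proof is correct and follows essentially the same route as the paper: the Liouville identity, $Q\ge 2\lambda$ from \Cref{lem:wpQ}, and then convexity on $[\rho(\lambda_k),\infty)$ together with the fact that a function which fails to be eventually decreasing cannot lie in $L^2(dr)$. The only difference is packaging. The paper first excludes zeros of $\psi_k$, using $\psi_k'(r_1)\neq0$ at a zero and linear growth, and then uses convexity of the positive $\psi_k$ to get $\psi_k'<0$. Your convexity of $h=\psi_k^2/2$ instead gives $\psi_k\psi_k'<0$, and hence both conclusions, in a single step.
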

\begin{proof}
The equivalence of the two equations is the Liouville
transformation of $L_fu=u''+(V_r-f')u'$, using
$(\log\sqrt W)'=-\Theta/2$ and $Q=(\sqrt W)''/\sqrt W$. Since
$\psi^2=Wu^2$ and radial integration uses the density
$\vol(S_r)\,dr$,
$\int\psi^2\,dr=\int u^2W\,dr=\int_Eu^2e^{-f}\,dV$, with all
integrals taken on the fixed end. This is the $L^2$ equivalence. By \Cref{lem:wpQ}, $Q\ge2\lambda$ for
$r\ge\rho(\lambda)$, so $Q-\lambda\ge\tfrac12Q>0$ there.

Write $p:=Q-\lambda_k$, so $\psi_k''=p\psi_k$ with $p>0$ on
$[\rho(\lambda_k),\infty)$. Suppose $\psi_k(r_1)=0$ for some
$r_1\ge\rho(\lambda_k)$. Since $\psi_k$ is not identically zero,
$\psi_k'(r_1)\ne0$. After replacing $\psi_k$ by $-\psi_k$ if necessary,
assume $\psi_k'(r_1)>0$. Then $\psi_k>0$ just to
the right of $r_1$. Wherever $\psi_k>0$ we have
$\psi_k''=p\psi_k>0$, so $\psi_k'$ is increasing there, and
$\psi_k'\ge\psi_k'(r_1)>0$. Thus $\psi_k$ never returns to $0$, and
$\psi_k(r)\ge\psi_k'(r_1)(r-r_1)$ for all $r\ge r_1$. This is not
in $L^2(dr)$, a contradiction. Hence $\psi_k$ has no zero on
$[\rho(\lambda_k),\infty)$, and we may take $\psi_k>0$ there.

Now $\psi_k''=p\psi_k>0$, so $\psi_k$ is convex on
$[\rho(\lambda_k),\infty)$. If $\psi_k'(r_1)\ge0$ for some
$r_1\ge\rho(\lambda_k)$, then $\psi_k'\ge0$ for $r\ge r_1$, so
$\psi_k\ge\psi_k(r_1)>0$ there, again not in $L^2(dr)$. Therefore
$\psi_k'<0$ on $[\rho(\lambda_k),\infty)$. Finally, since
$u_k=\psi_k/\sqrt W$ and $(\log\sqrt W)'=-\Theta/2$,
\[
  w=\frac{u_k'}{u_k}=\frac{\psi_k'}{\psi_k}+\frac\Theta2<\frac\Theta2 .
\]
\end{proof}

\subsection{The far-field asymptotic}

\begin{lemma}\label{lem:wpasympt}
Assume $\alpha>1$ and the first two conditions in \eqref{eq:Dregularity}.
Let $\lambda_k>0$ be a radial eigenvalue, and let $w=u_k'/u_k$ be
as in \Cref{lem:wpnonosc}. Then, for $r\ge\rho(\lambda_k)$,
\begin{equation}\label{eq:wpwasympt}
 w(r)=\frac{\lambda_k}{\Theta(r)}
 +O\!\left(\lambda_k r^{1-2\alpha}\right)
 +O\!\left(\frac{\lambda_k^2}{\Theta(r)^3}\right).
\end{equation}
The implicit constants depend only on the drift bounds and on $\alpha$,
not on $k$. On this region the first error is also $O(r^{-1})$,
with a constant independent of $k$.
\end{lemma}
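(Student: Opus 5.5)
The plan is to work directly with the Riccati equation for $w=u_k'/u_k$ on $\{r\ge\rho(\lambda_k)\}$, where \Cref{lem:wpnonosc} guarantees $u_k\ne0$ and $w<\Theta/2$. On a fixed end the radial equation reads $u''-\Theta u'=-\lambda_k u$, since $L_fu=u''+(V_r-f')u'$. Hence
\[
  w'=\Theta w-w^2-\lambda_k .
\]
I expect the solution to be close to $w_0:=\lambda_k/\Theta$, the root of the algebraic part after the quadratic term is dropped. Set $e:=w-\lambda_k/\Theta$. Using $(\lambda_k/\Theta)'=-\lambda_k\Theta'/\Theta^2$, a direct computation gives a linear equation with a quadratic correction absorbed into the coefficient:
\[
  e'=a(r)\,e+g(r),\qquad
  a:=\Theta-\frac{2\lambda_k}{\Theta}-e=\Theta-w-\frac{\lambda_k}{\Theta},\qquad
  g:=\frac{\lambda_k\Theta'}{\Theta^2}-\frac{\lambda_k^2}{\Theta^2}.
\]

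\emph{Step 1: uniform positivity of $a$.} From $w<\Theta/2$ we get $a\ge\Theta/2-\lambda_k/\Theta$. By \Cref{lem:wpQ}, $Q=\tfrac14\Theta^2-\tfrac12\Theta'\ge2\lambda_k$ for $r\ge\rho(\lambda_k)$. Since $\Theta'=O(r^{\alpha-2})=o(\Theta^2)$, this yields $\lambda_k\le\Theta^2/8+o(\Theta^2)$. After enlarging $R_2$ by an amount independent of $k$, we obtain $\lambda_k/\Theta\le\Theta/6$. Then $a\ge\Theta/3\ge c\,r^{\alpha-1}$ on the whole region, with $c$ independent of $k$.

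\emph{Step 2: backward integral representation.} Because $a>0$, the linear equation is repelling forward in $r$. The solution that stays controlled is therefore obtained by integrating from infinity:
\[
  e(r)=-\int_r^\infty\exp\Bigl(-\int_r^s a\Bigr)g(s)\,ds .
\]
To justify this, I must show that $E(s):=e(s)\exp(-\int_r^s a)\to0$ as $s\to\infty$. I expect this to be the main obstacle, since $a$ itself involves $e$. I would first prove that $e$ is bounded above and below by a polynomial in $r$.

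For the upper side, $e<\Theta/2$ follows from $w<\Theta/2$, and $\Theta\asymp r^{\alpha-1}$.

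For the lower side, suppose $e(r_1)<-M$ with $M$ large relative to $\sup|g|$ and to $\Theta+\lambda_k/\Theta$ near $r_1$. Then $e'=(\Theta-2\lambda_k/\Theta)e-e^2+g\le -e^2/2$, so $e$ reaches $-\infty$ in finite $r$. This is impossible, because $u_k$ has no zeros there. Iterating this argument gives $e\ge -C(1+r^{\alpha-1}+\lambda_k/\Theta)$.

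With polynomial bounds on $e$ and $\int_r^s a\ge c(s^\alpha-r^\alpha)/\alpha$, the boundary term vanishes at infinity. The representation then follows by variation of constants.

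\emph{Step 3: estimate.} Using $|g(s)|\lesssim\lambda_k s^{-\alpha}+\lambda_k^2\Theta(s)^{-2}$ from \eqref{eq:Dregularity}, together with the monotone-up-to-constants behaviour of $\Theta$, we get
\[
  |e(r)|\le\sup_{s\ge r}|g(s)|\int_r^\infty e^{-\int_r^s a}\,ds\lesssim\frac{\sup_{s\ge r}|g(s)|}{\Theta(r)}.
\]
Here the $s$-integral is bounded by $C/\Theta(r)$ through $a\ge\Theta/3$ and $\Theta\asymp r^{\alpha-1}$. This gives $|e|\lesssim\lambda_k r^{-\alpha}r^{1-\alpha}+\lambda_k^2/\Theta^3$, which is \eqref{eq:wpwasympt}. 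All implicit constants come only from the drift bounds and $\alpha$.

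Finally, on $r\ge\rho(\lambda_k)$ we have $\lambda_k\lesssim r^{2\alpha-2}$. Hence $\lambda_k r^{1-2\alpha}\lesssim r^{-1}$ with a $k$-independent constant, which is the last assertion of the statement.
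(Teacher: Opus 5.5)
Your proof is correct, but it reaches the estimate by a different mechanism than the paper.

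\textbf{The paper's route.} The paper factors the Riccati equation as $w'=-(w-w_-)(w-w_+)$ around the exact roots $w_\pm=(\Theta\pm D)/2$, where $D=\sqrt{\Theta^2-4\lambda_k}$. It then proves $|w-w_-|\le 2\tau$, with the decreasing barrier $\tau=K\lambda_k r^{1-2\alpha}$, by two contradiction arguments:
\begin{itemize}
\item an upward excursion would grow like $\exp(\tfrac12\int D)$ and violate $w<\Theta/2$;
\item a downward excursion would drive $w$ to $-\infty$ at finite $r$, which means a zero of $u_k$.
\end{itemize}
The term $\lambda_k^2/\Theta^3$ enters only afterwards, through the Taylor expansion of $w_-$.

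\textbf{Your route.} You linearize around the approximate root $\lambda_k/\Theta$ and absorb the quadratic term into the coefficient $a$. The bound $a\ge\Theta/3$ comes from the same input $w<\Theta/2$ together with $\lambda_k\le\Theta^2/6$. A backward Duhamel formula then gives both error terms at once, as the two pieces of the forcing $g$ divided by the damping rate $\Theta$.

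\textbf{What each buys.} Your approach gives an explicit integral formula for the error, which is convenient if you want to iterate to higher-order terms. The price is the extra a priori step needed to kill the boundary term at infinity. The paper's barrier argument needs no representation formula. It also yields a slightly sharper intermediate fact: $w$ lies within $O(\lambda_k r^{1-2\alpha})$ of the exact slow root $w_-$. You would get this too by linearizing around $w_-$ instead, since the forcing there is just $-w_-'$.

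\textbf{A small simplification.} Since $\Theta-2\lambda_k/\Theta>0$ on the region, for $e<0$ you have $e'\le -e^2+\lambda_k|\Theta'|/\Theta^2$. The blow-up argument therefore gives directly $e\ge -C\sqrt{\lambda_k}\,r^{-\alpha/2}$. No iteration or local control of $\Theta$ near $r_1$ is needed.
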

\begin{proof}
Write $\lambda=\lambda_k$. Since $u_k''-\Theta u_k'+\lambda u_k=0$,
the function $w$ satisfies the Riccati equation
\begin{equation}\label{eq:wpriccati}
  w'=\Theta w-w^2-\lambda=-(w-w_-)(w-w_+),
  \qquad
  w_\pm:=\frac{\Theta\pm D}{2},\quad D:=\sqrt{\Theta^2-4\lambda},
\end{equation}
where $w_\pm$ are the two roots of $w^2-\Theta w+\lambda=0$.

We first check that $D$ is real and comparable to $\Theta$ on
$\{r\ge\rho(\lambda)\}$. There $Q=\tfrac14\Theta^2-\tfrac12\Theta'\ge2\lambda$ by \Cref{lem:wpQ}.
Also $|\Theta'|\le Cr^{\alpha-2}$ and $\Theta^2\ge cr^{2\alpha-2}$.
After enlarging $R_2$ if needed, $|\Theta'|\le\tfrac18\Theta^2$ for $r\ge R_2$.
Hence
$\tfrac5{16}\Theta^2\ge2\lambda$, that is,
\begin{equation}\label{eq:wpThetalam}
  \Theta^2\ge6\lambda,\qquad \frac{4\lambda}{\Theta^2}\le\frac23,\qquad
  \frac{\Theta}{\sqrt3}\le D\le\Theta \qquad\text{on }\{r\ge\rho(\lambda)\}.
\end{equation}

Two facts about $w_-$ follow. First, $w_-=2\lambda/(\Theta+D)$, and
expanding $2/(1+\sqrt{1-x})=1+\tfrac x4+O(x^2)$ at
$x=4\lambda/\Theta^2\le\tfrac23$,
\begin{equation}\label{eq:wpwminus}
  w_-=\frac\lambda\Theta\Bigl(1+\frac{\lambda}{\Theta^2}+O\Bigl(\frac{\lambda^2}{\Theta^4}\Bigr)\Bigr)
  =\frac\lambda\Theta+O\Bigl(\frac{\lambda^2}{\Theta^3}\Bigr).
\end{equation}
Second, differentiating $w_-=2\lambda/(\Theta+D)$ and using
$D'=\Theta\Theta'/D$,
\[
  w_-'=-\frac{2\lambda\Theta'}{D(\Theta+D)},
  \qquad\text{so}\qquad
  |w_-'|\le\frac{2\sqrt3\,\lambda|\Theta'|}{\Theta^2}
\]
by \eqref{eq:wpThetalam}. Set
\begin{equation}\label{eq:wpT}
  \tau(r):=K\lambda\,r^{1-2\alpha},
\end{equation}
where $K$ depends on the constants in $\Theta\asymp r^{\alpha-1}$
and $\Theta'=O(r^{\alpha-2})$. Choose it large enough that
$|w_-'|\le\tau D$ on $\{r\ge\rho(\lambda)\}$. Then $\tau$ is decreasing in $r$, and since
$\lambda\le\tfrac16\Theta(r)^2\le Cr^{2\alpha-2}$ there,
$\tau(r)\le K'r^{-1}$ with $K'$ independent of $k$.

We show $|w-w_-|\le2\tau$ on $\{r\ge\rho(\lambda)\}$. With
\eqref{eq:wpwminus} this gives \eqref{eq:wpwasympt}.

\emph{Upper bound.} Let $h:=w-w_-$. By \eqref{eq:wpriccati} and
$w_+-w_-=D$,
\[
  h'=h(D-h)-w_-' .
\]
By \Cref{lem:wpnonosc}, $w<\Theta/2=w_-+D/2$, so $h<D/2$ on
$\{r\ge\rho(\lambda)\}$. Suppose $h(r_1)>2\tau(r_1)$ for some
$r_1\ge\rho(\lambda)$. Where $2\tau<h<D/2$ we have $D-h>D/2$ and
$|w_-'|\le\tau D$, so
\[
  h'\ge\tfrac12Dh-\tau D=\tfrac12D\,(h-2\tau)>0,
\]
and since $\tau'\le0$, $(h-2\tau)'\ge\tfrac12D\,(h-2\tau)$. Hence
$h-2\tau$ stays positive for $r\ge r_1$ and grows at least like
$\exp\bigl(\tfrac12\int_{r_1}^rD\bigr)$, which is $\exp(cr^\alpha)$
by \eqref{eq:wpThetalam}. But $D/2\asymp r^{\alpha-1}$, so $h$ would
exceed $D/2$ at some finite $r$, contradicting $h<D/2$. Hence
$h\le2\tau$ on $\{r\ge\rho(\lambda)\}$.

\emph{Lower bound.} Let $g:=w_--w$. Then
\[
  g'=g(D+g)+w_-' \;\ge\; gD-\tau D=D\,(g-\tau).
\]
Suppose $g(r_1)>\tau(r_1)$ for some $r_1\ge\rho(\lambda)$. Then
$(g-\tau)'\ge D\,(g-\tau)$ wherever $g>\tau$, so $g-\tau$ stays
positive for $r\ge r_1$ and grows like
$\exp\bigl(\int_{r_1}^rD\bigr)$. In particular $g\to\infty$, and
$w=w_--g$ becomes negative. Once $w<0$, \eqref{eq:wpriccati} gives
$w'=\Theta w-w^2-\lambda\le-w^2$, so $w$ reaches $-\infty$ at a
finite $r$. That is a zero of $u_k$ on $\{r\ge\rho(\lambda)\}$,
contradicting \Cref{lem:wpnonosc}. We conclude that $g\le\tau$.

Together, $|w-w_-|\le2\tau=2K\lambda_k r^{1-2\alpha}$, and hence
\[
  w=w_-+O\!\left(\lambda_k r^{1-2\alpha}\right).
\]
Combining this with \eqref{eq:wpwminus} gives \eqref{eq:wpwasympt}.
The bound $\tau\le K'r^{-1}$ gives the weaker estimate.
\end{proof}

\begin{remark}
The first error in \eqref{eq:wpwasympt} is bounded by $2\tau$.
For each fixed $k$, it is integrable when $\alpha>1$.
The second error comes from the expansion \eqref{eq:wpwminus} of
$w_-$. The displayed bound is integrable when $\alpha>4/3$.
At $r=\rho(\lambda_k)$ it is of order $\Theta(r)$, so it may
remain large near that radius. The constants in \eqref{eq:wpwasympt}
are independent of $k$, but the integrals of the errors may depend
on $k$. \Cref{lem:wpgrowthorder} uses this integrability to determine
$\log|u_k(r)|-\lambda_k\mathcal J(r)$.
\end{remark}

\subsection{The growth order, and rigidity}

Let $u_{k,E}$ be the radial profile on an end $E$. For large $\rho$,
write $r_E(\rho)$ for the solution of $f_E(r)=\rho^\alpha$ and
$v_E(\rho):=\vol(S_{r_E(\rho)})$. Then $r_E(\rho)\asymp\rho$, and
\begin{equation}\label{eq:wpwholeaverage}
 A_k(\rho):=\frac1{\vol(\Sigma_\rho)}\int_{\Sigma_\rho}u_k^2\,d\sigma
 =\frac{\sum_E v_E(\rho)|u_{k,E}(r_E(\rho))|^2}
        {\sum_E v_E(\rho)}.
\end{equation}
The sum has one or two terms, so
\begin{equation}\label{eq:wpminmax}
 \min_E|u_{k,E}(r_E(\rho))|^2\le A_k(\rho)
 \le\max_E|u_{k,E}(r_E(\rho))|^2.
\end{equation}

\begin{lemma}\label{lem:wpgrowthorder}
Assume H1, H2, $\alpha>4/3$, and the hypotheses of \Cref{lem:wpasympt}.
Let $\lambda_k>0$ be a radial eigenvalue. Fix an end $E$, suppress
its subscript on $u_k$ and $\Theta$, and set
$\mathcal J(r):=\int_{R_0}^r ds/\Theta(s)$. Then
\[
  \log|u_k(r)|=\lambda_k\mathcal J(r)+C_{k,E}+o(1)
  \qquad(r\to\infty),
\]
where $C_{k,E}$ is finite and may depend on $k$ and $E$.
Define the growth order on this end by
\[
 \gamma_{k,E}:=\max\left\{0,\limsup_{r\to\infty}
                     \frac{\log|u_k(r)|}{\log r}\right\}.
\]
For $\lambda_k>0$, this order is finite if and only if
$L_E:=\limsup_{r\to\infty}\mathcal J(r)/\log r<\infty$. When $L_E<\infty$,
\begin{equation}\label{eq:wpgammaform}
  \gamma_{k,E}=\lambda_kL_E.
\end{equation}
In the one-ended case $\gamma_{k,E}=\gamma_k$.
\end{lemma}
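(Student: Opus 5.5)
Integrating the logarithmic derivative from \Cref{lem:wpasympt} is the whole plan. By \Cref{lem:wpnonosc}, $u_k$ has no zero on $[\rho(\lambda_k),\infty)$ on the fixed end, so $w=u_k'/u_k$ is defined there and $\log|u_k(r)|=\log|u_k(\rho_k)|+\int_{\rho_k}^r w\,ds$, where $\rho_k:=\rho(\lambda_k)$. Substituting \eqref{eq:wpwasympt} gives
\[
 \log|u_k(r)|=\log|u_k(\rho_k)|+\lambda_k\int_{\rho_k}^r\frac{ds}{\Theta(s)}
 +\int_{\rho_k}^r E_k(s)\,ds,
\]
with $|E_k(s)|\le C\lambda_k s^{1-2\alpha}+C\lambda_k^2\Theta(s)^{-3}$. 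Since $\Theta\asymp s^{\alpha-1}$ by \eqref{eq:Dregularity}, the first error is integrable at infinity because $2\alpha-1>1$, that is $\alpha>1$. The second error is $O(\lambda_k^2 s^{3-3\alpha})$, which is integrable because $3\alpha-3>1$, that is $\alpha>4/3$; this is exactly where the hypothesis enters. Hence $\int_{\rho_k}^r E_k\to$ a finite limit, with tail $o(1)$. Writing $\int_{\rho_k}^r ds/\Theta=\mathcal J(r)-\mathcal J(\rho_k)$, which is finite because $\Theta>0$ and is continuous on $[R_0,\rho_k]$ after enlarging $R_0$ so that $\Theta>0$ there, we absorb all $k$-dependent constants into $C_{k,E}$. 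This proves the asymptotic formula.

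For the growth order, divide by $\log r$. The constant $C_{k,E}$ and the $o(1)$ term vanish after division, so $\limsup\log|u_k|/\log r=\lambda_k\limsup\mathcal J/\log r=\lambda_k L_E$, since $\lambda_k>0$. Because $\Theta>0$ on the tail, $\mathcal J$ is increasing and eventually positive, so $L_E\ge0$ and the $\max$ with $0$ is harmless. Thus $\gamma_{k,E}<\infty$ iff $L_E<\infty$, and then \eqref{eq:wpgammaform} holds. Note that $L_E>0$ automatically, since $\Theta\lesssim s^{\alpha-1}$ gives $\mathcal J\gtrsim r^{2-\alpha}$, although this is not needed.

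For the one-ended case, $\Sigma_\rho$ is the single slice $S_{r(\rho)}$, and $u_k$ is constant on it. Hence $A_k(\rho)=u_k(r(\rho))^2$ by \eqref{eq:wpwholeaverage}. By \Cref{lem:gammaequiv}, in its first form with the normalized average, together with the limsup characterization proved identically, $\gamma_k=\max\{0,\limsup_\rho\log|u_k(r(\rho))|/\log\rho\}$. Since H1 and H2 give $r(\rho)\asymp\rho$, we have $\log r(\rho)=\log\rho+O(1)$, and $r(\rho)$ is monotone and onto the tail. The two limsups therefore coincide, so $\gamma_k=\gamma_{k,E}$.

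The main obstacle I expect is the bookkeeping at the lower endpoint. The error bounds of \Cref{lem:wpasympt} are only valid for $r\ge\rho(\lambda_k)$, and the second error is of size $\Theta$ near $\rho_k$. This is why the constant $C_{k,E}$ must be allowed to depend on $k$. No uniformity in $k$ is claimed, so integrating from $\rho_k$ and collecting everything into $C_{k,E}$ suffices.
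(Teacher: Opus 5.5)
Your proposal is correct and follows the paper's proof essentially step for step: you integrate $w-\lambda_k/\Theta$ from $\rho(\lambda_k)$, use $\alpha>4/3$ to make the $\lambda_k^2\Theta^{-3}$ error integrable, divide by $\log r$, and identify the limsups in the one-ended case via $A_k(\rho)=u_k(r(\rho))^2$ and $r(\rho)\asymp\rho$. One correction to your aside: $L_E>0$ is not automatic, because for $\alpha>2$ the integral $\mathcal J$ converges and $L_E=0$, which is exactly the case the paper uses in the proof of \Cref{thm:wprigidity} to exclude $\alpha>2$; as you note, your argument does not use this claim.
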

\begin{proof}
By \Cref{lem:wpasympt}, for $r\ge\rho(\lambda_k)$,
\[
 \left|w(r)-\frac{\lambda_k}{\Theta(r)}\right|
 \le C\lambda_k r^{1-2\alpha}+C\lambda_k^2r^{3-3\alpha}.
\]
Since $\alpha>4/3$, both $1-2\alpha<-1$ and $3-3\alpha<-1$.
Thus, for fixed $k$, the right side is integrable on
$[\rho(\lambda_k),\infty)$. By \Cref{lem:wpnonosc}, $u_k$ has no
zero there, so $\log|u_k|$ is well defined. Set
\[
 F(r):=\log|u_k(r)|-\lambda_k\mathcal J(r).
\]
Then
\[
 F'(r)=w(r)-\frac{\lambda_k}{\Theta(r)}\in L^1([\rho(\lambda_k),\infty)).
\]
Fix $R\ge\rho(\lambda_k)$. For $r\ge R$,
\[
 F(r)=F(R)+\int_R^r F'(s)\,ds.
\]
The integral converges absolutely as $r\to\infty$. Hence
\[
 C_{k,E}:=F(R)+\int_R^\infty F'(s)\,ds
\]
is finite, and
\[
 F(r)-C_{k,E}=-\int_r^\infty F'(s)\,ds=o(1).
\]
This proves the asymptotic formula.

Dividing by $\log r$ gives
\[
 \frac{\log|u_k(r)|}{\log r}
 =\lambda_k\frac{\mathcal J(r)}{\log r}
  +\frac{C_{k,E}+o(1)}{\log r}.
\]
The last term tends to zero. After increasing $R_0$ if necessary,
$\Theta>0$ on $[R_0,\infty)$, so $\mathcal J$ is nonnegative. Therefore
\[
 \limsup_{r\to\infty}\frac{\log|u_k(r)|}{\log r}
 =\lambda_kL_E.
\]
For $L_E<\infty$ this is nonnegative and gives \eqref{eq:wpgammaform}.
If $L_E=\infty$, the same identity gives $\gamma_{k,E}=\infty$.

In the one-ended case, \eqref{eq:wpwholeaverage} gives
\[
 A_k(\rho)=|u_k(r_E(\rho))|^2.
\]
Thus the infimum in H4 is
\[
 \max\left\{0,\limsup_{\rho\to\infty}
 \frac{\log|u_k(r_E(\rho))|}{\log\rho}\right\}.
\]
Indeed, every exponent above this limsup gives the H4 bound for large
$\rho$, while every exponent below it fails along a sequence; the
remaining compact range is absorbed into the constant. By H1 and H2,
$r_E(\rho)\asymp\rho$, so
$\log r_E(\rho)/\log\rho\to1$. Hence the last limsup equals the one
in the definition of $\gamma_{k,E}$, and therefore
$\gamma_{k,E}=\gamma_k$.
\end{proof}

\begin{proposition}\label{prop:wplinear}
Assume H4 in the radial sector and the hypotheses of
\Cref{lem:wpgrowthorder} on every end. In the one-ended case there
is $c_*\in(0,\infty)$, independent of $k$, such that
\[
 \gamma_k=c_*^{-1}\lambda_k\qquad\text{for every }k,
 \qquad c_*^{-1}=\limsup_{r\to\infty}\frac{\mathcal J(r)}{\log r}.
\]
The same equality holds in the two-ended case if
$\mathcal J_E(r)/\log r\to L\in(0,\infty)$ on both ends with the
same $L$. In that case $c_*^{-1}=L$, where
$\mathcal J_E(r):=\int_{R_0}^r ds/\Theta_E(s)$.
\end{proposition}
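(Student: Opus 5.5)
The plan is to read off both statements from \Cref{lem:wpgrowthorder}, using H4 only to rule out the degenerate values $L=0$ and $L=\infty$.

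\textbf{One-ended case.} Here $\gamma_k=\gamma_{k,E}$. For every radial eigenvalue $\lambda_k>0$, \Cref{lem:wpgrowthorder} gives $\gamma_k=\lambda_kL_E$ whenever $L_E<\infty$, and $\gamma_k=\infty$ otherwise. The quantity $L_E=\limsup_{r\to\infty}\mathcal J(r)/\log r$ depends only on the drift $\Theta$, not on $k$.

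\begin{enumerate}[label=\rm(\roman*)]
\item H4 requires $\gamma_k<\infty$ for some, in fact every, $k$ with $\lambda_k>0$. Such an index exists, since the radial eigenvalues tend to $+\infty$. Hence $L_E<\infty$.
\item If $L_E=0$, then $\gamma_k=0$ for every $k$ with $\lambda_k>0$. This contradicts $\gamma_k\to\infty$ in H4. Hence $L_E>0$.
\item Set $c_*^{-1}:=L_E\in(0,\infty)$. Then $\gamma_k=c_*^{-1}\lambda_k$ for all $k$ with $\lambda_k>0$.
\item For $k=0$, $u_0$ is constant, so $\gamma_0=0=c_*^{-1}\lambda_0$. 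The identity therefore holds for every $k$.
\end{enumerate}
Nonnegativity of $\mathcal J$ after enlarging $R_0$, already noted in the lemma, ensures $L_E\ge0$, so the cases above are exhaustive.

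\textbf{Two-ended case.} Assume $\mathcal J_E(r)/\log r\to L$ on both ends, with the same $L$.

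\begin{enumerate}[label=\rm(\roman*)]
\item Since this is a true limit, \Cref{lem:wpgrowthorder} gives $\log|u_{k,E}(r)|=\lambda_kL\log r+o(\log r)$ on each end. This is a limit, not merely a limsup.
\item Let $\rho\to\infty$. By H1--H2, $r_E(\rho)\asymp\rho$, so $\log r_E(\rho)=\log\rho+O(1)$. Hence $\log|u_{k,E}(r_E(\rho))|^2=2\lambda_kL\log\rho+o(\log\rho)$ for both $E$.
\item The sandwich \eqref{eq:wpminmax} bounds $A_k(\rho)$ between the minimum and maximum of these two quantities. Therefore $\log A_k(\rho)/(2\log\rho)\to\lambda_kL$.
\item By \eqref{eq:gammalimsup}, applied to the unweighted average (equivalent to using $I_k$ by \Cref{lem:gammaequiv}), $\gamma_k=\lambda_kL$.
\item H4 excludes $L=0$ exactly as in the one-ended case, and $L<\infty$ is part of the hypothesis. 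The constant mode again gives $k=0$.
\end{enumerate}

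\textbf{Main obstacle.} The only delicate step is the two-ended passage through the min/max sandwich. This is why a true limit with a common $L$ is assumed. With only limsups, or different limits on the two ends, the minimum in \eqref{eq:wpminmax} could be governed by the smaller end-exponent along some sequences and by the larger along others. Then the growth order of the average would equal the maximum of the end-orders, which is not controlled by a single constant.

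The identification $\gamma_{k,E}=\gamma_k$, together with the comparison $r_E(\rho)\asymp\rho$, is already supplied by \Cref{lem:wpgrowthorder}. Beyond this, I expect the argument to need only bookkeeping.
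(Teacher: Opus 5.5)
Your proposal is correct and matches the paper's proof essentially step for step. In the one-ended case, H4 excludes $L=\infty$ and $L=0$ through \Cref{lem:wpgrowthorder}. In the two-ended case, the common limit, $\log r_E(\rho)/\log\rho\to1$ and the sandwich \eqref{eq:wpminmax} give $\log A_k(\rho)/(2\log\rho)\to\lambda_kL$, hence $\gamma_k=\lambda_kL$.

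One minor inaccuracy sits in your closing commentary, not in the proof. With different end-exponents, the growth order of $A_k$ lies between the smaller and the larger end-orders, but it need not equal the larger one. The reason is that the weights $v_E(\rho)/\sum_E v_E(\rho)$ in \eqref{eq:wpwholeaverage} may be super-polynomially small on one end.
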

\begin{proof}
In the one-ended case put
$L:=\limsup_{r\to\infty}\mathcal J(r)/\log r$.
If $L=\infty$, \Cref{lem:wpgrowthorder} gives infinite growth order
for every positive eigenvalue, contrary to H4. If $L=0$, the same
lemma gives $\gamma_k=0$ for every $k$, again contrary to H4.
We have $0<L<\infty$, and \eqref{eq:wpgammaform} proves the claimed
identity with $c_*=1/L$. The constant eigenfunction has
$\gamma_0=\lambda_0=0$.

For two ends with the stated common limit, \Cref{lem:wpgrowthorder}
and $\log r_E(\rho)/\log\rho\to1$ give, for each $k$ with $\lambda_k>0$,
\[
 \lim_{\rho\to\infty}
 \frac{\log|u_{k,E}(r_E(\rho))|}{\log\rho}=\lambda_kL
 \quad\text{on both ends}.
\]
The minimum--maximum bound \eqref{eq:wpminmax} yields
$\log A_k(\rho)/(2\log\rho)\to\lambda_kL$.
By H4's definition, $\gamma_k=\lambda_kL$.
The identity also holds for the constant eigenfunction.
\end{proof}

\begin{proof}[Proof of \Cref{thm:wprigidity}]
By \Cref{lem:wpalphagt1}, the radial H4 hypothesis first implies
$\alpha>1$. Under the drift derivative hypothesis,
\Cref{lem:wpanticancel} gives
$\Theta_E(r)\asymp r^{\alpha-1}$ on every end.
Fix an end and use its outward coordinate $r$.
If $\alpha>2$, then $\Theta\ge cr^{\alpha-1}$. Hence $\mathcal J(r)=\int^rds/\Theta(s)<\infty$
converges to a finite limit as $r\to\infty$, since $\alpha-1>1$. In
particular,
\[
  L := \limsup_{r\to\infty}\mathcal J(r)/\log r = 0.
\]
This number depends only on $\Theta$ and is independent of $k$ and $\lambda_k$.

Since $\alpha>2>4/3$, \Cref{lem:wpgrowthorder} gives
$\gamma_{k,E}=0$ on every end. By the upper bound in
\eqref{eq:wpminmax}, $\gamma_k=0$ for every $k$, contradicting H4.

For $\alpha<2$, the preceding argument does not cover the whole range.
Lemma~\ref{lem:wpgrowthorder} requires $\alpha>4/3$, so it does not apply
when $\alpha\in(1,4/3]$. We use \Cref{lem:wpasympt} instead.

By \Cref{lem:wpasympt},
\[
  w(r) = \frac{\lambda_k}{\Theta(r)} + O(r^{-1}) + O\Bigl(\frac{\lambda_k^2}{\Theta(r)^3}\Bigr)
  \qquad \text{for } r\ge\rho(\lambda_k).
\]
Fix $k$ with $\lambda_k>0$. Since $\lambda_k/\Theta(r)^2\to0$ as $r\to\infty$, there is
$r_1(k)\ge\rho(\lambda_k)$ beyond which the second error term is at
most $\tfrac12\lambda_k/\Theta(r)$. Hence
\[
  w(r) \;\ge\; \tfrac12\frac{\lambda_k}{\Theta(r)} - C'r^{-1}
\]
there, for a fixed constant $C'$. Integrating from $r_1(k)$ to $r$,
\[
  \log |u_k(r)| \;\ge\; \tfrac12\lambda_k\mathcal J(r) - C'\log r - C_k'',
\]
where $C_k''$ is a fixed constant depending on $k$. It absorbs
$\log |u_k(r_1(k))|$ and the $r_1(k)$-dependent shift in $\mathcal J$.

By \eqref{eq:Dregularity}, $\Theta\le Cr^{\alpha-1}$, so
\[
  \mathcal J(r) \;\ge\; c\bigl(r^{2-\alpha}-R_0^{2-\alpha}\bigr)
  \ge c' r^{2-\alpha}
\]
for all sufficiently large $r$. This grows polynomially, since
$2-\alpha>0$. Any positive power of $r$
eventually dominates $\log r$. Hence the lower bound on $\log |u_k(r)|$ grows faster than any multiple
of $\log r$ on every end. Since $r_E(\rho)\asymp\rho$, the minimum
in \eqref{eq:wpminmax} also grows faster than every polynomial.
This would give $\gamma_k=\infty$, contradicting H4.

We must have $\alpha=2$. In the one-ended case, \Cref{prop:wplinear} gives
$\gamma_k=c_*^{-1}\lambda_k$, so $\lambda_k\asymp\gamma_k$.

For two ends, $\Theta_E(r)\asymp r$ provides fixed $a,A>0$ such that
$a\log r-O(1)\le\mathcal J_E(r)\le A\log r+O(1)$ on both ends.
By \Cref{lem:wpgrowthorder}, after increasing a $k$-dependent constant,
\[
 a\lambda_k\log\rho-C_k
 \le\log|u_{k,E}(r_E(\rho))|
 \le A\lambda_k\log\rho+C_k.
\]
Apply \eqref{eq:wpminmax} and the growth-order definition to obtain
\[
 a\lambda_k\le\gamma_k\le A\lambda_k.
\]
Since $\lambda_k\to\infty$, we obtain $\lambda_k\asymp\gamma_k$.
The exponent $(2\alpha-2)/\alpha$ equals $1$ when $\alpha=2$.
\end{proof}

\begin{proof}[Proof of \Cref{thm:wpcompat}]
By \Cref{thm:wprigidity}, $\alpha=2$. Since $\Theta_E(r)\sim c_1r$,
\[
 \frac{\mathcal J_E(r)}{\log r}\longrightarrow\frac1{c_1}
 \quad\text{on every end}.
\]
Proposition~\ref{prop:wplinear} gives $\gamma_k=\lambda_k/c_1$.
It also gives $c_1=c_*$ in the one-ended case.
The conclusions hold for the constant eigenfunction.

Fix $\lambda_k>0$. The sharper estimate \eqref{eq:wpwasympt} gives
\[
 r\,w_E(r)=\frac{\lambda_k r}{\Theta_E(r)}
           +O\!\left(\frac{\lambda_k+\lambda_k^2}{r^2}\right),
\]
with a constant independent of $k$ and $E$.
For fixed $k$ the error tends to zero, and the first term tends to
$\lambda_k/c_1=\gamma_k$. This proves the limits.

For the uniform bound, choose $R_k\ge\max(1,\lambda_k)$ beyond the
nonoscillation radii on all ends. Increase $R_k$ so that
$\lambda_k|r/\Theta_E(r)-1/c_1|\le1$ for $r\ge R_k$ on every end.
Then $(\lambda_k+\lambda_k^2)/r^2\le2$, so
$|r\,w_E(r)-\gamma_k|\le C'$ with $C'$ independent of $k$ and $E$.
\end{proof}

\begin{remark}\label{rem:wpscope}
In the one-ended case, \Cref{prop:wplinear} shows that H4 implies
\[
  0<\limsup_{r\to\infty}\frac{\mathcal J(r)}{\log r}<\infty,
  \qquad \mathcal J(r):=\int_{R_0}^r\frac{ds}{\Theta(s)}.
\]
H3, through $\eta<\tfrac12$, prevents the volume term $V_r$ from
cancelling the leading part of $f'$, as in the proof of
\Cref{thm:wprigidity}. Neither condition alone implies the rigidity
conclusion. Without H3, H4 can hold in full with $\alpha=n-1$ for every
$n\ge4$, \Cref{thm:H4notalpha2}. Both directions of the proof of
\Cref{thm:wprigidity} use \Cref{lem:wpasympt}, which needs only the first
two conditions in \eqref{eq:Dregularity}.

The hypothesis $\Theta(r)\sim c_1r$ of \Cref{thm:wpcompat} is stronger
than the logarithmic-average conclusion of \Cref{prop:wplinear}. For
example, $\Theta(r)=r/(2+\sin(\log r))$ satisfies
\eqref{eq:Dregularity} at $\alpha=2$, but $r/\Theta(r)$ does not converge.
Regular variation alone does not give convergence of the logarithmic
average either. If
\[
  L(r):=2+\sin(\log\log r),\qquad \Theta(r):=\frac{r}{L(r)},
\]
then $L$ is slowly varying, but $\mathcal J(r)/\log r$ oscillates
\cite{BinghamGoldieTeugels1987}.

In the exact warped setting, the level-set contribution to $\Delta f$
is represented by the one-variable coefficient $V_r$, so
$\Delta f=f''+V_rf'$, and radial eigenfunctions satisfy a scalar ODE.
On a general manifold, the corresponding level-set geometry need not
depend only on the radial variable. Additional structure, such as the
transitive symmetry used in \S\ref{sec:sturm}, can restore a
one-dimensional reduction.
\end{remark}

\subsection{H4 without H3}\label{sssec:H4noH3}

The following example shows that H3 cannot be dropped from
\Cref{thm:wprigidity}. It is a one-ended exact warped product.

\begin{theorem}\label{thm:H4notalpha2}
Let $n\ge4$ and set $\alpha:=n-1$. Choose a smooth increasing warping
function $\varphi$ on $[0,\infty)$ such that
\[
  \varphi(r)=r\quad\text{near }0,
  \qquad
  \varphi(r)=e^{r^\alpha/\alpha}\quad\text{for }r\ge2.
\]
Let $M=\mathbb R^n$ with the complete smooth rotationally symmetric
metric
\[
  g=dr^2+\varphi(r)^2g_{\mathbb S^{n-1}}.
\]
Fix $c>0$, and choose a smooth nonnegative radial function $f$ with
\[
  f(r)=r^\alpha+\frac c2r^2\qquad(r\ge2).
\]
Then H1, H2 and H4 hold, but H3 fails. For every eigenfunction of
$L_f$ with eigenvalue $\lambda_k$,
\begin{equation}\label{eq:H4noH3exponent}
  \lim_{\rho\to\infty}
  \frac{\log\Bigl(\frac1{\vol\Sigma_\rho}
  \int_{\Sigma_\rho}u_k^2\,d\sigma\Bigr)}{2\log\rho}
  =\frac{\lambda_k}{c}.
\end{equation}
Hence
\[
  \gamma_k=\lambda_k/c
\]
for every $k$. In particular H4 holds in full and $\gamma_k\to\infty$,
but $\alpha=n-1\ne2$.
\end{theorem}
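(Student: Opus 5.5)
The plan is to reduce everything to an explicit radial computation, since on the tail the drift becomes exactly Ornstein--Uhlenbeck. For $r\ge2$ we have $\varphi'/\varphi=r^{\alpha-1}$, so $V_r=(n-1)r^{\alpha-1}=\alpha r^{\alpha-1}$. Since $f'=\alpha r^{\alpha-1}+cr$, this gives
\[
\Theta=f'-V_r=cr,\qquad W=\varphi^{n-1}e^{-f}=e^{-cr^2/2}\qquad(r\ge2).
\]
The hypotheses are checked first.
\begin{enumerate}[label=\rm(\roman*)]
\item Since $\alpha=n-1\ge3>2$, we have $f\asymp r^\alpha$ and $|\nabla f|^2=(f')^2\asymp r^{2\alpha-2}\asymp f^{2-2/\alpha}$. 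This gives H1 and H2.
\item We have $\Delta f=f''+V_rf'=\alpha^2r^{2\alpha-2}(1+o(1))$, so $\Delta f/|\nabla f|^2\to1$. Hence the upper bound in H3 fails for every $\eta<\tfrac12$.
\end{enumerate}

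Next I separate variables in spherical harmonics $Y_j$, with $-\Delta_{\mathbb S^{n-1}}Y_j=\mu_jY_j$, as in Step~1 of \Cref{thm:neckviolatesH4}. On the tail the sector equation is
\[
p''-crp'-\mu_j\varphi^{-2}p=-\lambda p .
\]
The Liouville substitution $\psi=\sqrt W\,p$ turns this into
\[
-\psi''+Q_j\psi=\lambda\psi,\qquad Q_j=\tfrac{c^2r^2}{4}-\tfrac c2+\mu_j e^{-2r^\alpha/\alpha}\quad\text{on the tail}.
\]
Each $Q_j\ge\tfrac{c^2r^2}{4}-C$ tends to $\infty$, so each sector has compact resolvent. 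The bottom of sector $j$ tends to $\infty$ as $j\to\infty$. The reason is that $Q_j\ge\min\bigl(\tfrac{c^2r^2}{4},\mu_je^{-2r^\alpha/\alpha}\bigr)-C$ everywhere, and this minimum is at least of order $(\log\mu_j)^{2/\alpha}$. One also has to control the compact core, where $\mu_j/\varphi^2$ is large anyway. Since each spherical-harmonic eigenspace is finite-dimensional, the full operator has discrete spectrum with finite multiplicities and a complete eigenbasis.

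The core step is the growth asymptotics for each separated profile $p$ with eigenvalue $\lambda>0$. First, nonoscillation on $\{r\ge\rho(\lambda)\}$ follows exactly as in \Cref{lem:wpnonosc}, because $Q_j\ge\tfrac{c^2r^2}{4}-C$. Then $w=p'/p$ satisfies
\[
w'=crw-w^2-\lambda+\mu_je^{-2r^\alpha/\alpha}.
\]
I would rerun the comparison of \Cref{lem:wpasympt} with $w_-=\lambda/(cr)+O(r^{-3})$. The extra forcing $\mu_je^{-2r^\alpha/\alpha}$ is superexponentially small, so it is absorbed into $\tau$ for fixed $j$. This yields $w=\lambda/(cr)+O_{j,\lambda}(r^{-3})$, hence
\[
\log|p(r)|=\tfrac{\lambda}{c}\log r+C_{j,\lambda}+o(1),
\]
with $C_{j,\lambda}$ finite. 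In particular the profile is comparable to $r^{\lambda/c}$ and not smaller. The growing solution behaves like $e^{cr^2/2}$ and is excluded by $L^2$. Recessive solutions decaying faster than $r^{\lambda/c}$ are excluded because $w\to0$ at rate $\lambda/(cr)$ exactly, which forces the limit of $rw$ to equal $\lambda/c$.

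To finish, any eigenfunction $u_k$ with eigenvalue $\lambda_k$ is a finite sum $\sum_j p_j(r)Y_j$ over sectors sharing the eigenvalue $\lambda_k$. Large levels $\Sigma_\rho$ are single slices $S_{r(\rho)}$ with $r(\rho)\asymp\rho$, so $\log r(\rho)/\log\rho\to1$. By $L^2(\mathbb S^{n-1})$ orthogonality, the normalized average equals $\sum_j p_j(r)^2\|Y_j\|^2/\vol(\mathbb S^{n-1})$. Every term is comparable to $r^{2\lambda_k/c}$ with a nonzero constant, which gives \eqref{eq:H4noH3exponent}. Therefore $\gamma_k=\lambda_k/c\to\infty$, and H4 holds in full. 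I expect the main obstacle to be the uniform discreteness across sectors, namely showing that the bottom of sector $j$ tends to $\infty$ even though $\varphi$ is unbounded. A secondary obstacle is making sure the Riccati comparison tolerates the angular forcing term. For that I would first try absorbing it into $|w_-'|\le\tau D$ beyond a $j$-dependent radius.
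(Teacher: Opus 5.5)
Your proof is correct and follows the paper's architecture step by step. The shared steps are the tail identities $\Theta=cr$ and $W=e^{-cr^2/2}$, H1--H2 and the failure of H3 via $\Delta f/|\nabla f|^2\to1$, and separation into spherical-harmonic sectors with Liouville potential $Q+\mu_j\varphi^{-2}$. The end is also the same: large levels are single slices, and angular orthogonality finishes.

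The genuine difference is the growth step.
\begin{itemize}
\item The paper treats the tail sector equation as Hermite's equation perturbed by $q=\mu_je^{-2r^\alpha/\alpha}$. It runs a variation-of-parameters contraction around the explicit solutions $h_1=r^\nu(1+O(r^{-2}))$ and $h_2\sim r^{-\nu-1}e^{cr^2/2}$. Then $p=Ar^\nu(1+o(1))$ with $A\ne0$, because $h_2$ is not square integrable.
\item You instead rerun the Riccati comparison of \Cref{lem:wpnonosc,lem:wpasympt} with drift exponent $2$, and this goes through. For $h=w-w_-$ one gets $h'=h(D-h)+q-w_-'$, so $q\ge0$ only helps the upper bound. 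For $g=w_--w$ you need $|w_-'|+q\le\tau D\asymp\lambda r^{-2}$, and the blow-up step needs $q<\lambda$. Both hold beyond a $j$-dependent radius, since $q$ decays superexponentially.
\end{itemize}
Your route avoids special-function asymptotics and the Wronskian. It would work for any tail drift with $\Theta\sim cr$ and $\Theta'=O(1)$. The paper's route is tailored to the exact Ornstein--Uhlenbeck tail, but it selects the slow solution directly. Your remark about ``recessive solutions decaying faster than $r^{\lambda/c}$'' is unnecessary: the lower half of the comparison already forces $rw\to\lambda/c$.

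One slip needs fixing in the sector-bottom estimate. You wrote $Q_j\ge\min\bigl(\tfrac{c^2r^2}{4},\mu_je^{-2r^\alpha/\alpha}\bigr)-C$ and said this minimum is of order $(\log\mu_j)^{2/\alpha}$. But the pointwise minimum tends to $0$ as $r\to\infty$. On the tail $Q_j$ is the sum of these two nonnegative terms minus $c/2$, so it is at least their maximum minus $c/2$. The infimum of that maximum is attained at the crossover $t_j$, where $\mu_je^{-2t_j^\alpha/\alpha}=c^2t_j^2/4$; this is exactly the paper's two-region argument. On $[0,2]$, use that $\varphi$ is increasing, so $\mu_j\varphi^{-2}\ge\mu_j\varphi(2)^{-2}$. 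Combine this with a global bound $Q\ge-C_Q$, which near the pole comes from $Q=\tfrac{(n-1)(n-3)}{4r^2}+O(1)$ and $n\ge4$.
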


\begin{proof}
On the tail,
\[
  V_r:=(\log\vol S_r)'=(n-1)\frac{\varphi'}{\varphi}
  =\alpha r^{\alpha-1}.
\]
Thus
\[
  V_r-f'=-cr,
\]
and
\[
  \varphi(r)^{n-1}e^{-f(r)}=e^{-cr^2/2}\qquad(r\ge2).
\]
Since $r$ is the distance from the pole, $f\asymp r^\alpha$ and
$|\nabla f|^2\asymp r^{2\alpha-2}$. These are H1 and H2.

H3 fails. Indeed, on $r\ge2$,
\[
  \Delta f=f''+V_rf'
  =\alpha^2r^{2\alpha-2}+\alpha c\,r^\alpha
   +\alpha(\alpha-1)r^{\alpha-2}+c.
\]
On the other hand,
\[
  |\nabla f|^2
  =\alpha^2r^{2\alpha-2}+2\alpha c\,r^\alpha+c^2r^2.
\]
Hence
\[
  \frac{\Delta f}{|\nabla f|^2}\longrightarrow1,
\]
so no $\eta<\tfrac12$ can satisfy H3.

We prove H4 by separation of variables. Let
$-\Delta_{\mathbb S^{n-1}}\phi_j=\mu_j\phi_j$, with
$0=\mu_0<\mu_1\le\cdots\to\infty$. In the $j$-th spherical harmonic
sector, $u=p(r)\phi_j(\omega)$ satisfies on the tail
\begin{equation}\label{eq:H4noH3sector}
  p''-crp'-\mu_je^{-2r^\alpha/\alpha}p=-\lambda p.
\end{equation}
Let
\[
  m(r):=\varphi(r)^{n-1}e^{-f(r)}.
\]
The sector is self-adjoint in $L^2((0,\infty),m(r)\,dr)$. Under the
unitary substitution $\psi=\sqrt m\,p$, its nonnegative realization
has the one-dimensional Schr\"odinger form
\[
  -\psi''+\left(Q(r)+\frac{\mu_j}{\varphi(r)^2}\right)\psi,
  \qquad
  Q:=\frac{(\sqrt m)''}{\sqrt m}.
\]
Near the pole, $m(r)=r^{n-1}e^{-f(r)}$, up to a smooth positive
factor, so
\[
  Q(r)=\frac{(n-1)(n-3)}{4r^2}+O(1).
\]
Since $n\ge4$, $Q$ is bounded below. Fix $C_Q>0$ with
$Q\ge-C_Q$ on $(0,\infty)$. On the tail,
\[
  Q(r)=\frac{c^2r^2}{4}-\frac c2.
\]
Every sector has compact resolvent.

The sector ground energies tend to infinity. For large $j$, let
$t_j\ge2$ solve
\[
  \mu_je^{-2t_j^\alpha/\alpha}=\frac{c^2t_j^2}{4}.
\]
Then $t_j\to\infty$. Since $\varphi$ is increasing, for $r\le t_j$,
\[
  Q(r)+\frac{\mu_j}{\varphi(r)^2}
  \ge -C_Q+\mu_je^{-2t_j^\alpha/\alpha}
  =\frac{c^2t_j^2}{4}-C_Q.
\]
For $r\ge t_j$,
\[
  Q(r)+\frac{\mu_j}{\varphi(r)^2}
  \ge\frac{c^2t_j^2}{4}-\frac c2.
\]
The bottom of the $j$-th sector tends to infinity. The direct
sum over the spherical harmonic sectors has compact
resolvent. The constants span the kernel, so
$0=\lambda_0<\lambda_1\le\lambda_2\le\cdots\to\infty$ and there is a
complete orthonormal eigenbasis.

It remains to determine the growth order. Fix one sector and an
eigenvalue $\lambda$, and write $\nu:=\lambda/c$. Equation
\eqref{eq:H4noH3sector} is Hermite's equation with the perturbation
$q(r):=\mu_je^{-2r^\alpha/\alpha}$. The unperturbed equation has a
slow solution
\[
  h_1(r)=r^\nu(1+O(r^{-2}))
\]
and a second solution
\[
  h_2(r)\sim r^{-\nu-1}e^{cr^2/2}.
\]
With the convention $\mathcal W(h_1,h_2)=h_1h_2'-h_1'h_2$,
the Wronskian is $W_0e^{cr^2/2}$. Variation of parameters gives
\[
  \widetilde h_1(r)=h_1(r)+\int_r^\infty
  \frac{h_1(r)h_2(s)-h_2(r)h_1(s)}{W_0e^{cs^2/2}}
  q(s)\widetilde h_1(s)\,ds.
\]
In the norm $\|y\|_R:=\sup_{r\ge R}r^{-\nu}|y(r)|$, the integral
operator is bounded by a constant times
\[
  r^{-2}q(r)+\int_r^\infty s^{-1}q(s)\,ds.
\]
This tends uniformly to zero as $R\to\infty$, since $q$ decays faster
than every power. Hence the operator is a contraction for $R$ large,
and
\[
  \widetilde h_1(r)=r^\nu(1+o(1)).
\]
A second solution obtained by reduction of order has the
$e^{cr^2/2}$ factor. It is not square integrable against
$e^{-cr^2/2}dr$. Every sector eigenfunction therefore satisfies
\[
  p(r)=A r^\nu(1+o(1)),\qquad A\ne0.
\]

For large $\rho$, the level set $\Sigma_\rho=\{f=\rho^\alpha\}$ is
the sphere $r=r_\rho$, where
\[
  r_\rho=\rho\bigl(1+O(\rho^{2-\alpha})\bigr).
\]
If $u=p\phi_j$ and $\phi_j$ is $L^2$-normalized, then
\[
  \frac1{\vol\Sigma_\rho}\int_{\Sigma_\rho}u^2\,d\sigma
  =\frac{p(r_\rho)^2}{\vol(\mathbb S^{n-1})}.
\]
For a linear combination inside a repeated full eigenspace, angular
orthogonality replaces the right side by a finite sum of such terms.
All terms have the same exponent $2\lambda/c$, and at least one
leading coefficient is nonzero. This proves
\eqref{eq:H4noH3exponent}. By the definition of the growth order,
$\gamma_k=\lambda_k/c$. Hence H4 holds.
\end{proof}

\begin{remark}\label{rem:H4noH3}
The example depends on a cancellation on the tail, where
\[
  \Theta=f'-V_r=cr.
\]
The spectral growth has effective exponent $2$, although
$f\asymp r^\alpha$ with $\alpha=n-1\ge3$. H3 rules out this
cancellation. The full Schr\"odinger potential
$V=\tfrac14|\nabla f|^2-\tfrac12\Delta f$ satisfies
\[
  V=-\frac{\alpha^2}{4}r^{2\alpha-2}
    +\frac{c^2}{4}r^2+O(r^{\alpha-2}),
\]
so it tends to $-\infty$. Discreteness here comes from the radial
Liouville potential after the cross-sectional volume is included.

When $n=3$ and $\alpha=2$, the same tail gives
$\Delta f/|\nabla f|^2\to2/(2+c)$. H3 then holds exactly when
$c>2$. This is consistent with \Cref{thm:wprigidity}.
\end{remark}

\subsection{The effective exponent of the drift}\label{sssec:beta}

In this subsection, H3 is used in one place. \Cref{lem:wpanticancel}
uses it to show $\Theta\asymp r^{\alpha-1}$. Every step after that uses only $\Theta$, so the drift can be measured on its own.

\begin{definition}\label{def:beta}
Let $(M,g)$ and $f$ be as in this subsection, with $f$ satisfying H1
and H2. We say that $\Theta=f'-V_r$ has effective exponent
$\beta>1$ if, on every end,
\[
  \Theta(r)\asymp r^{\beta-1},\qquad
  \Theta'(r)=O(r^{\beta-2}),
\]
with the same exponent $\beta$.
Since the radial weight $W=\vol(S_r)e^{-f}$ satisfies
$(\log W)'=-\Theta$, the first condition gives
$-\log W\asymp r^\beta$ on each end.
\end{definition}

\begin{remark}\label{rem:betaequalsalpha}
Under H1--H3 and either derivative hypothesis in \Cref{lem:wpanticancel},
$\Theta\asymp r^{\alpha-1}$ when $\alpha>1$. A regular effective
exponent, when it exists, equals $\alpha$. If $0<\alpha\le1$, the same
lemma bounds $\Theta$ from above, which excludes an effective exponent
$\beta>1$. Without H3 the two exponents can differ. In \Cref{thm:H4notalpha2}, $\Theta=cr$, so
$\beta=2$, but $\alpha=n-1$.
\end{remark}

\begin{theorem}\label{thm:betarigidity}
Let $(M,g)$ be an exact warped product and $f$ a radial weight
satisfying H1 and H2. Suppose $\Theta$ has effective exponent
$\beta>1$ on all ends, in the sense of \Cref{def:beta}, and the radial
eigenfunctions satisfy H4 in their own index. Then $\beta=2$, and
$\lambda_k\asymp\gamma_k$ for every $k$ with $\lambda_k>0$, with
constants independent of $k$.
\end{theorem}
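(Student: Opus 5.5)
The plan is to rerun the proof of \Cref{thm:wprigidity} with $\alpha$ replaced by $\beta$ in every step after \Cref{lem:wpanticancel}, since those steps use only $\Theta$. H1 and H2 are still needed: they give $f'>0$ on the tail, $r_E(\rho)\asymp\rho$, and the identification of the level sets $\Sigma_\rho$ with one or two warped slices. That identification is what lets \eqref{eq:wpwholeaverage} and \eqref{eq:wpminmax} express $\gamma_k$ through the radial profiles.

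First I would check that \Cref{lem:wpQ}, \Cref{lem:wpnonosc} and \Cref{lem:wpasympt} hold verbatim with $\beta$ in place of $\alpha$. Their proofs only use $\Theta\asymp r^{\beta-1}$ and $\Theta'=O(r^{\beta-2})$ with $\beta>1$. This gives the following.
\begin{itemize}
\item $Q=\tfrac14\Theta^2-\tfrac12\Theta'\asymp r^{2\beta-2}$.
\item Nonoscillation of $u_k$ beyond $\rho(\lambda_k)\asymp\lambda_k^{1/(2\beta-2)}$.
\item The expansion $w=\lambda_k/\Theta+O(\lambda_kr^{1-2\beta})+O(\lambda_k^2\Theta^{-3})$.
\end{itemize}
Discreteness of the radial sector is supplied by radial H4 itself; $Q\to\infty$ also gives it directly. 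No use of $V=\tfrac14|\nabla f|^2-\tfrac12\Delta f$ is needed, so H3 never enters.

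Next I would rule out $\beta\ne2$, as in the proof of \Cref{thm:wprigidity}. If $\beta>2$, then $\mathcal J(r)=\int^r ds/\Theta$ converges. Since $\beta>4/3$, \Cref{lem:wpgrowthorder} with $\beta$ gives $\gamma_{k,E}=\lambda_kL_E=0$ on every end, so \eqref{eq:wpminmax} forces $\gamma_k=0$ for all $k$, contradicting H4. If $1<\beta<2$, I would avoid \Cref{lem:wpgrowthorder}, which fails for $\beta\le4/3$. Instead I would use the one-sided bound $w\ge\tfrac12\lambda_k/\Theta-C'r^{-1}$ for large $r$. Integrating it gives $\log|u_k|\ge c\lambda_kr^{2-\beta}-C'\log r-C_k$ on each end. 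The minimum in \eqref{eq:wpminmax} is then superpolynomial, so $\gamma_k=\infty$, again contradicting H4. Hence $\beta=2$.

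With $\beta=2$ we have $\Theta\asymp r$, so $a\log r-O(1)\le\mathcal J_E(r)\le A\log r+O(1)$ on each end. \Cref{lem:wpgrowthorder} then gives $a\lambda_k\log\rho-C_k\le\log|u_{k,E}(r_E(\rho))|\le A\lambda_k\log\rho+C_k$. Through \eqref{eq:wpminmax}, this yields $a\lambda_k\le\gamma_k\le A\lambda_k$ with $a,A$ independent of $k$.

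The main obstacle is verifying that the constants in \Cref{lem:wpasympt} really depend only on the drift bounds. The transplanted lemmas were stated under ``$\alpha>1$ and \eqref{eq:Dregularity}''. I must make sure no hidden appeal to H1--H3 through $\alpha$ survives, for instance in the threshold $R_2$ or in the comparison $r_E(\rho)\asymp\rho$, which should come from H1 and H2 with exponent $\alpha$ and not $\beta$. The two exponents live on different objects: $\alpha$ governs $b$ and the level sets, while $\beta$ governs the ODE in $r$. Because $r_E(\rho)\asymp\rho$, polynomial growth in $r$ and in $\rho$ coincide, so the argument goes through.
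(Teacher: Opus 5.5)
Your proposal is correct and follows the paper's proof essentially step for step. Like the paper, you rerun \Cref{lem:wpQ,lem:wpnonosc,lem:wpasympt,lem:wpgrowthorder} with $\beta$ in the drift bounds, while H1--H2 keep the exponent $\alpha$ of $f$ only to give $r_E(\rho)\asymp\rho$ and the slice structure of the level sets. You exclude $\beta>2$ through the convergent $\mathcal J_E$ and $1<\beta<2$ through the one-sided Riccati lower bound, and you read off $a\lambda_k\le\gamma_k\le A\lambda_k$ from \eqref{eq:wpminmax}.
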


\begin{proof}
The ODE estimates in \Cref{lem:wpQ,lem:wpnonosc,lem:wpasympt,lem:wpgrowthorder}
use only the radial equation, square integrability, and the first two
drift bounds in \eqref{eq:Dregularity}. Apply them with $\beta$ in
place of $\alpha$ in the drift bounds. H1 and H2 retain the exponent
of $f$ and give $r_E(\rho)\asymp\rho$. The radial equation is still
$L_fu=\tfrac1W(Wu')'$. \Cref{lem:wpQ} gives
$Q\asymp r^{2\beta-2}$ and $\rho(\lambda)\asymp\lambda^{1/(2\beta-2)}$,
which needs $\beta>1$. \Cref{lem:wpasympt} is unchanged.
\Cref{lem:wpgrowthorder} needs $\beta>4/3$, for the integrability of
$\lambda_k^2/\Theta^3\asymp r^{3-3\beta}$. Apply these estimates on
each end and combine them using \eqref{eq:wpminmax}. If $\beta>2$,
then every $\mathcal J_E$ converges and all end growth orders vanish,
contrary to H4. If $1<\beta<2$, the lower estimate
in the proof of \Cref{thm:wprigidity} gives faster-than-polynomial
growth on every end, again contrary to H4. Hence $\beta=2$.
The final comparison in that proof gives
$a\lambda_k\le\gamma_k\le A\lambda_k$, so
$\lambda_k\asymp\gamma_k$.
\end{proof}

The proof of \Cref{thm:wprigidity} uses the first two drift bounds
with $\beta=\alpha$. Under H1--H3, \Cref{lem:wpanticancel} gives the two-sided bound on $\Theta$
from the assumed bound on $\Theta'$.

\begin{definition}\label{def:betacompatibility}
With $\beta$ as in \Cref{def:beta}, the spectral and growth
filtrations are compatible with effective exponent $\beta$ if
there are constants $0<c\le C$, independent of $k$, with
\[
  c\,\gamma_k^{(2\beta-2)/\beta}\;\le\;\lambda_k\;\le\;C\,\gamma_k^{(2\beta-2)/\beta}
  \qquad\text{for all sufficiently large }k.
\]
\Cref{def:compatibility} is the case $\beta=\alpha$.
\end{definition}

\begin{remark}\label{rem:betacompatibility}
This is the same statement as \Cref{def:compatibility}, written in
terms of $\Theta$ instead of $f$. By \Cref{lem:wpQ} with $\beta$, the
turning radius is $\rho(\lambda_k)\asymp\lambda_k^{1/(2\beta-2)}$. Heuristically, if one replaces an eigenfunction by the model growth
profile $u_k^2\sim r^{2\gamma_k}$ and $-\log W\asymp r^\beta$, then
the corresponding model density peaks at $r\asymp\gamma_k^{1/\beta}$.
Matching this model scale with
$\rho(\lambda_k)\asymp\lambda_k^{1/(2\beta-2)}$ leads to
$\lambda_k\asymp\gamma_k^{(2\beta-2)/\beta}$. This heuristic motivates the definition but does not describe the true concentration
radius. For $\beta=2$ the
exponent is $1$. The example of \Cref{thm:H4notalpha2} is compatible
with effective exponent $2$, since there $\gamma_k=\lambda_k/c$.
\end{remark}

\begin{remark}\label{rem:betapicture}
If the radial drift has a regular effective power-law exponent, then
radial H4 requires that exponent to be $2$, so $-\log W\asymp r^2$.
Under H3 together with the derivative hypothesis of
\Cref{lem:wpanticancel}, one has $\Theta\asymp f'$. Thus the effective
exponent equals the exponent $\alpha$ of $f$, and $\alpha=2$. The
comparison is between the drift scales, not between $W$ and $e^{-f}$
pointwise. \Cref{thm:H4notalpha2} shows what can happen when
H3 fails: the volume term cancels the leading part of $f'$ and leaves an
effective quadratic drift even though $f$ itself has higher-order
growth.
\end{remark}

\subsection{Uniqueness of the weight}

Under the effective-exponent hypothesis, radial H4 constrains the
weight up to its volume term.
Below, $\Lambda(r):=\log\vol(S_r)$ on each end.

\begin{corollary}\label{cor:H4uniqueness}
Let $(M,g)$ be an exact warped product with $\varphi$ bounded below
on every end. Let $f$ and $\tilde f$ be radial weights, each satisfying
H1, H2 and H4 in its own radial index, where full H4 is not assumed.
Suppose each drift, $\Theta_f:=f'-V_r$ and
$\Theta_{\tilde f}:=\tilde f'-V_r$, has an effective exponent in the
sense of \Cref{def:beta}. The exponents for the two weights may differ.
Then, on every end,
\[
  f=\Lambda+\Phi_f,\qquad \tilde f=\Lambda+\Phi_{\tilde f},\qquad
  \Phi_f\asymp r^2,\quad \Phi_{\tilde f}\asymp r^2 .
\]
In particular $f\asymp\tilde f$. If $\Lambda(r)/r^2\to\infty$ on
every end, then $f/\tilde f\to1$ at infinity.
\end{corollary}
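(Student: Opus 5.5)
The plan is to apply \Cref{thm:betarigidity} to each weight separately, then integrate the drift. For the weight $f$, the hypotheses of \Cref{thm:betarigidity} hold: $f$ satisfies H1 and H2, the drift $\Theta_f$ has an effective exponent $\beta_f>1$ on all ends, and radial H4 holds in the radial index of $f$. That theorem gives $\beta_f=2$, and similarly $\beta_{\tilde f}=2$. By \Cref{def:beta}, on every end
\[
  \Theta_f(r)\asymp r,\qquad \Theta_{\tilde f}(r)\asymp r .
\]

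Next, define $\Phi_f:=f-\Lambda$ on each end. Since $\Lambda'=(\log\vol S_r)'=V_r$, we have $\Phi_f'=f'-V_r=\Theta_f$. Integrating from a fixed tail radius $R$ gives
\[
  \Phi_f(r)=\Phi_f(R)+\int_R^r\Theta_f(s)\,ds .
\]
The integral is two-sided comparable to $r^2-R^2$, so $\Phi_f\asymp r^2$ for large $r$; the constant $\Phi_f(R)$ is absorbed once $r$ is large. The same argument applies to $\tilde f$. This is the same computation as $-\log W\asymp r^\beta$ in \Cref{def:beta}, because $-\log W=f-\Lambda=\Phi_f$.

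For $f\asymp\tilde f$, I will use that $\varphi$ is bounded below on each end. Then $\Lambda=(n-1)\log\varphi+\log\vol(N)$ is bounded below by a constant $-C$. Since $f=\Lambda+\Phi_f\ge0$, we have $\Lambda\le f$, and by H1 $f\asymp r^{\alpha_f}$. The key observation is that $\Lambda\ge -C$ and $\Phi_{\tilde f}\asymp r^2\to\infty$ give
\[
  \tilde f=\Lambda+\Phi_{\tilde f}\ge \Lambda+c\,r^2-C\ge \tfrac{c}{C'}\,(\Lambda+C' r^2)-C''\ge c'' f
\]
for large $r$. The middle inequality uses $\Lambda+cr^2\ge\min(1,c/C')(\Lambda+C'r^2)$ up to a constant, which is valid because $\Lambda$ is bounded below. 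The constant $C'$ is chosen with $\Phi_f\le C'r^2$, so $f\le\Lambda+C'r^2$. The case $\Lambda\ge0$ works directly, and when $\Lambda$ is negative but bounded the $r^2$ term dominates. By symmetry $f\lesssim\tilde f$, so $f\asymp\tilde f$.

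Finally, suppose $\Lambda(r)/r^2\to\infty$ on every end. Then $\Phi_f/\Lambda\to0$ and $\Phi_{\tilde f}/\Lambda\to0$, so
\[
  \frac{f}{\tilde f}=\frac{1+\Phi_f/\Lambda}{1+\Phi_{\tilde f}/\Lambda}\to1 .
\]
Since the hypothesis is imposed on every end, this gives the limit at infinity on all of $M$.

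The main obstacle is confirming that \Cref{thm:betarigidity} applies with the radial index of each weight separately. This holds because that theorem only uses H1, H2, the effective exponent and radial H4 for a single weight; it does not need H3 or the same exponent for both weights. The remaining points are bookkeeping of additive constants in the comparison.
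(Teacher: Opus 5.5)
Your proposal is correct and follows essentially the paper's proof. You apply \Cref{thm:betarigidity} to each weight separately to get $\Theta_f,\Theta_{\tilde f}\asymp r$, integrate to get $\Phi_f,\Phi_{\tilde f}\asymp r^2$, and use the lower bound $\Lambda\ge -C$ coming from $\varphi$; the paper phrases the final comparison as a mediant bound on $(a+p)/(a+q)$ with $a=\Lambda+C_0\ge0$, while you absorb the additive constants using $f\to\infty$, which is equivalent. One small slip: $\Lambda\le f$ on the tail follows from $\Phi_f\ge0$ for large $r$, not from $f\ge0$, though your argument never actually uses this inequality.
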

\begin{proof}
By \Cref{thm:betarigidity}, both effective exponents equal $2$.
Work on a fixed end. There are constants $0<c\le C$ with
$cr\le\Theta_f\le Cr$ and
$cr\le\Theta_{\tilde f}\le Cr$ for $r\ge R_0$. Since $f'=V_r+\Theta_f$
and $\Lambda'=V_r$, integrating from $R_0$ gives $f=\Lambda+\Phi_f$
with $\Phi_f:=f(R_0)-\Lambda(R_0)+\int_{R_0}^r\Theta_f$, and
\[
  \tfrac c2\,r^2-C'\;\le\;\Phi_f\;\le\;\tfrac C2\,r^2+C' .
\]
The same holds for $\tilde f$. Since $\varphi$ is bounded below on the tail,
$\Lambda\ge-C_0$ there. Put $a:=\Lambda+C_0\ge0$, $p:=\Phi_f-C_0$ and
$q:=\Phi_{\tilde f}-C_0$. For $r$ large, $p$ and $q$ lie in
$[\tfrac c4r^2,\tfrac C2r^2]$. Hence
\[
  \frac f{\tilde f}=\frac{a+p}{a+q}
\]
lies between $\min(1,p/q)$ and $\max(1,p/q)$, and lies in
$[\tfrac c{2C},\tfrac{2C}c]$. This is $f\asymp\tilde f$. If
$a/r^2\to\infty$, the ratio tends to $1$. There are at most two ends,
so the comparisons hold with common constants on their union.
\end{proof}

\begin{remark}\label{rem:H4uniqueness-sharp}
The lower bound on $\varphi$ is used in \Cref{cor:H4uniqueness}.
Without it, the conclusion can fail. For a
concrete smooth example, choose a smooth rotationally symmetric metric
with $\varphi(r)=r$ near $r=0$ and
$\varphi(r)=e^{-r^2/(n-1)}$ for $r\ge2$. The resulting metric is a complete smooth
warped product with one noncompact end and $V_r=-2r$ on that end.
Choose smooth proper radial weights with
\[
  f(r)=r\quad (r\ge2),\qquad \widetilde f(r)=r^2\quad (r\ge2),
\]
patched smoothly near the origin. Then on the tail
\[
  \Theta_f=2r+1,\qquad \Theta_{\widetilde f}=4r.
\]
Both drifts have effective exponent $2$, and their radial
one-dimensional Schr\"odinger potentials are confining. Moreover,
\[
  \mathcal J_f(r)=\frac12\log r+O(1),\qquad
  \mathcal J_{\widetilde f}(r)=\frac14\log r+O(1).
\]
The same one-dimensional Riccati calculation used above, now with the
effective drift exponent $2$, yields
\[
  \gamma_k^f=\frac12\lambda_k^f,\qquad
  \gamma_k^{\widetilde f}=\frac14\lambda_k^{\widetilde f}
\]
for every positive radial eigenvalue. Hence radial H4 holds for both
weights, but $f/\widetilde f\sim1/r\to0$. Hence some lower control on
the volume term is necessary for the comparability conclusion.

The lower bound on $\varphi$ ensures comparability, but it does not by
itself imply $f/\widetilde f\to1$. On Euclidean space, prescribe for
$r\ge2$ the derivative $f'(r)=r(2+\sin\log r)$ and patch $f$ smoothly
near the origin. Let $\widetilde f=r^2$. Since $V_r=(n-1)/r$, both
radial drifts are $\asymp r$ and satisfy the effective-exponent
regularity with $\beta=2$. In this case
\[
  \frac{\mathcal J_f(r)}{\log r}\longrightarrow\frac1{\sqrt3},\qquad
  \frac{\mathcal J_{\widetilde f}(r)}{\log r}\longrightarrow\frac12.
\]
\Cref{lem:wpgrowthorder} then gives the exact identities
\[
  \gamma_k^f=\frac1{\sqrt3}\lambda_k^f,\qquad
  \gamma_k^{\widetilde f}=\frac12\lambda_k^{\widetilde f},
\]
so radial H4 holds for both. On the other hand,
\[
  f(r)=r^2\Bigl[1+\tfrac15\bigl(2\sin\log r-\cos\log r\bigr)\Bigr]+O(1),
\]
so $f/\widetilde f$ oscillates between
$1-\sqrt5/5$ and $1+\sqrt5/5$ asymptotically and has no limit.
\end{remark}

The effective exponent in \Cref{thm:betarigidity} is a hypothesis
on a one-dimensional drift. On a general manifold, neither such a
drift nor its power-law behavior is supplied by H4.
We return to this distinction in \Cref{q:gaussianweight}.

\section{Compatibility Under Radiality: a Sturm--Liouville Approach}\label{sec:sturm}\label{ssec:sturmreduction}

We replace the exact warped-product structure by a transitive
isometric symmetry of the level sets, \Cref{ass:transitive}. This symmetry produces a radial invariant subspace with its own complete
spectral decomposition. In this subspace, compatibility is a
one-dimensional Sturm--Liouville problem. Transverse spectral modes may still exist in the full operator, but they
do not couple to the invariant sector.

The reduction and the growth-order comparison are established below.

\Cref{thm:osccount} counts the zeros of the sector eigenfunctions and
gives $k\asymp\lambda_k^{\alpha/(2\alpha-2)}$.
\Cref{cor:SL-compat} then transfers the rigidity theory of
\S\ref{sec:warped} to the sector and gives $\alpha=2$ and
$\lambda_k\asymp\gamma_k\asymp k$.

\subsection{Hypothesis and reduction}

\begingroup
\renewcommand{\theassumption}{TS1}
\renewcommand{\theHassumption}{TS1}
\begin{assumption}[Transitive isometric symmetry]\label{ass:transitive}
$(M,g)$ admits an isometric action of a compact Lie group $G$. This
action preserves $f$, so it also preserves $b$. It acts transitively on each
regular level set $\Sigma_r$.

The $G$-invariant subspace $L^2_G(M,e^{-f}dV)\subset L^2(M,e^{-f}dV)$
is closed and $L_f$-invariant. This holds since $L_f$ commutes with
the isometric, $f$-preserving action of $G$.

We assume this subspace carries its own complete eigenbasis
$\{v_j\}_{j\ge0}$ of $L_f|_{L^2_G}$, with eigenvalues $\{\mu_j\}$.
We also assume that, for each $v_j$, there is an intrinsic growth
order $\gamma_j$ satisfying H4's requirements within this sector,
that is, finite for each $j$, with $\gamma_j\to\infty$. This is not
concluded from H4 applied to the full spectrum. It is assumed
explicitly here.
\end{assumption}
\endgroup

\begin{lemma}\label{ass:radial}
If \Cref{ass:transitive} holds, then every $v_j$ in the
$G$-invariant eigenbasis is radial, that is, $v_j(x)=p_j(b(x))$ for
a function $p_j$ of one variable. The quantities $\rho:=|\nabla b|^2$ and
$\tau:=\Delta b-\langle\nabla f,\nabla b\rangle$, that is,
$\tau=L_fb$, are also radial.
\end{lemma}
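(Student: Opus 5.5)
The plan is to derive every claim from $G$-invariance together with transitivity on level sets. Fix $g\in G$, acting as an isometry with $f\circ g=f$. Then $b\circ g=f^{1/\alpha}\circ g=b$. Isometries preserve the gradient, so $dg(\nabla b(x))=\nabla b(gx)$. They commute with the Laplacian, so $(\Delta b)(gx)=\Delta(b\circ g)(x)=\Delta b(x)$. They also preserve inner products, so $\langle\nabla f,\nabla b\rangle(gx)=\langle\nabla f,\nabla b\rangle(x)$. Hence $\rho=|\nabla b|^2$ and $\tau=\Delta b-\langle\nabla f,\nabla b\rangle$ are $G$-invariant functions. Alternatively, $\tau=L_fb$ and $L_f$ commutes with the action, as stated in \Cref{ass:transitive}. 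Transitivity on each regular level set $\Sigma_r$ then says that any $G$-invariant function is constant on $\Sigma_r$. Writing $\rho(x)=P(b(x))$ and $\tau(x)=T(b(x))$ defines functions of one variable on the set of regular values.

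For the eigenfunctions, each $v_j$ lies in $L^2_G$, so a priori it is invariant only almost everywhere. I first upgrade this to pointwise invariance. By elliptic regularity (the coefficients of $L_f$ are $C^1$), $v_j$ is continuous, in fact $C^2$. For fixed $g$, the function $v_j\circ g-v_j$ is continuous and vanishes a.e., so it vanishes identically. Transitivity on $\Sigma_r$ then makes $v_j$ constant on each regular level set, and we set $p_j(r):=v_j|_{\Sigma_r}$.

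The remaining step is to handle non-regular levels so that $v_j=p_j\circ b$ holds on all of $M$. By \Cref{lem:coarea}, every $r\ge R_0$ is regular. Whether the assumption is meant to cover critical levels in the compact core is where I expect the only subtlety. Two options handle it. One can read \Cref{ass:transitive} as giving transitivity on every level set, so that $G$-orbits coincide with the level sets of $b$. Or one can use Sard's theorem, which applies since $b$ is $C^2$ wherever $f>0$: the regular values are then dense. Nearby regular level sets accumulate on a critical level set, and continuity of $v_j$ makes it constant there provided that level set is connected in the orbit sense. This second route is not automatic, so I would state the orbit-equals-level-set reading explicitly for the core. The radiality of $\rho$ and $\tau$ is handled by the same argument. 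On the tail $\{b\ge R_0\}$, which is all the subsequent Sturm--Liouville analysis uses, no such issue arises. In particular, $p_j$ and the coefficient functions $P,T$ are $C^1$ there, since $b$ has nonvanishing gradient and we can compose with a $C^1$ curve transverse to the level sets.
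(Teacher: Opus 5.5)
Your proof is correct and follows the paper's argument: $v_j$, $\rho$ and $\tau$ are $G$-invariant, since isometries preserving $f$ (hence $b$) commute with $\nabla$ and $\Delta$ and preserve inner products, and transitivity then makes them constant on level sets; your continuity argument upgrading a.e.\ invariance of $v_j$ to pointwise invariance supplies a step the paper leaves implicit. Your caution about critical levels is warranted, because \Cref{ass:transitive} only concerns regular level sets and $v_j=p_j\circ b$ can genuinely fail in the core (e.g.\ if $f$ is constant on an invariant open annulus, where $v_j$ solves a nontrivial radial ODE), so restricting to the tail or adopting the orbit-equals-level-set reading is the right repair; one peripheral slip is that $\tau$ contains $\Delta b$ with $b$ only $C^2$, so $T$ is in general only continuous, not $C^1$.
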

\begin{proof}
Since $v_j\in L^2_G$, $v_j$ is $G$-invariant. That is,
$v_j(g\cdot x)=v_j(x)$ for every $g\in G$ and $x\in M$. Since $G$
acts transitively on each $\Sigma_r$, for any $x,y\in\Sigma_r$ there
is some $g\in G$ with $g\cdot x=y$. Hence
$v_j(y)=v_j(g\cdot x)=v_j(x)$. This means $v_j$ is constant on each
$\Sigma_r$, that is, radial.

The same argument applies to $\rho$ and $\tau$. Both are obtained from the $G$-invariant functions $f,b$ by applying
$\nabla$ and $\Delta$. These operations commute with the isometric
action, since $G$ acts by isometries preserving $f$. Hence $\rho$ and
$\tau$ are themselves $G$-invariant. They are radial by the same
transitivity argument just given. No separate argument is needed,
and no exact-polynomiality hypothesis is required.
\end{proof}

\begin{remark}\label{rem:invariantH4}
We index the invariant sector by $k=0,1,2,\ldots$ in increasing
spectral order and write $\lambda_k,\gamma_k,u_k$ for its eigenvalues,
growth orders, and eigenfunctions. H4 is required for this sequence.
Within a repeated full eigenspace, cancellation can change the growth
order of a linear combination. A growth condition on a chosen
full eigenbasis is not used as a substitute for the invariant-sector
condition in \Cref{ass:transitive}.
\end{remark}

Each $p_k$ solves, for $r\ge R_0$,
\begin{equation}\label{eq:radialSL}
  \rho(r)\,p_k''(r) + \tau(r)\,p_k'(r) + \lambda_k\,p_k(r) \;=\; 0.
\end{equation}
By \Cref{rem:gradb}, $c_0^2\le \rho(r)\le C_0^2$. By
\Cref{lem:deltab,lem:deltabUpper} and H2, $\tau(r)$ is bounded from
both sides in terms of the constants of H1--H3, by a two-sided
polynomial bound of the same type as the one for $\Delta b$.

\begin{lemma}\label{lem:SLform}
Equation~\eqref{eq:radialSL} is equivalent, on $r\ge R_0$, to the
self-adjoint form
\begin{equation}\label{eq:SLform}
  \bigl(\mu(r)\,p_k'(r)\bigr)' \;+\; \lambda_k\,Q_{\mathrm{SL}}(r)\,p_k(r) \;=\; 0,
  \qquad \mu(r):=\exp\Bigl(\int_{R_0}^r\frac{\tau(s)}{\rho(s)}\,ds\Bigr),
  \quad Q_{\mathrm{SL}}(r):=\frac{\mu(r)}{\rho(r)}.
\end{equation}
\end{lemma}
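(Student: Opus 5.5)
The plan is to carry out the standard integrating-factor computation for a second-order linear ODE, after first checking that the coefficients are regular enough. On $r\ge R_0$, \Cref{rem:gradb} gives $\rho=|\nabla b|^2\ge c_0^2>0$. By \Cref{ass:radial}, $\rho$ and $\tau$ are functions of $r$ alone. Since $f\in C^2$ and $b$ is $C^2$ on the tail by \Cref{lem:coarea}, both $\rho$ and $\tau$ are continuous there. Hence $\tau/\rho$ is continuous on $[R_0,\infty)$. It follows that $\mu(r)=\exp\bigl(\int_{R_0}^r\tau/\rho\bigr)$ is $C^1$, strictly positive, and satisfies $\mu'=(\tau/\rho)\mu$. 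Also $Q_{\mathrm{SL}}=\mu/\rho$ is continuous and positive.

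For the direct computation, I would expand the self-adjoint form using the product rule:
\[
(\mu p_k')'+\lambda_kQ_{\mathrm{SL}}p_k=\mu p_k''+\frac{\tau}{\rho}\mu p_k'+\lambda_k\frac{\mu}{\rho}p_k=\frac{\mu}{\rho}\bigl(\rho p_k''+\tau p_k'+\lambda_kp_k\bigr).
\]
Because $\mu/\rho$ never vanishes on $r\ge R_0$, the left side vanishes exactly when \eqref{eq:radialSL} holds. This gives both directions of the equivalence at once.

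The only point requiring care is regularity, namely whether $p_k$ is $C^2$ so that $p_k''$ makes sense. Here $v_k=p_k\circ b$ is an eigenfunction of an elliptic operator with continuous coefficients. On the tail we have $|\nabla b|\ge c_0$, and the flow of $\nabla b/|\nabla b|^2$ lets us recover $p_k$ from $v_k$ along a curve transverse to the level sets, so $p_k$ is $C^2$ in $r$. If this level of regularity were in doubt, I would fall back on reading \eqref{eq:SLform} in the weak sense: $\mu p_k'$ is absolutely continuous, and the same identity holds almost everywhere. I expect this regularity step to be the only mild obstacle. The algebra itself is immediate.
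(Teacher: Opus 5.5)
Your proposal is correct and follows the paper's proof. The paper divides \eqref{eq:radialSL} by $\rho\ge c_0^2>0$, uses $\mu'=\mu\tau/\rho$ and the product rule, and multiplies by $\mu$; this is your factorization $(\mu p_k')'+\lambda_kQ_{\mathrm{SL}}p_k=(\mu/\rho)(\rho p_k''+\tau p_k'+\lambda_kp_k)$ read in the other direction. Your regularity discussion goes beyond the paper, which takes $p_k\in C^2$ for granted; strictly, merely continuous coefficients give only $W^{2,p}$ regularity, and $C^2$ follows from Schauder theory because $\nabla f$ is $C^1$ and hence locally H\"older.
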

\begin{proof}
Since $\rho(r)\ge c_0^2>0$ (\Cref{rem:gradb}), dividing
\eqref{eq:radialSL} by $\rho(r)$ gives
$p_k''+\frac{\tau}{\rho}p_k'+\frac{\lambda_k}{\rho}p_k=0$.
Differentiate $\mu(r):=\exp\big(\int_{R_0}^r\tau(s)/\rho(s)\,ds\big)$.
The chain rule and the fundamental theorem of calculus give
$\mu'(r)=\mu(r)\cdot\tau(r)/\rho(r)$. Using this, the product rule
gives
\[
  (\mu p_k')' = \mu'p_k'+\mu p_k'' = \mu\frac{\tau}{\rho}p_k'+\mu p_k'',
\]
which is $\mu$ times the first two terms of the divided equation
above. Multiplying the divided equation through by $\mu$ therefore gives
$(\mu p_k')' + \lambda_k\frac{\mu}{\rho}p_k = 0$, which is
\eqref{eq:SLform} with $Q_{\mathrm{SL}}:=\mu/\rho$.
\end{proof}

In \S\ref{ssec:countingzeros} this equation is rewritten in arclength
and put in Schr\"odinger form. The zeros of $p_k$ are then counted
with the Pr\"ufer angle, \Cref{lem:pruferzeros}.

\begin{remark}\label{rem:muQbounds}
Recall $\tau(r):=\Delta b-\langle\nabla f,\nabla b\rangle$.

For $\alpha>1$, \Cref{lem:deltab,lem:deltabUpper} and the bounds on
$\rho$ give
\[
  |\tau(r)|\le Cr^{\alpha-1}.
\]
The term $\langle\nabla f,\nabla b\rangle
=\alpha r^{\alpha-1}\rho(r)$ has this scale.

The same bound follows from a geometric identity. Recall
$W(r):=\vol(\Sigma_r)e^{-f(r)}$. Coarea gives the radial $L^2$ density
$W(r)/\sqrt{\rho(r)}$, while
$|\nabla(p\circ b)|^2=\rho|p'|^2$ gives the radial energy density
$\sqrt{\rho(r)}\,W(r)$. The divergence coefficient in the
one-dimensional equation is $\sqrt{\rho}W$, up to normalization. Since
$\mu(R_0)=1$, there is a constant $c>0$ such that
\[
  \mu(r) \;=\; c\,\sqrt{\rho(r)}\,W(r).
\]
Equivalently,
$Q_{\mathrm{SL}}=\mu/\rho=cW/\sqrt{\rho}$, which is the coarea
$L^2$ density up to the same constant.

Since $b:=f^{1/\alpha}$, we have $f(r)=r^\alpha$ exactly on
$\Sigma_r$. By \Cref{lem:volmoment},
\[
  \vol(\Sigma_r)\lesssim r^{1-\alpha}e^{\eta r^\alpha},
  \qquad \eta<\tfrac12.
\]
The bounds on $\rho$ from H2 and the identity above then give
\[
  \mu(r) \;\lesssim\; r^{1-\alpha}\exp\bigl(-(1-\eta)r^\alpha\bigr).
\]
For $\alpha>1$, the bounds on $\Delta b$ and $\rho$ also give
$|\tau(r)|\le C r^{\alpha-1}$ on the tail. Since
$(\log\mu)'=\tau/\rho$ and $\rho$ is bounded above and below,
integration gives a lower bound $\mu(r)\ge c e^{-Cr^\alpha}$. Hence,
for suitable positive constants $c,C,c',C'$,
\[
  c e^{-Cr^\alpha}\le\mu(r)\le C e^{-c'r^\alpha},
  \qquad
  c e^{-C'r^\alpha}\le Q_{\mathrm{SL}}(r)\le C e^{-c'r^\alpha}.
\]
The constants in the two exponents may differ. These bounds do not give a comparison with
a single fixed profile. They are used only in the
confining range $\alpha>1$.
\end{remark}

\subsection{Oscillation and comparison}

\begin{lemma}\label{lem:sturmosc}
If H1--H3 hold, with $\alpha>1$, and \Cref{ass:transitive} holds,
let $Z_k$ be the number of zeros of $p_k$ in $(R_0,\infty)$. Then
\[
  Z_k = k+O\bigl(1+\sqrt{\lambda_k}\bigr),
\]
with the implicit constant independent of $k$.
\end{lemma}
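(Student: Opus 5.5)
The count combines Sturm's oscillation theorem on the whole invariant sector with the fact, from \Cref{lem:wpnonosc}, that no zeros occur in the forbidden region. The zeros inside the compact core $\{b<R_0\}$ are then controlled by a Sturm comparison argument.

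\textbf{Step 1 (global oscillation count).} By \Cref{ass:radial}, the $G$-invariant sector is unitarily equivalent to a singular one-dimensional Sturm--Liouville problem in the variable $b$ on $[0,\infty)$, or on its range. Its weight is the coarea density and its energy density is $\sqrt\rho\,W$, as in \Cref{rem:muQbounds}. The spectrum is simple, because the endpoint at infinity is limit point: an $L^2$ solution there is unique up to scaling, by the nonoscillation and convexity argument of \Cref{lem:wpnonosc}. Sturm's oscillation theorem for this limit-point/regular problem then says that the $k$-th eigenfunction $p_k$ has exactly $k$ zeros in the interior of the full radial interval.

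This reduces the claim to bounding the zeros of $p_k$ in the core $\{b\le R_0\}$ by $O(1+\sqrt{\lambda_k})$. On the core the transitive action still makes $v_k$ radial, so $p_k$ is well defined there. By the statement's count, all other zeros lie in $(R_0,\infty)$.

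\textbf{Step 2 (core zeros).} On the compact core, or on $[r_{\min},R_0]$ after excising the singular orbit, rewrite the radial equation in arclength $s$ with $ds=db/\sqrt\rho$. The Liouville transform then gives the form $-\psi''+q(s)\psi=\lambda_k\psi$, where $q$ is bounded on the compact core up to the singular-orbit term. That term is handled as in \Cref{thm:H4notalpha2}, where it behaves like $c/s^2$ with $c\ge -1/4$ if needed. Sturm comparison with $-\psi''=(\lambda_k+\|q_-\|_\infty)\psi$ on an interval of fixed length $\ell$ bounds the number of zeros by $\ell\sqrt{\lambda_k+C}/\pi+1=O(1+\sqrt{\lambda_k})$. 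The Pr\"ufer angle form of this bound is \Cref{lem:pruferzeros}. Substituting into Step 1 gives $Z_k=k-O(1+\sqrt{\lambda_k})$, and trivially $Z_k\le k$.

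\textbf{Main obstacle.} The delicate point is making Step 1 rigorous at the core. The level sets of $b$ for $b<R_0$ need not be regular, because H1--H3 constrain nothing there, and a radial ODE need not make sense across critical levels. I would first try to use the orbit structure. A $G$-invariant function factors through the orbit space $M/G$, and transitivity on the regular level sets makes $M/G$ one-dimensional near the tail. On the core I would instead use the variational characterization: the min--max principle gives that $p_k$ has at most $k$ nodal domains in the invariant sector, and Courant's theorem restricted to $G$-invariant functions gives $Z_k\le k$ directly. The lower bound $Z_k\ge k-O(1+\sqrt{\lambda_k})$ then follows by counting eigenvalues of the Dirichlet problem on $(R_0,\infty)$ below $\lambda_k$ by bracketing: decoupling at $b=R_0$ changes the counting function by at most the number of core eigenvalues below $\lambda_k$, which is $O(1+\sqrt{\lambda_k})$ by one-dimensional Weyl bounds on a compact interval.
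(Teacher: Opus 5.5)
Your argument is correct in outline, but it takes a different route from the paper. The paper never counts zeros in the core. It applies Dirichlet--Neumann bracketing at the orbit $\Sigma_{R_0}$ to get $N_G^{<}(\lambda)=N_+^D(\lambda)+O(1+\sqrt\lambda)$. The core enters only through finite-interval Weyl bounds on eigenvalue counts, and the tail only through Dirichlet/Neumann interlacing. The paper then converts $N_+^D(\lambda_k)$ into $Z_k$ by the half-line Dirichlet oscillation theorem and Sturm separation.

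You instead apply the oscillation theorem once on the whole orbit space, which gives exactly $k$ zeros. You then subtract the zeros in the compact part $(0,s_0]$, bounded directly by a Pr\"ufer count. This gives the sharper one-sided fact $Z_k\le k$, with $k-Z_k$ equal to the number of core zeros, and it needs no bracketing or interlacing. The cost is that your oscillation theorem must include the singular-orbit endpoint, and your zero count must reach it. The first point is fine: the spectrum is discrete and bounded below, so the equation is nonoscillatory at both ends, which is all the exactly-$k$-zeros theorem requires. The paper meets the singular orbit only through a variational eigenvalue bound, and it uses oscillation only on the tail, where $R_0$ is a regular endpoint. Your Courant-plus-bracketing fallback is essentially the paper's argument. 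It would still need a half-line Dirichlet oscillation and Sturm separation step to turn $N_+^D(\lambda_k)$ into a lower bound on $Z_k$; your primary route does not.

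Three repairs are needed.

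\textbf{Simplicity.} \Cref{lem:wpnonosc} rests on \Cref{lem:wpQ}, hence on \eqref{eq:Dregularity}, which is not assumed here. So it cannot be cited for limit-point behavior at infinity; that behavior holds, but for other reasons. You do not need it. The endpoint condition at the singular orbit selects a one-dimensional space of solutions, so two eigenfunctions with the same eigenvalue have zero Wronskian and the spectrum is simple.

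\textbf{Zeros near a codimension-two singular orbit.} There the Liouville potential is $-1/(4s^2)+O(1)$, so $q=\lambda_k-Q$ is unbounded above and \Cref{lem:pruferzeros}(b) does not apply. \Cref{thm:H4notalpha2} only treats a nonnegative coefficient. Here is a fix. Suppose $p_k$ had consecutive zeros $z_1<z_2\le\delta\lambda_k^{-1/2}$. Its restriction to $(z_1,z_2)$, extended by zero, would have weighted Rayleigh quotient $\lambda_k$. Comparison with the Dirichlet problem on a disk of radius $z_2$ bounds that quotient below by $c\,j_{0,1}^2/z_2^2\ge c\,j_{0,1}^2\lambda_k/\delta^2$. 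For a fixed small $\delta$ this is a contradiction, so at most one zero lies there. On $[\delta\lambda_k^{-1/2},s_0]$ one has $q\lesssim\lambda_k$, so part~(b) applies.

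\textbf{Coordinate.} Work in orbit-space arclength throughout, not in $b$. The orbit space is a half-line, since transitivity on far level sets rules out a second end, but $b$ need not be monotone along it in the core.
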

\begin{proof}
It is convenient to work with the nonnegative operator
$A_G:=-L_f|_{L^2_G}$. Since the compact group $G$ acts by isometries
preserving $f$, averaging over $G$ defines the orthogonal projection
onto $L^2_G$:
\[
  (P_Gu)(x):=\int_G u(g\cdot x)\,dg,
\]
where $dg$ is normalized Haar measure. This averaging commutes with
$L_f$. Hence $L^2_G$ is a reducing subspace and $A_G$ is self-adjoint.
By H1--H3 and $\alpha>1$, \Cref{lem:discrete} implies compact resolvent.

\emph{Step 1. The global one-dimensional quotient.} A regular level
set $\Sigma_r$ is, by \Cref{ass:transitive}, a single $G$-orbit and is
a hypersurface. The $G$-action has cohomogeneity one. The orbit
space $X:=M/G$ is one-dimensional. On its principal part,
let $s$ denote arclength along a normal geodesic. A $G$-invariant
function is represented by a scalar function $v(s)$, and the
quadratic form and norm of $A_G$ have the Sturm--Liouville form
\[
  \mathcal E_G(v,v)=\int_X |v'(s)|^2 W(s)\,ds,
  \qquad
  \|v\|_G^2=\int_X |v(s)|^2W(s)\,ds,
\]
where $W(s)=\vol(G\cdot\gamma(s))e^{-f(\gamma(s))}$ on the principal
part. At a singular orbit this is understood with the natural smooth
self-adjoint endpoint condition. Hence $A_G$ is a scalar
Sturm--Liouville realization on the connected one-dimensional orbit
space. By the standard Wronskian argument, its eigenvalues in the
invariant sector are simple. With the zero-based enumeration of \Cref{rem:invariantH4}, if
$N_G^{<}(\lambda)$ denotes the number of invariant eigenvalues
strictly below $\lambda$, then
\begin{equation}\label{eq:NGatk}
  N_G^{<}(\lambda_k)=k.
\end{equation}

On the tail $\{b\ge R_0\}$, H2 implies $|\nabla b|\ge c_0>0$, so there
are no critical orbits and $b$ is a valid radial coordinate. The
orbit-space arclength and $b$ are related by $ds=db/\sqrt\rho$, with
$c_0^2\le\rho\le C_0^2$. Zeros counted in the $s$-coordinate
are exactly the zeros of $p_k(b)$ counted in $(R_0,\infty)$.

\emph{Step 2. Cutting off the compact core.} Let $s_0$ be the orbit
corresponding to $\Sigma_{R_0}$. Since $b$ is proper, the quotient of
$\{b\le R_0\}$ is compact. It is the compact side $X_-$ of the cut at
$s_0$. The other side $X_+$ is the noncompact tail. Let
$A_\pm^D$ and $A_\pm^N$ denote the scalar Sturm--Liouville operators
on $X_\pm$ with Dirichlet and Neumann conditions at $s_0$, respectively.
At the other endpoint, we retain the natural self-adjoint condition.
Let $N_\pm^B(\lambda)$ count the eigenvalues of $A_\pm^B$ strictly below
$\lambda$, with multiplicity, for $B\in\{D,N\}$.
Dirichlet--Neumann bracketing for the quadratic form gives
\begin{equation}\label{eq:bracket}
  N_-^D(\lambda)+N_+^D(\lambda)
  \le N_G^{<}(\lambda)
  \le N_-^N(\lambda)+N_+^N(\lambda).
\end{equation}

The compact-side operators are one-dimensional Sturm--Liouville
operators on a finite orbit-space interval. The possible endpoint
corresponding to a singular orbit is of the standard regular-singular
type arising from a smooth cohomogeneity-one metric. The classical
finite-interval estimate gives
\begin{equation}\label{eq:corebound}
  N_-^{D/N}(\lambda)=O(1+\sqrt\lambda)
\end{equation}
as $\lambda\to\infty$. See, for example, \cite{Zettl2005}.
Here $D/N$ denotes either boundary condition.

The tail realizations have compact resolvent. After the unitary
conjugation of \Cref{lem:conjugation}, the full operator is $-\Delta+V$
with $V\to+\infty$ by \Cref{lem:Vbounds2}. A boundary condition at the
fixed compact hypersurface $\Sigma_{R_0}$ does not affect confinement
at infinity. Dirichlet
and Neumann eigenvalues for the scalar tail problem interlace, and so
\begin{equation}\label{eq:tailinterlace}
  |N_+^D(\lambda)-N_+^N(\lambda)|\le1.
\end{equation}
Combining \eqref{eq:bracket}--\eqref{eq:tailinterlace} gives
\begin{equation}\label{eq:NGeqNplus}
  N_G^{<}(\lambda)=N_+^D(\lambda)+O(1+\sqrt\lambda).
\end{equation}

\emph{Step 3. Tail zeros.} On $X_+$, use the $b$-coordinate. The
scalar equation is exactly \eqref{eq:SLform}. Let $q_\lambda$ be its
solution satisfying the Dirichlet condition $q_\lambda(R_0)=0$.
The Sturm oscillation theorem for the half-line Dirichlet problem
identifies the number of zeros of $q_\lambda$ in $(R_0,\infty)$ with
$N_+^D(\lambda)$, up to the choice of endpoint convention. See
\cite{Titchmarsh1962}. At $\lambda=\lambda_k$, $p_k$ solves the same
second-order equation. The Dirichlet counting function is finite, so Sturm oscillation gives
only finitely many zeros for $q_{\lambda_k}$. If $p_k$ and
$q_{\lambda_k}$ are linearly
independent, Sturm separation makes their zeros interlace. If they are
proportional, the zero counts agree. Hence
\begin{equation}\label{eq:ZkeqNplus}
  Z_k=N_+^D(\lambda_k)+O(1).
\end{equation}
Finally, \eqref{eq:NGatk}, \eqref{eq:NGeqNplus}, and
\eqref{eq:ZkeqNplus} give
\[
  Z_k=k+O(1+\sqrt{\lambda_k}),
\]
as claimed.
\end{proof}

\subsection{Counting zeros}\label{ssec:countingzeros}

The zeros of a solution of $\Psi''+q\Psi=0$ are counted by the
Pr\"ufer angle. Writing a nontrivial solution in Pr\"ufer variables
turns its zeros into crossings of integer multiples of $\pi$ by a phase
angle. When $q>0$, the angle advances at leading order like $\sqrt q$.
The next lemma contains three estimates.

\begin{lemma}\label{lem:pruferzeros}
Let $q$ be continuous on $[a,b]$, and let $\Psi\not\equiv0$ solve
$\Psi''+q\Psi=0$ there. Let $Z$ be the number of zeros of $\Psi$ in
$(a,b]$.
\begin{enumerate}[label=\rm(\alph*)]
\item If $q\le0$ on $[a,b]$, then $Z\le1$.
\item If $q\le\Lambda$ on $[a,b]$ for a constant $\Lambda>0$, then
  \[
    Z\;\le\;\frac{\sqrt\Lambda}{\pi}\,(b-a)+1.
  \]
\item If $q$ is absolutely continuous and $q\ge m$ on $[a,b]$ for a
  constant $m>0$, then
  \[
    \Bigl|\,Z-\frac1\pi\int_a^b\sqrt{q(s)}\,ds\,\Bigr|
    \;\le\;1+\frac1{4\pi m}\int_a^b|q'(s)|\,ds.
  \]
\end{enumerate}
\end{lemma}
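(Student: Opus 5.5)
The plan is to use the Prüfer substitution throughout. For (a) and (b) we use the standard one, $\Psi=R\sin\theta$, $\Psi'=R\cos\theta$. For (c) we use the modified one, $\Psi=R q^{-1/4}\sin\theta$, $\Psi'=Rq^{1/4}\cos\theta$. In each case $R>0$ never vanishes, because $\Psi$ and $\Psi'$ cannot vanish simultaneously for a nontrivial solution. Zeros of $\Psi$ are exactly the points where $\theta\in\pi\mathbb Z$.

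For (a), we argue directly. Between two consecutive zeros $z_1<z_2$ we may assume $\Psi>0$ on $(z_1,z_2)$. Then $\Psi''=-q\Psi\ge0$, so $\Psi$ is convex with zero endpoint values, which forces $\Psi\le0$ there. This is a contradiction unless $\Psi\equiv0$ on the interval, and that would force $\Psi\equiv0$ everywhere. Hence $Z\le1$.

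For (b), the standard Prüfer equation is $\theta'=\cos^2\theta+q\sin^2\theta$. At a zero $\theta\in\pi\mathbb Z$ we have $\theta'=1>0$, so $\theta$ crosses multiples of $\pi$ only upward, and each zero costs a full $\pi$ of phase between consecutive crossings. To get the rate $\sqrt\Lambda$, rather than $\max(1,\Lambda)$, we compare with $\Psi_0''+\Lambda\Psi_0=0$. By Sturm comparison, between two consecutive zeros of $\Psi$ there must be a zero of every solution of the comparison equation. Equivalently, consecutive zeros of $\Psi$ are at least $\pi/\sqrt\Lambda$ apart. This can be seen by applying the Prüfer equation with the scaled variables $\Psi=R\sin\theta$, $\Psi'=\sqrt\Lambda R\cos\theta$, which gives $\theta'=\sqrt\Lambda\cos^2\theta+(q/\sqrt\Lambda)\sin^2\theta\le\sqrt\Lambda$. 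With $Z$ zeros in $(a,b]$, the span between the first and last is $(Z-1)\pi/\sqrt\Lambda\le b-a$, which is the claim.

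For (c), differentiating the modified substitution gives
\[
  \theta'=\sqrt q+\frac{q'}{4q}\sin\theta\cos\theta ,
\]
which is a routine computation. Integrating over $[a,b]$ yields
\[
  \Bigl|\theta(b)-\theta(a)-\int_a^b\sqrt q\Bigr|\le\frac14\int_a^b\frac{|q'|}{q}\le\frac1{4m}\int_a^b|q'| .
\]
At a zero we have $\theta'=\sqrt q>0$, so crossings of $\pi\mathbb Z$ are upward and transversal. Hence $Z$ equals the number of multiples of $\pi$ in $(\theta(a),\theta(b)]$, which differs from $(\theta(b)-\theta(a))/\pi$ by at most $1$. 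Dividing the integral estimate by $\pi$ gives the bound with the $1/(4\pi m)$ factor. Absolute continuity of $q$ is exactly what is needed to justify differentiating $q^{\pm1/4}$ almost everywhere and integrating the derivative.

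The main obstacle is the bookkeeping in (c). One must derive the modified Prüfer equation correctly, with coefficient $\tfrac14$ and the $\sin\theta\cos\theta$ term bounded by $\tfrac12$, so that the stated constant $1/(4\pi m)$ is recovered. The constant in fact comes out with room to spare. One must also handle the endpoint convention carefully, so that the count of zeros in $(a,b]$ is off from the phase increment by at most one.
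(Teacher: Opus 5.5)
Your proposal is correct and follows the paper's proof essentially step for step: a convexity argument for (a), the $\sqrt\Lambda$-scaled Pr\"ufer angle with $\theta'=\sqrt\Lambda\cos^2\theta+(q/\sqrt\Lambda)\sin^2\theta\le\sqrt\Lambda$ for (b) (your zero-spacing phrasing is equivalent to the paper's phase count), and the $\sqrt q$-scaled Pr\"ufer angle with upward crossings of $\pi\mathbb Z$ for (c). One slip in (c): the modified Pr\"ufer equation is $\theta'=\sqrt q+\frac{q'}{4q}\sin2\theta=\sqrt q+\frac{q'}{2q}\sin\theta\cos\theta$, not $\sqrt q+\frac{q'}{4q}\sin\theta\cos\theta$. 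Consequently $|\theta'-\sqrt q|\le |q'|/(4q)$ gives exactly the constant $1/(4\pi m)$, with no room to spare. Your displayed integrated estimate is nonetheless the right one, because it is what the corrected equation gives.
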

\begin{proof}
Zeros of $\Psi$ are isolated, since $\Psi(s_0)=\Psi'(s_0)=0$ would give $\Psi\equiv0$.

(a) Suppose $\Psi$ has two zeros in $(a,b]$. Take consecutive ones,
$s_1<s_2$, and replace $\Psi$ by $-\Psi$ if needed, so that $\Psi>0$
on $(s_1,s_2)$. Then $\Psi'(s_1)\ge0$ and $\Psi'(s_2)\le0$. But
$\Psi''=-q\Psi\ge0$ on $(s_1,s_2)$, so $\Psi'$ is nondecreasing
there. This gives $0\le\Psi'(s_1)\le\Psi'(s_2)\le0$, so $\Psi'\equiv0$
on $[s_1,s_2]$. Then $\Psi\equiv0$ on $[s_1,s_2]$, and so
$\Psi\equiv0$. This contradicts $\Psi\not\equiv0$.

(b) Write $\Psi=\varrho\sin\phi$ and $\Psi'=\sqrt\Lambda\,\varrho\cos\phi$,
with $\varrho:=(\Psi^2+\Psi'^2/\Lambda)^{1/2}>0$ and $\phi$ continuous.
Differentiating both relations and eliminating $\varrho'$ gives
\[
  \phi'=\sqrt\Lambda\cos^2\phi+\frac{q}{\sqrt\Lambda}\sin^2\phi
  \;\le\;\sqrt\Lambda,
\]
using $q\le\Lambda$. The zeros of $\Psi$ are the points where
$\phi\in\pi\mathbb Z$. At such a point $\phi'=\sqrt\Lambda>0$, so
$\phi$ crosses each multiple of $\pi$ upward and never downward.
Once $\phi$ has passed a level $m\pi$ it stays above it. Thus $Z$ is
the number of integers $m$ with $\phi(a)<m\pi\le\phi(b)$, and
\[
  \Bigl|\,Z-\frac{\phi(b)-\phi(a)}{\pi}\,\Bigr|\le1 .
\]
Since $\phi(b)-\phi(a)=\int_a^b\phi'\le\sqrt\Lambda\,(b-a)$, the
claim follows.

(c) Let $\omega:=\sqrt q$, so $\omega\ge\sqrt m>0$. Write
$\Psi=\varrho\sin\phi$ and $\Psi'=\omega\varrho\cos\phi$, with
$\varrho:=(\Psi^2+\Psi'^2/\omega^2)^{1/2}>0$. Differentiating both
relations and eliminating $\varrho'$ gives, almost everywhere,
\[
  \phi'=\omega-\frac{\omega'}{2\omega}\sin2\phi,
  \qquad\text{so}\qquad
  |\phi'-\omega|\le\frac{|\omega'|}{2\omega}=\frac{|q'|}{4q}\le\frac{|q'|}{4m}.
\]
As in (b), the zeros of $\Psi$ are upward crossings of $\pi\mathbb Z$
by $\phi$, since $\phi'=\omega>0$ at such points. Hence
$|Z-(\phi(b)-\phi(a))/\pi|\le1$. Integrating the bound on
$\phi'-\omega$ over $[a,b]$ proves the claim.
\end{proof}

\subsection{Main theorem}

\begin{theorem}\label{thm:osccount}
Suppose H1--H4 with $\alpha>1$ and \Cref{ass:transitive} hold.
Let $s$ be arclength
along the radial direction, $ds=dr/\sqrt{\rho}$, with $s=0$ at
$r=R_0$. Set $W(s):=\vol(\Sigma_s)\,e^{-f}$ and
$\Theta(s):=-\dfrac{d}{ds}\log W(s)$. Assume $\Theta$ satisfies
\eqref{eq:Dregularity}, with $s$ in place of $r$. If $Z_k$ denotes the
number of zeros of $p_k$ in $(R_0,\infty)$, then
\[
  Z_k \;\asymp\; k \;\asymp\; r_{\mathrm{Ag}}(k)^\alpha
  \;\asymp\; \lambda_k^{\alpha/(2\alpha-2)}.
\]
\end{theorem}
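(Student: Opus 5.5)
We would prove the three comparisons $Z_k\asymp k$, $Z_k\asymp\lambda_k^{\alpha/(2\alpha-2)}$ and $r_{\mathrm{Ag}}(k)^\alpha\asymp\lambda_k^{\alpha/(2\alpha-2)}$ separately. The last is immediate from \Cref{def:Agmon}, since $r_{\mathrm{Ag}}(k)=(\lambda_k/c_B^2)^{1/(2\alpha-2)}$. For the first, \Cref{lem:sturmosc} gives $Z_k=k+O(1+\sqrt{\lambda_k})$. Because $\alpha/(2\alpha-2)>1/2$ for every $\alpha>1$ (the inequality reduces to $\alpha>\alpha-1$), the error $O(1+\sqrt{\lambda_k})$ is $o(\lambda_k^{\alpha/(2\alpha-2)})$ as $\lambda_k\to\infty$. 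So once $Z_k\asymp\lambda_k^{\alpha/(2\alpha-2)}$ is proved, the error is absorbed and we get $k\asymp Z_k$ for large $k$. The finitely many small $k$ are handled by adjusting constants; only $\lambda_0=0$ requires excluding $k=0$.

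The core step is the zero count in Schr\"odinger form. In the arclength variable $s$ the sector equation is $(Wp')'+\lambda Wp=0$. The substitution $\Psi=\sqrt W\,p$ turns it into
\[
\Psi''+q_\lambda\Psi=0,\qquad q_\lambda:=\lambda-Q,\qquad Q=\tfrac14\Theta^2-\tfrac12\Theta'.
\]
This is the same Liouville computation as in \Cref{lem:wpQ} and \Cref{lem:wpnonosc}. One point to verify is that the coefficient $\mu$ of \Cref{lem:SLform} is proportional to $\sqrt\rho\,W$ (\Cref{rem:muQbounds}), so that the equation in $s$ has equal weights $W$ in the energy and mass terms. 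By \eqref{eq:Dregularity}, $cs^{2\alpha-2}\le Q\le Cs^{2\alpha-2}$ for $s\ge S_2$, and $s\asymp r$ because $c_0^2\le\rho\le C_0^2$.

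We would split $(0,\infty)$ at three points. On $[0,S_2]$, $Q$ is bounded, and \Cref{lem:pruferzeros}(b) gives $O(1+\sqrt{\lambda_k})$ zeros. Beyond $s_+:=(\lambda_k/c)^{1/(2\alpha-2)}$ we have $q\le0$, so \Cref{lem:pruferzeros}(a) gives at most one zero. On $[S_2,s_+]$ we have $q\le\lambda_k$, and \Cref{lem:pruferzeros}(b) gives the upper bound $Z_k\lesssim\sqrt{\lambda_k}\,s_+\asymp\lambda_k^{1/2+1/(2\alpha-2)}=\lambda_k^{\alpha/(2\alpha-2)}$.

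For the lower bound we restrict to $[S_2,s_-]$ with $s_-:=(\lambda_k/(2C))^{1/(2\alpha-2)}$. There $q\ge\lambda_k/2=:m$, and we apply \Cref{lem:pruferzeros}(c). The main term is $\pi^{-1}\int\sqrt q\ge c\sqrt{\lambda_k}\,s_-\asymp\lambda_k^{\alpha/(2\alpha-2)}$. The error is controlled by the variation of $q$, that is $\int|Q'|$ with $Q'=\tfrac12\Theta\Theta'-\tfrac12\Theta''$. Here we need the third bound in \eqref{eq:Dregularity}: $|Q'|\lesssim s^{2\alpha-3}+s^{\alpha-3}$. This gives $\int_{S_2}^{s_-}|Q'|\lesssim s_-^{2\alpha-2}+O(1)\asymp\lambda_k$, so the error is $1+O(\lambda_k/m)=O(1)$. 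Hence $Z_k\gtrsim\lambda_k^{\alpha/(2\alpha-2)}$ for large $k$.

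The step I expect to be the main obstacle is the Liouville reduction itself. We must make sure the potential in the $s$-variable is exactly $\tfrac14\Theta^2-\tfrac12\Theta'$, with $\Theta=-(\log W)'$ taken in $s$ and with no extra $\rho$-dependent terms. If the weight were $W/\sqrt\rho$ rather than $W$, the extra terms would involve derivatives of $\rho$, for which we have no bounds. I would first check that the orbit-space form in Step 1 of \Cref{lem:sturmosc} already has the single weight $W(s)$ in both integrals; if so, the reduction introduces no additional terms.
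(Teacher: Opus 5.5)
Your proposal is correct and follows the paper's proof essentially step for step. It uses the same Liouville reduction in arclength, with $\mu=c\sqrt\rho\,W$ from \Cref{rem:muQbounds} giving the single weight $W$, and the same three-region Pr\"ufer count using parts (a), (b), (c) of \Cref{lem:pruferzeros}, where the $\Theta''$ bound gives $\int|Q'|\lesssim\lambda_k$; it then absorbs the $O(1+\sqrt{\lambda_k})$ error from \Cref{lem:sturmosc} in the same way. The only cosmetic difference is that you apply part (b) on all of $[S_2,s_+]$ for the upper bound, whereas the paper uses (c) on $[s_1,s_-]$ and (b) on $[s_-,s_+]$; both give the same order $\lambda_k^{\alpha/(2\alpha-2)}$.
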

\begin{proof}
Write $p:=p_k$ and $\lambda:=\lambda_k$.

\emph{Step 1. The radial problem in arclength.} By
\eqref{eq:radialSL}, $\rho\,p_{rr}+\tau\,p_r+\lambda p=0$. Since
$p_r=p_s/\sqrt\rho$ and $p_{rr}=p_{ss}/\rho-\rho'p_s/(2\rho^{3/2})$,
this reads
\[
  p_{ss}+\frac{\tau-\rho'/2}{\sqrt\rho}\,p_s+\lambda p=0 .
\]
By \Cref{rem:muQbounds}, $\mu=c\sqrt\rho\,W$, and $\mu'/\mu=\tau/\rho$.
Hence $(\log W)_r=\tau/\rho-\rho'/(2\rho)$, and the coefficient of $p_s$
above is $\sqrt\rho\,(\log W)_r=(\log W)_s=-\Theta(s)$. Thus
\[
  p_{ss}-\Theta(s)\,p_s+\lambda p=0,
  \qquad\text{that is,}\qquad
  \tfrac1W\,(W p_s)_s=-\lambda p .
\]
This is the radial operator of \S\ref{ssec:warpedrigidity}, with no
error term. Here \Cref{ass:transitive} is used. The level sets are
orbits of $G$, so $\rho$, $\tau$ and $\vol(\Sigma_r)$ are functions of
$r$ alone, and the identities above hold pointwise.

Set $\Psi:=\sqrt W\,p$. A direct computation gives
\[
  -\Psi_{ss}+Q_s\,\Psi=\lambda\Psi,
  \qquad Q_s:=\tfrac14\Theta^2-\tfrac12\Theta'.
\]
Since $W>0$, the zeros of $\Psi$ are those of $p$. By
\Cref{lem:sturmosc}, writing $Z_k$ for their number,
$Z_k=k+O(1+\sqrt\lambda)$. Write $q:=\lambda-Q_s$, so
$\Psi''+q\Psi=0$.

\emph{Step 2. The potential.} By \eqref{eq:Dregularity} in the
variable $s$, $\Theta(s)\asymp s^{\alpha-1}$, $\Theta'(s)=O(s^{\alpha-2})$
and $\Theta''(s)=O(s^{\alpha-3})$. Since $\alpha>1$, the term
$\tfrac14\Theta(s)^2\asymp s^{2\alpha-2}$ dominates
$\tfrac12\Theta'(s)=O(s^{\alpha-2})$ for large $s$. Hence there are
$s_1\ge1$ and constants $0<c_1\le C_1$ and $C_2$ with
\[
  c_1s^{2\alpha-2}\le Q_s(s)\le C_1s^{2\alpha-2},
  \qquad
  |Q_s'(s)|=\bigl|\tfrac12\Theta(s)\Theta'(s)-\tfrac12\Theta''(s)\bigr|
  \le C_2s^{2\alpha-3},
  \qquad s\ge s_1 .
\]

\emph{Step 3. Three regions.} Set
\[
  s_-:=\Bigl(\frac{\lambda}{2C_1}\Bigr)^{1/(2\alpha-2)},
  \qquad
  s_+:=\Bigl(\frac{2\lambda}{c_1}\Bigr)^{1/(2\alpha-2)}.
\]
Both are $\asymp\lambda^{1/(2\alpha-2)}$, and $s_1<s_-<s_+$ once $k$
is large. By Step 2, $0\le Q_s\le\lambda/2$ on $[s_1,s_-]$, so
$\lambda/2\le q\le\lambda$ there. Also $Q_s\ge2\lambda$ on
$[s_+,\infty)$, so $q\le-\lambda<0$ there.

\emph{Step 4. The bulk.} Apply \Cref{lem:pruferzeros}(c) on
$[s_1,s_-]$ with $m=\lambda/2$. There $|q'|=|Q_s'|\le C_2s^{2\alpha-3}$,
so
\[
  \int_{s_1}^{s_-}|q'|\,ds
  \;\le\;C_2\,\frac{s_-^{2\alpha-2}}{2\alpha-2}
  \;=\;\frac{C_2}{2\alpha-2}\cdot\frac{\lambda}{2C_1}.
\]
Let $Z_0$ be the number of zeros of $\Psi$ in $(s_1,s_-]$. Then
\[
  \Bigl|\,Z_0-\frac1\pi\int_{s_1}^{s_-}\sqrt{\lambda-Q_s}\,ds\,\Bigr|
  \;\le\;1+\frac{C_2}{8\pi(\alpha-1)\,C_1}=:C_3,
\]
a constant independent of $k$. Since $\lambda/2\le\lambda-Q_s\le\lambda$
on the bulk,
\[
  \frac{\sqrt\lambda}{\pi\sqrt2}\,(s_--s_1)-C_3
  \;\le\;Z_0\;\le\;
  \frac{\sqrt\lambda}{\pi}\,(s_--s_1)+C_3 .
\]

\emph{Step 5. Outside the bulk.} Put $C_4:=\max_{[0,s_1]}|Q_s|$.
On $[0,s_1]$ we have $q\le\lambda+C_4$. By \Cref{lem:pruferzeros}(b),
$\Psi$ has at most $\frac{\sqrt{\lambda+C_4}}{\pi}s_1+1$ zeros in $(0,s_1]$.
On $[s_-,s_+]$, $q\le\lambda$, so by \Cref{lem:pruferzeros}(b) there
are at most $\frac{\sqrt\lambda}{\pi}(s_+-s_-)+1$ zeros in
$(s_-,s_+]$. On $[s_+,T]$ for every $T>s_+$, $q<0$, so by
\Cref{lem:pruferzeros}(a) there is at most one zero in $(s_+,\infty)$.

\emph{Step 6. Combining the estimates.} The four intervals cover $(0,\infty)$.
Steps 4 and 5 give
\[
  \frac{\sqrt\lambda}{\pi\sqrt2}\,(s_--s_1)-C_3
  \;\le\;Z_k\;\le\;
  \frac{\sqrt\lambda}{\pi}\,s_+ + \frac{\sqrt{\lambda+C_4}}{\pi}\,s_1 + C_3+3 .
\]
Here $s_\pm\asymp\lambda^{1/(2\alpha-2)}\to\infty$. The quantities
$s_1$, $C_3$ and $C_4$ are fixed. Both sides are
$\asymp\sqrt\lambda\cdot\lambda^{1/(2\alpha-2)}=\lambda^{\alpha/(2\alpha-2)}$,
and $Z_k\asymp\lambda_k^{\alpha/(2\alpha-2)}$.

By \Cref{lem:sturmosc}, $k=Z_k+O(1+\sqrt{\lambda_k})$. Since $\alpha>1$,
\[
  \frac{\sqrt{\lambda_k}}{\lambda_k^{\alpha/(2\alpha-2)}}
  \;=\;\lambda_k^{-1/(2\alpha-2)}\;\longrightarrow\;0,
\]
so $\sqrt{\lambda_k}=o\bigl(\lambda_k^{\alpha/(2\alpha-2)}\bigr)$, and the
correction from $Z_k$ to $k$ does not change the order. Hence
$k\asymp\lambda_k^{\alpha/(2\alpha-2)}$.

Finally, by \Cref{def:Agmon} and \Cref{lem:Vbounds2},
$\lambda_k\asymp r_{\mathrm{Ag}}(k)^{2\alpha-2}$. Thus
$\lambda_k^{\alpha/(2\alpha-2)}\asymp r_{\mathrm{Ag}}(k)^\alpha$.
\end{proof}

\begin{corollary}\label{cor:SL-compat}
Under the hypotheses of \Cref{thm:osccount}, $\alpha=2$, and there
is $c_*\in(0,\infty)$, independent of $k$, with
\[
  \gamma_k=c_*^{-1}\lambda_k\qquad\text{for every }k.
\]
Thus $\lambda_k\asymp\gamma_k\asymp k$, giving full
quantitative compatibility for the $G$-invariant sector.
\end{corollary}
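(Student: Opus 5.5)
The plan is to transfer the invariant sector to the one-dimensional framework of \S\ref{sec:warped} and then run the rigidity argument of \Cref{thm:wprigidity} there. Step 1 of the proof of \Cref{thm:osccount} already shows that, in the arclength coordinate $s$ on the tail, each $p_k$ satisfies
\[
  \tfrac1W(Wp_s)_s=-\lambda_kp,
  \qquad \Theta=-(\log W)_s .
\]
Moreover, $\Theta$ satisfies \eqref{eq:Dregularity}, in particular $\Theta\asymp s^{\alpha-1}$ and $\Theta'=O(s^{\alpha-2})$. This is exactly the radial equation of the one-ended warped case, with effective exponent $\beta=\alpha>1$ in the sense of \Cref{def:beta}.

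The lemmas of \S\ref{sec:warped} depend only on this ODE, on square integrability against $W\,ds$, and on the first two drift bounds. So \Cref{lem:wpQ}, \Cref{lem:wpnonosc}, \Cref{lem:wpasympt}, and (for $\alpha>4/3$) \Cref{lem:wpgrowthorder} apply verbatim in the variable $s$. The coarea identity gives $\int p^2W\,ds=\|u\|^2_{L^2_f}$ up to the normalization already recorded in \Cref{rem:muQbounds}. Because $G$ acts transitively, each regular large $\Sigma_r$ is a single orbit. Hence the normalized level-set average equals $p_k(r)^2$ exactly, as in the one-ended case of \eqref{eq:wpwholeaverage}.

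The coordinates must also be matched. We have $ds=db/\sqrt\rho$ with $c_0^2\le\rho\le C_0^2$, so $s\asymp b$ and $\log s/\log b\to1$. Therefore the growth order measured in $s$ agrees with $\gamma_k$ defined through $b$.

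Next, rule out $\alpha\ne2$, splitting into two cases.

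\emph{Case $\alpha>2$.} Here $\mathcal J(s)=\int^s d\sigma/\Theta$ converges. \Cref{lem:wpgrowthorder} then gives $\gamma_k=0$ for every $k$, which contradicts the sector H4 in \Cref{ass:transitive}.

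\emph{Case $1<\alpha<2$.} Use the lower estimate from the proof of \Cref{thm:wprigidity}. \Cref{lem:wpasympt} gives
\[
  w\ge\tfrac12\lambda_k/\Theta-C's^{-1}
\]
for large $s$. Integrating yields
\[
  \log|p_k|\gtrsim\lambda_k s^{2-\alpha}-C'\log s,
\]
so $\gamma_k=\infty$, again contradicting H4. This handles $(1,4/3]$ as well, since only \Cref{lem:wpasympt} is used.

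Hence $\alpha=2$. Then \Cref{prop:wplinear}, in its one-ended form, gives $\gamma_k=c_*^{-1}\lambda_k$ with
\[
  c_*^{-1}=\limsup_{s\to\infty}\mathcal J(s)/\log s .
\]
This quantity is positive and finite: it cannot be infinite because $\gamma_k<\infty$, and it cannot be zero because $\gamma_k\to\infty$. The constant eigenfunction is covered trivially. Finally, \Cref{thm:osccount} at $\alpha=2$ gives $k\asymp\lambda_k^{\alpha/(2\alpha-2)}=\lambda_k$, so $\lambda_k\asymp\gamma_k\asymp k$.

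The main obstacle is justifying that the warped-product lemmas transfer without change. There are two points to check. First, the orbit space near the compact core may be a half-line with a singular orbit, or a line if there are two tails. Second, the growth order must be read off the whole level set. I would first argue that properness of $b$ together with transitivity makes each large level a single orbit, so only one end is relevant. If two ends did occur, I would instead use the two-ended argument with \eqref{eq:wpminmax}. That argument still gives $\alpha=2$ and $\lambda_k\asymp\gamma_k$, but exact proportionality would need a common limit of $\mathcal J_E/\log s$ on both ends.
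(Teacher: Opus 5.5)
Your proposal is correct and follows essentially the paper's route: reduce via Step~1 of \Cref{thm:osccount} to $\tfrac1W(Wp_s)_s=-\lambda p$, check that the warped-product lemmas use only this ODE, the coarea identity, and $s\asymp b$, rule out $\alpha\ne2$ as in the proof of \Cref{thm:betarigidity}, and finish with the one-ended \Cref{prop:wplinear} and \Cref{thm:osccount} at $\alpha=2$. Your two-ended fallback is unnecessary, because under \Cref{ass:transitive} each level set is a single orbit, so an invariant eigenfunction is a single profile on the whole level set, as the paper notes.
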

\begin{proof}
By Step~1 of the proof of \Cref{thm:osccount}, in the arclength
variable $s$ the invariant sector satisfies
$p_{ss}-\Theta(s)p_s+\lambda p=0$, that is,
$\tfrac1W(Wp_s)_s=-\lambda p$ with $W=\vol(\Sigma_s)e^{-f}$, with no
error term. This is the radial equation of
\S\ref{ssec:warpedrigidity} with $s$ in place of $r$. We check that
the proof of \Cref{thm:betarigidity} applies to it.

That proof uses \Cref{lem:wpQ,lem:wpnonosc,lem:wpasympt,lem:wpgrowthorder},
\Cref{prop:wplinear}, and the proof of \Cref{thm:wprigidity}. These
use the warped product structure in three places only. The first is
the radial equation, which is the identity above. The second is the
coarea identity $\int_0^\infty u^2W\,ds=\int_{\{b\ge R_0\}}u^2e^{-f}\,dV$ for radial $u$,
used in \Cref{lem:wpnonosc}. It holds here since $|\nabla s|=1$,
and $u$, $f$ and $\vol(\Sigma_s)$ are functions of $s$ alone. The
third is the comparability $s\asymp b\asymp r(x)$, used in
\Cref{lem:wpgrowthorder} to measure growth orders in $s$. It holds
since $ds=dr/\sqrt\rho$ with $c_0^2\le\rho\le C_0^2$, and
$b\asymp r(x)$ by H1. The other hypotheses used are
\eqref{eq:Dregularity} for $\Theta$, which is assumed, and H4 for
the sector, which is part of \Cref{ass:transitive}. Hence
\Cref{thm:betarigidity} applies with $\beta=\alpha$ and gives
$\alpha=2$. Each invariant eigenfunction is constant on the whole
level set, so its growth order is that of this single profile.
The single-profile argument in \Cref{prop:wplinear}
gives $\gamma_k=c_*^{-1}\lambda_k$. Since $\gamma_k\to\infty$, this
gives $\lambda_k\asymp\gamma_k$. Finally, at $\alpha=2$
\Cref{thm:osccount} reads $k\asymp\lambda_k$.
\end{proof}

\begin{remark}\label{rem:classicaltime}
For the one-dimensional Hamiltonian
$|\xi|^2+cr^{2\alpha-2}=E$, the turning point is
$r_E=(E/c)^{1/(2\alpha-2)}$. Rescaling $r=r_Ex$ produces the action
scale $r_E\sqrt E\asymp E^{\alpha/(2\alpha-2)}$, consistent with
\Cref{thm:osccount}. This calculation concerns the radial model,
not localization of general eigenfunctions.
\end{remark}

Theorem~\ref{thm:osccount} relates the zero count $Z_k$, the spectral index $k$,
and $\lambda_k$, while \Cref{cor:SL-compat} relates $\lambda_k$
to the growth order $\gamma_k$. Together they give
$\lambda_k\asymp\gamma_k\asymp k$ within this radial class. What is
not settled is whether the regularity hypothesis on the radial drift
can be removed.

\begin{openquestion}\label{prob:oscgrowth}
Under \Cref{ass:transitive} alone, without \eqref{eq:Dregularity}
for $\Theta$, does $\gamma_k\asymp k$ hold? With
\eqref{eq:Dregularity} it does, by \Cref{cor:SL-compat}.
\end{openquestion}

\begin{remark}\label{rem:sturmscope}
Theorem~\ref{thm:osccount} is a statement about the $G$-invariant
sector alone. The theorem allows nontrivial transverse modes. Euclidean space with
its rotation-invariant sector is an example. The restrictions are the
transitive level-set symmetry, the sector version of H4 in
\Cref{ass:transitive}, and the radial-drift regularity used in
\Cref{thm:osccount}.
\end{remark}

\section{\texorpdfstring{From compatibility to $\alpha=2$}{From compatibility to alpha=2}}\label{sec:alphatwo}

Suppose compatibility holds in the sense of \Cref{def:compatibility}.
If $\lambda_k\asymp k^a$ and $\gamma_k\asymp k^b$, then
$a/b=(2\alpha-2)/\alpha$. Linear growth of both
sequences implies $\alpha=2$. For this conclusion, it suffices to have
linear growth along a common subsequence, indexed by its own position.

\begingroup
\renewcommand{\theassumption}{H6}
\renewcommand{\theHassumption}{H6}
\begin{assumption}[Linear growth along a common subsequence]\label{ass:linearcorr}
There are indices $k_1<k_2<\cdots$ and constants $c,C>0$ such that,
for all sufficiently large $j$,
\[
  c\,j \;\le\; \gamma_{k_j} \;\le\; C\,j, \qquad
  c\,j \;\le\; \lambda_{k_j} \;\le\; C\,j.
\]
Here $j$ indexes the subsequence, not the full spectrum.
The upper bound on $\gamma_{k_j}$ gives $\dim\mathcal P_N\gtrsim N$,
but no matching upper bound. If $\gamma_k\asymp k$ holds for the
entire sequence, then $\dim\mathcal P_N\asymp N$.
The converse can fail because the spectral index $k$ does not
necessarily order the growth numbers $\gamma_k$ monotonically.
Recall that $\mathcal P_N$ is finite-dimensional,
by \Cref{prop:PN_structure}(i).
\end{assumption}
\endgroup
\addtocounter{assumption}{-1}

\begin{remark}
For the Gaussian operator with $f(x)=|x|^2/4$ on $\R^n$, a
nonzero eigenfunction of total degree $m$ has $\lambda=m/2$ and
$\gamma=m$. There are $\binom{m+n}{n}$ basis elements of degree at
most $m$. The full spectrum, counted with multiplicity, satisfies
$\lambda_k\asymp\gamma_k\asymp k^{1/n}$, and compatibility holds.
Choosing one eigenfunction from each degree $j$ produces
$\lambda_{k_j}=j/2$ and $\gamma_{k_j}=j$.
H6 holds in every dimension, although the full sequences are
not linear in $k$ when $n\ge2$. The radial sector also satisfies H6.
\end{remark}

\begin{corollary}\label{cor:alpha2conditional}
Suppose H1--H3 hold with $\alpha>1$, and \Cref{ass:linearcorr}
holds with indices $k_j$. If
$\lambda_{k_j}\asymp\gamma_{k_j}^{(2\alpha-2)/\alpha}$,
then $\alpha=2$. This holds in particular under compatibility
(\Cref{def:compatibility}). The same conclusion holds if
\[
  \gamma_{k_j}^{(2\alpha-2)/\alpha}\lesssim\lambda_{k_j}
  \lesssim(\gamma_{k_j}\log\gamma_{k_j})^{(2\alpha-2)/\alpha}
\]
for all sufficiently large $j$.
\end{corollary}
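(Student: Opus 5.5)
The plan is to substitute the linear growth of H6 into the assumed comparison and read off the exponent. Put $\theta:=(2\alpha-2)/\alpha$. Since $\alpha>1$, we have $\theta>0$. By \Cref{ass:linearcorr} there are $c,C>0$ and $j_0$ with $cj\le\gamma_{k_j}\le Cj$ and $cj\le\lambda_{k_j}\le Cj$ for $j\ge j_0$. The hypothesis $\lambda_{k_j}\asymp\gamma_{k_j}^{\theta}$ gives constants $a,A>0$ with $a\gamma_{k_j}^\theta\le\lambda_{k_j}\le A\gamma_{k_j}^\theta$ for all large $j$. Monotonicity of $t\mapsto t^\theta$ then gives
\[
  a c^\theta j^\theta\;\le\;\lambda_{k_j}\;\le\;A C^\theta j^\theta ,
\]
and combining this with $cj\le\lambda_{k_j}\le Cj$ yields
\[
  \frac{ac^\theta}{C}\le j^{1-\theta}\le\frac{AC^\theta}{c}
  \qquad\text{for all large } j .
\]
As $j\to\infty$, $j^{1-\theta}$ stays bounded above and below by positive constants only if $1-\theta=0$. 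The step $\theta=1$ is equivalent to $2\alpha-2=\alpha$, that is, $\alpha=2$. The statement ``in particular under compatibility'' follows because \Cref{def:compatibility} restricted to the indices $k_j$, which eventually exceed $K$ since $k_j\to\infty$, is exactly the assumed two-sided comparison.

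For the logarithmic variant, I will run the same argument with the one-sided bounds. The lower bound $\gamma_{k_j}^\theta\lesssim\lambda_{k_j}$ together with $\gamma_{k_j}\ge cj$ and $\lambda_{k_j}\le Cj$ gives $j^\theta\lesssim j$, hence $\theta\le1$. The upper bound together with $\lambda_{k_j}\ge cj$ and $\gamma_{k_j}\le Cj$ gives
\[
  j\;\lesssim\;(Cj\log(Cj))^\theta\;\lesssim\;j^\theta(\log j)^\theta .
\]
So $j^{1-\theta}\lesssim(\log j)^\theta$. If $\theta<1$, a positive power of $j$ would be dominated by a power of $\log j$, which is impossible. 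Hence $\theta\ge1$, and with the previous inequality $\theta=1$, so $\alpha=2$. I will note that $\gamma_{k_j}\to\infty$ makes $\log\gamma_{k_j}$ positive for large $j$, so the hypothesis is meaningful.

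There is no real obstacle here; the argument is elementary exponent bookkeeping. The only points needing care are that $\theta>0$, which requires $\alpha>1$ and makes $t\mapsto t^\theta$ increasing, and that the comparison constants are independent of $j$. This independence is exactly what forces the exponent identity, rather than a statement about finitely many indices.
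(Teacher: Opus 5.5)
Your proposal is correct and follows the paper's proof. The paper also substitutes the linear bounds of H6 into the assumed comparison, compares exponents of $j$, and treats the logarithmic variant by noting that $\log j$ grows more slowly than any positive power of $j$; you simply make the constants explicit and separate the two one-sided conclusions $\theta\le 1$ and $\theta\ge 1$.
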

\begin{proof}
The first comparison and \Cref{ass:linearcorr} give
\[
  \lambda_{k_j} \;\asymp\; \gamma_{k_j}^{(2\alpha-2)/\alpha}
  \;\asymp\; j^{(2\alpha-2)/\alpha},
\]
and also $\lambda_{k_j}\asymp j$. Comparing exponents,
$(2\alpha-2)/\alpha=1$, that is, $\alpha=2$.

With a logarithmic factor, the two-sided bound becomes
\[
  j^{(2\alpha-2)/\alpha} \;\lesssim\; j \;\lesssim\;
  (j\log j)^{(2\alpha-2)/\alpha}.
\]
Now $\log j$ grows slower than any positive power of $j$. So
comparing exponents of $j$ on each side still gives
$(2\alpha-2)/\alpha=1$.
\end{proof}

\begin{remark}\label{rem:radialcompare}
The conditional conclusion above uses H6 and the compatibility
comparison on its subsequence.
Under transitive symmetry and the drift regularity in
\eqref{eq:Dregularity}, \Cref{cor:SL-compat} proves $\alpha=2$ and
linear indexing within the invariant sector without assuming either
comparison separately.
\end{remark}

\begin{remark}\label{rem:indexingscope}
H6 concerns a common subsequence indexed by $j$. It does not require
linear growth in the original spectral index $k_j$.
It is not an assumption on eigenvalue multiplicities. Exact radial
polynomiality in \S\ref{sec:bochner} provides a different hypothesis
from which $\alpha=2$ follows.
\end{remark}

\begin{openquestion}\label{prob:compatnotlinear}
Does there exist a weighted manifold satisfying H1--H4 with
$\alpha\ne2$ for which the full spectral and growth filtrations are
compatible? Any such example must fail the subsequence condition
in \Cref{ass:linearcorr}.
\end{openquestion}

\section{An inverse Hermite/Laguerre theorem}\label{sec:bochner}

We assume exact polynomiality of the invariant eigenfunctions.
The proof uses a degree comparison for the reduced operator, related
to Bochner's classification \cite{Bochner1929}, rather than a
localization estimate.

\subsection{Hypothesis}

\begin{definition}\label{def:exactpoly}
Under the transitive isometric symmetry of
Assumption~\ref{ass:transitive} (\S\ref{sec:sturm}), let
$\{u_k\}_{k\ge0}$ be the eigenbasis of the $G$-invariant sector,
re-indexed by its own spectral enumeration as in
Remark~\ref{rem:invariantH4}. We impose the condition below only on
this sector, not on the full eigenbasis.

The family satisfies exact radial polynomiality of type
$\beta\in\{1,2\}$ if, for every $k$ and every $r\ge R_0$, writing
$s:=r^\beta$,
\[
  u_k(r) \;=\; q_k(s)
\]
for a real polynomial $q_k$ of degree $k$.

We call $\beta=1$ the Hermite-type case. There, $u_k$ is a degree-$k$ polynomial in $r$. We call $\beta=2$ the
Laguerre-type case. There, $u_k$ is a degree-$k$ polynomial
in $r^2$.
\end{definition}

\begin{remark}\label{rem:betachoice}
For a power substitution $s=r^\beta$, the coefficient of $q''$ in the
transformed equation is
\[
  \beta^2r^{2\beta-2}\rho(r).
\]
Since H2 bounds $\rho$ above and below, this coefficient has size
$s^{2-2/\beta}$. \Cref{lem:gequals1} shows that exact polynomiality
forces it to be a polynomial of degree at most two. Its degree must
therefore equal $2-2/\beta$. For finite $\beta>0$, the only
nonnegative integer possibilities are $0$ and $1$, giving $\beta=1$
and $\beta=2$. These are the Hermite and Laguerre cases. The Jacobi
case, whose leading coefficient is quadratic, cannot arise from a
finite power substitution under H2.

We consider only these two power substitutions here. The case
$\beta=2$ is realized by the examples of \S\ref{sec:examples}. No
example of the case $\beta=1$ is known to us,
\Cref{prob:hermiterealize}.
\end{remark}

This hypothesis adds a condition to Assumption~\ref{ass:transitive}.
Transitive symmetry already implies radiality. H4 in the invariant sector
ensures a finite growth order for each $k$, tending to infinity.
Exact polynomiality requires the degree $\beta k$ to come from a
polynomial identity in $r^\beta$ rather than merely from asymptotic growth.

\subsection{Radiality of the coefficient functions}

Consider the functions
\[
  \rho(x) := |\nabla b(x)|^2, \qquad
  \tau(x) := \Delta b(x) - \langle\nabla f(x),\nabla b(x)\rangle .
\]
These depend on $x$ only through $r=b(x)$. This is by
\Cref{ass:radial} (\S\ref{sec:sturm}), which establishes this
radiality from Assumption~\ref{ass:transitive}. We write
$\rho=\rho(r)$ and $\tau=\tau(r)$.

\begin{remark}
Let $u=p(b(x))$ be a radial function. The chain rule gives
\[
  \nabla u = p'(b)\nabla b, \qquad
  \Delta u = p''(b)|\nabla b|^2 + p'(b)\Delta b.
\]
With $\rho,\tau$ as above, we obtain
\begin{equation}\label{eq:radialLf}
  L_fu \;=\; \Delta u - \langle\nabla f,\nabla u\rangle
  \;=\; \rho(r)\,p''(r) + \tau(r)\,p'(r).
\end{equation}

Apply this to $u_k=p_k(b)$ and use $L_fu_k=-\lambda_ku_k$. We get
\begin{equation}\label{eq:radialeigeneq}
  \rho(r)\,p_k''(r) + \tau(r)\,p_k'(r) + \lambda_k\,p_k(r) \;=\; 0.
\end{equation}
This is the same reduction used in \S\ref{sec:sturm}.
\end{remark}

\begin{lemma}\label{lem:gequals1}
Suppose that H1--H3, \Cref{ass:transitive}, and exact radial
polynomiality of type $\beta\in\{1,2\}$ (\Cref{def:exactpoly}) hold.
For $\beta=1$ there are constants $\rho_0>0$, $P$, and $\tilde B$ such
that
\[
  \rho(r)=\rho_0,\qquad \tau(r)=P+\tilde Br.
\]
For $\beta=2$ there are constants $\rho_0>0$, $\rho_{-2}$,
$\tilde A$, and $\tilde B$ such that
\[
  \rho(r)=\rho_0+\frac{\rho_{-2}}{r^2},\qquad
  \tau(r)=\frac{\tilde A}{r}+\tilde Br-\frac{\rho_{-2}}{r^3}.
\]
In the Hermite-type case, translating the one-dimensional ODE
coordinate removes $P$. The geometric coordinate $b$ is not
translated.
\end{lemma}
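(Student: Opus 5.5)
The plan is to run the classical Bochner degree argument on the reduced equation \eqref{eq:radialeigeneq}, written in the variable $s=r^\beta$, and then use H2 to pin down the degree of the leading coefficient. For $u_k=q_k(s)$ with $s=r^\beta$ and $p_k(r)=q_k(r^\beta)$, the chain rule gives
\[
  p_k'=\beta r^{\beta-1}q_k',\qquad
  p_k''=\beta^2r^{2\beta-2}q_k''+\beta(\beta-1)r^{\beta-2}q_k'.
\]
Substituting into \eqref{eq:radialeigeneq} yields, for $r\ge R_0$,
\[
  \mathcal A(s)\,q_k''(s)+\mathcal B(s)\,q_k'(s)+\lambda_kq_k(s)=0,
\]
where
\[
  \mathcal A:=\beta^2r^{2\beta-2}\rho(r),\qquad
  \mathcal B:=\beta r^{\beta-1}\tau(r)+\beta(\beta-1)r^{\beta-2}\rho(r).
\]
Both coefficients are well defined functions of $s$ on $[R_0^\beta,\infty)$, because $\rho$ and $\tau$ are radial by \Cref{ass:radial}.

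\emph{Step 1: the first-order coefficient.} Take $k=1$. Since $q_1''=0$ and $q_1'$ is a nonzero constant (the degree is exactly $1$), we get $\mathcal B=-\lambda_1q_1/q_1'$ on the whole interval. Hence $\mathcal B$ is a polynomial of degree at most $1$. Its linear coefficient is $-\lambda_1$ times a nonzero number, so it is nonzero because $\lambda_1>0$ by H4.

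\emph{Step 2: the second-order coefficient.} Take $k=2$. Here $q_2''$ is a nonzero constant, so
\[
  \mathcal A=-\frac{\mathcal Bq_2'+\lambda_2q_2}{q_2''}
\]
is a polynomial of degree at most $2$ on $[R_0^\beta,\infty)$. This is the content anticipated in \Cref{rem:betachoice}. The identities hold only on a half-line, but a function that agrees with a polynomial on an interval determines that polynomial uniquely, so no extension past $R_0$ is needed.

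\emph{Step 3: degree from H2, then unwinding.} By \Cref{rem:gradb}, $c_0^2\le\rho\le C_0^2$, so $\mathcal A\asymp r^{2\beta-2}=s^{2-2/\beta}$ on the tail.
\begin{itemize}
\item For $\beta=1$, $\mathcal A$ is a polynomial bounded above and below by positive constants on a half-line, so it is a constant $\rho_0>0$. Then $\mathcal B=\tau$, and Step 1 gives $\tau=P+\tilde Br$ with $\tilde B\neq0$.
\item For $\beta=2$, the relation $\mathcal A\asymp s$ rules out degrees $0$ and $2$. Hence $\mathcal A=a_1s+a_0$ with $a_1>0$, which gives $\rho=\mathcal A/(4r^2)=\rho_0+\rho_{-2}/r^2$ with $\rho_0=a_1/4>0$. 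Writing $\mathcal B=2r\tau+2\rho=b_0+b_1r^2$ and solving for $\tau$ gives
\[
  \tau=\frac{b_0/2-\rho_0}{r}+\frac{b_1}{2}\,r-\frac{\rho_{-2}}{r^3},
\]
which is the stated form with $\tilde A=b_0/2-\rho_0$ and $\tilde B=b_1/2$.
\end{itemize}

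For the final remark in the Hermite case, the shift $r\mapsto r+P/\tilde B$ in the ODE variable makes the drift coefficient $\tilde Br$. This is legitimate because $\tilde B\neq0$ by Step 1, and it leaves $b$ itself untouched.

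The main obstacle is Step 3, namely converting the asymptotic two-sided bound from H2 into an exact degree. The argument has to note that a polynomial with $c\,s^{d}\le\mathcal A\le C\,s^{d}$ on a half-line must have degree exactly $d$. This also requires checking that $q_1'$ and $q_2''$ are nonzero constants, which holds because the degrees in \Cref{def:exactpoly} are exact.
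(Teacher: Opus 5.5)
Your proposal is correct and is essentially the paper's proof: rewrite \eqref{eq:radialeigeneq} in $s=r^\beta$, use the $k=1$ and $k=2$ equations to show that the first-order coefficient is affine and the second-order coefficient has degree at most two, then let the two-sided bound on $\rho$ from H2 fix that degree and solve back for $\rho$ and $\tau$. Your explicit handling of the translation that removes $P$ is a small addition that the paper defers to the proof of \Cref{thm:inversehermite}. It uses that the linear coefficient of $\tau$ is $-\lambda_1\neq0$, which holds because a nonconstant invariant eigenfunction has positive energy.
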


\begin{proof}
Write $s=r^\beta$ and $q_k(s)=p_k(r)$. Then $q_k$ has degree $k$.
The chain rule gives
\[
  p_k'=\beta r^{\beta-1}q_k',\qquad
  p_k''=\beta(\beta-1)r^{\beta-2}q_k'
  +\beta^2r^{2\beta-2}q_k''.
\]
Thus
\begin{equation}\label{eq:radialSLgeneral}
  \hat\sigma(s)q_k''+\hat\tau(s)q_k'+\lambda_kq_k=0,
\end{equation}
where
\[
  \hat\sigma(s)=\beta^2r^{2\beta-2}\rho(r),\qquad
  \hat\tau(s)=\beta(\beta-1)r^{\beta-2}\rho(r)
  +\beta r^{\beta-1}\tau(r).
\]

Write $q_1(s)=D_1s+E_1$, with $D_1\ne0$. Since $q_1''=0$,
\eqref{eq:radialSLgeneral} gives
\[
  \hat\tau(s)=-\lambda_1\left(s+\frac{E_1}{D_1}\right).
\]
The function $\hat\tau$ is affine, with leading coefficient $-\lambda_1$.
Now write $q_2(s)=F_2s^2+G_2s+H_2$, with $F_2\ne0$. The $k=2$
equation gives
\[
  \hat\sigma(s)=
  \frac{-\hat\tau(s)(2F_2s+G_2)-\lambda_2(F_2s^2+G_2s+H_2)}{2F_2}.
\]
The coefficient $\hat\sigma$ is a polynomial of degree at most two.

If $\beta=1$, then $\hat\sigma=\rho$. By H2 it is bounded above and
below on the tail. A bounded polynomial on a half-line is constant,
so $\rho\equiv\rho_0>0$. Since $\hat\tau=\tau$ is affine,
$\tau=P+\tilde Br$.

Suppose $\beta=2$. Then $\hat\sigma(s)=4s\rho(r)$. Since $\rho$ is
bounded above and below, $\hat\sigma(s)/s$ is bounded above and below
by positive constants as $s\to\infty$. The polynomial $\hat\sigma$ has no quadratic term, but its linear coefficient is
positive. Hence
\[
  \hat\sigma(s)=A_0+A_1s,\qquad A_1>0.
\]
It follows that
\[
  \rho(r)=\frac{A_1}{4}+\frac{A_0}{4r^2}
  =:\rho_0+\frac{\rho_{-2}}{r^2}.
\]
Write $\hat\tau(s)=P_0+Ds$. Solving
$\hat\tau=2\rho+2r\tau$ for $\tau$ gives
\[
  \tau(r)=\frac{P_0-2\rho_0}{2r}+\frac D2r
  -\frac{\rho_{-2}}{r^3}.
\]
This has the stated form.
\end{proof}

\subsection{Bochner's classification}

Bochner's theorem \cite{Bochner1929} classifies the classical
orthogonal polynomial systems as eigenfunctions of second-order
operators. The normal forms are given below. See also \cite[\S\S18.3, 18.8]{DLMF}.

\begin{theorem}\label{thm:bochner}
Let $\sigma\not\equiv0$ and $\tau$ be real polynomials of degrees
at most two and one. Suppose $\sigma p_k''+\tau p_k'+\mu_kp_k=0$ has a polynomial solution
of degree $k$ for every $k\ge0$. Assume these polynomials form a complete
orthogonal system for a positive weight on an interval.
Up to an affine change of variable and a constant scaling of the
operator, the coefficient pairs are
\[
\begin{array}{ll}
\text{Hermite:}&(\sigma,\tau)=(1,-2r),\\
\text{Laguerre:}&(\sigma,\tau)=(r,a+1-r),\quad a>-1,\\
\text{Jacobi:}&(\sigma,\tau)=(1-r^2,b-a-(a+b+2)r),\quad a,b>-1.
\end{array}
\]
\end{theorem}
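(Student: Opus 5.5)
The plan is the classical Bochner argument: degree bookkeeping first, then a symmetrizing weight, then a case split on $\deg\sigma$. Write $\sigma(r)=s_2r^2+s_1r+s_0$ and $\tau(r)=t_1r+t_0$.

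\emph{Step 1: the eigenvalues.} Insert $p_k=r^k+\dots$ into $\sigma p_k''+\tau p_k'+\mu_kp_k=0$ and compare $r^k$ coefficients. This gives
\[
  \mu_k=-\bigl(k(k-1)s_2+kt_1\bigr).
\]
The cases $k=0,1$ force $\mu_0=0$ and $\mu_1=-t_1$. If $\tau$ were constant, every $p_k$ with $k\ge1$ would satisfy $\mu_1p_1=-\tau p_1'$ with $\mu_1=0$, which fails. So $t_1\neq0$, and the $\mu_k$ are determined by $(s_2,t_1)$.

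\emph{Step 2: the weight.} Orthogonality for a positive weight $w$ on an interval $(\ell_-,\ell_+)$ means the operator $\mathcal L=\sigma\partial^2+\tau\partial$ is symmetric there. I would take $w$ from the Pearson equation $(\sigma w)'=\tau w$, which puts $\mathcal L$ in the form $w^{-1}(\sigma w\,\partial)'$. Then
\[
  \int_{\ell_-}^{\ell_+}(\mathcal Lp)\,q\,w-\int_{\ell_-}^{\ell_+}p\,(\mathcal Lq)\,w=\bigl[\sigma w\,(p'q-pq')\bigr]_{\ell_-}^{\ell_+}.
\]
Conversely, testing symmetry on $p_0,p_1,p_2$ recovers Pearson's equation for the given weight. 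Symmetry therefore forces $\sigma w\,r^m\to0$ at each endpoint for every $m$. Positivity of $w$ and finiteness of all moments $\int r^mw$ are further constraints.

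\emph{Step 3: case split on $\deg\sigma$.} Each case is normalized by an affine change of variable and a constant rescaling of the operator.
\begin{enumerate}[label=(\roman*)]
\item $\deg\sigma=0$. Normalize $\sigma=1$ and translate to kill $t_0$. Then $w=e^{t_1r^2/2}$. Finite moments on an interval where the boundary term vanishes force the interval to be $\mathbb R$ and $t_1<0$. Rescaling gives $(1,-2r)$, the Hermite pair.
\item $\deg\sigma=1$. Translate so $\sigma=r$. Then $w=r^{t_0-1}e^{t_1r}$. Integrability and vanishing of $r\,w$ at $0$ force $t_0>0$, and decay at $+\infty$ forces $t_1<0$. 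Rescaling $r$ gives $\tau=a+1-r$ with $a>-1$, the Laguerre pair.
\item $\deg\sigma=2$. If $\sigma$ has distinct real roots, map them to $\pm1$. Then $w=(1-r)^a(1+r)^b$, and $a,b>-1$ is exactly integrability plus vanishing of $\sigma w$ at $\pm1$. This is Jacobi.
\end{enumerate}

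\emph{Step 4: the main obstacle.} The hard part is excluding the remaining quadratic cases, where $\sigma$ has complex conjugate roots or a double root (Bessel-type systems). I would argue as follows.
\begin{enumerate}[label=(\roman*)]
\item For complex roots, $\sigma$ has a fixed sign and $w$ is a power of $\sigma$ times $e^{c\arctan}$. On any interval, the requirement $\sigma w\to0$ at both ends fails unless the endpoints are $\pm\infty$. There $w$ behaves like a power, so only finitely many moments exist. That contradicts orthogonality of polynomials of every degree.
\item For a double root at $0$, $w=r^{c}e^{-d/r}$. Either $\sigma w$ cannot vanish at both ends of a real interval, or moments diverge at $\infty$. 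So no positive weight exists.
\end{enumerate}
Two further points must be checked here. First, the degeneracy $\mu_j=\mu_k$ for some $j\neq k$, which occurs when $t_1/s_2$ is a negative integer, has to be handled in the Jacobi case. Second, completeness is used only to fix the interval as the full support of $w$. Once these are done, the remaining cases are exhausted.
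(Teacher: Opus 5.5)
The paper gives no proof of this statement. It is quoted from \cite{Bochner1929,DLMF}, and the proof of \Cref{thm:inversehermite} deliberately avoids it. Measured against the classical argument, your Steps~3--4 are the usual Pearson-weight analysis. The gap is in Step~2: the claim that orthogonality forces the \emph{given} weight to satisfy $(\sigma w)'=\tau w$ is not justified, and everything after it depends on it. Orthogonality plus the eigenvalue equation give symmetry of $\mathcal L$ only on polynomials. (Expand $p,q$ in the $p_k$; repeated $\mu_k$ cause no trouble.) That is, they give identities among the moments $m_n=\int r^nw$. With $q=p_0$ these read
\[
  \bigl((n-1)s_2+t_1\bigr)m_n+\bigl((n-1)s_1+t_0\bigr)m_{n-1}+(n-1)s_0\,m_{n-2}=0,
  \qquad n\ge1,
\]
and nothing more local is available. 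Testing on $p_0,p_1,p_2$ yields three numbers, and even testing on all polynomials cannot localize. Any positive weight with the same moments passes every such test, and the hypothesis does not even make $w$ differentiable. So your endpoint analysis is about the explicit Pearson function $w_P$, not the hypothesized weight. In the excluded cases (complex roots, a double root, Jacobi with $a\le-1$ or $b\le-1$), and in deriving $a>-1$ and $a,b>-1$, you only show that $w_P$ is unsuitable. The theorem needs that \emph{no} positive orthogonalizing weight exists.

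The missing idea is to apply positivity to the moment sequence itself.
\begin{enumerate}
\item With $\sigma=1$ and $t_0=0$, the recursion gives $t_1m_2=-m_0$, so $t_1<0$.
\item With $(\sigma,\tau)=(r,a+1-r)$ it gives $m_n=(a+1)_nm_0$. Then $m_0m_2-m_1^2=(a+1)m_0^2>0$ forces $a>-1$.
\item With $\sigma=r^2$ it gives $m_n=m_0(-t_0)^n/(t_1)_n$; the degenerate subcases contradict $m_0,m_2,m_4>0$ at once. Then $m_{2n}^{1/2n}\to0$, which is impossible for a positive weight on an interval.
\item For $\sigma=1+r^2$, apply AM--GM to the $n=2k$ relation and use $|m_{2k-1}|\le\sqrt{m_{2k-2}m_{2k}}$. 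This gives $2\sqrt{(2k-1+t_1)(2k-1)}\le|t_0|$, which is false for large $k$.
\item For $\sigma=1-r^2$, the same Cauchy--Schwarz device confines the support to $[-1,1]$. Testing $\int\mathcal L(p)\,w=0$ with $p'=(1\pm r)^N$ then forces $a,b>-1$.
\end{enumerate}

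Two smaller points. In Step~1, $t_1\ne0$ does not follow from degree counting when $s_2\ne0$. The pair $(1-r^2,0)$, which is Jacobi with $a=b=-1$, has polynomial eigenfunctions of every degree. Degree counting only turns $t_1=0$ into $\tau\equiv0$, and positivity must then exclude that case. Also, $\mu_j=\mu_k$ occurs exactly when $t_1/s_2\in\{0,-1,-2,\dots\}$, and in the moment formulation it needs no separate treatment.
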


The proof below identifies the coefficients directly and does not
invoke this classification.

\subsection{Main theorem}

\begin{theorem}\label{thm:inversehermite}
Suppose $M^n$ satisfies H1--H4 with any $\alpha>0$, and TS1
(\Cref{ass:transitive}). Assume $\{u_k\}$ satisfies exact radial
polynomiality of type $\beta\in\{1,2\}$ (\Cref{def:exactpoly}). Then
$\alpha=2$, so $f=b^2$ on the tail. For $\beta=1$, an affine change
of the one-dimensional variable puts the invariant-sector equation in
Hermite form. For $\beta=2$, an affine change of $s=b^2$ puts it in
generalized Laguerre form.
\end{theorem}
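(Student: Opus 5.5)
The plan is to start from \Cref{lem:gequals1}, which already pins down $\rho$ and $\tau$ as explicit finite Laurent expressions in $r$, and then compare these with the independent formula $\tau=\Delta b-\alpha r^{\alpha-1}\rho$ together with H2--H3 to force $\alpha=2$.

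\textbf{Step 1: extract $\Delta b$ and $\Delta f$.} On $\Sigma_r$ we have $\langle\nabla f,\nabla b\rangle=\alpha r^{\alpha-1}\rho(r)$, so
\[
\Delta b=\tau+\alpha r^{\alpha-1}\rho .
\]
Moreover $\Delta f=\alpha r^{\alpha-1}\Delta b+\alpha(\alpha-1)r^{\alpha-2}\rho$.

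\textbf{Step 2: H3 controls $\Delta b$.} Writing $\Delta f$ in terms of $\Delta b$, the upper bound $\Delta f\le\eta|\nabla f|^2=\eta\alpha^2r^{2\alpha-2}\rho$ gives
\[
\Delta b\le \eta\alpha r^{\alpha-1}\rho-(\alpha-1)r^{-1}\rho .
\]
Combined with \Cref{lem:deltab}, which gives $\Delta b\ge -C_b$, we obtain
\[
\tau=\Delta b-\alpha r^{\alpha-1}\rho
\in\bigl[-C_b-\alpha r^{\alpha-1}\rho,\;-(1-\eta)\alpha r^{\alpha-1}\rho+O(r^{-1})\bigr].
\]
Since $\rho\to\rho_0>0$ in both cases of \Cref{lem:gequals1}, this shows that $-\tau$ is comparable to $r^{\alpha-1}$ when $\alpha\ge1$, and that $\tau$ is bounded when $\alpha<1$.

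\textbf{Step 3: compare exponents.} In both polynomiality types, $\tau=\tilde Br+O(1)$ as $r\to\infty$.
\begin{itemize}
\item If $\tilde B\neq0$, Step 2 forces $\alpha-1=1$, i.e.\ $\alpha=2$; when $\alpha<1$ we would need $\tau$ bounded, which contradicts $\tilde B\ne0$.
\item If $\tilde B=0$, then $\tau$ is bounded on the tail. For $\alpha>1$ this contradicts $-\tau\gtrsim r^{\alpha-1}$. For $\alpha=1$ it is compatible, and for $\alpha<1$ it is consistent with Step 2.
\end{itemize}
So the remaining task is to rule out $\tilde B=0$, i.e.\ to show $\tilde B<0$. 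The eigenvalue relation from the top-degree coefficient supplies this. For $\beta=1$ the leading coefficient of the $k$-th equation gives $\lambda_k=-k\tilde B$. For $\beta=2$, with $\hat\tau=P_0+Ds$ and $D=2\tilde B$, the leading coefficient gives $\lambda_k=-kD$. Since H4 has $\lambda_1>0$, it follows that $\tilde B<0$. With $\tilde B\neq0$ established, Step 3 yields $\alpha=2$, and hence $f=b^2$ on the tail.

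\textbf{Step 4: normal forms.} For $\beta=1$, the equation is $\rho_0p''+(P+\tilde Br)p'+\lambda p=0$ with $\tilde B<0$. Translating $r\mapsto r+P/\tilde B$ and rescaling by $\sqrt{-\tilde B/(2\rho_0)}$ gives $(1,-2x)$ up to an overall constant. For $\beta=2$, the equation reads $(A_0+A_1s)q''+(P_0+Ds)q'+\lambda q=0$ with $A_1>0$ and $D<0$. The affine change $x=-(D/A_1)(s+A_0/A_1)$ produces $x q_{xx}+(a+1-x)q_x+\cdots$, where
\[
a+1=P_0/A_1-DA_0/A_1^2 .
\]
The condition $a>-1$ then follows from orthogonality and integrability of the weight $x^ae^{-x}$, since the $u_k$ lie in $L^2$.

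\textbf{Expected main obstacle.} The delicate point is Steps 2--3 in the regime $\alpha\le1$, where \Cref{lem:deltab,lem:deltabUpper} give only bounded $\Delta b$. There one must exclude the case $\tilde B\neq0$ with $\alpha<1$ carefully by using exact radiality, after the case $\tilde B=0$ has been excluded by $\lambda_1>0$.
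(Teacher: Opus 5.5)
Your proof is correct and is essentially the paper's argument. \Cref{lem:gequals1} together with $\lambda_1>0$ gives $\tau=\tilde Br+O(1)$ with $\tilde B<0$. The bound $\Delta b\ge -C_b$ of \Cref{lem:deltab} then rules out $\alpha<2$; it holds for every $\alpha>0$, so the range $\alpha\le1$ is not actually delicate. The upper half of H3 rules out $\alpha>2$. You package these two bounds as the single comparison $-\tau\asymp r^{\alpha-1}$, whereas the paper treats $\alpha<2$ and $\alpha>2$ as separate contradictions. Your Hermite and Laguerre normalizations are the same as the paper's.

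The one blemish is your aside that $a>-1$ follows from orthogonality. The theorem does not claim this, and the reasoning does not work. The Laguerre form is only known on the tail, where $x$ is a positive multiple of $r^2\rho(r)$ and so stays bounded away from $0$. Orthogonality in $L^2(M,e^{-f}dV)$, on the other hand, also involves the unconstrained core.
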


\begin{proof}
We work first in the geometric coordinate $r=b$. Then $f=r^\alpha$
on the tail.

\emph{Step 1. The coefficients.} By \Cref{lem:gequals1}, there are
constants $\rho_0>0$, $\delta$, $\tilde A$, $P$, and $\tilde B$ such
that
\begin{equation}\label{eq:inversecoeffs}
  \rho(r)=\rho_0+\delta r^{-2},\qquad
  \tau(r)=\frac{\tilde A}{r}+P+\tilde Br-\delta r^{-3}.
\end{equation}
For $\beta=1$, $\delta=\tilde A=0$. For $\beta=2$, $P=0$, and
$\tilde B<0$ in both cases. The leading coefficient of $\hat\tau$ is
$-\lambda_1$. Hence $\tilde B=-\lambda_1$ for $\beta=1$ and
$\tilde B=-\lambda_1/2$ for $\beta=2$.

\emph{Step 2. The geometric identities.} Since $f=r^\alpha$,
\[
  \langle\nabla f,\nabla b\rangle
  =\alpha r^{\alpha-1}\rho(r),\qquad
  |\nabla f|^2=\alpha^2r^{2\alpha-2}\rho(r).
\]
Also
\[
  \Delta b=\tau+\alpha r^{\alpha-1}\rho,
\]
and
\[
  \Delta f=\alpha(\alpha-1)r^{\alpha-2}\rho
  +\alpha r^{\alpha-1}\Delta b.
\]

\emph{Step 3. The case $\alpha<2$.} By \eqref{eq:inversecoeffs},
$\rho(r)=\rho_0+O(r^{-2})$ and $\tau(r)=\tilde Br+O(1)$. Hence
\[
  \Delta b=\tilde Br+o(r).
\]
Since $\tilde B<0$, this tends to $-\infty$, contrary to
\Cref{lem:deltab}.

\emph{Step 4. The case $\alpha>2$.} The identities in Step 2 and
\eqref{eq:inversecoeffs} give
\[
  \Delta f-\eta|\nabla f|^2
  =(1-\eta)\alpha^2\rho_0r^{2\alpha-2}
  +\alpha\tilde B r^\alpha+o(r^{2\alpha-2}).
\]
Because $2\alpha-2>\alpha$ and $\eta<\tfrac12$, the first term is
positive and dominant. The left side tends to $+\infty$, contrary to
H3, so $\alpha=2$.

\emph{Step 5. Hermite type.} Let $\beta=1$. Then
$\rho\equiv\rho_0$ and $\tau=P+\tilde Br$. Put
$\tilde r=r/\sqrt{\rho_0}$. The radial equation becomes
\[
  p''+\left(\frac{P}{\sqrt{\rho_0}}+\tilde B\tilde r\right)p'
  +\lambda p=0.
\]
Since $p_k$ has degree $k$, comparison of the leading coefficient
gives $\lambda_k=-\tilde Bk$. Translate
\[
  y=\tilde r+\frac{P}{\tilde B\sqrt{\rho_0}}
\]
and then set $x=\sqrt{-\tilde B/2}\,y$. The equation becomes
\[
  h''-2xh'+2kh=0.
\]

\emph{Step 6. Laguerre type.} Let $\beta=2$ and $s=r^2$. In the
notation of the proof of \Cref{lem:gequals1},
\[
  \hat\sigma(s)=A_0+A_1s,\qquad A_1>0,
  \qquad
  \hat\tau(s)=P_0+Ds,
\]
with $D=-\lambda_1<0$. The equation is
\[
  (A_0+A_1s)q_k''+(P_0+Ds)q_k'+\lambda_kq_k=0.
\]
Since $q_k$ has degree $k$, comparison of the coefficient of $s^k$
gives
\[
  \lambda_k=-Dk.
\]
Set $s_0=A_0/A_1$, $z=s+s_0$, and
\[
  x=-\frac{D}{A_1}z,
  \qquad
  a+1=\frac{P_0-Ds_0}{A_1}.
\]
Because $-D/A_1>0$, this is an affine orientation-preserving change
on the tail. The equation becomes
\[
  xy''+(a+1-x)y'+ky=0,
\]
the generalized Laguerre equation.
\end{proof}

\begin{remark}\label{rem:bochnervscompat}
\Cref{thm:inversehermite} does not assume compatibility,
\Cref{def:compatibility}, the Lower Localization Property,
\Cref{def:LLP}, or any estimate of
\S\S\ref{sec:freq}--\ref{sec:localization}. It uses no result
restricted to $\alpha>1$, see \S\ref{sec:small}. Its hypothesis is
stronger than H4 in one specific way. Exact radial polynomiality of
type $\beta$ implies $\gamma_k=\beta k$ for the invariant sector. H4
only asks that $\gamma_k<\infty$ for each $k$ and
$\gamma_k\to\infty$. That bounds the growth order of $u_k$. It does not require $u_k$ to be a polynomial or, in particular,
a radial polynomial of a fixed degree.

The hypothesis is placed on the $G$-invariant sector because radial
functions span only the invariant subspace. Even when each level set
is a single $G$-orbit, radial functions do not span the full
$L^2(M,e^{-f}\dV)$ when the level sets have positive dimension.
\end{remark}

\section{Which Results Extend to \texorpdfstring{$0<\alpha\le1$}{0 < alpha <= 1}}\label{sec:small}

The exponent $\alpha$ is introduced in H1, and H1--H3 are stated for
every $\alpha>0$. Several results above require $\alpha>1$ because
they use confinement of the Schr\"odinger potential $V$. There is also
a more basic reason that the compatibility relation itself belongs to
this range. Under H4, both $\lambda_k$ and $\gamma_k$ tend to infinity.
If one imposed
\[
  \lambda_k\asymp\gamma_k^{(2\alpha-2)/\alpha}
\]
for $0<\alpha\le1$, then the right-hand side would remain bounded when
$\alpha=1$ and would tend to zero when $0<\alpha<1$. Either conclusion
contradicts $\lambda_k\to\infty$. The power law called
Compatibility in this paper is an $\alpha>1$ relation.
What remains open in the nonconfining range is whether H1--H4 can hold
at all. This section states which definitions and results still apply there.

\begin{center}
\begin{tabularx}{\textwidth}{@{}>{\raggedright\arraybackslash}p{0.42\textwidth}c>{\raggedright\arraybackslash}X@{}}
\toprule
Component & Needs $\alpha>1$? & Reason \\
\midrule
Frequency function and Caccioppoli estimates (\S\ref{sec:freq}) & No & two-sided bounds on $\Delta b$ only \\
Concentration radius, moment bound (\S\ref{sec:twofiltrations}) & No & \Cref{lem:volmoment}, H4, and $B_k$ \\
Schr\"odinger conjugation (\S\ref{sec:conjugate}) & No & unitary identity \\
Discrete spectrum and Agmon estimates (\S\ref{sec:conjugate}) & Yes & the confinement argument uses $V\to\infty$ \\
Turning radius, localization (\S\ref{sec:localization}) & Yes & built on Agmon decay \\
Compatibility (\S\S\ref{sec:compatibility}--\ref{sec:sturm}) & Yes & the exponent $(2\alpha-2)/\alpha$ is positive only for $\alpha>1$ \\
Inverse Hermite/Laguerre Theorem (\S\ref{sec:bochner}) & No & \Cref{lem:gequals1} and H3 only \\
\bottomrule
\end{tabularx}
\end{center}

\begin{remark}\label{rem:whyalphagt1}
Let $0<\alpha\le1$. Then $2\alpha-2\le0$ and $\alpha-1\le0$. On
$\{r\ge R_0\}$, H2 and the lower bound in H3 give
\[
  V=\tfrac14|\nabla f|^2-\tfrac12\Delta f
  \;\le\;\tfrac14c_3\,r^{2\alpha-2}+\tfrac12c_4\,r^{\alpha-1},
\]
so $V$ is bounded above on the tail, and hence on all of $M$ after
including the compact core. There is no confining potential of the form
used in \S\ref{sec:conjugate}. Hence \Cref{lem:discrete}, the Agmon
decay proved there, and the safe radius $r_{\mathrm{Ag}}(k)$ of
\Cref{def:Agmon} are unavailable. The formula defining that radius
uses the positive power $1/(2\alpha-2)$ and is not continued through
$\alpha=1$.

In this range H4 does not follow from H1--H3, and the basic model
violates it. Take $\R^n$ with $f=|x|^\alpha$ outside a compact set.
Then H1--H3 hold, but $V\to0$ if $\alpha<1$ and $V\to\tfrac14$ if
$\alpha=1$. By \Cref{lem:conjugation} and Weyl's theorem, $L_f$ has
nonempty essential spectrum, so H4 fails. Whether some $(M,g,f)$
satisfies H1--H4 with $0<\alpha\le1$ is not settled here.
\end{remark}

\begin{remark}\label{rem:whatextends}
\Cref{lem:deltab,lem:deltabUpper} are proved by a case split at
$\alpha=1$ and hold for every $\alpha>0$. The frequency function
results of \S\ref{sec:freq}
(\Cref{lem:Iprime,lem:logI,lem:Ulb} and \Cref{prop:Ucacc}) and the Caccioppoli
estimates use only these two bounds. The upper bound on the
concentration radius, \Cref{prop:moment}, uses \Cref{lem:volmoment}
and H4, both valid for every $\alpha>0$.

The proof of \Cref{thm:inversehermite} uses
\Cref{ass:radial,lem:gequals1,lem:properness,lem:deltab} and H3.
None of these is restricted to $\alpha>1$. For $0<\alpha\le1$,
completeness of the invariant eigenbasis is part of H4 and TS1. It
does not follow from H1--H3, \Cref{rem:completeness}. By contrast,
\Cref{thm:osccount} needs $\alpha>1$, since it is stated relative to
$r_{\mathrm{Ag}}(k)$.
\end{remark}

\section{Examples and Open Problems}\label{sec:examples}\label{sec:open}\label{sec:ruledout}

\subsection{Examples}

\begin{example}\label{ex:OUsector}
On $\R^n$ with $f(x)=|x|^2/4$, H1--H3 hold with $\alpha=2$,
$c_1=c_2=1/4$, $c_3=c_5=1$, any $c_4>0$, and any $\eta\in(0,\tfrac12)$
once $R_0\ge\sqrt{2n/\eta}$. The full eigenbasis, products of
one-variable Hermite polynomials, is not radial. The relevant instance
of \Cref{ass:radial} comes from $G=O(n)$, which preserves $f$ and
acts transitively on each sphere $\Sigma_r$. Here $b=|x|/2$. The
$G$-invariant sector is spanned by
$v_j(x)=L_j^{(n/2-1)}(|x|^2/4)$, where $L_j^{(\nu)}$ denotes the
generalized Laguerre polynomial of degree $j$ and parameter $\nu$.
The polynomial $v_j$ has degree $j$ in $|x|^2$ and degree $2j$ in $b$. Hence
$\gamma_j=2j$, and $\mu_j=j$. This sector
satisfies H4 on its own, \Cref{rem:invariantH4}, and exact radial
polynomiality of type $\beta=2$, \Cref{def:exactpoly}. At $n=1$ the
sector consists of the even Hermite polynomials, and
$L_j^{(-1/2)}(x^2/4)$ is the same statement. This paper contains no
example of type $\beta=1$.

The radial turning scale can also be computed explicitly. Since
$V=b^2/4-n/4$, the level $V=\mu_j$ occurs at
$b=2\sqrt{j+n/4}$. The turning scale is $b\asymp\sqrt j$, and
the safe radius of \Cref{def:Agmon} is comparable to $\sqrt j$. This is
the same scale as the radial zero count in \Cref{thm:osccount}.
\end{example}

\begin{example}\label{rem:hermiteopen}
Let $n\ge2$, $\rho_0>0$, and $M=\R^n$ with the rotationally symmetric metric
\[
  g=dr^2+\varphi(r)^2g_{S^{n-1}},\qquad
  \varphi(r)=r\,e^{Br^2/(2(n-1))}.
\]
Since $\varphi(r)/r$ is a smooth function of $r^2$ equal to $1$
at the origin, the metric extends smoothly across the pole. Take $f=\rho_0r^2$, so $b=\sqrt{\rho_0}\,r$ and
$\rho\equiv\rho_0$. A
direct computation gives
\[
  \tau=\frac{(n-1)\rho_0}{b}+(B-2\rho_0)\,b ,
\]
the Laguerre form of \Cref{lem:gequals1} with $\tilde A=(n-1)\rho_0$
and $\tilde B=B-2\rho_0$. H1--H3 hold with $\alpha=2$ if and only if
$0\le B<2\eta\rho_0$. In $s=r^2$ the radial equation is
$4sq''+\bigl(2n+2(B-2\rho_0)s\bigr)q'+\mu q=0$, and the $G$-invariant
sector consists of the Laguerre polynomials $L_j^{(n/2-1)}$ in
$(2\rho_0-B)s/2$, with $\mu_j=2(2\rho_0-B)\,j$ and $\gamma_j=2j$.
The case $B=0$ is the Euclidean metric. Every admissible $B>0$
produces a nonflat rotationally symmetric example of type $\beta=2$.
\end{example}

\begin{example}
On $\mathbb H^n$, take a smooth radial weight that agrees with
$f(r)=(n-1)r+\tfrac c2r^2$ for $r\ge1$, where $c>0$ and $r$ is the
distance to a fixed point. The leading term is quadratic,
so $\alpha=2$, and H1--H3 hold. The linear term cancels the
exponential volume growth,
$e^{-f(r)}\vol(\partial B_r)\sim e^{-(n-1)r-cr^2/2}\,e^{(n-1)r}=e^{-cr^2/2}$,
leaving a Gaussian tail. H1--H3 allow exponential volume growth,
provided the leading term of $f$ confines at the quadratic rate.
\end{example}

The next example is on $\R^n$. Its weights lie outside H1, so H4 does
not apply to them as stated. We instead examine the asymptotic
polynomial-growth condition in H4. A logarithmic perturbation of the
quadratic scale in either direction makes this condition fail in the
radial sector. These examples suggest that the quadratic scale is
special when measured by the radial drift $\Theta=f'-V_r$.

\begin{example}\label{ex:probes}
Set $r=|x|$ and consider on $\R^n$
\[
  f_+(r):=(1+r^2)\log(2+r^2),
  \qquad
  f_-(r):=\frac{1+r^2}{\log(2+r^2)}.
\]
Hence $f_+\asymp r^2\log r$ and $f_-\asymp r^2/\log r$. Neither is
comparable to a fixed power $r^\alpha$, so neither belongs to H1 for
any $\alpha>0$. They lie on opposite sides of the quadratic scale and
differ from it only by a logarithmic factor.

We measure radial polynomial growth on Euclidean spheres by
\[
  \widehat\gamma(u)
  :=\max\left\{0,\limsup_{r\to\infty}
  \frac{\log |u(r)|}{\log r}\right\}
\]
for radial eigenfunctions. In the radial sector, polynomial growth fails in different ways on
the two sides of the quadratic scale.

For either weight, put
\[
  \Theta_\pm(r):=f_\pm'(r)-\frac{n-1}{r},
  \qquad
  W_\pm(r):=r^{n-1}e^{-f_\pm(r)}.
\]
A radial eigenfunction satisfies
\[
  u''-\Theta_\pm u'+\lambda u=0,
  \qquad
  \frac1{W_\pm}(W_\pm u')'=-\lambda u.
\]
Direct differentiation gives
\[
  \Theta_+(r)=4r\log r+2r+O(r^{-1}),\qquad
  \Theta_+'(r)=4\log r+O(1),
\]
and
\[
  \Theta_-(r)=\frac{r}{\log r}
  +O\!\left(\frac{r}{(\log r)^2}\right),\qquad
  \Theta_-'(r)=O\!\left(\frac1{\log r}\right).
\]
Both $\Theta_\pm$ tend to infinity. The corresponding radial
Liouville potentials
\[
  Q_\pm=\frac14\Theta_\pm^2-\frac12\Theta_\pm'
\]
tend to $+\infty$, so the radial sector has compact resolvent and an
infinite sequence of eigenvalues.

Fix a positive radial eigenvalue $\lambda$ and an associated
$L^2(e^{-f_\pm}dx)$ eigenfunction. Put
$\psi:=\sqrt{W_\pm}\,u$. Then
$-\psi''+Q_\pm\psi=\lambda\psi$. On a sufficiently far tail,
$Q_\pm-\lambda>0$. Since $\psi\in L^2(dr)$, the convexity argument
of \Cref{lem:wpnonosc} shows that $\psi$ has fixed sign there and
$\psi\psi'<0$. Hence $u$ has fixed sign there and $w:=u'/u$ is
well defined. It satisfies
\[
  w'=\Theta_\pm w-w^2-\lambda.
\]
Set
\[
  D:=\sqrt{\Theta_\pm^2-4\lambda},\qquad
  w_-:=\frac{\Theta_\pm-D}{2}.
\]
For large $r$, $D\asymp\Theta_\pm$ and
\[
  w_-=\frac{\lambda}{\Theta_\pm}
      +O\!\left(\frac{\lambda^2}{\Theta_\pm^3}\right),
  \qquad
  |w_-'|\le C\lambda\frac{|\Theta_\pm'|}{\Theta_\pm^2}.
\]
Moreover,
\[
  w=\frac{\psi'}{\psi}+\frac{\Theta_\pm}{2}
  <\frac{\Theta_\pm}{2}=w_-+\frac D2.
\]
For the two explicit drifts, choose the decreasing functions
\[
  \tau_+(r):=K\lambda\frac1{r^3(\log r)^2},
  \qquad
  \tau_-(r):=K\lambda\frac{(\log r)^2}{r^3}.
\]
With $K$ large enough, $|w_-'|\le \tau_\pm D$ on the tail. Set
$h:=w-w_-$. Then
\[
  h'=h(D-h)-w_-'.
\]
If $h>2\tau_\pm$ at some sufficiently large point, then, as long as
$2\tau_\pm<h<D/2$,
\[
  h'\ge \tfrac12D(h-2\tau_\pm)>0.
\]
Since $\tau_\pm$ is decreasing, $h-2\tau_\pm$ then grows at least
exponentially in $\int D$. This contradicts $h<D/2$, so
$h\le2\tau_\pm$. For $g:=w_--w$ one has
\[
  g'=g(D+g)+w_-'.
\]
If $g>\tau_\pm$, then $g-\tau_\pm$ grows at least exponentially in
$\int D$. Hence $w$ becomes negative. The Riccati equation then gives
$w'\le-w^2$, so $u$ reaches a finite zero, contrary to the fixed-sign
tail. Thus $g\le\tau_\pm$, and
\[
  w=w_-+O(\tau_\pm)
  =\frac{\lambda}{\Theta_\pm}
   +O\!\left(\frac{\lambda^2}{\Theta_\pm^3}\right)
   +O(\tau_\pm).
\]
For both weights the two error terms are integrable at infinity.
Consequently
\begin{equation}\label{eq:probeasymptotic}
  \log|u(r)|=\lambda\int^r\frac{ds}{\Theta_\pm(s)}+O(1).
\end{equation}
For $f_+$,
\[
  \int^r\frac{ds}{\Theta_+(s)}
  =\frac14\log\log r+O(1),
\]
so every radial eigenfunction with positive eigenvalue has
\[
  |u(r)|=(\log r)^{\lambda/4+o(1)},
  \qquad \widehat\gamma(u)=0.
\]
The radial polynomial-growth orders do not tend to infinity.
For $f_-$,
\[
  \int^r\frac{ds}{\Theta_-(s)}
  =\frac12(\log r)^2+o((\log r)^2),
\]
and hence
\[
  \frac{\log|u(r)|}{\log r}\longrightarrow\infty
  \qquad(\lambda>0).
\]
Every radial eigenfunction with positive eigenvalue grows faster than every
power of $r$.

The two logarithmic perturbations give opposite behavior. All positive radial
eigenfunctions have growth order zero for $f_+$ and infinite growth order
for $f_-$. This conclusion concerns these two weights only.
\end{example}

\begin{example}
The manifolds of \Cref{thm:H4notalpha2}, with $n\ge4$ and
$\alpha=n-1$, satisfy H1, H2 and H4 but not H3. There $\alpha\ne2$,
but the drift has effective exponent $2$ in the sense of
\Cref{def:beta}. On the tail the radial equation is the
Ornstein--Uhlenbeck equation up to an exponentially decaying angular
term, and $\gamma_k=\lambda_k/c$. This shows that H4 alone does not give
$\alpha=2$.
\end{example}

\subsection{Open problems}

\begin{openquestion}\label{q:sharpened}
Under H1--H4 with $\alpha>1$, which additional hypotheses give a
constant $C_D\ge1$, independent of $k$, such that the local mass-ratio bound
\eqref{eq:localdoubling} holds at the turning radius,
\begin{equation}\label{eq:sharpenedcondition}
  \sup_{s\in[R/2,\,3R]} m_k(s) \;\le\; C_D\,m_k(R^\dagger),
  \qquad R=r_{\mathrm{Ag}}(k),
\end{equation}
with $R^\dagger\in[R,2R]$ as in \Cref{prop:Ucacc}, for all sufficiently
large $k$?
\end{openquestion}

\begin{remark}\label{rem:sharpenedequiv}
The mass-ratio bound in \Cref{q:sharpened} gives
$|U(R^\dagger)|\le CC_D(r_{\mathrm{Ag}}\sqrt{\lambda_k}+1)$ by
\Cref{rem:doubling}. This is an absolute frequency bound at one
radius, not a lower bound for the growth order.
\end{remark}

\begin{openquestion}\label{q:gammalambda}
Which additional geometric hypotheses ensure
$\lambda_k\lesssim\gamma_k^{(2\alpha-2)/\alpha}$ for the full spectrum?
\Cref{prop:incompatibility} shows that radial drift regularity alone
does not suffice. The compatible subsequence question is
\Cref{q:compatiblesubsequence}.
\end{openquestion}

\begin{openquestion}\label{q:linear}
Which geometric hypotheses give a Weyl-type law for the growth
filtration $\dim\mathcal P_N$? H6 provides only the lower bound
$\dim\mathcal P_N\gtrsim N$. If $\gamma_k\asymp k$ for the entire
sequence, then $\dim\mathcal P_N\asymp N$.
The full Gaussian example in dimension $n$ has
$\dim\mathcal P_N\asymp N^n$.
\end{openquestion}

\begin{openquestion}\label{q:gaussianweight}
Under H1--H4, what additional conditions imply
\[
  -\log\bigl(\vol(\Sigma_r)e^{-r^\alpha}\bigr)\asymp r^2?
\]
\Cref{thm:betarigidity} provides the corresponding radial conclusion
when the effective drift has a regular power-law exponent.
Can a comparable conclusion be obtained without an invariant radial
sector?
\end{openquestion}

\begin{openquestion}\label{prob:hermiterealize}
Does there exist $(M,f)$ satisfying H1--H4 and TS1 whose
$G$-invariant sector satisfies exact radial polynomiality of type
$\beta=1$, \Cref{def:exactpoly}? \Cref{rem:hermiteopen} provides a
family of type $\beta=2$. No example of type $\beta=1$ is known to
us.
\end{openquestion}

\subsection{Further geometric hypotheses}

H1--H3 bound $\Delta b$, but not the full Hessian of $b$ or the
Bakry--\'Emery Ricci tensor. Arguments using these quantities need
additional hypotheses. The angular contribution in
\Cref{prop:incompatibility} is a separate obstruction to a uniform
upper spectral comparison.

For semiclassical Schr\"odinger operators in one dimension on an
interval, Laurent and L\'eautaud \cite{LaurentLeautaud} prove uniform
upper and lower bounds for eigenfunction densities. They assume
Dirichlet boundary conditions and a potential with a single well.
Their hypotheses differ from the general setting here. Their result is
not used to infer localization under H1--H4.

\appendix
\section{The frequency function: further identities and a Caccioppoli bound}\label{sec:appendix}

This appendix contains further identities for the frequency function
of \S\ref{sec:freq}. The main rigidity proofs do not use them.
The Caccioppoli estimate below motivates the condition in
\Cref{q:sharpened}. We also bound the number of level sets contained
in a nodal set and explain why maximizing the mass density does not
by itself determine $r_C(k)$.

\subsection{Derivative formulas and the logarithmic identity}

\begin{lemma}\label{lem:Iprime}
For almost every regular $r > R_0$:
\begin{equation}\label{eq:Iprime}
  I'(r) \;=\; 2D(r) - V_r(r)\,I(r)
    + \frac{1}{\vol(\Sigma_r)}\int_{\Sigma_r}
      u^2\,\frac{\Lap b}{\abs{\grad b}}\,d\sigma,
\end{equation}
where $V_r(r) := (\log\vol(\Sigma_r))'$ is the logarithmic
derivative of the level-set volume.
\end{lemma}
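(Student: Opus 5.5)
We want $I'(r)$ where $I(r)=J(r)/\vol(\Sigma_r)$ and $J(r):=\int_{\Sigma_r}u^2|\nabla b|\,d\sigma$. The quotient rule gives $I'=J'/\vol(\Sigma_r)-V_r I$, so the whole task reduces to the identity
\[
  J'(r)=2\int_{\Sigma_r}u\,\partial_\nu u\,d\sigma+\int_{\Sigma_r}u^2\frac{\Delta b}{|\nabla b|}\,d\sigma .
\]
Dividing this by $\vol(\Sigma_r)$ and using the definition of $D$ then yields \eqref{eq:Iprime}. This formulation also matches the remark preceding \Cref{cor:Jidentity}: one differentiates $J$ and avoids differentiating the volume directly. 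The factor $V_r$ simply records the quotient. It exists a.e. because $\vol(\Sigma_r)$ is locally absolutely continuous, by the first-variation discussion after \Cref{def:freq}.

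To compute $J'$, I would write $J$ as a flux. On $\Sigma_r$ the outward normal is $\nu=\nabla b/|\nabla b|$, so $u^2|\nabla b|=\langle u^2\nabla b,\nu\rangle$. Hence the divergence theorem on $\{b<r\}$ gives
\[
  J(r)=\int_{\{b<r\}}\operatorname{div}(u^2\nabla b)\,dV
  =\int_{\{b<r\}}\bigl(2u\langle\nabla u,\nabla b\rangle+u^2\Delta b\bigr)\,dV .
\]
This is valid because $\{b\le r\}$ is compact by \Cref{lem:properness}, and because $\Sigma_r$ is a $C^2$ boundary for $r\ge R_0$ by \Cref{lem:coarea}. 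The integrand is continuous: $u\in C^2$ by elliptic regularity since $f\in C^2$, and $b\in C^2$ on the tail.

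Next I would split off the core $\{b<R_0\}$ and apply the coarea formula of \Cref{lem:coarea} on $\{R_0<b<r\}$. This writes $J(r)$ as a constant plus $\int_{R_0}^r\!\int_{\Sigma_s}(2u\langle\nabla u,\nabla b\rangle+u^2\Delta b)|\nabla b|^{-1}\,d\sigma\,ds$. The inner integrand is continuous in $s$, so by the fundamental theorem of calculus $J$ is $C^1$ on $(R_0,\infty)$. On $\Sigma_r$ we have $\langle\nabla u,\nabla b\rangle/|\nabla b|=\partial_\nu u$, so the derivative is exactly the displayed formula for $J'$, and it holds at every regular $r>R_0$.

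The only delicate point is the a.e. differentiability of $\vol(\Sigma_r)$ and the justification of $V_r$. I would handle it with the flow of $X=\nabla b/|\nabla b|^2$, as indicated after \Cref{def:freq}. That flow carries $\Sigma_{R_0}$ onto $\Sigma_r$ with a $C^1$ Jacobian, so $\vol(\Sigma_r)$ is differentiable. Alternatively, the same flux argument with $\operatorname{div}(\nabla b/|\nabla b|)$ requires one more derivative of $b$ than is available. This is why the statement is only asserted almost everywhere.
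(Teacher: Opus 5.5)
Your proof is correct and is essentially the paper's argument: write $I=J/\vol(\Sigma_r)$, express the flux $J$ through the divergence theorem, differentiate with the coarea formula, and finish with the quotient rule. Two small corrections. First, apply the divergence theorem on the slab $\{R_0<b<r\}$, as the paper does, rather than on $\{b<r\}$, since $b=f^{1/\alpha}$ need not be $C^1$ in the core (for instance $b=|x|/2$ for the Gaussian weight). Second, your closing remark is mistaken: $\nabla b/|\nabla b|$ is $C^1$ on the tail, so the flux argument gives $\vol(\Sigma_r)\in C^1$ there, and no loss of regularity forces the restriction to almost every $r$.
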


\begin{proof}
Let $J(r) = \int_{\Sigma_r} u^2\abs{\grad b}\,d\sigma$,
so $I = J/\vol(\Sigma_r)$. Since $b$ is smooth and has no critical
points on $\{b\ge R_0\}$, apply the divergence theorem to the slab
$\{R_0<b<r\}$. The inner boundary contributes a term independent of
$r$, and hence
\[
  J(r)-J(R_0)
  =\int_{\{R_0<b<r\}}\mathrm{div}(u^2\grad b)\,dV.
\]
Since $\mathrm{div}(u^2\grad b)=2u\ip{\grad u}{\grad b}+u^2\Lap b$,
differentiating in $r$ with Lemma~\ref{lem:coarea} gives
\[
  J'(r) = \int_{\Sigma_r}
    \frac{2u\ip{\grad u}{\grad b} + u^2\Lap b}{\abs{\grad b}}\,d\sigma
  = 2\int_{\Sigma_r} u\,\bdy u\,d\sigma
    + \int_{\Sigma_r} u^2\frac{\Lap b}{\abs{\grad b}}\,d\sigma.
\]
By definition~\eqref{eq:D}, $\int_{\Sigma_r} u\bdy u\,d\sigma =
\vol(\Sigma_r)\,D(r)$, so $J'(r) = 2\vol(\Sigma_r)D(r)
+ \int_{\Sigma_r}u^2\Lap b/\abs{\grad b}\,d\sigma$. Since $I=J/\vol(\Sigma_r)$,
the quotient rule gives
\[
  I'(r) = \frac{J'(r)}{\vol(\Sigma_r)} - \frac{J(r)\,\vol(\Sigma_r)'}{\vol(\Sigma_r)^2}
  = \frac{J'(r)}{\vol(\Sigma_r)} - I(r)\,V_r(r),
\]
where we used the definition $V_r(r)=\vol(\Sigma_r)'/\vol(\Sigma_r)$
and $J/\vol(\Sigma_r)=I$ in the second term. Substituting $J'(r)$ from
above and division by $\vol(\Sigma_r)$ gives~\eqref{eq:Iprime}.
\end{proof}

\begin{lemma}\label{lem:logI}
For almost every $r>R_0$ with $I(r)>0$:
\begin{equation}\label{eq:logI}
  (\log I)'(r)
  = \frac{2U(r)}{r} + \mathrm{Err}(r) - V_r(r),
\end{equation}
where
\[
  \mathrm{Err}(r):=\frac{1}{\vol(\Sigma_r)I(r)}
  \int_{\Sigma_r}u^2\,\frac{\Delta b}{|\nabla b|}\,d\sigma.
\]
By \Cref{lem:Errexpectation}, this is an average of
$\Delta b/|\nabla b|^2$ over $\Sigma_r$.
Under the lower bound on $\Delta f$ in H3,
\begin{equation}\label{eq:Errlb}
  \mathrm{Err}(r)\ge -C_{\mathrm{err}},
  \qquad C_{\mathrm{err}}=C_b/c_0^2.
\end{equation}
\end{lemma}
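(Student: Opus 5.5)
The plan is to start from the derivative formula \eqref{eq:Iprime} of \Cref{lem:Iprime} and divide through by $I(r)>0$. Since $U=rD/I$ by \eqref{eq:U}, the term $2D/I$ equals $2U(r)/r$. The term $-V_r I$ becomes $-V_r$ after division. The remaining term, divided by $I$, is exactly $\mathrm{Err}(r)$ as defined. On any interval where $I>0$, $\log I$ is locally absolutely continuous (by the first-variation discussion after \Cref{def:freq}), so $(\log I)'=I'/I$ a.e. This gives \eqref{eq:logI} at almost every such $r$.

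For the representation as an average, I write the integrand as
\[
\frac{\Delta b}{|\nabla b|}=\frac{\Delta b}{|\nabla b|^2}\,|\nabla b|.
\]
Then $\mathrm{Err}(r)$ is the average of $\Delta b/|\nabla b|^2$ against the probability measure
\[
\frac{u^2|\nabla b|\,d\sigma}{\int_{\Sigma_r}u^2|\nabla b|\,d\sigma}
\]
on $\Sigma_r$. The normalizing denominator is $\vol(\Sigma_r)I(r)$ by \eqref{eq:I}. This is presumably the content of \Cref{lem:Errexpectation}, and I would verify it directly in one line if needed.

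For \eqref{eq:Errlb}, I invoke \Cref{lem:deltab}, which uses the lower bound on $\Delta f$ in H3, to get $\Delta b\ge -C_b$ on $\Sigma_r$ for $r\ge R_0$. \Cref{rem:gradb} gives $|\nabla b|\ge c_0$. Hence pointwise
\[
\frac{\Delta b}{|\nabla b|^2}\ge -\frac{C_b}{|\nabla b|^2}\ge -\frac{C_b}{c_0^2}.
\]
Here the sign matters: where $\Delta b\ge0$ the quotient is already nonnegative, and where $\Delta b<0$ dividing by $|\nabla b|^2\ge c_0^2$ keeps it at least $-C_b/c_0^2$. Averaging against a probability measure preserves the lower bound, so $\mathrm{Err}(r)\ge -C_b/c_0^2$.

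The only delicate point is the a.e.\ differentiability and the restriction to $I>0$, which \Cref{lem:Ivanish} and the absolute continuity remark already handle. The computation itself is routine.
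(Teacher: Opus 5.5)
Your proof is correct and matches the paper's argument: divide \eqref{eq:Iprime} by $I(r)$ using $D/I=U/r$, then combine \Cref{lem:deltab} with $|\nabla b|\ge c_0$. The only cosmetic difference is where the two factors of $c_0$ enter. You bound $\Delta b/|\nabla b|^2\ge -C_b/c_0^2$ pointwise and average against $u^2|\nabla b|\,d\sigma$, whereas the paper bounds $\Delta b/|\nabla b|\ge -C_b/c_0$ and then uses $\int_{\Sigma_r}u^2\,d\sigma\le c_0^{-1}\vol(\Sigma_r)I(r)$, arriving at the same constant.
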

\begin{proof}
Divide~\eqref{eq:Iprime} by $I(r)$ and use $D/I=U/r$. This
gives~\eqref{eq:logI}. For~\eqref{eq:Errlb}, Lemma~\ref{lem:deltab}
gives $\Delta b\ge-C_b$ on $\Sigma_r$, and $|\nabla b|\ge c_0$, so
$\Delta b/|\nabla b|\ge -C_b /c_0$. Using $|\nabla b|\ge c_0$ again,
$\int_{\Sigma_r}u^2\,d\sigma\le c_0^{-1}\int_{\Sigma_r}u^2|\nabla b|\,d\sigma
=c_0^{-1}\vol(\Sigma_r)I(r)$.
It follows that $\mathrm{Err}(r)\ge -(C_b /c_0)\cdot c_0^{-1}=-C_b /c_0^2$.
\end{proof}

For the lower bounds, we use this form of~\eqref{eq:logI}. Combining
$\log I$ and $\log\vol(\Sigma_r)$ cancels the volume term $V_r$
algebraically.

\begin{corollary}\label{cor:Jidentity}
With $J(r):=\vol(\Sigma_r)I(r)=\int_{\Sigma_r}u^2|\nabla b|\,d\sigma$, for
almost every $r>R_0$ with $J(r)>0$,
\begin{equation}\label{eq:Jidentity}
  (\log J)'(r)\;=\;\frac{2U(r)}{r}+\mathrm{Err}(r).
\end{equation}
Equivalently $(\log I)'(r)=\tfrac{2U(r)}{r}+\mathrm{Err}(r)-V_r(r)$ is
recovered by subtracting $V_r=(\log\vol(\Sigma_r))'$.
\end{corollary}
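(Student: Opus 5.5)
The identity \eqref{eq:Jidentity} follows from \Cref{lem:Iprime} applied to $J$ directly, with no need to differentiate $\vol(\Sigma_r)$. We can reuse the first half of that proof. For almost every regular $r>R_0$, the divergence theorem on the slab $\{R_0<b<r\}$, together with the coarea formula of \Cref{lem:coarea}, gives
\[
  J'(r)=2\int_{\Sigma_r}u\,\partial_\nu u\,d\sigma+\int_{\Sigma_r}u^2\frac{\Delta b}{|\nabla b|}\,d\sigma .
\]
This step uses only the identity $\mathrm{div}(u^2\nabla b)=2u\langle\nabla u,\nabla b\rangle+u^2\Delta b$ and $\nabla b/|\nabla b|=\nu$. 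By the definition \eqref{eq:D}, the first integral equals $2\vol(\Sigma_r)D(r)$.

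Next, on the set where $J(r)>0$, we divide by $J=\vol(\Sigma_r)I$. The first term becomes $2D/I$, and by \eqref{eq:U} this equals $2U/r$. This is legitimate because $I>0$ exactly when $J>0$, since $\vol(\Sigma_r)>0$ by \Cref{cor:weightfinite}. The second term becomes
\[
  \frac{1}{\vol(\Sigma_r)I(r)}\int_{\Sigma_r}u^2\frac{\Delta b}{|\nabla b|}\,d\sigma=\mathrm{Err}(r),
\]
which is exactly the definition in \Cref{lem:logI}. Together these give $(\log J)'=J'/J=2U/r+\mathrm{Err}$. Here $J$ is locally absolutely continuous (see the remark following \Cref{def:freq}), so $\log J$ is differentiable almost everywhere on the open set $\{J>0\}$.

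For the equivalent form, note that $\log I=\log J-\log\vol(\Sigma_r)$ wherever $\vol(\Sigma_r)$ is differentiable. Subtracting $V_r=(\log\vol(\Sigma_r))'$ then recovers \eqref{eq:logI}, consistent with \Cref{lem:logI}.

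The only delicate point is regularity: we need the almost-everywhere differentiability of $J$, and we need it without assuming differentiability of $\vol(\Sigma_r)$. We handle this through the slab representation $J(r)=J(R_0)+\int_{\{R_0<b<r\}}\mathrm{div}(u^2\nabla b)\,dV$. The integrand is continuous and $|\nabla b|\ge c_0$, so by coarea this expresses $J$ as an integral of a locally integrable function of $r$. Hence $J$ is locally absolutely continuous with the stated derivative almost everywhere, and this is precisely the advantage of working with $J$ rather than $I$.
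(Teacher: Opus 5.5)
Your argument is correct, but it is organized differently from the paper's proof. The paper obtains \eqref{eq:Jidentity} algebraically from \Cref{lem:logI}: it writes $\log J=\log I+\log\vol(\Sigma_r)$, adds $V_r$ to \eqref{eq:logI}, and lets the two $V_r$ terms cancel. You instead stop at the intermediate formula for $J'(r)$ given by the slab identity, which is the first half of the proof of \Cref{lem:Iprime}, and divide it by $J$. In your route the quotient rule, $I'$, and $V_r$ never appear, and you recover the $I$-form only at the end by subtracting $V_r$. The paper's proof is shorter once \Cref{lem:logI} is available. However, because it passes through $(\log I)'$ and $V_r$, it tacitly uses differentiability of $\vol(\Sigma_r)$ even for the $J$-identity. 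Your route proves \eqref{eq:Jidentity} from local absolute continuity of $J$ alone, which is exactly the advantage claimed for $J$ in \Cref{rem:CMcomparison}. Differentiability of the volume enters only in the equivalent $I$-form, and you correctly flag it there. That caveat is harmless. Apply the same slab argument to the $C^1$ field $\nabla b/|\nabla b|$, which has unit flux through each $\Sigma_r$. This gives $\vol(\Sigma_r)=\vol(\Sigma_{R_0})+\int_{\{R_0<b<r\}}\mathrm{div}(\nabla b/|\nabla b|)\,dV$. Hence $\vol(\Sigma_r)$ is locally absolutely continuous on the tail, and $V_r$ exists almost everywhere.
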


\begin{proof}
Since $\log J=\log I+\log\vol(\Sigma_r)$, we have
$(\log J)'=(\log I)'+V_r$. Substituting~\eqref{eq:logI}, the two
$V_r$ terms cancel.
\end{proof}

\begin{remark}
\Cref{lem:Ivanish} shows that $I(r_0)=0$ only when $\Sigma_{r_0}$
lies in the nodal set of $u$. Whether $I(r)$ stays bounded away from
zero in general is not settled by H1--H2 and H4. At such a zero,
$I(r_0)=I'(r_0)=0$ by \Cref{lem:Iprime}, which produces no contradiction
by itself. The Colding--Minicozzi argument for the analogous question
uses the identity $L_ff=\tfrac n2-f$ of the Gaussian case, which is
not available here. There are only finitely many such level sets. Courant's nodal domain
theorem bounds their number by the spectral counting function.
\end{remark}

\begin{lemma}\label{lem:nodallevelsets}
For a fixed eigenfunction $u_k$, let
\[
  N(\lambda_k):=\#\{j\ge0:\lambda_j\le\lambda_k\},
\]
where eigenvalues are counted with multiplicity. Then at most
$N(\lambda_k)-1$ radii $r_0\ge R_0$ have $\Sigma_{r_0}$ contained in
the nodal set $\{u_k=0\}$. Equivalently, $I_k(r_0)=0$ for at most
$N(\lambda_k)-1$ such radii.
\end{lemma}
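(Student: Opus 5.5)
The plan is a Courant-type nodal domain argument applied to the compact hypersurfaces $\Sigma_{r_0}$. Suppose that $r_1<r_2<\cdots<r_m$, all $\ge R_0$, are distinct radii with $\Sigma_{r_i}\subset\{u_k=0\}$. By \Cref{lem:coarea} each $\Sigma_{r_i}$ is a compact $C^2$ hypersurface. Each one separates $M$ into $\{b<r_i\}$ and $\{b>r_i\}$. Set $r_0:=-\infty$ and $r_{m+1}:=+\infty$, and consider the $m+1$ open shells
\[
  \Omega_i:=\{r_i<b<r_{i+1}\},\qquad i=0,\dots,m.
\]
They are pairwise disjoint, and $u_k$ vanishes on every interior boundary. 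The innermost shell $\Omega_0$ is relatively compact, and so is every shell except $\Omega_m$. Each $u_k|_{\Omega_i}$ is not identically zero. Otherwise unique continuation for the second-order elliptic equation $L_fu=-\lambda_ku$ with $C^1$ drift would force $u_k\equiv0$.

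Next, define test functions $v_i:=u_k\mathbf 1_{\Omega_i}$. Since $u_k$ is continuous, locally $H^1$, and vanishes on the interior boundaries $\Sigma_{r_i}$, each $v_i$ lies in the form domain of $-L_f$. For the unbounded shell $\Omega_m$, membership follows because $u_k$ itself is in the form domain. Integrating by parts on each shell gives
\[
  \int|\nabla v_i|^2e^{-f}dV=\lambda_k\int v_i^2e^{-f}dV .
\]
The boundary terms vanish because $u_k=0$ on $\Sigma_{r_i}$. At infinity, use a cutoff together with $u_k\in$ form domain (or, for $\alpha>1$, the decay of \Cref{cor:taildecay}). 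The $v_i$ have disjoint supports, so every $v\in\operatorname{span}\{v_0,\dots,v_m\}$ satisfies $\mathcal E(v,v)=\lambda_k\|v\|_f^2$. This span is $(m+1)$-dimensional. The min–max principle for the discrete spectrum of H4 then gives $\lambda_m\le\lambda_k$ in the zero-based enumeration. Hence $m+1\le N(\lambda_k)$, which is $m\le N(\lambda_k)-1$.

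Finally, the restatement via $I_k(r_0)=0$ is immediate. \Cref{lem:Ivanish} gives $I_k(r_0)=0\Rightarrow\Sigma_{r_0}\subset\{u_k=0\}$. The converse follows from the definition of $I$.

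The main obstacle is rigor in the form-domain step, specifically two things. First, that truncation by a level set yields an admissible test function; this needs only the Lipschitz regularity of $u_k$ across a $C^2$ hypersurface on which it vanishes. Second, the unique-continuation claim that $u_k$ does not vanish on a whole shell. I would cite standard weak unique continuation for elliptic operators with locally bounded coefficients, since $f\in C^2$. A subtle point is the counting when $\lambda_k$ has multiplicity. Min–max shows that an $(m+1)$-dimensional subspace with Rayleigh quotient $\le\lambda_k$ forces $\lambda_m\le\lambda_k$. This bounds $m+1$ by the number of eigenvalues $\le\lambda_k$, which is exactly $N(\lambda_k)$, so no sharper Courant strictness is needed.
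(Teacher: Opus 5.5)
Your proof is correct and follows the paper's argument: you use the same decomposition of $M$ into the $m+1$ shells cut out by the nodal level sets, with unique continuation guaranteeing that $u_k\not\equiv0$ on each shell. The only difference is at the last step. The paper notes that each shell contains a nodal domain and then cites Courant's theorem, whereas you prove the needed non-strict Courant bound directly by min--max applied to the truncations $u_k\mathbf 1_{\Omega_i}$. That makes your version more self-contained in this noncompact weighted setting, because these truncations across regular $C^2$ level sets are plainly in the form domain, and the Courant--Hilbert statement is classically formulated for bounded domains.
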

\begin{proof}
Suppose $r_0^{(1)}<\cdots<r_0^{(m)}$ are such radii. Set
\[
  \Omega_0:=\{b<r_0^{(1)}\},\qquad
  \Omega_i:=\{r_0^{(i)}<b<r_0^{(i+1)}\}\quad(1\le i<m),
\]
and $\Omega_m:=\{b>r_0^{(m)}\}$. These are pairwise disjoint open
sets separated by nodal level sets.

The eigenfunction cannot vanish identically on any nonempty
$\Omega_i$. Otherwise unique continuation \cite{AronszajnInequality} for
$\Delta u_k=\langle\nabla f,\nabla u_k\rangle-\lambda_ku_k$
would give $u_k\equiv0$ on $M$. Each $\Omega_i$ contains a point
where $u_k\ne0$, and hence contains a nodal domain. The $m+1$ regions
contain $m+1$ distinct nodal domains.

Courant's theorem \cite{CourantHilbert1953}, allowing for multiplicity,
bounds the number
of nodal domains of any eigenfunction with eigenvalue $\lambda_k$ by
$N(\lambda_k)$. Hence $m+1\le N(\lambda_k)$, which proves the claim.
\end{proof}

\begin{remark}\label{rem:firstordersecondorder}
A single coarea differentiation of the domain integral underlying
\[
  J(r)=\int_{\Sigma_r}u^2|\nabla b|\,d\sigma
\]
uses only $\mathrm{div}(\nabla b)=\Delta b$. \Cref{lem:Iprime} is the
basic example. The boundary-flux identity of \Cref{lem:PhiUm} is likewise
first order. By contrast, differentiating the normal field itself, or
differentiating boundary quantities such as $D(r)$ a second time,
introduces $\nabla(\nabla b)=\mathrm{Hess}(b)$. The same obstruction
appears in Bochner, Mourre and Rellich--Pohozaev identities with
multipliers built from $\nabla b$. H1--H3 control $\Delta b$ but not
the full Hessian, which is why the arguments below remain first order
in the level-set geometry.
\end{remark}

\subsection{Bounds on the frequency from the logarithmic identity}

\begin{lemma}\label{lem:ErrUpper}
Under H1--H3, at every $r\ge R_0$ for which $I(r)>0$:
\begin{equation}\label{eq:Errub}
  \mathrm{Err}(r) \;\le\; \frac{C_b^+(r)}{c_0^2},
  \qquad
  C_b^+(r) :=
  \begin{cases}
  \dfrac{(1-\alpha)c_3}{\alpha^2 R_0} + \dfrac{\eta c_3}{\alpha}R_0^{\alpha-1},
    & 0<\alpha\le1,\\[3mm]
  \dfrac{\eta c_3}{\alpha}\,r^{\alpha-1}, & \alpha>1,
  \end{cases}
\end{equation}
as given by Lemma~\ref{lem:deltabUpper}. In particular, for
$0<\alpha\le1$ the bound $C_b^+(r)\equiv C_b^+$ is independent of $r$.
For $\alpha>1$ it grows like $r^{\alpha-1}$.
\end{lemma}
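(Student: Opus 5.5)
The plan is to bound $\mathrm{Err}(r)$ pointwise by the upper bound on $\Delta b$ from \Cref{lem:deltabUpper}. This mirrors how \eqref{eq:Errlb} was obtained from \Cref{lem:deltab} in the proof of \Cref{lem:logI}. By definition,
\[
  \mathrm{Err}(r)=\frac{1}{\vol(\Sigma_r)I(r)}\int_{\Sigma_r}u^2\,\frac{\Delta b}{|\nabla b|}\,d\sigma,
  \qquad \vol(\Sigma_r)I(r)=\int_{\Sigma_r}u^2|\nabla b|\,d\sigma>0 ,
\]
so $\mathrm{Err}(r)$ is a weighted average of $\Delta b/|\nabla b|^2$ against the probability measure $u^2|\nabla b|\,d\sigma/\int_{\Sigma_r}u^2|\nabla b|\,d\sigma$.

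First, on $\Sigma_r$ with $r\ge R_0$, \Cref{lem:deltabUpper} gives $\Delta b\le C_b^+(r)$, with $C_b^+(r)$ exactly the piecewise expression in \eqref{eq:Errub}. In both cases $C_b^+(r)\ge0$: for $\alpha>1$ this is clear, and for $0<\alpha\le1$ both summands are nonnegative.

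Second, use $|\nabla b|\ge c_0$ from \Cref{rem:gradb}. Where $\Delta b\ge0$ we have $\Delta b/|\nabla b|^2\le C_b^+(r)/c_0^2$. Where $\Delta b<0$ the integrand is negative and is trivially bounded above by the same nonnegative constant. Hence $u^2\Delta b/|\nabla b|\le (C_b^+(r)/c_0^2)\,u^2|\nabla b|$ pointwise on $\Sigma_r$. Integrating and dividing by $\int_{\Sigma_r}u^2|\nabla b|\,d\sigma$ gives \eqref{eq:Errub}.

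The only point needing care is this sign issue. One cannot simply divide an upper bound for $\Delta b$ by a lower bound for $|\nabla b|^2$ unless the bound is nonnegative. The nonnegativity of $C_b^+(r)$ resolves it. The closing remarks about $r$-independence for $\alpha\le1$ and growth like $r^{\alpha-1}$ for $\alpha>1$ are read off directly from the formula.
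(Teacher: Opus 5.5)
Your proof is correct and follows the paper's argument: both combine the pointwise bound $\Delta b\le C_b^+(r)$ from \Cref{lem:deltabUpper} with $|\nabla b|\ge c_0$, and both use $C_b^+(r)\ge0$ to justify the division. The only difference is cosmetic: the paper applies $|\nabla b|\ge c_0$ twice in sequence (first to bound $\Delta b/|\nabla b|$, then to compare $\int_{\Sigma_r}u^2\,d\sigma$ with $\vol(\Sigma_r)I(r)$), whereas you bound $\Delta b/|\nabla b|^2$ pointwise in a single step.
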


\begin{proof}
By Lemma~\ref{lem:deltabUpper}, $\Delta b\le C_b^+(r)$ on $\Sigma_r$,
and $|\nabla b|\ge c_0>0$. Dividing preserves the inequality,
\[
  \frac{\Delta b}{|\nabla b|} \;\le\; \frac{C_b^+(r)}{|\nabla b|}
  \;\le\; \frac{C_b^+(r)}{c_0},
\]
the last step using $C_b^+(r)\ge0$ and $|\nabla b|\ge c_0$. Hence
\[
  \int_{\Sigma_r} u^2\,\frac{\Delta b}{|\nabla b|}\,d\sigma
  \;\le\; \frac{C_b^+(r)}{c_0}\int_{\Sigma_r}u^2\,d\sigma.
\]
Using $|\nabla b|\ge c_0$ again,
\[
  \int_{\Sigma_r}u^2\,d\sigma
  \;\le\; c_0^{-1}\int_{\Sigma_r}u^2|\nabla b|\,d\sigma
  \;=\; c_0^{-1}\,\vol(\Sigma_r)\,I(r).
\]
Combining the two displays,
\[
  \int_{\Sigma_r}u^2\,\frac{\Delta b}{|\nabla b|}\,d\sigma
  \;\le\; \frac{C_b^+(r)}{c_0^2}\,\vol(\Sigma_r)\,I(r),
\]
and division by $\vol(\Sigma_r)I(r)$ gives~\eqref{eq:Errub}.
\end{proof}

The bounds of Lemmas~\ref{lem:logI} and~\ref{lem:ErrUpper} are the two extreme cases of a single averaging identity. Introduce, on each
level set $\Sigma_r$, the probability measure
\begin{equation}\label{eq:nur}
  d\nu_r \;:=\; \frac{u^2|\nabla b|\,d\sigma}{\int_{\Sigma_r}u^2|\nabla b|\,d\sigma},
\end{equation}
the unweighted level-set average
$A(r):=\fint_{\Sigma_r}\frac{\Delta b}{|\nabla b|^2}\,d\sigma$, its
deviation $\Xi:=\frac{\Delta b}{|\nabla b|^2}-A(r)$, the normalized
unweighted level-set measure $d\bar\sigma_r:=d\sigma/\vol(\Sigma_r)$,
and the density $\rho_r:=d\nu_r/d\bar\sigma_r$.

\begin{lemma}\label{lem:Errexpectation}
At every level with $I(r)>0$, and with the notation above,
\begin{equation}\label{eq:Errexpectation}
  \mathrm{Err}(r)\;=\;\int_{\Sigma_r}\frac{\Delta b}{|\nabla b|^2}\,d\nu_r,
\end{equation}
that is, $\mathrm{Err}(r)$ is the $\nu_r$-expectation of the
geometric scalar $\Delta b/|\nabla b|^2$. Equivalently,
\begin{equation}\label{eq:ErrACov}
  \mathrm{Err}(r)\;=\;A(r)\;+\;\mathrm{Cov}(r),
  \qquad
  \mathrm{Cov}(r):=\langle\Xi,\rho_r-1\rangle_{L^2(d\bar\sigma_r)}
       =\operatorname{Cov}_{\bar\sigma_r}\!\Bigl(\tfrac{\Delta b}{|\nabla b|^2},\,\rho_r\Bigr).
\end{equation}
\end{lemma}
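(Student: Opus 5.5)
The identity \eqref{eq:Errexpectation} follows by rewriting the integrand of $\mathrm{Err}(r)$ in terms of the measure $\nu_r$ of \eqref{eq:nur}. The covariance form \eqref{eq:ErrACov} then follows by subtracting the mean and using the normalization of $\rho_r$. No analysis is needed beyond the definitions and the facts that, on the tail, $|\nabla b|\ge c_0>0$ (\Cref{rem:gradb}) and $\Delta b$ is continuous on the compact level set $\Sigma_r$ (\Cref{lem:coarea}). Hence $\Delta b/|\nabla b|^2$ is bounded on $\Sigma_r$ and every integral below is finite.

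\emph{First identity.} Since $I(r)>0$, the normalizing constant $\int_{\Sigma_r}u^2|\nabla b|\,d\sigma=\vol(\Sigma_r)I(r)$ is positive, so $\nu_r$ is a probability measure. Writing
\[
  \frac{\Delta b}{|\nabla b|}=\frac{\Delta b}{|\nabla b|^2}\,|\nabla b|
\]
inside the definition of $\mathrm{Err}(r)$ in \Cref{lem:logI} gives
\[
  \mathrm{Err}(r)=\frac{1}{\vol(\Sigma_r)I(r)}\int_{\Sigma_r}\frac{\Delta b}{|\nabla b|^2}\,u^2|\nabla b|\,d\sigma
  =\int_{\Sigma_r}\frac{\Delta b}{|\nabla b|^2}\,d\nu_r ,
\]
which is \eqref{eq:Errexpectation}.

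\emph{Second identity.} The density is explicit:
\[
  \rho_r=\frac{u^2|\nabla b|}{I(r)},
\]
which is bounded and nonnegative, and $\int\rho_r\,d\bar\sigma_r=1$. Writing $\Delta b/|\nabla b|^2=A(r)+\Xi$ with $\int\Xi\,d\bar\sigma_r=0$, we get
\[
  \mathrm{Err}(r)=\int(A+\Xi)\rho_r\,d\bar\sigma_r=A(r)+\int\Xi\,\rho_r\,d\bar\sigma_r .
\]
Because $\Xi$ has zero $\bar\sigma_r$-mean, $\int\Xi\,\rho_r\,d\bar\sigma_r=\int\Xi(\rho_r-1)\,d\bar\sigma_r=\langle\Xi,\rho_r-1\rangle_{L^2(d\bar\sigma_r)}$. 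Since $\rho_r$ has $\bar\sigma_r$-mean $1$, this is exactly the covariance of $\Delta b/|\nabla b|^2$ and $\rho_r$ under $\bar\sigma_r$.

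\emph{Where care is needed.} The only point to check is the measure-theoretic bookkeeping, namely that $\bar\sigma_r$ and $\nu_r$ are probability measures with $\nu_r\ll\bar\sigma_r$. This holds since $0<\vol(\Sigma_r)<\infty$ at regular levels (\Cref{cor:weightfinite}) and $I(r)>0$ by assumption. The convention $\fint$ for $A(r)$ is the $\bar\sigma_r$-average.
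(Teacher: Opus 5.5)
Your proof is correct and follows the paper's argument essentially line for line. You rewrite $\Delta b/|\nabla b|$ as $(\Delta b/|\nabla b|^2)\,|\nabla b|$ and normalize by $\vol(\Sigma_r)I(r)$, then split off the mean $A(r)$ using $\int\Xi\,d\bar\sigma_r=0$ and $\int\rho_r\,d\bar\sigma_r=1$; your explicit density $\rho_r=u^2|\nabla b|/I(r)$ and the finiteness remarks are correct details that the paper leaves implicit.
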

\begin{proof}
Equation~\eqref{eq:Errexpectation} follows from
$J(r)=\int_{\Sigma_r}u^2|\nabla b|\,d\sigma=\vol(\Sigma_r)I(r)$ and
the definition of $\mathrm{Err}$. Its numerator is
$\int u^2\,\Delta b/|\nabla b|=\int(\Delta b/|\nabla b|^2)\,u^2|\nabla b|$,
and division by $\int u^2|\nabla b|$ gives~\eqref{eq:Errexpectation}.
For~\eqref{eq:ErrACov}, we have $\int\rho_r\,d\bar\sigma_r=
\int d\nu_r=1$, and $\int\Xi\,d\bar\sigma_r=0$ because $\Xi$ is the
deviation from its own $d\bar\sigma_r$-mean $A(r)$. Hence
\[
  \mathrm{Err}(r) = \int(A(r)+\Xi)\,d\nu_r
  = A(r)\int\rho_r\,d\bar\sigma_r + \int\Xi\rho_r\,d\bar\sigma_r
  = A(r) + \langle\Xi,\rho_r-1\rangle_{L^2(d\bar\sigma_r)}.
\]
Here
\[
  \int\Xi\rho_r\,d\bar\sigma_r
  =\langle\Xi,\rho_r-1\rangle+\int\Xi\,d\bar\sigma_r
  =\langle\Xi,\rho_r-1\rangle.
\]
A direct expansion of
$\mathrm{Cov}_{\bar\sigma_r}(\Delta b/|\nabla b|^2,\rho_r)$ gives the
same quantity. This is the final equality.
\end{proof}

\begin{remark}
Lemmas~\ref{lem:deltab} and~\ref{lem:deltabUpper} amount to replacing
Lemma~\ref{lem:Errexpectation}'s expectation by the infimum or
supremum of the integrand, using nothing about $u$ beyond positivity.
The dependence of $\mathrm{Err}$ on the eigenfunction is
contained in the covariance $\mathrm{Cov}(r)$. It vanishes when $\Delta b/|\nabla b|^2$ is constant on $\Sigma_r$,
as in an exactly radial level-set geometry. It also vanishes when
$\rho_r$ is constant, meaning that the boundary mass is angularly uniform. In the radial case
$u=h\circ b$ the density $\rho_r=|\nabla b|/\fint|\nabla b|$ is
independent of $k$, so $\mathrm{Cov}(r)$ is then geometric.
\end{remark}

\begin{lemma}\label{lem:Ulb}
Under H1--H3, let
\[
  \widetilde J(r) \;:=\; \int_{\Sigma_r} u^2\,d\sigma
\]
denote the (unweighted, unnormalized) boundary $L^2$-mass on $\Sigma_r$.
Then for almost every $r>R_0$ with $J(r)>0$:
\begin{equation}\label{eq:Ulb}
  U(r) \;\ge\; \frac{r}{2}\Bigl[(\log J)'(r) - \frac{C_b^+(r)}{c_0^2}\Bigr],
  \qquad J(r):=\vol(\Sigma_r)\,I(r),
\end{equation}
with $C_b^+(r)$ as in Lemma~\ref{lem:ErrUpper}, and no hypothesis on the
monotonicity of $\vol(\Sigma_r)$ is required. If $R\ge R_0$ and $J(s)>0$ for every
$s\in[R,2R]$, then
\begin{equation}\label{eq:Ulbavg}
  \int_R^{2R}\frac{2U(s)}{s}\,ds
  \;\ge\;
  \log\frac{\widetilde J(2R)}{\widetilde J(R)}
  \;+\;\log\frac{c_0}{C_0}
  \;-\; \frac{1}{c_0^2}\int_R^{2R}C_b^+(s)\,ds,
\end{equation}
and explicitly, using~\eqref{eq:Errub}:
\begin{equation}\label{eq:Ulbavgexplicit}
  \int_R^{2R}\frac{2U(s)}{s}\,ds
  \;\ge\;
  \log\frac{\widetilde J(2R)}{\widetilde J(R)}
  \;+\;\log\frac{c_0}{C_0}
  \;-\;
  \begin{cases}
  \dfrac{C_b^+\,R}{c_0^2}, & 0<\alpha\le1,\\[3mm]
  \dfrac{\eta c_3\,(2^\alpha-1)}{\alpha^2\,c_0^2}\,R^\alpha, & \alpha>1.
  \end{cases}
\end{equation}
\end{lemma}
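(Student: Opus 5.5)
The plan is to start from the logarithmic identity of \Cref{cor:Jidentity}, $(\log J)'=2U/r+\mathrm{Err}$. I will then bound $\mathrm{Err}$ from above by \Cref{lem:ErrUpper}, and this gives \eqref{eq:Ulb} directly. Rearranging, $2U/r=(\log J)'-\mathrm{Err}\ge(\log J)'-C_b^+(r)/c_0^2$, and multiplying by $r/2>0$ gives the claim. The identity for $J$ never differentiates $\vol(\Sigma_r)$ separately; the $V_r$ terms cancel in \Cref{cor:Jidentity}. For this reason no monotonicity of the level-set volume is needed, and I will note this explicitly.

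For \eqref{eq:Ulbavg}, I integrate the pointwise inequality $2U(s)/s\ge(\log J)'(s)-C_b^+(s)/c_0^2$ over $[R,2R]$. On this interval $J>0$ throughout, and $J$ is locally absolutely continuous by the first-variation remark after \Cref{def:freq}. So $\log J$ is absolutely continuous on $[R,2R]$ (since $J$ is continuous and positive on a compact interval, it is bounded below), and $\int_R^{2R}(\log J)'=\log(J(2R)/J(R))$.

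Next I convert $J$ to $\widetilde J$ using $c_0\widetilde J\le J\le C_0\widetilde J$ from \Cref{rem:gradb}. This gives $\log(J(2R)/J(R))\ge\log(c_0\widetilde J(2R)/(C_0\widetilde J(R)))$, which is \eqref{eq:Ulbavg}.

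Finally, \eqref{eq:Ulbavgexplicit} is the computation of $\int_R^{2R}C_b^+(s)\,ds$.
\begin{itemize}
\item For $0<\alpha\le1$, the integrand is the constant $C_b^+$, so the integral is $C_b^+R$.
\item For $\alpha>1$, the integral is $\frac{\eta c_3}{\alpha}\int_R^{2R}s^{\alpha-1}ds=\frac{\eta c_3(2^\alpha-1)}{\alpha^2}R^\alpha$.
\end{itemize}
Dividing by $c_0^2$ gives the stated constants.

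The only delicate point is the justification of integrating an a.e. derivative. That requires absolute continuity of $\log J$ rather than mere a.e. differentiability. I will handle it by citing the local absolute continuity of $J$ established via the flow of $\nabla b/|\nabla b|^2$, together with positivity of $J$ on the compact interval.
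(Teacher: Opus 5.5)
Your proposal is correct and follows the paper's proof step for step. The steps are the identity $(\log J)'=2U/r+\mathrm{Err}$ of \Cref{cor:Jidentity}, the bound $\mathrm{Err}\le C_b^+(r)/c_0^2$ of \Cref{lem:ErrUpper}, integration over $[R,2R]$, the comparison $c_0\widetilde J\le J\le C_0\widetilde J$, and the evaluation of $\int_R^{2R}C_b^+(s)\,ds$; your explicit check that $\log J$ is absolutely continuous on $[R,2R]$, which the paper's integration step leaves implicit, is a useful addition, and it also follows directly from the slab divergence identity in the proof of \Cref{lem:Iprime}, which writes $J(r)-J(R_0)$ as a coarea integral in $r$.
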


\begin{proof}
\emph{Step 1.}
By \Cref{cor:Jidentity},
\begin{equation}\label{eq:logJexact}
  (\log J)'(r) \;=\; \frac{2U(r)}{r} + \mathrm{Err}(r).
\end{equation}
The unknown sign of $(\log\vol(\Sigma_r))'$ disappears after
combining $\log I$ and $\log\vol(\Sigma_r)$ into $\log J$.

\emph{Step 2.}
Rearrange~\eqref{eq:logJexact} and apply Lemma~\ref{lem:ErrUpper},
which gives $\mathrm{Err}(r)\le C_b^+(r)/c_0^2$. Hence
\[
  \frac{2U(r)}{r} \;=\; (\log J)'(r) - \mathrm{Err}(r)
  \;\ge\; (\log J)'(r) - \frac{C_b^+(r)}{c_0^2},
\]
which is~\eqref{eq:Ulb} after multiplying by $r/2$. This holds
unconditionally for a.e.\ regular $r> R_0$, the common domain of
Lemma~\ref{lem:logI} and Lemma~\ref{lem:ErrUpper}.

\emph{Step 3.}
Integrating~\eqref{eq:Ulb} in the form
$\tfrac{2U(s)}{s}\ge(\log J)'(s)-C_b^+(s)/c_0^2$ over $s\in[R,2R]$
gives
\begin{equation}\label{eq:JintermediateBound}
  \int_R^{2R}\frac{2U(s)}{s}\,ds
  \;\ge\; \log\frac{J(2R)}{J(R)} \;-\; \frac{1}{c_0^2}\int_R^{2R}C_b^+(s)\,ds.
\end{equation}

\emph{Step 4.}
By definition, $J(r)=\int_{\Sigma_r}u^2|\nabla b|\,d\sigma$ and
$\widetilde J(r)=\int_{\Sigma_r}u^2\,d\sigma$. Since
$c_0\le|\nabla b|\le C_0$ pointwise on $\Sigma_r$ by
Remark~\ref{rem:gradb},
\[
  c_0\,\widetilde J(r) \;\le\; J(r) \;\le\; C_0\,\widetilde J(r)
  \qquad\text{for every }r.
\]
Apply the lower bound at $r=2R$ and the upper bound at $r=R$. Then
\[
  \frac{J(2R)}{J(R)} \;\ge\; \frac{c_0\,\widetilde J(2R)}{C_0\,\widetilde J(R)},
\]
so
\[
  \log\frac{J(2R)}{J(R)} \;\ge\; \log\frac{\widetilde J(2R)}{\widetilde J(R)}
  + \log\frac{c_0}{C_0}.
\]
Substitution into~\eqref{eq:JintermediateBound} gives~\eqref{eq:Ulbavg}.
This step uses only the two-sided pointwise bounds on $|\nabla b|$
at $R,2R$. No comparison of derivatives is used. The
correction $\log(c_0/C_0)\le0$ is a fixed constant, independent of
$R$ and $k$.

\emph{Step 5.}
If $0<\alpha\le1$, $C_b^+(s)\equiv C_b^+$
is constant, so $\int_R^{2R}C_b^+(s)\,ds=C_b^+\cdot R$. If $\alpha>1$,
$C_b^+(s)=\frac{\eta c_3}{\alpha}s^{\alpha-1}$, so
\[
  \int_R^{2R}C_b^+(s)\,ds
  = \frac{\eta c_3}{\alpha}\int_R^{2R}s^{\alpha-1}\,ds
  = \frac{\eta c_3(2^\alpha-1)}{\alpha^2}\,R^\alpha.
\]
Substitution into~\eqref{eq:Ulbavg} gives~\eqref{eq:Ulbavgexplicit}.
\end{proof}

\begin{remark}
Combining $\log I$ and $\log\vol(\Sigma_r)$ as
$\log J=\log(I\cdot\vol(\Sigma_r))$ cancels the $V_r$ term before
any inequality is used. The volume
factor reappears only at the last step, in converting $J$ back to
$\widetilde J(r)=\int_{\Sigma_r}u^2\,d\sigma$. This step uses only the fixed constants $c_0,C_0$ from H2.
It assumes neither a sign for $V_r$ nor monotonicity of $\vol(\Sigma_r)$.
It adds the fixed constant $\log(c_0/C_0)\le0$ to
\eqref{eq:Ulbavg}--\eqref{eq:Ulbavgexplicit}.
It does not grow with $R$ or $k$, so it does not affect the scales
of interest, where $R\sim r_{\mathrm{Ag}}(k)\to\infty$.
\end{remark}

\begin{remark}\label{rem:Ulbstandalone}
Lemma~\ref{lem:Ulb} is a direct consequence of the logarithmic
identity and the geometric bound on $\Delta b$. The averaged form is
included for comparison. The upper estimate below and the
concentration-radius bounds of \S\ref{sec:twofiltrations} are obtained
independently.
\end{remark}

\subsection{A Caccioppoli upper bound on the frequency}

The lemmas above use only the pointwise behavior of $\Delta b$ and
$|\nabla b|$. A Caccioppoli inequality on an annulus gives a different estimate
relating $U$ to $\lambda_k$. The resulting local mass ratio is
the quantity used in \Cref{q:sharpened}.

\begin{definition}
For $r > 2R_0$, define the annuli
\[
  A(r) = \{r \le b \le 2r\},
  \qquad
  A^*(r) = \{r/2 \le b \le 3r\}.
\]
\end{definition}

\begin{lemma}\label{lem:cacc}
Under Assumptions~\ref{ass:H1}--\ref{ass:H3}, for each eigenfunction
$u_k$ with $\Lf u_k = -\lambda_k u_k$ and for all $r \ge 2R_0$:
\begin{equation}\label{eq:cacc}
  \int_{A(r)} \abs{\grad u_k}^2\,\ef\dV
  \;\le\; \Bigl(\frac{4C_\phi^2}{r^2} + 2\lambda_k\Bigr)
    \int_{A^*(r)} u_k^2\,\ef\dV,
\end{equation}
where $C_\phi > 0$ depends only on $C_0$.
\end{lemma}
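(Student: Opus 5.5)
We will use the standard weighted Caccioppoli argument, built on a cutoff that is a function of $b$. Fix a $C^\infty$ function $\chi:\R\to[0,1]$ with $\chi=1$ on $[1,2]$, $\chi=0$ outside $[1/2,3]$, and $|\chi'|\le 2$. Set
\[
  \phi(x):=\chi\bigl(b(x)/r\bigr).
\]
Since $r\ge2R_0$, the support of $\phi$ lies in $A^*(r)\subset\{b\ge R_0\}$. There $b$ is $C^2$ and $|\nabla b|\le C_0$ by \Cref{rem:gradb}, so
\[
  |\nabla\phi|\le \frac{2C_0}{r}=:\frac{C_\phi}{r}.
\]
This is the only place where the geometry enters, and it explains why $C_\phi$ depends only on $C_0$. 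By \Cref{lem:properness} the set $A^*(r)$ is compact. Hence $\phi^2u_k$ is a compactly supported $C^2$ test function, and no form-domain subtleties arise.

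Next we test the eigenfunction equation against $\phi^2 u_k$ with the weight $e^{-f}$. We use the weighted integration by parts
\[
  \int_M (L_f u)\,v\,e^{-f}dV=-\int_M\langle\nabla u,\nabla v\rangle e^{-f}dV,
\]
valid for compactly supported $v$. This gives
\[
  \int\phi^2|\nabla u_k|^2e^{-f}dV
  +2\int\phi u_k\langle\nabla u_k,\nabla\phi\rangle e^{-f}dV
  =\lambda_k\int\phi^2u_k^2e^{-f}dV.
\]
We bound the cross term by Young's inequality:
\[
  2|\phi u_k\langle\nabla u_k,\nabla\phi\rangle|
  \le\tfrac12\phi^2|\nabla u_k|^2+2u_k^2|\nabla\phi|^2.
\]
Absorbing half of the gradient term into the left-hand side yields
\[
  \tfrac12\int\phi^2|\nabla u_k|^2e^{-f}
  \le\int\bigl(2|\nabla\phi|^2+\lambda_k\phi^2\bigr)u_k^2e^{-f}.
\]

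To finish, multiply by $2$. Then use $\phi=1$ on $A(r)$ on the left, and $\phi\le1$ together with $\operatorname{supp}\phi\subset A^*(r)$ on the right. This gives
\[
  \int_{A(r)}|\nabla u_k|^2e^{-f}dV\le\Bigl(\frac{4C_\phi^2}{r^2}+2\lambda_k\Bigr)\int_{A^*(r)}u_k^2e^{-f}dV,
\]
which is exactly \eqref{eq:cacc}. None of H1–H3 is needed beyond the gradient bound and properness, and no sign of $\lambda_k$ enters.

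There is no real obstacle. The only point needing care is regularity: $\phi$ is $C^2$ because $b$ is $C^2$ on the tail, and $u_k$ is $C^2$ by elliptic regularity since $f\in C^2$. This makes the integration by parts legitimate. Compact support then ensures the boundary terms vanish.
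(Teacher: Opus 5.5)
Your proposal is correct and follows the paper's proof essentially verbatim: the cutoff $\phi=\chi(b/r)$ with $|\nabla\phi|\le 2C_0/r$, testing against $\phi^2u_k$ in $L^2(e^{-f}dV)$, Young's inequality with weight $\tfrac12$, and absorption. There are two cosmetic corrections. First, no $C^\infty$ function can rise from $0$ at $1/2$ to $1$ at $1$ with $|\chi'|\le 2$, since this would force $\chi'\equiv 2$ on $[1/2,1]$; take $\chi$ piecewise linear, which suffices for the weak integration by parts, or allow $|\chi'|\le 3$ and a different multiple of $C_0$. Second, the final step $\lambda_k\int\phi^2u_k^2e^{-f}dV\le\lambda_k\int_{A^*(r)}u_k^2e^{-f}dV$ does use $\lambda_k\ge 0$, contrary to your remark that no sign of $\lambda_k$ enters; this is harmless, since $-L_f\ge 0$.
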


\begin{proof}
Choose a one-variable cutoff $\chi$ supported in $[1/2,3]$, equal
to $1$ on $[1,2]$, and with $|\chi'|\le2$. Set
$\phi(x):=\chi(b(x)/r)$. Then $\phi\equiv1$ on $A(r)$,
$\phi\equiv0$ outside $A^*(r)$, and H2 implies
$|\nabla\phi|\le 2C_0/r$. One may take $C_\phi=2C_0$.

Multiply $\Lf u_k = -\lambda_k u_k$ by $\phi^2 u_k$ and integrate
against $\ef\dV$.
By self-adjointness of $\Lf$ in $\fdom$,
\[
  \int_M \phi^2 u_k\,\Lf u_k\,\ef\dV
  = -\int_M \ip{\grad(\phi^2 u_k)}{\grad u_k}\,\ef\dV.
\]
Expanding $\grad(\phi^2 u_k) = 2\phi u_k\grad\phi + \phi^2\grad u_k$
gives
\[
  -\int_M(2\phi u_k\ip{\grad\phi}{\grad u_k} + \phi^2\abs{\grad u_k}^2)\ef\dV
  = -\lambda_k\int_M \phi^2 u_k^2\,\ef\dV.
\]
Rearranging,
\[
  \int_M\phi^2\abs{\grad u_k}^2\ef\dV
  = -2\int_M\phi u_k\ip{\grad\phi}{\grad u_k}\ef\dV
    + \lambda_k\int_M\phi^2 u_k^2\ef\dV.
\]
Young's inequality
$\abs{2\phi u_k\ip{\grad\phi}{\grad u_k}}
  \le \frac{1}{2}\phi^2\abs{\grad u_k}^2 + 2u_k^2\abs{\grad\phi}^2$
gives
\[
  \int_M\phi^2\abs{\grad u_k}^2\ef\dV
  \le \frac{1}{2}\int_M\phi^2\abs{\grad u_k}^2\ef\dV
    + 2\int_M u_k^2\abs{\grad\phi}^2\ef\dV
    + \lambda_k\int_M\phi^2 u_k^2\ef\dV.
\]
Absorbing the $\frac{1}{2}$ term and using $\abs{\grad\phi}\le C_\phi/r$
and $\mathrm{supp}(\phi)\subset A^*(r)$, we get
\[
  \int_{A(r)}\abs{\grad u_k}^2\ef\dV
  \le \Bigl(\frac{4C_\phi^2}{r^2}+2\lambda_k\Bigr)\int_{A^*(r)}u_k^2\ef\dV.
  \qquad\qedhere
\]
\end{proof}

\begin{proposition}\label{prop:Ucacc}
Under H1--H4, for every $R\ge2R_0$ there is a regular value
$R^\dagger\in[R,2R]$ such that
\begin{equation}\label{eq:Ucacc}
  |U(R^\dagger)| \;\le\; C\bigl(R\sqrt{\lambda_k}+1\bigr)\cdot
  \frac{\displaystyle\sup_{s\in[R/2,3R]}m(s)}{m(R^\dagger)},
\end{equation}
where $m(s):=e^{-s^\alpha}\vol(\Sigma_s)I(s)$ is the weighted mass
density of \Cref{def:massdensity}, and $C=C(c_0,C_0)$ is independent
of $k$ and $R$.
\end{proposition}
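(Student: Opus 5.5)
The plan is to combine the Caccioppoli estimate of \Cref{lem:cacc} with a mean-value selection of a good radius in $[R,2R]$, then convert the resulting bound on $\Phi$ into a bound on $U$ through \Cref{lem:PhiUm}. Throughout write $u=u_k$, and note that by \Cref{lem:PhiUm}
\[
  |U(s)|=\frac{s\,|\Phi(s)|}{m(s)},\qquad
  \Phi(s)=\int_{\Sigma_s}u\,\partial_\nu u\,e^{-f}\,d\sigma,
\]
at every regular $s$ with $m(s)>0$. So it suffices to find $R^\dagger\in[R,2R]$ with $R^\dagger|\Phi(R^\dagger)|\le C(R\sqrt{\lambda_k}+1)\sup_{[R/2,3R]}m$.

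\emph{Step 1: a bound on $\int_R^{2R}|\Phi|$.} By \Cref{lem:coarea}, $|\partial_\nu u|\le|\nabla u|$, and Cauchy--Schwarz on each level set,
\[
  \int_R^{2R}|\Phi(s)|\,ds
  \le\int_{A(R)}|u|\,|\nabla u|\,|\nabla b|\,e^{-f}\,dV
  \le C_0\Bigl(\int_{A(R)}u^2e^{-f}dV\Bigr)^{1/2}
          \Bigl(\int_{A(R)}|\nabla u|^2e^{-f}dV\Bigr)^{1/2}.
\]
Apply \Cref{lem:cacc} to the gradient factor. This gives a factor $(4C_\phi^2/R^2+2\lambda_k)^{1/2}\le C(1/R+\sqrt{\lambda_k})$ times $(\int_{A^*(R)}u^2e^{-f})^{1/2}$. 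Both mass integrals are then bounded by coarea and \Cref{lem:IFprime}: $\int_{A^*(R)}u^2e^{-f}dV=\int_{R/2}^{3R}F'(s)\,ds\le c_0^{-2}\int_{R/2}^{3R}m(s)\,ds\le c_0^{-2}\cdot3R\,\sup_{[R/2,3R]}m$. The mass over $A(R)$ is bounded the same way. Together these give
\[
  \int_R^{2R}|\Phi|\,ds\le C(c_0,C_0)\,R\,(R^{-1}+\sqrt{\lambda_k})\,\sup_{[R/2,3R]}m
  = C\,(1+R\sqrt{\lambda_k})\,\sup_{[R/2,3R]}m .
\]

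\emph{Step 2: selecting $R^\dagger$.} By Chebyshev (mean value), the set of $s\in[R,2R]$ with $|\Phi(s)|\le\frac{2}{R}\int_R^{2R}|\Phi|$ has measure at least $R/2$. All $s\ge R_0$ are regular by \Cref{lem:coarea}, and by \Cref{lem:nodallevelsets} at most finitely many levels have $m(s)=0$. So pick such an $R^\dagger$ with $m(R^\dagger)>0$. Then $R^\dagger|\Phi(R^\dagger)|\le 2R\cdot\frac{2}{R}\int_R^{2R}|\Phi|$, and dividing by $m(R^\dagger)$ gives \eqref{eq:Ucacc} with $C$ depending only on $c_0,C_0$.

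\emph{Main obstacle.} The delicate point is making sure the selected radius is not one where $m$ vanishes or where $\Phi$ is not defined pointwise. I would handle this through the finiteness from \Cref{lem:nodallevelsets} and the a.e.\ differentiability and continuity of $\Phi$ on the tail: the good set has positive measure, so it contains admissible points. A secondary point is that $R\ge 2R_0$ guarantees $A^*(R)\subset\{b\ge R_0\}$, which is where the coarea formula and \Cref{lem:IFprime} apply.
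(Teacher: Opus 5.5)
Your argument is correct, and it reaches \eqref{eq:Ucacc} by a somewhat different route from the paper's.

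The paper first chooses $R^\dagger\in[R,2R]$ where the level-set gradient energy $h(s)=e^{-s^\alpha}\int_{\Sigma_s}|\nabla u|^2/|\nabla b|\,d\sigma$ is at most its average. Then \Cref{lem:cacc} bounds $B(R^\dagger)=\vol(\Sigma_{R^\dagger})^{-1}\int_{\Sigma_{R^\dagger}}|\nabla u|^2\,d\sigma$ by $C(\lambda_k+R^{-2})\,I(R^\dagger)\,Y$, where $Y:=\sup_{[R/2,3R]}m/m(R^\dagger)$. The paper then applies Cauchy--Schwarz on that single level set, $|D|\le\sqrt{c_0^{-1}IB}$, and divides by $I$.

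You instead apply Cauchy--Schwarz in the bulk of the annulus to control $\int_R^{2R}|\Phi|$. You then select $R^\dagger$ by a Chebyshev (Markov) argument on the flux $\Phi$ itself, and convert to $U$ through \Cref{lem:PhiUm}.

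What each route buys:
\begin{itemize}
\item Your version is more direct. It never introduces $B$ or $h$, and the selection is made on the quantity actually being estimated.
\item The paper's version gives a sharper intermediate bound. One Cauchy--Schwarz factor is the mass $I(R^\dagger)$ on the chosen level, and it cancels against the denominator of $U$. So the paper actually proves $|U(R^\dagger)|\le C(R\sqrt{\lambda_k}+1)\sqrt{Y}$, and only afterwards relaxes $\sqrt{Y}\le Y$ using $Y\ge1$.
\item In your bulk estimate both $L^2$ mass factors are bounded by $\sup m$. The ratio therefore enters linearly, which is exactly what the statement asks for.
\end{itemize}

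Both proofs secure $m(R^\dagger)>0$ in the same way, through \Cref{lem:nodallevelsets}.
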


\begin{proof}
Apply the Caccioppoli inequality, \Cref{lem:cacc}, at radius $R$,
\begin{equation}\label{eq:caccagain}
  \int_{A(R)}|\nabla u|^2e^{-f}\dV \;\le\; \Bigl(2\lambda_k+\frac{4C_\phi^2}{R^2}\Bigr)
  \int_{A^*(R)}u^2e^{-f}\dV, \qquad A(R)=\{R\le b\le2R\}.
\end{equation}
By the coarea formula, the left side equals
\[
  \int_R^{2R} h(s)\,ds,\qquad
  h(s):=e^{-s^\alpha}\int_{\Sigma_s}
       \frac{|\nabla u|^2}{|\nabla b|}\,d\sigma.
\]
The set of nodal levels $I(s)=0$ is finite by \Cref{lem:nodallevelsets}.
Hence one may choose a regular value $R^\dagger\in[R,2R]$ with
$I(R^\dagger)>0$ and $h(R^\dagger)$ no larger than the average of $h$
on $[R,2R]$. Hence
\[
  \int_{A(R)}|\nabla u|^2e^{-f}\dV
  \ge R\,e^{-(R^\dagger)^\alpha}
  \int_{\Sigma_{R^\dagger}}\frac{|\nabla u|^2}{|\nabla b|}\,d\sigma
  \ge \frac{R}{C_0}\,e^{-(R^\dagger)^\alpha}
  \vol(\Sigma_{R^\dagger})B(R^\dagger),
\]
where $B(r):=\vol(\Sigma_r)^{-1}\int_{\Sigma_r}|\nabla u|^2\,d\sigma$
and the last inequality uses $|\nabla b|\le C_0$.

For the right side of \eqref{eq:caccagain}, use the coarea formula
and $u^2/|\nabla b|=u^2|\nabla b|/|\nabla b|^2\le u^2|\nabla b|/c_0^2$,
which uses $|\nabla b|\ge c_0$ twice. Hence
\[
  \int_{A^*(R)}u^2e^{-f}\dV \;=\; \int_{R/2}^{3R}e^{-s^\alpha}
  \Bigl(\int_{\Sigma_s}\frac{u^2}{|\nabla b|}\,d\sigma\Bigr)ds
  \;\le\; \frac1{c_0^2}\int_{R/2}^{3R}m(s)\,ds
  \;\le\; \frac{5R}{2c_0^2}\sup_{s\in[R/2,3R]}m(s).
\]
Combining, and using $C_\phi=2C_0$ from \Cref{lem:cacc},
\[
  \frac{R}{C_0}\,e^{-(R^\dagger)^\alpha}\vol(\Sigma_{R^\dagger})B(R^\dagger)
  \;\le\; \Bigl(2\lambda_k+\frac{16C_0^2}{R^2}\Bigr)\cdot\frac{5R}{2c_0^2}
  \sup_{[R/2,3R]}m,
\]
so, since $m(R^\dagger)=e^{-(R^\dagger)^\alpha}\vol(\Sigma_{R^\dagger})I(R^\dagger)$,
\begin{equation}\label{eq:Bbound}
  B(R^\dagger) \;\le\; C\Bigl(\lambda_k+\frac1{R^2}\Bigr)\cdot
  \frac{\sup_{[R/2,3R]}m}{m(R^\dagger)/I(R^\dagger)}
  \;=\; C\Bigl(\lambda_k+\frac1{R^2}\Bigr)I(R^\dagger)\cdot
  \frac{\sup_{[R/2,3R]}m}{m(R^\dagger)}.
\end{equation}
By Cauchy--Schwarz on $\Sigma_{R^\dagger}$, using $|\nabla b|\ge c_0$,
\begin{equation}\label{eq:CSforD}
  |D(R^\dagger)| \;\le\; \sqrt{c_0^{-1}I(R^\dagger)\,B(R^\dagger)}.
\end{equation}
This bounds $D(R^\dagger)$ from both sides,
$-\sqrt{c_0^{-1}I(R^\dagger)B(R^\dagger)}\le D(R^\dagger)\le
\sqrt{c_0^{-1}I(R^\dagger)B(R^\dagger)}$. Since $U(R^\dagger)=
R^\dagger D(R^\dagger)/I(R^\dagger)$ and $R^\dagger\le2R$, dividing
\eqref{eq:CSforD} by $I(R^\dagger)$ and multiplying by $R^\dagger$
bounds $|U(R^\dagger)|$,
\[
  |U(R^\dagger)| \;\le\; 2R\sqrt{\frac{B(R^\dagger)}{c_0 I(R^\dagger)}}
  \;\le\; C'R\sqrt{\Bigl(\lambda_k+\frac1{R^2}\Bigr)Y}, \qquad
  Y:=\frac{\sup_{[R/2,3R]}m}{m(R^\dagger)}.
\]
Since $R\sqrt{\lambda_k+1/R^2}=\sqrt{R^2\lambda_k+1}$, and $Y\ge1$
gives $\sqrt Y\le Y$, this is $\le C'\sqrt{R^2\lambda_k+1}\cdot Y$.
Finally, $\sqrt{R^2\lambda_k+1}\le\sqrt{R^2\lambda_k}+\sqrt1=
R\sqrt{\lambda_k}+1$, by subadditivity of the square root. Hence
$|U(R^\dagger)|\le C'(R\sqrt{\lambda_k}+1)\cdot Y$, which is
\eqref{eq:Ucacc} with constants renamed.
\end{proof}

\begin{remark}\label{rem:doubling}
\Cref{prop:Ucacc} reduces the estimate of $|U(R^\dagger)|$ to control of
one quantity, the ratio $\sup_{[R/2,3R]}m/m(R^\dagger)$. This is a local doubling-type ratio for the weighted mass density $m$
over a fixed range of radii. It measures how much the boundary
mass can vary, in relative terms, across nearby radii. If there is $C_D\ge1$, independent of $k$ and $R$, with
\begin{equation}\label{eq:localdoubling}
  \sup_{s\in[R/2,3R]}m(s) \;\le\; C_D\,m(R^\dagger)
  \qquad\text{for the } R^\dagger \text{ given by \Cref{prop:Ucacc}},
\end{equation}
then \eqref{eq:Ucacc} immediately yields
\begin{equation}\label{eq:bothsides}
  |U(R^\dagger)| \;\le\; C\,C_D\,\bigl(R\sqrt{\lambda_k}+1\bigr) \;\sim\; CC_D\,R\sqrt{\lambda_k}.
\end{equation}

The bound does not compare $\lambda_k$ with $\gamma_k$ without
further information on the frequency. The radial results use the
ODE instead. We ask which conditions imply \eqref{eq:localdoubling} in
\Cref{q:sharpened}.
\end{remark}

\subsection{Critical points of the mass density}

\begin{proposition}\label{prop:exactcrit}
At any interior maximizer $r$ of $m_k$ at which $m_k(r)>0$ and the
quantities are differentiable, $J_k'(r)/J_k(r)=f'(r)$.
\end{proposition}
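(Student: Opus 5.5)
The plan is to differentiate the logarithm of $m_k$ and use the first-order condition at an interior maximizer. By \Cref{def:massdensity}, $m_k(r)=J_k(r)e^{-f(r)}$, where $J_k(r)=\vol(\Sigma_r)I_k(r)=\int_{\Sigma_r}u_k^2|\nabla b|\,d\sigma$ and $f(r)=r^\alpha$ denotes the constant value of $f$ on $\Sigma_r$.

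\textbf{Positivity.} Since $m_k(r)>0$ and $e^{-f(r)}>0$, we have $J_k(r)>0$. Continuity of $J_k$, coming from the flow of $X=\nabla b/|\nabla b|^2$ discussed after \Cref{def:freq}, gives $J_k>0$ on a neighbourhood of $r$. There $\log m_k=\log J_k-f$ is well defined.

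\textbf{Derivative.} At the point $r$ all quantities are assumed differentiable, so
\[
  \frac{m_k'(r)}{m_k(r)}=\frac{J_k'(r)}{J_k(r)}-f'(r),
\]
where $f'(r)=\alpha r^{\alpha-1}$ is the derivative of $s\mapsto s^\alpha$. Alternatively, one can compute $m_k'=J_k'e^{-f}-f'J_ke^{-f}$ directly and divide by $m_k(r)>0$.

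\textbf{First-order condition.} Since $r$ is an interior maximizer of $m_k$ and $m_k$ is differentiable there, Fermat's theorem gives $m_k'(r)=0$. Substituting into the display yields $J_k'(r)/J_k(r)=f'(r)$.

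There is no real obstacle; the argument is elementary. The only subtlety is bookkeeping: $f'(r)$ must be read as the derivative of the level value $r^\alpha$, not as a gradient on $M$. If desired, one can add a remark that \Cref{cor:Jidentity} turns this into $2U(r)/r+\mathrm{Err}(r)=\alpha r^{\alpha-1}$ at such a maximizer, but that is not needed for the statement itself.
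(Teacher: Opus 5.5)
Your argument is correct and matches the paper's proof, which writes $\log m_k=\log J_k-f$, differentiates, and sets the derivative to zero at the maximizer. The extra care about positivity of $J_k$ near $r$ and about reading $f'(r)$ as $\alpha r^{\alpha-1}$ is fine but not essential, since the product-rule computation $m_k'=(J_k'-f'J_k)e^{-f}=0$ followed by division by $J_k(r)e^{-f(r)}>0$ already suffices.
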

\begin{proof}
We have $\log m_k=\log J_k-f$. Differentiate and set to $0$.
\end{proof}

\begin{remark}\label{rem:whynotargmax}
To obtain an explicit value from Proposition~\ref{prop:exactcrit},
we need an asymptotic estimate for $J_k'/J_k$. H4 controls $I_k$, not
$J_k=\vol(\Sigma_r)I_k$. Even for $I_k$, an asymptotic size bound
cannot be differentiated. For instance,
$I(r)=r^{2\gamma}(2+\sin r^2)$ satisfies $\log I(r)=2\gamma\log
r+O(1)$ yet $(\log I)'(r)=2\gamma/r+O(r)$. The condition $(\log J_k)'(r)=2\gamma_k/r+o(1/r)$
would be substantially stronger than H4. We use instead the
median-type radius $r_C(k)$, for which we have an unconditional
one-sided bound.
\end{remark}

\end{document}